\documentclass[11 pt, a4paper,titlepage]{book}
\usepackage{amsmath,amsthm,amssymb,latexsym,enumerate,color,hyperref}
\usepackage{graphicx}




\usepackage[english]{babel}
\usepackage{verbatim,mathrsfs,paralist}
%
%
\usepackage{epstopdf}
\usepackage{indentfirst}
\usepackage[a4paper]{geometry}



%
\usepackage{frontespizio}
\usepackage[titletoc]{appendix}

\usepackage{emptypage}


\usepackage{url}



\usepackage[multiple]{footmisc}

\usepackage{caption}


\usepackage{bbm}





\begin{document}

\newtheorem{thm}{Theorem}[chapter]
\newtheorem{prop}[thm]{Proposition}
\newtheorem{lem}[thm]{Lemma}
\newtheorem{cor}[thm]{Corollary}
\newtheorem{rem}[thm]{Remark}
\newtheorem*{defn}{Definition}

\newtheorem{SBT}{Theorem}
\renewcommand*{\theSBT}{\Alph{SBT}}


\newcommand{\DD}{\mathbb{D}}
\newcommand{\NN}{\mathbb{N}}
\newcommand{\ZZ}{\mathbb{Z}}
\newcommand{\QQ}{\mathbb{Q}}
\newcommand{\RR}{\mathbb{R}}
\newcommand{\CC}{\mathbb{C}}
\renewcommand{\SS}{\mathbb{S}}

\newcommand{\re}{\mathop{\mathrm{Re}}}   
\newcommand{\im}{\mathop{\mathrm{Im}}}   
\newcommand{\dist}{\mathop{\mathrm{dist}}}  
\newcommand{\link}{\mathop{\circ\kern-.35em -}}
\newcommand{\spn}{\mathop{\mathrm{span}}}   
\newcommand{\ind}{\mathop{\mathrm{ind}}}   
\newcommand{\rank}{\mathop{\mathrm{rank}}}   
\newcommand{\Fix}{\mathop{\mathrm{Fix}}}   
\newcommand{\codim}{\mathop{\mathrm{codim}}}   
\newcommand{\conv}{\mathop{\mathrm{conv}}}   
\newcommand{\epsi}{\mbox{$\varepsilon$}}
\newcommand{\cl}{\overline}
\newcommand{\pa}{\partial}
\newcommand{\ve}{\varepsilon}
\newcommand{\zi}{\zeta}
\newcommand{\Si}{\Sigma}
\newcommand{\cA}{{\mathcal A}}
\newcommand{\cG}{{\mathcal G}}
\newcommand{\cI}{{\mathcal I}}
\newcommand{\cJ}{{\mathcal J}}
\newcommand{\cK}{{\mathcal K}}
\newcommand{\cL}{{\mathcal L}}
\newcommand{\cN}{{\mathcal N}}
\newcommand{\cR}{{\mathcal R}}
\newcommand{\cS}{{\mathcal S}}
\newcommand{\cT}{{\mathcal T}}
\newcommand{\cU}{{\mathcal U}}
\newcommand{\OM}{\Omega}
\newcommand{\B}{\bullet}
\newcommand{\ol}{\overline}
\newcommand{\ul}{\underline}
\newcommand{\vp}{\varphi}
\newcommand{\AC}{\mathop{\mathrm{AC}}}   
\newcommand{\Lip}{\mathop{\mathrm{Lip}}}   
\newcommand{\es}{\mathop{\mathrm{esssup}}}   
\newcommand{\les}{\mathop{\mathrm{les}}}   
\newcommand{\nid}{\noindent}
\newcommand{\pzr}{\phi^0_R}
\newcommand{\pir}{\phi^\infty_R}
\newcommand{\psr}{\phi^*_R}
\newcommand{\pow}{\frac{N}{N-1}}
\newcommand{\ncl}{\mathop{\mathrm{nc-lim}}}   
\newcommand{\nvl}{\mathop{\mathrm{nv-lim}}}  
\newcommand{\la}{\lambda}
\newcommand{\La}{\Lambda}    
\newcommand{\de}{\delta}    
\newcommand{\fhi}{\varphi} 
\newcommand{\ga}{\gamma}    
\newcommand{\ka}{\kappa}   

\newcommand{\core}{\heartsuit}
\newcommand{\diam}{\mathrm{diam}}

\newcommand{\lan}{\langle}
\newcommand{\ran}{\rangle}
\newcommand{\tr}{\mathop{\mathrm{tr}}}
\newcommand{\diag}{\mathop{\mathrm{diag}}}
\newcommand{\dv}{\mathop{\mathrm{div}}}

\newcommand{\al}{\alpha}
\newcommand{\be}{\beta}
\newcommand{\Om}{\Omega}
\newcommand{\na}{\nabla}

\newcommand{\cC}{\mathcal{C}}
\newcommand{\cM}{\mathcal{M}}
\newcommand{\nr}{\Vert}
\newcommand{\De}{\Delta}
\newcommand{\cX}{\mathcal{X}}
\newcommand{\cP}{\mathcal{P}}
\newcommand{\om}{\omega}
\newcommand{\si}{\sigma}
\newcommand{\te}{\theta}
\newcommand{\Ga}{\Gamma}

\newcommand{\pCap}{\operatorname{Cap}}




\newcommand{\eps}{\varepsilon}
\newcommand{\ovr}{\overline}

\theoremstyle{plain}

\newtheorem*{principle1}{First Principle}
\newtheorem*{principle2}{Second Principle}
\newtheorem*{principle3}{Third Principle}
\newtheorem*{principle4}{Fourth Principle}




\theoremstyle{remark}
\newtheorem*{sketch alternative}{Outline of an alternative proof}
\newtheorem*{alternative}{Alternative proof}
\newtheorem*{sketch}{Outline of the proof}
\newtheorem*{hint}{Hint}

\theoremstyle{definition}
\newtheorem*{notations}{Notation}{\bf}{}


\theoremstyle{plain} 
\newtheorem{theorem}{Theorem}[chapter]
\newtheorem{proposition}[theorem]{Proposition}
\newtheorem{lemma}[theorem]{Lemma}
\newtheorem{corollary}[theorem]{Corollary}

\theoremstyle{definition}
\newtheorem{remark}[theorem]{Remark}
\newtheorem{definition}[theorem]{Definition}
\newtheorem{exercise}{Exercise}[chapter]
\newtheorem{open problem}[theorem]{Open problem}



\newcommand{\cSC}{{\mathcal C}_S}
\newcommand{\cH}{{\mathcal H}}

\newcommand{\DLB}{\De_{LB} \, }


\makeatletter
\renewenvironment{quotation}
                 {\list{}{\listparindent 1.5em
                          \rightmargin \leftmargin 
                                     \parsep \z@ \@plus\p@}
                                     \item\relax}
                                     {\endlist}
\makeatother                                     

%

\frontmatter

\begin{titlepage}

\linespread{1.1}
\pagestyle{myheadings}

\thispagestyle{empty}
\newgeometry{top=3cm,bottom=2cm}
\begin{center}
	\vspace{-4cm}
	\includegraphics[scale=0.41]{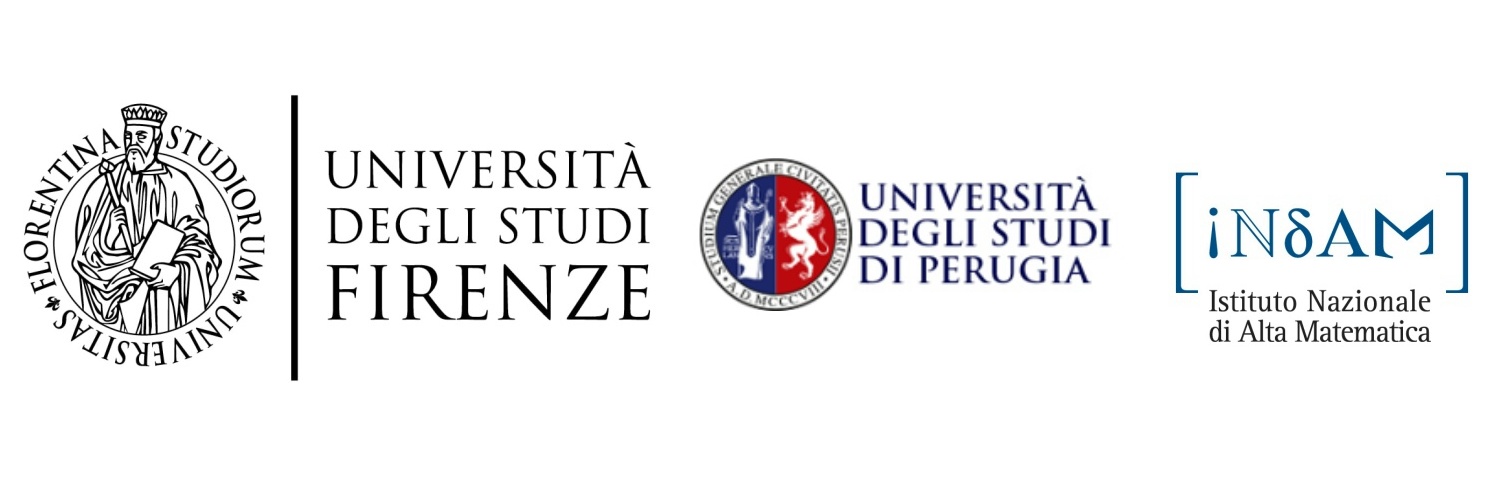}
	\par
	 { 
	  Universit\`a di Firenze, Universit\`a di Perugia, INdAM  consorziate nel {\small CIAFM}\\}
	 \vspace{1mm}
\vskip .8cm
	\par \vspace{2mm}
	\large
	\textbf{DOTTORATO DI RICERCA\\
	IN MATEMATICA, INFORMATICA, STATISTICA}\\
		 \vskip.2cm
		 CURRICULUM IN MATEMATICA\\
	CICLO XXXI
	\par \vspace{5mm}
	\large
	
		 
	 {\bf Sede amministrativa Universit\`a degli Studi di Firenze}\\
	Coordinatore Prof.~Graziano Gentili
	\par \vspace{8mm}
	\huge
	\textbf{ The Soap Bubble Theorem\\ and Serrin's problem:\\ quantitative symmetry}
	\par \vspace{5mm}
	
	\large
	Settore Scientifico Disciplinare MAT/05
\end{center}
\par \vspace{10mm}

\normalsize
\hspace{1cm}\begin{minipage}{0.42\linewidth}
	\textbf{Dottorando}:
	\\
	{Giorgio Poggesi}
\end{minipage}
\hspace{2cm}
\begin{minipage}{0.42\linewidth}
	\textbf{Tutore}
	\\
	{Prof. Rolando Magnanini}
	
\end{minipage}
\par \vspace{10mm}

\begin{center}
	\begin{minipage}{0.30\linewidth}
		\textbf{Coordinatore}
		\\
		{Prof.~Graziano Gentili}
	\end{minipage}
\end{center}
\par \vspace{9mm}
\begin{center}
	\hrule
	\par \vspace{5mm}
	Anni 2015/2018
	
\end{center}
\restoregeometry

%
%
%
%
%
%

\end{titlepage}


\chapter*{Acknowledgements}

First and foremost I thank my advisor Rolando Magnanini for introducing me to mathematical research. I really appreciated his constant guidance and encouraging criticism, as well as the many remarkable opportunities that he gave me during the PhD period.

I am also indebted with all the professors who, in these three years, have invited me at their institutions in order to collaborate in mathematical research:
Xavier Cabr\'e at the Universitat Polit\`{e}cnica de Catalunya (UPC) in Barcelona, Daniel Peralta-Salas at the Instituto de Ciencias Matem\'aticas (ICMAT) in Madrid, Shigeru Sakaguchi and Kazuhiro Ishige at the Tohoku University in Sendai, Lorenzo Brasco at the Universit\`{a} di Ferrara.
It has been a honour for me to interact and work with them.

I would like to thank Paolo Salani, Andrea Colesanti, Chiara Bianchini, and Elisa Francini for their kind and constructive availability.
Also, I thank my friend and colleague Diego Berti, with whom I shared Magnanini as advisor, for the long and good time spent together.

Special thanks to my partner Matilde, my parents, and the rest of my family. Their unfailing and unconditional support has been extremely important to me.

\tableofcontents

\mainmatter
\chapter*{Introduction}
\markboth{Introduction}{Introduction}
\addcontentsline{toc}{chapter}{Introduction}

\begin{quotation}
The thesis is divided in two parts. The first
%
%
is based on the results obtained in \cite{MP, MP2, Pog} and further improvements that give the title to the thesis. The following introduction is dedicated {\it only} to this part.

The second part, entitled ``Other works'', contains a couple of papers, as they were published.
The first are the lecture notes \cite{CP} to which the author of this thesis collaborated. They
%
%
have several points of contact with the first part. 
The second (\cite{MPLittlewood}) is an article on a different topic, published during my studies as a PhD student.
\end{quotation}

The pioneering symmetry results obtained by A. D. Alexandrov \cite{Al1}, \cite{Al2} and  J. Serrin \cite{Se} are now classical but still influential. The former -- the well-known Soap Bubble Theorem -- states that a compact hypersurface, embedded in $\RR^N$, that has constant mean curvature must be a sphere. The latter -- Serrin's symmetry result -- has to do with certain overdetermined problems for partial differential equations.
In its simplest formulation, it states that  the overdetermined boundary value problem 
\begin{eqnarray}
\label{serrin1}
&\De u=N \ \mbox{ in } \ \Om, \quad u=0 \ \mbox{ on } \ \Ga, \\
\label{serrin2}
&u_\nu=R \ \mbox{ on } \ \Ga, 
\end{eqnarray}
admits a solution 
for some positive constant $R$ if and only if $\Om$ is a ball of radius $R$ and, up to translations, $u(x)=(|x|^2-R^2)/2$. Here, $\Om$ denotes a bounded domain in $\RR^N$, $N\ge 2$, with sufficiently smooth boundary $\Ga$, say $C^2$, and $u_\nu$ is the outward normal derivative of $u$ on $\Ga$. 
This result inaugurated a new and fruitful field in mathematical research at the confluence of Analysis and Geometry.

Both problems have many applications to other areas of mathematics and natural sciences. In fact (see the next section), Alexandrov's theorem is related with soap bubbles and the classical isoperimetric problem, while Serrin's result -- as Serrin himself explains in \cite{Se} -- was actually motivated by two concrete problems in mathematical physics regarding the torsion of a straight solid bar and the tangential stress of a fluid on the walls of a rectilinear pipe.

\bigskip
%
%
\noindent{\bf Some motivations.}
Let us consider a free soap bubble in the space, that is a closed surface $\Ga$ in $\RR^3$ made of a soap film enclosing a domain $\Om$ (made of air) of fixed volume.
It is known that (see \cite{Roslibro}) if the surface is in equilibrium then it must be a minimizing minimal surface (in the terminology of Chapter \ref{chap:LNCIME}).
%
%
%
This means that its area within any compact region increases when the surface is perturbed within that region.
Thus, soap bubbles solve the classical isoperimetric problem, that asks which sets $\Om$ in $\RR^N$ (if they exist) minimize the surface area for a given volume.
%
%
Put in other terms, soap bubbles attain the sign of equality in the classical isoperimetric inequality. Hence, they must be spheres (see also Theorem \ref{thm:servepertesiLNisoperimetric}).
%
%

A standard proof of the isoperimetric inequality that hinges on rearrangement techniques can be found in \cite[Section 1.12]{PZ}.
Here, we just want to underline the relation between the isoperimetric problem and Alexandrov's theorem.

%
%
In this introduction, we adopt the terminology and techniques pertaining to \textit{shape derivatives}, which will later be useful also to give a motivation to Serrin's problem. 
See Chapter \ref{chap:LNCIME}, for a more general treatment pertaining the theory of minimal surfaces.
%
%

Thus, we assume the existence of a set $\Om$ whose boundary $\Ga$ (of class $C^2$) minimizes its surface area among sets of given volume. 
%
%
%
%
Then, we consider the family of domains $\Om_t$ such that $\Om_0= \Om $, where
\begin{equation}\label{intro:eq:evolutiondomains1}
\Om_t = \cM_t (\Om) 
\end{equation}
and $\cM_t: \RR^N \to \RR^N$ is a mapping such that
\begin{equation}\label{intro:eq:evolutiondomains2}
\cM_0 (x) =x, \; \cM'_0 (x) = \phi(x) \nu(x) .
\end{equation}
Here, the symbol $'$ means differentiation with respect to $t$, $\phi$ is any compactly supported continuous function, and $\nu$ is a proper extension of the unit normal vector field to a tubular neighborhood of $\Ga$.
%
%

Thus, we consider the area and volume functionals (in the variable $t$)
\begin{equation*}
A(t) = | \Ga_t | = \int_{\Ga_t} \, dS_x \quad \text{ and } \quad V(t) = | \Om_t| = \int_{\Om_t} \, dx ,
\end{equation*}
where $\Ga_t$ is the boundary of $\Om_t$, and $|\Ga_t|$, $|\Om_t|$ denote indifferently the $N-1$-dimensional measure of $\Ga_t$ and the $N$-dimensional measure of $\Om_t$.

Since $\Om_0 = \Om$ is the domain that minimizes $A(t)$ among all the domains in the one-parameter family $\left\lbrace \Om_t \right\rbrace_{t \in \RR}$ that have prescribed volume $|\Om_t|=V$, the method of {\it Lagrange multipliers} informs us that there exists a number $\la$ such that
$$
A'(0) - \la V'(0) = 0.
$$

Now by applying Hadamard's variational formula (see \cite[Chapter 5]{HP}) we can directly compute that the derivatives of $V$ and $A$ are given by:
\begin{equation*}
V'(0)= \int_{\Ga} \phi (x) \, dS_x 
\end{equation*}
and
\begin{equation*}
A'(0) = \int_{\Ga} H(x) \, \phi(x) \, dS_x , 
\end{equation*}
where $H$ is the mean curvature of $\Ga$. 

Therefore, we obtain that
$$
\int_{\Ga} \left( H(x) - \la \right) \, \phi(x) \, dS_x = 0.
$$
Since $\phi$ is arbitrary, we conclude that $H \equiv \la$ on $\Ga$.
The value $\la$ can be computed by using {\it Minkowski's identity},
\begin{equation*}
\int_\Ga H \, <x, \nu> \,dS_x = |\Ga| ,
\end{equation*}
and equals the number $H_0$ given by
$$
H_0= \frac{| \Ga|}{N | \Om|} .
$$

This argument proves that soap bubbles must have constant mean curvature. In turn, Alexandrov's theorem informs us that they must be spheres.


Similar arguments can be applied to optimization of other physical quantities, which also satisfy inequalities of isoperimetric type.
%
%
A celebrated example, which is connected with the overdetermined problem \eqref{serrin1}, \eqref{serrin2}, is given by {\it Saint Venant's Principle}, explained below.

To this aim we introduce the {\it torsional rigidity} $\tau(\Om)$ of a bar of cross-section $\Om$, that after some normalizations, can be defined as
$$
\max \left\lbrace Q(v) : 0 \not\equiv v \in W_0^{1,2} (\Om)  \right\rbrace , \, \text{ where } \, Q(v)= \frac{ \left( N \, \int_\Om v \, dx \right)^2}{\int_\Om |\na v|^2 \, dx  } .
$$
As mentioned in \cite{Se}, this quantity is related to the solution $u$ of \eqref{serrin1}. In fact, it turns out that $u$ realizes the maximum above, and we have that
\begin{equation*}
\tau(\Om) = Q(u) = \int_\Om | \na u|^2 \, dx = -N \int_\Om u \, dx.
\end{equation*}

Saint Venant's Principle (or the isoperimetric inequality for the torsional rigidity) states that the ball maximizes $\tau(\Om)$ among sets of given volume.
The standard proof of this result also uses rearrangement techniques and can be found in \cite[Section 1.12]{PZ}.
Here, in analogy to what done just before,
%
%
we simply show the relation between Saint Venant's Principle and problem \eqref{serrin1},\eqref{serrin2}.


Thus, we suppose that a set $\Om$ (of class $C^{1, \al}$) maximizing $\tau(\Om)$ among sets of given volume exists and consider the one-parameter family $\Om_t$ such that $\Om_0= \Om$ already defined.
%
%

This time, as the parameter $t$ changes, we must describe how the solution $u(t,x)$ of \eqref{serrin1} in $\Om_t$ changes, since the torsional rigidity of $\Om_t$ reads as
\begin{equation*}
T(t)= \tau (\Om_t)= - N \int_{\Om_t} u (t,x) \, dx .
\end{equation*}


By using the method of Lagrange multipliers as before, we deduce that there exists a number $\la$ such that
\begin{equation*}
T'(0) + \la V'(0) = 0.
\end{equation*}
By Hadamard's variational formula (see \cite[Chapter 5]{HP}), we can compute
\begin{equation*}
T'(0)= -N \int_{\Om} u'(x) \, dx - N \int_{\Ga} u(x) \phi(x) \, dS_x,
\end{equation*}
where $u(x)= u(0,x)$, and $u'(x)$ is the derivative of $u(t,x)$ with respect to $t$, evaluated at $t=0$.
%
%
Moreover, it turns out that $u'$ is the solution of the problem
\begin{equation*}
\De u' = 0 \, \mbox{ in } \Om , \, u'= u_\nu \phi \, \mbox{ on } \Ga.
\end{equation*}
Thus, 
%
%
we immediately find that
$$
T'(0) = -N \int_{\Om} u' \, dx = - \int_{\Om} u' \De u \, dx = - \int_{\Ga} u' u_\nu \, dS_x ,
$$
after an integration by parts in the last equality.
By using this last formula together with the fact that $ u'= u_\nu \phi$ on $\Ga$,
and recalling the already computed expression for $V'(0)$, we conclude that
$$
0= T'(0) + \la V'(0) =  \int_{\Ga} \left( - u_\nu^2 + \la \right) \phi \, dS_x .
$$
Since $\phi$ is arbitrary, we deduce that $u_\nu^2 \equiv \la$ on $\Ga$. By the identity
\begin{equation*}
\int_\Ga u_\nu\,dS_x=N\,|\Om|,
\end{equation*}
we then compute $\la=R^2$,
where 
$$
R=\frac{N |\Om|}{| \Ga|}.
$$

The core of this thesis concerns symmetry and stability results for the Soap Bubble Theorem, Serrin's problem, and some other related problems. Most of the material and techniques presented in this part are contained in the papers \cite{MP, MP2, Pog}, that are already published or in print. However, in the process of writing the thesis, we realized that some of the stability results described in those papers could be sensibly improved.
%
%
Thus, we present our results in this new (technically equivalent) form.
%
%

\bigskip
\noindent{\bf Integral identities and symmetry.}
The Soap Bubble Theorem and Serrin's result share several common features. To prove his result, Alexandrov introduced his reflection principle, an elegant geometric technique that also works for other symmetry results concerning curvatures. 
Serrin's proof hinges on his method of moving planes, an adaptation and refinement of the reflection principle. That method proves to be a very flexible tool, since it allows to prove
radial symmetry for positive solutions of a far more general class of non-linear equations, that includes the semi-linear equation
\begin{equation}
\label{semilinear}
\De u=f(u),
\end{equation}
where $f$ is a locally Lipschitz continuous non-linearity.
\par
Also, alternative proofs of both symmetry results can be given, based on certain integral identities and inequalities. 
In fact, in the same issue of the journal in which \cite{Se} is published, H.~F.~Weinberger \cite{We} gave a different proof of Serrin's symmetry result for problem \eqref{serrin1}-\eqref{serrin2} based on integration by parts, a maximum principle for an appropriate $P$-function, and the Cauchy-Schwarz inequality.
These ideas can be extended to the Soap Bubble Theorem,
using R. C. Reilly's argument (\cite{Re}), in which the relevant hypersurface is
regarded as the zero level surface of the solution of \eqref{serrin1}.
The connection between \eqref{serrin1}-\eqref{serrin2} and the Soap Bubble problem is then hinted by the simple differential identity 
$$
\De u=|\na u|\,\dv\frac{\na u}{| \na u|}+\frac{\lan \na^2u\,\na u, \na u\ran}{|\na u|^2};
$$
here, $\na u$ and $\na^2u$ are the gradient and the hessian matrix of $u$, as standard. If we agree
to still denote by $\nu$ the vector field $\na u/|\na u|$ (that on $\Ga$ coincides with the outward unit normal), the above identity and \eqref{serrin1} inform us that
%
%
\begin{equation*}
u_{\nu\nu}+(N-1)\,H\,u_\nu=N,
\end{equation*}
on every {\it non-critical} level surface of $u$, and hence on $\Ga$. In fact, a well known formula states that
the mean curvature $H$ (with respect to the inner normal) of a regular level surface of $u$ equals
$$
\frac1{N-1}\,\dv\frac{\na u}{|\na u|}.
$$
%
%
\par 
In both problems, the radial symmetry of $\Ga$ will follow from that of the solution $u$ of \eqref{serrin1}. In fact, we will show that in Newton's inequality,
%
%
\begin{equation*}
(\De u)^2\le N\,|\na^2 u|^2,
\end{equation*}
that holds pointwise in $\Om$ by the Cauchy-Schwarz inequality, the equality sign is identically attained in $\Om$. The point is that such equality holds if and only if 
$u$ is a quadratic polynomial $q$ of the form
%
%
\begin{equation*}
q(x)=\frac12\, (|x-z|^2-a),
\end{equation*}
for some choice of $z\in\RR^N$ and $a\in\RR$ (see Lemma \ref{lem:sphericaldetector}). The boundary condition in \eqref{serrin1} will then
tell us that $\Ga$ must be a sphere centered at $z$.
\par
The starting points of our analysis are the following two integral identities (see Theorems \ref{thm:serrinidentity} and \ref{thm:identitySBT}):
\begin{equation}
\label{intro:idwps}
\int_{\Om} (-u) \left\{ |\na ^2 u|^2- \frac{ (\De u)^2}{N} \right\} dx=
\frac{1}{2}\,\int_\Ga \left( u_\nu^2- R^2\right) (u_\nu-q_\nu)\,dS_x
\end{equation}
and
\begin{multline}
\label{intro:H-fundamental}
\frac1{N-1}\int_{\Om} \left\{ |\na ^2 u|^2-\frac{(\De u)^2}{N}\right\}dx+
\frac1{R}\,\int_\Ga (u_\nu-R)^2 dS_x = \\
\int_{\Ga}(H_0-H)\, (u_\nu)^2 dS_x.
\end{multline}
Here, $R$ and $H_0$ are the already mentioned reference constants:
\begin{equation*}
R=\frac{N\,|\Om|}{|\Ga|}, \quad H_0=\frac1{R}=\frac{|\Ga|}{N\,|\Om|}.
\end{equation*}

Identities \eqref{intro:idwps} and \eqref{intro:H-fundamental} hold regardless of how the point $z$ or the constant $a$ are chosen for $q$. 
Identity \eqref{intro:idwps}, claimed in \cite[Remark 2.5]{MP} and proved in \cite{MP2}, puts together and refine Weinberger's identities and some remarks of L.~E.~Payne and P.~W.~Schaefer \cite{PS}. Identity \eqref{intro:H-fundamental} was proved in \cite[Theorem 2.2]{MP} by polishing the arguments contained in \cite{Re}.

It is thus evident that each of the two identities gives spherical symmetry if respectively $u_\nu=R$ or $H=H_0$ on $\Ga$, since Newton's inequality holds with the equality sign (notice that in \eqref{intro:idwps} $-u>0$ by the strong maximum principle). The same conclusion is also achieved if we only assume that $u_\nu$ or $H$ are constant on $\Ga$, since those constants must equal $R$ and $H_0$, as already observed.

%
%
%
%

Thus, \eqref{intro:idwps} and \eqref{intro:H-fundamental} give new elegant proofs of Alexandrov's and Serrin's results.
Moreover,
%
%
they have several advantages that we shall describe in this and the next section.
%
%

One advantage is that we can obtain symmetry under weaker assumptions.
In fact, observe that to get spherical symmetry it is enough to prove that the right-hand sides in the two identities are non-positive. For instance, in the case of the Soap Bubble Theorem (see Theorem \ref{th:SBT}) one gets the spherical symmetry if 
$$\int_{\Ga}(H_0-H)\, (u_\nu)^2 dS_x \le 0 ,$$
which is certainly true if $H\ge H_0$ on $\Ga$.

Such slight generalization has been exploited in \cite{Pog} to improve a symmetry result of Garofalo and Sartori (\cite{GS}) for the p-capacitary potential with constant normal derivative. In fact, in \cite{Pog} the star-shaped assumption used in \cite{GS} has been removed. 
%
%

%
%

Another advantage is that \eqref{intro:idwps} and \eqref{intro:H-fundamental} tell us something more about Saint Venant's principle and the classical isoperimetric problem described in the previous section. 

In fact, it has been noticed in \cite[Theorem 7]{Mag} that, to infer that a domain $\Om$ is a ball, it is sufficient to require that, under the
flow \eqref{intro:eq:evolutiondomains1}-\eqref{intro:eq:evolutiondomains2} with the choice $\phi= (u_\nu - q_\nu)/2$, the function 
$$
t \to T(t) + R^2 (V(t) - V)
$$
has non-positive derivative at $t=0$. In particular, the same conclusion holds if $t=0$ is a critical point.
In fact, with that choice of $\phi$, \eqref{intro:eq:evolutiondomains2} gives that
$$
T'(0)+ R^2 V'(0)= \frac{1}{2} \int_\Ga  (u_\nu - R^2) \, (u_\nu - q_\nu) \, dS_x ,
$$
and the conclusion follows from \eqref{intro:idwps}.

For the case of the classical isoperimetric problem we can deduce a similar statement.
In fact we can infer that a domain $\Om$ is a ball if, under the
flow \eqref{intro:eq:evolutiondomains1}-\eqref{intro:eq:evolutiondomains2} with the choice $\phi=- u_\nu^2$, the function 
$$
t \to A(t) - H_0 (V(t) - V)
$$
has non-positive derivative at $t=0$. In particular, the same conclusion holds if $t=0$ is a critical point.
In fact, with that choice of $\phi$, \eqref{intro:eq:evolutiondomains2} gives that
\begin{equation*}
A'(0) - H_0 V'(0)= \int_\Ga (H_0 - H) \, ( u_\nu^2) \, dS_x ,
\end{equation*}
and the conclusion follows from \eqref{intro:H-fundamental}.

Thus, \eqref{intro:idwps} and \eqref{intro:H-fundamental} inform us that the flows generated respectively by $\phi= (u_\nu - q_\nu)/2$ and $\phi= - u_\nu^2$ are quite privileged.

\bigskip
\noindent{\bf Stability results.}
The greatest benefit
%
%
produced by our identities are undoubtedly the stability results for the Soap Bubble Theorem, Serrin's problem, and other related overdetermined problems.

Technically speaking, there are several ways to describe the closeness of a domain to a ball. In this thesis, we mainly privilege
the following: find two concentric balls $B_{\rho_i} (z)$ and $B_{\rho_e} (z)$, centered at $z \in \Om$ with radii $\rho_i$ and $\rho_e$, such that
\begin{equation}
\label{balls}
B_{\rho_i} (z) \subseteq \Om \subseteq B_{\rho_e} (z)
\end{equation} 
 and
\begin{equation}
\label{stability}
\rho_e-\rho_i\le \psi(\eta),
\end{equation}
where
$\psi:[0,\infty)\to[0,\infty)$ is a continuous function vanishing at $0$ and $\eta$ is a suitable measure of the deviation of $u_\nu$ or $H$ from being a constant.

%
%

The landmark results of this thesis are the following stability estimates:
\begin{equation}\label{intro:eq:Improved-Serrin-stability}
\rho_e-\rho_i\le C \, \nr u_\nu - R \nr_{2,\Ga}^{\tau_N} 
\end{equation}
and
\begin{equation}
\label{intro:eq:SBT-improved-stability}
\rho_e-\rho_i\le C\,\nr H_0-H\nr_{2,\Ga}^{\tau_N} .
\end{equation}

In \eqref{intro:eq:Improved-Serrin-stability} (see Theorem \ref{thm:Improved-Serrin-stability} for details), $\tau_2 = 1$, $\tau_3$ is arbitrarily close to one,  
%
%
and $\tau_N = 2/(N-1)$ for $N \ge 4$.
In \eqref{intro:eq:SBT-improved-stability} (see Theorem \ref{thm:SBT-improved-stability} for details), $\tau_N=1$ for $N=2, 3$, $\tau_4$ is arbitrarily close to one, and
%
%
$\tau_N = 2/(N-2) $ for $N\ge 5$.

The constants $C$ depend on the dimension $N$, the diameter $d_\Om$, the radii $r_i$, $r_e$ of the uniform interior and exterior sphere conditions, and the distance $\de_\Ga (z)$ of $z$ to $\Ga$.
%
%
As pointed out in Remarks \ref{rem:removing-distance-convex} and \ref{remarkprova:nuova scelte z}, the dependence on $\de_\Ga (z)$ of the constants in \eqref{intro:eq:Improved-Serrin-stability} and \eqref{intro:eq:SBT-improved-stability} can (so far) be removed when $\Om$ is convex or by assuming some additional requirements.

The stability results for Serrin's problem so far obtained in the literature can be divided into two groups, depending on the method employed. One is based on a quantitative study of the method of moving planes and hinges on the use of Harnack's inequality. In \cite{ABR}, where the stability issue was considered for the first time, \eqref{balls} and \eqref{stability} are obtained with 
$\psi(\eta)=C\,|\log \eta|^{-1/N}$ and $\eta=\nr u_\nu-c\nr_{C^1(\Ga)}$.
The estimate was later improved in \cite{CMV} to get
\begin{equation*}
\psi(\eta)=C\,\eta^{\tau_N} \quad \mbox{and} \quad   \eta=\sup_{\substack{x,y \in \Ga\\ \ x \neq y}} \frac{|u_\nu(x) - u_\nu(y)|}{|x-y|}.
\end{equation*}
The exponent $\tau_N \in (0,1)$ can be computed for a general setting and, if $\Om$ is convex,
is arbitrarily close to $1/(N+1)$.
It should be also stressed that the method works in the more general case of the semilinear equation \eqref{semilinear}.

The approach of using integral identities and inequalities, in the wake of Weinberger's proof of symmetry, was inaugurated in \cite{BNST}, and later improved in \cite{Fe} and \cite{MP2}. This method has worked so far only for problem \eqref{serrin1}, though.

The main result in \cite{BNST} states that (if a uniform bound on $u_\nu$ is assumed) $\Om$ can be approximated in measure by a finite number of mutually disjoint balls $B_i$. The error in the approximation is 
$\psi(\eta)=C\,\eta^{1/(4N+9)}$ where $\eta=\nr u_\nu-c\nr_{1,\Ga}$. There, it is also proved that
%
%
\eqref{balls} and \eqref{stability} hold with
$\psi(\eta)=C\,\eta^{1/2(4N+9)(N-1)}$ and $\eta=\nr u_\nu-c\nr_{\infty, \Ga}$.

In \cite[Theorem 1.1]{MP2} we have obtained
$\psi(\eta)= C\,\eta^{\frac{2}{N+2} }$ where $\eta = \nr u_\nu - R \nr_{2,\Ga}$, which already improves on \cite{ABR, CMV, BNST}.
While writing this thesis, we obtained \eqref{intro:eq:Improved-Serrin-stability}, that improves on \cite{MP2} (for every $N \ge 2$) to the extent that it gains the (optimal) Lipschitz stability in the case $N=2$.
%
%
%

In this thesis we will also consider a weaker $L^1$-deviation (as in \cite{BNST}) and prove (see Theorem \ref{thm:Serrin-stability}) the inequality
\begin{equation}
\label{stability-Serrin}
\rho_e-\rho_i\le C\,\nr u_\nu-R\nr_{1,\Ga}^{\tau_N/2},
\end{equation}
where $\tau_N$ is the same appearing in \eqref{intro:eq:Improved-Serrin-stability}.
Also this inequality is new and refines one stated in \cite[Theorem 3.6]{MP2}
%
%
in which $\tau_N /2$ was replaced by $1/(N+2)$.
Of course, that inequality was already better than that obtained in \cite{BNST}.

In \cite{Fe} instead, a different measure of closeness to spherical symmetry is adopted, by considering a slight modification of the so-called Fraenkel asymmetry:
\begin{equation}
\label{intro:asymmetry}
\cA(\Om)=\inf\left\{\frac{|\Om\De B^x|}{|B^x|}: x \mbox{ center of a ball $B^x$ with radius $R$} \right\}.
\end{equation}
Here, $\Om\De B^x$ denotes the symmetric difference of $\Om$ and $B^x$.
It is then obtained a Lipschitz-type estimate: $\cA(\Om)\le C\,\nr u_\nu-R\nr_{2,\Ga}$.
The control given by $\rho_e-\rho_i$ is stronger than that given by $\cA(\Om)$ (see Section \ref{sec:stabbyasymmetrySBT}).

\medskip

Let us now turn our attention to the Soap Bubble Theorem and comment on our estimate \eqref{intro:eq:SBT-improved-stability}. 
%
%

The only stability result based on Alexandrov's reflection principle has been obtained in \cite{CV} and states that there exist two positive constants $C$, $\ve$ such that \eqref{balls} and \eqref{stability} hold for $\psi(\eta)=C\eta$ if $\eta= \nr H_0-H\nr_{\infty,\Ga} < \ve .$
In \cite{CM} and \cite{KM}, similar results (again with the uniform deviation) are obtained, based on the proof of the Soap Bubble Theorem via Heintze-Karcher's inequality (that holds if $\Ga$ is mean convex) given in \cite{Ro}. Moreover, in \cite{KM} \eqref{balls} and \eqref{stability} are obtained also with $\psi(\eta)= C \, \eta$ and $\eta= \nr H_0-H\nr_{2,\Ga}$, for surfaces that are small normal deformations of spheres.

%

Further advances have been obtained in \cite[Theorem 1.2]{MP2} and \cite[Theorem 4.1]{MP}. In fact, based on Reilly's proof (\cite{Re}) of the Soap Bubble Theorem, \eqref{balls} and \eqref{stability} are shown to hold for general surfaces respectively with 
$$
\psi(\eta)= C\, \eta^{\tau_N} \quad \mbox{and} \quad \eta=\nr H_0-H\nr_{2,\Ga}.
$$
$$
\psi(\eta) = C\, \eta^{\tau_N /2} \quad \mbox{and} \quad \eta = \int_{\Ga}(H_0-H)^+\,dS_x;
$$
Here, $\tau_N=1$ for $N=2,3$, and $\tau_N=2/(N+2)$ for $N \ge 4$.
%
%
%
%
%
Both these estimates are improved
%
%
by \eqref{intro:eq:SBT-improved-stability}
and
\begin{equation}
\label{the-estimate}
\rho_e-\rho_i\le C\,\left\{\int_{\Ga}(H_0-H)^+\,dS_x\right\}^{\tau_N /2} 
\end{equation}
(see Theorem \ref{thm:SBT-stability}), which are original material of this thesis. In \eqref{the-estimate}, $\tau_N$ is the same appearing in \eqref{intro:eq:SBT-improved-stability}.
If we compare the exponents in \eqref{intro:eq:SBT-improved-stability} and \eqref{the-estimate} to those obtained in \cite[Theorem 1.2]{MP2} and \cite[Theorem 4.1]{MP}, we notice that the dependence of $\tau_N$ on $N$ has become virtually continuous, in the sense that $\tau_N\to 1$, if $N$ ``approaches'' $4$ from below or from above.

\par
As a final important achievement, by arguments similar to those of \cite{Fe}, we report in Theorem \ref{th:asymmetry} the (optimal) inequality for the asymmetry \eqref{intro:asymmetry} obtained in \cite[Theorem 4.6]{MP2}:
\begin{equation}\label{eq:introasymmetrystab}
\cA(\Om)\le C\,\nr H_0-H\nr_{2,\Ga}.
\end{equation}
\medskip

In the remaining part of this introduction, we shall pinpoint the key remarks in the proof of \eqref{intro:eq:Improved-Serrin-stability} and \eqref{intro:eq:SBT-improved-stability}.
\par
First, notice that the function $h=q-u $ is harmonic and we have that
\begin{equation*}
|\na ^2 u|^2-\frac{(\De u)^2}{N} = |\na^2 h|^2 .
\end{equation*}
Thus, \eqref{intro:idwps} reads as:
\begin{equation}
\label{intro:idwps-h}
\int_{\Om} (-u)\, |\na ^2 h|^2\,dx=
\frac{1}{2}\,\int_\Ga ( R^2-u_\nu^2)\, h_\nu\,dS_x.
\end{equation}
Also, since $h=q$ on $\Ga$, if we choose $z$ in $\Om$, it holds that
\begin{equation*}
\max_{\Ga} h-\min_{\Ga} h=\frac12\,(\rho_e^2-\rho_i^2)\ge \frac{r_i}{2} \, (\rho_e-\rho_i).
\end{equation*}

Now, observe that \eqref{intro:idwps-h} holds regardless of the choice of the parameters $z$ and $a$ defining $q$. We will thus complete the first step of our proof by choosing $z\in\Om$ in a way that the oscillation of $h$ on $\Ga$ can be bounded in terms of the volume integral in \eqref{intro:idwps-h}.
\par

To carry out this plan, we use three ingredients. First, as done in Lemmas \ref{lem:Lp-estimate-oscillation-generic-v} and \ref{lem:L2-estimate-oscillation}, we show that the oscillation of $h$ on $\Ga$, and hence $\rho_e - \rho_i$ can be bounded from above in the following way: 
\begin{equation}\label{intro:eq:lemmanuovooscillationLpnorm}
\rho_e - \rho_i \le C (N, p, d_\Om, r_i, r_e) \, \nr h - h_\Om \nr_{p, \Om}^{p/(N+p)} ,
\end{equation}
where $h_\Om$ is the mean value of $h$ on $\Om$ and $p \in \left[1, \infty \right)$.
We emphasize that this inequality is new and generalizes to any $p$ the estimate obtained in \cite[Lemma 3.3]{MP} for $p=2$.

Secondly (see Lemma \ref{lem:relationdist}), we easily obtain the bound
\begin{equation*}
\frac{r_i}{2} \,\de_\Ga(x) \le -u \ \mbox { on } \ \ol{\Om}.
\end{equation*}

Thirdly, we choose $z \in \Om$ as a minimum (or any critical) point of $u$ and we apply two integral inequalities to $h$ and its first (harmonic) derivatives. One is the Hardy-Poincar\'e-type inequality 
\begin{equation}
\label{boas-straube}
\nr v \nr_{r, \Om} \le C(N, r, p, \al, d_\Om, r_i, \de_\Ga (z)) \, \nr \de_\Ga(x)^\al \, \na v(x) \nr_{p, \Om},
\end{equation}
that is applied to the first (harmonic) derivatives of $h$. It holds for any harmonic function $v$
in $\Om$ that is zero at some given point in $\Om$ (in our case that point will be $z$, since $\na h(z)=0$), when $r, p, \al$ are three numbers such that either $1 \le p \le r \le \frac{Np}{N-p(1 - \al )}$, $p(1 - \al)<N$, $0 \le \al \le 1$ (see Lemma \ref{lem:John-two-inequalities}),
or $1 \le r=p <\infty$ and $0 \le \al \le 1$ (see Lemma \ref{lem:two-inequalities}).
The other one is applied to
$h- h_\Om$ and is the Poincar\'e-type inequality
\begin{equation}
\label{classicalpoincare}
\nr v \nr_{r, \Om} \le C(N, r, p, d_\Om, r_i) \, \nr \na v(x) \nr_{p, \Om}
\end{equation}
that holds for any function $v\in W^{1,p}(\Om)$ with zero mean value on $\Om$, where $r$ and $p$ must satisfy the same inequalities mentioned above for \eqref{boas-straube} when $\al=0$ (see
Lemmas \ref{lem:John-two-inequalities} and \ref{lem:two-inequalities}).
This third step is accomplished in Theorem \ref{thm:serrin-W22-stability}, where all the details can be found.

\par
Thus, putting together the above arguments gives that (see Theorem \ref{thm:serrin-W22-stability}) there exists a constant $C$ depending on $N$, $d_\Om$, $r_i$, $r_e$, $\de_\Ga (z)$
%
%
such that
\begin{equation}
\label{the-estimate-for-h}
\rho_e - \rho_i \le C \left(\int_{\Om} (-u)\, |\na ^2 h|^2\,dx\right)^{\tau_N /2},
\end{equation}
where $\tau_N$ is that appearing in \eqref{intro:eq:Improved-Serrin-stability}.
We mention that in dimension $N=2$ there is no need to use \eqref{intro:eq:lemmanuovooscillationLpnorm}, thanks to Sobolev imbedding theorem (see item (i) of Theorem \ref{thm:serrin-W22-stability}).

Next, we work on the right-hand side of \eqref{intro:idwps-h}. The important observation is that, if $u_\nu-R$ tends to $0$, also $h_\nu$ does. Quantitatively, this fact can be expressed by the inequality
\begin{equation}\label{intro:eq:tracehnu}
\nr h_\nu\nr_{2,\Ga}\le C(N, d_\Om, r_i, r_e, \de_\Ga (z) ) \,\nr u_\nu-R\nr_{2,\Ga},
\end{equation}
that can be derived (see the proof of Theorem \ref{thm:Improved-Serrin-stability}) by exploiting arguments contained in \cite{Fe}. 
Thus, after an application of H\"older's inequality to the right-hand side of \eqref{intro:idwps-h}, by \eqref{intro:eq:tracehnu} we deduce that
$$
\int_\Om (-u) |\na^2 h|^2 dx \le C(N, d_\Om, r_i, r_e, \de_\Ga (z) ) \,\nr u_\nu-R\nr_{2,\Ga}^2 ,
$$
and \eqref{intro:eq:Improved-Serrin-stability} will follow by \eqref{the-estimate-for-h}.

%
%

\smallskip

Let us now sketch the proof of \eqref{intro:eq:SBT-improved-stability}.
We fix again $z$ at a local minimum point of $u$ in $\Om$.
In this way, $z \in \Om$ and $h=q-u$ is such that $\na h(z)=0.$

Thus, as before we can apply to $h-h_\Om$ and to its first (harmonic) derivatives the Poincar\'e-type inequalities \eqref{classicalpoincare} and \eqref{boas-straube} (with $\al=0$).
In this way, by exploiting again \eqref{intro:eq:lemmanuovooscillationLpnorm}, we get that (see Theorem \ref{thm:SBT-W22-stability}) there exists a constant $C$ depending on $N$, $d_\Om$, $r_i$, $r_e$, $\de_\Ga (z)$
%
%
such that
\begin{equation}
\label{eq:step2}
\rho_e - \rho_i \le C \| \na^2 h \|^{\tau_N}_{2, \Om},
\end{equation}
where $\tau_N$ is that appearing in \eqref{intro:eq:SBT-improved-stability}.
We mention that when $N=2,3$, there is no need to use \eqref{intro:eq:lemmanuovooscillationLpnorm}, thanks to Sobolev imbedding theorem (see item (i) of Theorem \ref{thm:SBT-W22-stability}).

Next, we work on the right-hand side of \eqref{intro:H-fundamental} and we notice that
%
%
it can be rewritten as (see Theorem \ref{thm:identitySBT})
%
%
$$
\int_{\Ga}(H_0-H)\,(u_\nu-q_\nu)\,u_\nu\,dS_x+
\int_{\Ga}(H_0-H)\, (u_\nu-R)\,q_\nu\, dS_x.
$$
Hence, \eqref{intro:H-fundamental} can be written in terms of $h$ as
\begin{multline}
\label{intro:identity-SBT-h}
\frac1{N-1}\int_{\Om} |\na ^2 h|^2 dx+
\frac1{R}\,\int_\Ga (u_\nu-R)^2 dS_x = \\
-\int_{\Ga}(H_0-H)\,h_\nu\,u_\nu\,dS_x+
\int_{\Ga}(H_0-H)\, (u_\nu-R)\,q_\nu\, dS_x.
\end{multline}
Discarding the first summand at the left-hand side of \eqref{intro:identity-SBT-h} and applying H\"older's inequality and \eqref{intro:eq:tracehnu} to its right-hand side yield that
$$
\nr u_\nu-R\nr_{2,\Ga}\le C (N, d_\Om, r_i, r_e, \de_\Ga (z) ) \,\nr H_0-H\nr_{2,\Ga}.
$$

Thus, by discarding the second summand at the left-hand side of \eqref{intro:identity-SBT-h} we get
\begin{equation*}
\| \na ^2 h \|_{2,\Om} \le C (N, d_\Om, r_i, r_e, \de_\Ga (z) ) \| H_0 - H \|_{2, \Ga}.
\end{equation*}
Hence, \eqref{intro:eq:SBT-improved-stability} easily follows by putting together this last inequality and \eqref{eq:step2}.
%
%
%
%
%

\bigskip
\noindent{\bf Plan of the work.}
%
The core of the thesis comprises four chapters.

Chapter \ref{chapter:integral identities and symmetry results} contains integral identities and related symmetry results.
In Section \ref{sec:integral identities} we prove \eqref{intro:idwps} and \eqref{intro:H-fundamental}, as well as other related identities. 
Section \ref{sec:symmetry results via integral identities} contains the corresponding relevant symmetry results.
%
%

Chapter \ref{chap:PogAA} describes symmetry theorems that concern overdetermined problems for $p$-harmonic functions in exterior and punctured domains. They are essentially the results proved in \cite{Pog}.

In Chapter \ref{chapter:various estimates} we collect all the estimates for the torsional rigidity density $u$ and the harmonic function $h$ that will be useful to derive our stability results.
Section \ref{sec:gradient estimate for torsion} is dedicated to the function $u$ and contains pointwise estimates for $u$ and its gradient.
Section \ref{sec:harmonic functions in weighted spaces} collects Hardy-Poincar\'e inequalities as well as a proof of a trace inequality for harmonic functions which is the key ingredient to get \eqref{intro:eq:tracehnu}.
In Section \ref{sec:estimates for the oscillation of harmonic functions} we present the crucial lemma which allows to prove \eqref{intro:eq:lemmanuovooscillationLpnorm}.
%
%
All the inequalities for the particular harmonic function $h= q- u$ that are necessary to get our stability estimates are presented in Section \ref{sec:estimates for h}, as a consequence of Sections \ref{sec:gradient estimate for torsion}, \ref{sec:harmonic functions in weighted spaces}, \ref{sec:estimates for the oscillation of harmonic functions}. 
%
%

Chapter \ref{chapter:Stability results} contains all the stability results for the spherical configuration.
Section \ref{sec:stability Serrin} is devoted to the case of Serrin's problem \eqref{serrin1}, \eqref{serrin2} and contains the proof of inequalities \eqref{intro:eq:Improved-Serrin-stability} and \eqref{stability-Serrin}.
Section \ref{sec:Alexandrov's SBT} is devoted to the case of Alexandrov's Soap Bubble Theorem and contains the proof of \eqref{intro:eq:SBT-improved-stability} and \eqref{the-estimate}.
In Section \ref{sec:stabbyasymmetrySBT} we consider the asymmetry $\cA(\Om)$
defined in \eqref{intro:asymmetry}, we prove \eqref{eq:introasymmetrystab}, and we compare $\cA(\Om)$ with $\rho_e -\rho_i$. 
Finally, Section \ref{sec:Other stability results mean curvature} collects other related stability results.

As already mentioned, the second part of the thesis, entitled ``Other works'', contains a couple of papers as they were published. For the sake of coherence with the topic of this dissertation, we decided not to present them in the main part.

In chapter \ref{chap:LNCIME} are reported the lecture notes \cite{CP} of a CIME summer course taught by Prof. Xavier Cabr\'e in Cetraro during the week of June 19-23, 2017. The author of the present thesis attended that course and then collaborated in writing those notes.

The second paper, reported in Chapter \ref{chap:Littlewoodfourthprinciple}, is \cite{MPLittlewood}. That is a short article stimulated by some remarks made while the author of the present thesis was attending (as undergraduate student) the class ``Analisi Matematica III'' taught by Prof. Rolando Magnanini at the Universit\`a di Firenze.

\chapter{Integral identities and symmetry}\label{chapter:integral identities and symmetry results}
In this chapter, we shall present a number of integral identities which our symmetry and stability results are based on. In them, the torsional rigidity density $u$, i.e. the solution of \eqref{serrin1}, plays the main role. Their proofs are combinations of the divergence theorem, differential identities, and some ad hoc manipulations.

Before we start, we set some relevant notations. 
By $\Om\subset\RR^N$, $N\ge 2$, we shall denote a bounded domain, that is a connected bounded open set, and call $\Ga$ its boundary. 
By $|\Om|$ and $|\Ga|$, we will denote indifferently the $N$-dimensional Lebesgue measure of $\Om$
and the surface measure of $\Ga$. When $\Ga$ is of class $C^1$, $\nu$ will denote the (exterior) unit normal vector field to $\Ga$ and, when $\Ga$ is
%
%
of class $C^2$, $H(x)$ will denote its mean curvature (with respect to $-\nu(x)$) at $x\in\Ga$. 
\par
As already done in the introduction, we set $R$ and $H_0$ to be the two reference constants given by
\begin{equation}
\label{def-R-H0}
R=\frac{N\,|\Om|}{|\Ga|}, \quad H_0=\frac1{R}=\frac{|\Ga|}{N\,|\Om|},
\end{equation}
and we use the letter $q$ to denote the quadratic polynomial defined by
\begin{equation}
\label{quadratic}
q(x)=\frac12\, (|x-z|^2-a),
\end{equation}
where $z$ is any point in $\RR^N$ and $a$ is any real number.

\section{Integral identities for the torsional rigidity density}\label{sec:integral identities}

We start by recalling the following classical Rellich-Pohozaev identity (\cite{Poh}).

\begin{lem}[Rellich-Pohozaev identity]
Let $\Om$ be a domain with boundary $\Ga$ of class $C^1$.
For every function $u \in C^2(\Om) \cap C^1(\ol{\Om})$, it holds that
\begin{multline}
\label{eq:idPohozaevgeneral}
\frac{N-2}{2} \int_{\Om} |\na u|^2 \, dx - \int_{\Om} <x , \na u > \De u \, dx =
\\
\frac{1}{2} \int_\Ga |\na u|^2 <x, \nu> \, dS_x - \int_\Ga <x,\na u> u_\nu \, dS_x.
\end{multline}
\end{lem}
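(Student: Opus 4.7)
The plan is to recognize the Rellich-Pohozaev identity as the integrated form of a pointwise differential identity, which I will establish by a direct computation and then couple with the divergence theorem. Concretely, I expect to show that the vector field
\[
X(x) \;=\; \tfrac12\,|\nabla u|^2\,x \;-\; \langle x,\nabla u\rangle\,\nabla u
\]
satisfies
\[
\dv X \;=\; \tfrac{N-2}{2}\,|\nabla u|^2 \;-\; \langle x,\nabla u\rangle\,\Delta u
\]
pointwise in $\Om$, after which the identity is immediate by integrating over $\Om$ and using $X\cdot\nu = \tfrac12|\nabla u|^2\langle x,\nu\rangle - \langle x,\nabla u\rangle\,u_\nu$ on $\Ga$.

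To establish the pointwise identity, I would expand each divergence separately. First,
\[
\dv\!\left(\tfrac12|\nabla u|^2\, x\right) \;=\; \tfrac12\,\nabla(|\nabla u|^2)\cdot x \;+\; \tfrac{N}{2}\,|\nabla u|^2 \;=\; \langle \nabla^2 u\,\nabla u,\,x\rangle \;+\; \tfrac{N}{2}\,|\nabla u|^2.
\]
Second, using $\nabla\langle x,\nabla u\rangle = \nabla u + \nabla^2 u\,x$,
\[
\dv\!\bigl(\langle x,\nabla u\rangle\,\nabla u\bigr) \;=\; \nabla\langle x,\nabla u\rangle\cdot\nabla u \;+\; \langle x,\nabla u\rangle\,\Delta u \;=\; |\nabla u|^2 \;+\; \langle \nabla^2 u\,x,\,\nabla u\rangle \;+\; \langle x,\nabla u\rangle\,\Delta u.
\]
Subtracting, the mixed Hessian terms cancel and one is left precisely with $\tfrac{N-2}{2}|\nabla u|^2 - \langle x,\nabla u\rangle\,\Delta u$.

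Since $u\in C^2(\Om)\cap C^1(\ol\Om)$ and $\Ga$ is $C^1$, the vector field $X$ is continuous up to the boundary with divergence in $L^1(\Om)$ (noting that $\Delta u$ appears multiplied by the bounded factor $\langle x,\nabla u\rangle$), so the classical divergence theorem applies on $\Om$ and yields \eqref{eq:idPohozaevgeneral}. The only real point requiring mild care is this justification of the divergence theorem under the stated regularity; the algebraic identity itself is straightforward. No additional hypotheses on $u$ are needed, and in particular the identity is not limited to harmonic or to Dirichlet solutions.
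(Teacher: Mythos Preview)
Your proof is correct and follows exactly the same approach as the paper: verify the pointwise divergence identity for the vector field $X=\tfrac12|\nabla u|^2\,x-\langle x,\nabla u\rangle\,\nabla u$ and then apply the divergence theorem. In fact you give more detail than the paper, which simply states the differential identity and says the result follows by integrating over $\Om$.
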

\begin{proof}
By direct computation it is easy to verify the following differential identity:
$$
\dv \left\lbrace \frac{| \na u|^2}{2} \, x - <x, \na u> \na u \right\rbrace = \frac{N-2}{2} \, |\na u|^2 - <x,\na u> \De u.
$$
Thus, \eqref{eq:idPohozaevgeneral} easily follows by integrating over $\Om$ and applying the divergence theorem.
\end{proof}

We now focus our attention on the solution $u$ of \eqref{serrin1}.
In this case, it is easy to check that \eqref{eq:idPohozaevgeneral} becomes the identity stated in the following corollary.
\begin{cor}[Rellich-Pohozaev identity for the torsional rigidity density]
Let $\Om \subset \mathbb R^N$ be a bounded domain with boundary $\Ga$ of class $C^{1, \al}$, $0< \al \le 1$.
Then the solution $u$ of  \eqref{serrin1} satisfies the identity
\begin{equation}
\label{Pohozaev}
(N+2) \int_{\Om} |\na u|^2 \, dx =\int_\Ga (u_\nu)^2 \, q_\nu\,dS_x ,
\end{equation}
where $q$ denotes the quadratic polynomial defined in \eqref{quadratic}.
\end{cor}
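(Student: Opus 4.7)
My plan is to derive the identity directly from the general Rellich–Pohozaev identity \eqref{eq:idPohozaevgeneral}, but applied after a translation so that the origin is placed at the point $z$ used to define $q$. Since the differential identity
\[
\dv\!\left\{\tfrac12|\nabla u|^2\,(x-z) - \langle x-z,\nabla u\rangle\,\nabla u\right\}
=\tfrac{N-2}{2}|\nabla u|^2-\langle x-z,\nabla u\rangle\,\Delta u
\]
holds for every $z\in\RR^N$, the same proof that produced \eqref{eq:idPohozaevgeneral} gives its shifted version with $x$ replaced by $x-z$ on both sides; this is the only nonstandard move, and it is routine.

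Next I would specialize this shifted identity to the solution $u$ of \eqref{serrin1}. The boundary condition $u\equiv 0$ on $\Gamma$ forces $\nabla u = u_\nu\,\nu$ on $\Gamma$, so that $|\nabla u|^2 = u_\nu^2$ and $\langle x-z,\nabla u\rangle = u_\nu\,\langle x-z,\nu\rangle = u_\nu\,q_\nu$ on $\Gamma$ (recall $\nabla q(x) = x-z$). Hence the two boundary integrals collapse to
\[
\tfrac12\!\int_\Gamma u_\nu^2\,q_\nu\,dS_x-\int_\Gamma u_\nu^2\,q_\nu\,dS_x
= -\tfrac12\!\int_\Gamma u_\nu^2\,q_\nu\,dS_x.
\]

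For the bulk side I use $\Delta u = N$ to obtain $\int_\Omega\langle x-z,\nabla u\rangle\,\Delta u\,dx = N\!\int_\Omega\langle x-z,\nabla u\rangle\,dx$, and then integrate by parts (the boundary term vanishes since $u=0$ on $\Gamma$, and $\dv(x-z)=N$) to get $\int_\Omega\langle x-z,\nabla u\rangle\,dx = -N\!\int_\Omega u\,dx$. Combined with the standard energy identity $\int_\Omega |\nabla u|^2\,dx = -\int_\Omega u\,\Delta u\,dx = -N\!\int_\Omega u\,dx$, the left-hand side of the shifted Rellich–Pohozaev identity becomes
\[
\tfrac{N-2}{2}\!\int_\Omega |\nabla u|^2\,dx + N^2\!\int_\Omega u\,dx
=\Bigl(\tfrac{N-2}{2}-N\Bigr)\!\int_\Omega |\nabla u|^2\,dx
=-\tfrac{N+2}{2}\!\int_\Omega |\nabla u|^2\,dx.
\]

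Equating this with the boundary expression and multiplying by $-2$ yields \eqref{Pohozaev}. There is no real obstacle here: the only subtlety is the justification of the translation, which is immediate, and the careful tracking of signs when combining the divergence theorem with the two integrations by parts — the regularity $\Gamma\in C^{1,\alpha}$ is enough to apply the divergence theorem to the $C^1(\overline\Omega)\cap C^2(\Omega)$ function $u$.
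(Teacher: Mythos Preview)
Your proof is correct and follows essentially the same route as the paper: apply the Rellich--Pohozaev identity \eqref{eq:idPohozaevgeneral}, reduce the boundary terms via $\nabla u = u_\nu\,\nu$ on $\Gamma$, and simplify the bulk term to $-\tfrac{N+2}{2}\int_\Omega|\nabla u|^2\,dx$. The only cosmetic differences are that you explicitly carry the translation to a general center $z$ (which the paper leaves implicit), and that you reach $\int_\Omega\langle x-z,\nabla u\rangle\,dx=\int_\Omega|\nabla u|^2\,dx$ in two steps through $-N\!\int_\Omega u\,dx$, whereas the paper obtains it in one stroke from the divergence identity $\dv(u\,x-u\,\nabla u)=\langle\nabla u,x\rangle-|\nabla u|^2$.
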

\begin{proof}
If $\Ga$ is of class $C^{1,\al}$, then $u\in C^{1,\al}(\ol{\Om})\cap C^2(\Om)$, and hence \eqref{eq:idPohozaevgeneral} holds.
Since $u$ satisfies \eqref{serrin1}, the following differential identity holds
$$
\dv \left( u \, x - u \, \na u \right) = < \na u, x> - | \na u |^2.
$$
%
%
By integrating over $\Om$, applying the divergence theorem, and recalling the boundary
condition in \eqref{serrin1}, we obtain that
\begin{equation*}
\int_\Om | \na u|^2 \, dx = \int_{\Om} <\na u , x> \, dx.
\end{equation*}
Thus, the left-hand side of \eqref{eq:idPohozaevgeneral} becomes
$$
- \left( \frac{N+2}{2} \right) \int_\Om | \na u|^2.
$$
On the other hand, by using the fact that $\na u = u_\nu \, \nu$ on $\Ga$, the right-hand side of \eqref{eq:idPohozaevgeneral} becomes
$$
- \frac{1}{2} \int_\Ga (u_\nu)^2 <x, \nu> , 
$$
and the conclusion follows.
\end{proof}

Following the tracks of Weinberger \cite{We} we introduce the P-function,
\begin{equation}
\label{P-function}
P = \frac{1}{2}\,|\nabla u|^2 - u,
\end{equation}
and we easily compute that
\begin{equation}
\label{differential-identity}
\De P = |\na^2 u|^2-\frac{(\De u)^2}{N}.
\end{equation}

We are now ready to provide the proof of the fundamental identity for Serrin's problem.

%
%
%
%

\begin{thm}[Fundamental identity for Serrin's problem, \cite{MP,MP2}] 
\label{thm:serrinidentity}
Let $\Om \subset \mathbb R^N$ be a bounded domain with boundary $\Ga$ of class $C^{1,\al}$, $0<\al\le 1$, and $R$ be the positive constant defined in \eqref{def-R-H0}.
Then the solution $u$ of  \eqref{serrin1} satisfies 
\begin{equation}
\label{idwps}
\int_{\Om} (-u) \left\{ |\na ^2 u|^2- \frac{ (\De u)^2}{N} \right\} dx=
\frac{1}{2}\,\int_\Ga \left( u_\nu^2- R^2\right) (u_\nu-q_\nu)\,dS_x .
\end{equation}
\end{thm}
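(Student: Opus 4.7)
The plan is to exploit the $P$-function formalism already set up in \eqref{P-function}--\eqref{differential-identity}: the integrand on the left-hand side of \eqref{idwps} is exactly $(-u)\,\Delta P$, so I would rewrite
\[
\int_\Omega (-u)\left\{|\nabla^2 u|^2-\frac{(\Delta u)^2}{N}\right\}dx=\int_\Omega (-u)\,\Delta P\,dx
\]
and then apply Green's second identity to transfer the Laplacian onto $u$. Since $u$ vanishes on $\Gamma$, one of the two boundary terms disappears and the other reduces to $\int_\Gamma u_\nu\, P\, dS_x$; moreover, on $\Gamma$ we have $|\nabla u|=|u_\nu|$ and $u=0$, so $P=\tfrac12 u_\nu^2$ there. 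Using also $\Delta u=N$, this yields
\[
\int_\Omega (-u)\,\Delta P\,dx=-N\int_\Omega P\,dx+\frac12\int_\Gamma u_\nu^3\,dS_x.
\]

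Next I would evaluate $\int_\Omega P\,dx=\tfrac12\int_\Omega |\nabla u|^2\,dx-\int_\Omega u\,dx$. A first integration by parts (again using $u=0$ on $\Gamma$) gives $\int_\Omega |\nabla u|^2\,dx=-\int_\Omega u\,\Delta u\,dx=-N\int_\Omega u\,dx$, hence $\int_\Omega P\,dx=-\tfrac{N+2}{2}\int_\Omega u\,dx$. Now I invoke the Rellich--Pohozaev identity \eqref{Pohozaev}, combined with the same relation $\int_\Omega |\nabla u|^2\,dx=-N\int_\Omega u\,dx$, to rewrite
\[
-N\int_\Omega P\,dx=\frac{N(N+2)}{2}\int_\Omega u\,dx=-\frac12\int_\Gamma u_\nu^2\,q_\nu\,dS_x.
\]
Substituting this back gives
\[
\int_\Omega (-u)\,\Delta P\,dx=\frac12\int_\Gamma u_\nu^2\,(u_\nu-q_\nu)\,dS_x.
\]

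Finally, to upgrade the factor $u_\nu^2$ to $u_\nu^2-R^2$, I would observe that one can add $R^2$ for free because
\[
\int_\Gamma (u_\nu-q_\nu)\,dS_x=\int_\Omega (\Delta u-\Delta q)\,dx=0,
\]
since $\Delta q=N=\Delta u$ in $\Omega$. This yields exactly \eqref{idwps}. The argument is essentially a bookkeeping exercise in Green's identities, and the only step that requires any foresight is recognising that the Rellich--Pohozaev identity is the right tool to convert the bulk term $-N\int_\Omega P\,dx$ into the boundary integral $-\tfrac12\int_\Gamma u_\nu^2 q_\nu\,dS_x$ that combines with $\tfrac12\int_\Gamma u_\nu^3\,dS_x$ to produce the factor $u_\nu-q_\nu$; the main potential pitfall is keeping track of the signs throughout, since $u\le 0$ in $\Omega$.
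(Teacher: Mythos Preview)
Your proof is correct and follows essentially the same route as the paper: Green's identity for $u$ and $P$, the Rellich--Pohozaev identity \eqref{Pohozaev} to convert $-N\int_\Omega P\,dx$ into $-\tfrac12\int_\Gamma u_\nu^2 q_\nu\,dS_x$, and the harmonicity of $u-q$ to insert the $R^2$ term. The only point you omit is the regularity caveat: Green's identity with $\Delta P$ requires $u\in C^{2}(\overline\Omega)$, which holds when $\Gamma\in C^{2,\alpha}$; the paper first proves \eqref{idwps} under this stronger assumption and then passes to $C^{1,\alpha}$ boundaries by a standard approximation argument.
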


\begin{proof}
First, suppose that $\Ga$ is of class $C^{2,\al}$, so that $u\in C^{2,\al}(\ol{\Om})$. Integration by parts then gives:
$$
\int_\Om (u\,\De P-P\,\De u)\,dx=\int_\Ga(u\,P_\nu-u_\nu\,P)\,dS_x.
$$
Thus, since $u$ satisfies \eqref{serrin1}, we have that
\begin{equation}
\label{parts}
\int_\Om (-u)\,\De P\,dx=-N\,\int_\Om P\,dx+\frac12\,\int_\Ga u_\nu^3\,dS_x,
\end{equation}
being $P=|\na u|^2/2=u_\nu^2/2$ on $\Ga$.
\par
Next, notice that we can write
$
u_\nu^3 = (u_\nu^2 - R^2)(u_\nu - q_\nu) + R^2 (u_\nu - q_\nu) + u_\nu^2 q_\nu.
$
By the divergence theorem and \eqref{Pohozaev} we then compute:
\begin{multline*}
N \int_{\Om} P\,dx=\frac{N}{2} \int_\Om |\na u|^2 dx-\int_\Om u\,\De u\,dx =
\left( \frac{N}{2} + 1 \right) \int_{\Om} |\na u|^2 \, dx
= 
\\
\frac{1}{2} \int_\Ga u_\nu^2 q_\nu \, dS_x .
\end{multline*}
Thus, this identity, \eqref{parts}, and \eqref{differential-identity} give \eqref{idwps}, since
$$
\int_\Ga (u_\nu-q_\nu)\,dS_x=0,
$$
being $u-q$ harmonic in $\Om$.
\par
If $\Ga$ is of class $C^{1,\al}$, then $u\in C^{1,\al}(\ol{\Om})\cap C^2(\Om)$. Thus, by a standard approximation argument, we conclude that \eqref{idwps} holds also in this case.
\end{proof}

\begin{rem}
{\rm
The assumptions on the regularity of $\Ga$ can further be weakened. For instance, if $\Om$ is a 
(bounded) convex domain, then inequality \eqref{bound-M-convex} below gives that $u_\nu$ is essentially bounded on $\Ga$ with respect to the $(N-1)$-dimensional Hausdorff measure on $\Ga$. Thus, an approximation argument again gives that \eqref{idwps} holds true.
}
\end{rem}

We now present a couple of integral identities, involving the torsional rigidity density $u$ and a harmonic function $v$, which are necessary to establish a useful trace inequality (see Lemma \ref{lem:genericv-trace inequality} below).

\begin{lem}\label{lem:identityfortraceineq}
Let $\Om \subset \mathbb R^N$ be a bounded domain with boundary $\Ga$ of class $C^{1,\al}$, $0<\al\le 1$, and let $u$ be the solution of \eqref{serrin1}. Then, for every harmonic function $v$ in $\Om$ the following two identities hold:
\begin{equation}\label{eq:nuovaidentityforidentityfortraceinequality}
\int_{\Ga} v^2 u_{\nu} \, dS_x = N \int_{\Om} v^2 dx + 2 \int_{\Om} (-u) |\na v|^2 dx ;
\end{equation}
\begin{equation}\label{eq:identityfortraceinequality}
\int_{\Ga} |\na v|^2 u_{\nu} dS_x = N \int_{\Om} |\na v|^2 dx + 2 \int_{\Om} (-u) |\na^2 v|^2 dx.
\end{equation}
\end{lem}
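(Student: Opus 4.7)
The two identities have a common structure: both are obtained by writing
$$
N\int_\Om \varphi\,dx = \int_\Om \varphi\,\De u\,dx,
$$
with $\varphi=v^2$ for \eqref{eq:nuovaidentityforidentityfortraceinequality} and $\varphi=|\na v|^2$ for \eqref{eq:identityfortraceinequality}, and then integrating by parts twice. The first integration by parts produces the boundary term $\int_\Ga \varphi\,u_\nu\,dS_x$ that appears in the claim; the second uses the harmonicity of $v$ and the vanishing of $u$ on $\Ga$ to kill an unwanted boundary term and to convert the remaining interior integral into the one involving $(-u)$.

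For \eqref{eq:nuovaidentityforidentityfortraceinequality} the plan is to write
$$
\int_\Om v^2\,\De u\,dx=\int_\Ga v^2\,u_\nu\,dS_x-2\int_\Om v\,\lan\na v,\na u\ran\,dx,
$$
and then to apply the divergence theorem to the vector field $u\,v\,\na v$: since $u\equiv 0$ on $\Ga$ and $\dv(v\,\na v)=|\na v|^2+v\,\De v=|\na v|^2$, one gets $\int_\Om v\,\lan\na v,\na u\ran\,dx=\int_\Om (-u)\,|\na v|^2\,dx$. Rearranging gives \eqref{eq:nuovaidentityforidentityfortraceinequality}.

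For \eqref{eq:identityfortraceinequality} the only nontrivial step is the analogue of the last computation. After the first integration by parts one is left with $\int_\Om \lan\na(|\na v|^2),\na u\ran\,dx=2\int_\Om\sum_{i,j}v_{ij}\,v_i\,u_j\,dx$. Integrating by parts once more, and using $u=0$ on $\Ga$ to discard the boundary contribution, this becomes $-2\int_\Om u\sum_{i,j}\partial_j(v_{ij}v_i)\,dx$. The key observation is that $\sum_{i,j}\partial_j(v_{ij}v_i)=\sum_i v_i(\De v)_i+|\na^2 v|^2$, which reduces to $|\na^2 v|^2$ because $\De v=0$ (so also $(\De v)_i=0$). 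Plugging this back yields \eqref{eq:identityfortraceinequality}.

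As far as regularity is concerned, the integrations by parts require $u\in C^2(\ol\Om)$ and $v\in C^2(\ol\Om)$; I would first argue under the stronger assumption that $\Ga$ is of class $C^{2,\al}$ (so that $u\in C^{2,\al}(\ol\Om)$ by standard elliptic regularity, while $v$ is harmonic hence smooth inside, and can be extended by approximation from the interior), and then recover the $C^{1,\al}$ case by the same approximation argument used at the end of the proof of Theorem \ref{thm:serrinidentity}. No other obstacle is expected; the main care is in bookkeeping the two boundary integrals so that exactly the ones containing $u=0$ vanish.
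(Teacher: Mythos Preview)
Your proof is correct and, for the first identity, essentially identical to the paper's: the paper packages your two integrations by parts into the single Green-type identity $\dv\{v^2\na u - u\,\na(v^2)\}=v^2\De u - u\,\De(v^2)=Nv^2-2u|\na v|^2$ and integrates once, but this is the same computation.

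For the second identity the paper takes a shortcut worth noting: since $v$ is harmonic, each partial derivative $v_i$ is harmonic as well, so one may simply apply \eqref{eq:nuovaidentityforidentityfortraceinequality} with $v$ replaced by $v_i$ and sum over $i=1,\dots,N$; the left side becomes $\int_\Ga |\na v|^2 u_\nu\,dS_x$, the first term on the right becomes $N\int_\Om |\na v|^2\,dx$, and the second becomes $2\int_\Om(-u)\sum_i|\na v_i|^2\,dx=2\int_\Om(-u)|\na^2 v|^2\,dx$. This bypasses your explicit computation of $\sum_{i,j}\partial_j(v_{ij}v_i)$ entirely. Your direct argument is of course valid (and would be needed if the $v_i$ were not themselves harmonic), but here the structure of the problem makes the paper's reduction cleaner.
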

\begin{proof}
We begin with the following differential identity:
\begin{equation*}
\dv\,\{v^2 \na u - u \, \na(v^2)\}= v^2 \De u - u \, \De (v^2)= N \, v^2 - 2 u \, |\na v|^2,
\end{equation*}
that holds for any $v$ harmonic function in $\Om$, if $u$ satisfies \eqref{serrin1}.
Next, we integrate on $\Om$ and, by the divergence theorem,
we get \eqref{eq:nuovaidentityforidentityfortraceinequality}.
If we use \eqref{eq:nuovaidentityforidentityfortraceinequality} with $v=v_{i}$, and hence we sum up over $i=1,\dots, N$, we obtain \eqref{eq:identityfortraceinequality}.
\end{proof}

We now prove several integral identities which involve the mean curvature function $H$.
We start with the following one, that can be obtained by polishing the arguments contained in \cite{Re}.
%
%

\begin{thm}
\label{th:fundamental-identity}
Let $\Omega \subset \mathbb R^N$ be a bounded domain with boundary $\Ga$ of class $C^2$ and
denote by $H$ the mean curvature of $\Ga$.
\par
If $u$ is the solution of  \eqref{serrin1},
then the following identity holds:
\begin{equation}\label{fundamental}
\frac1{N-1}\int_{\Om} \left\{ |\na ^2 u|^2-\frac{(\De u)^2}{N}\right\}\,dx = N |\Om| -\int_{\Ga}H\, (u_\nu)^2\,dS_x.
\end{equation}
\end{thm}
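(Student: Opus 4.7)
The plan is to integrate the differential identity \eqref{differential-identity} for the $P$-function $P=\frac12|\nabla u|^2-u$ and then compute the boundary term $\int_\Gamma P_\nu\,dS_x$ using the curvature formula already recorded in the introduction, namely
\begin{equation*}
u_{\nu\nu}+(N-1)\,H\,u_\nu=N \quad\text{on }\Gamma.
\end{equation*}
By the divergence theorem and \eqref{differential-identity}, the left-hand side of \eqref{fundamental}, multiplied by $N-1$, equals $\int_\Gamma P_\nu\,dS_x$; so everything reduces to evaluating $P_\nu$ on $\Gamma$.

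First I would note that on $\Gamma$ the boundary condition $u=0$ forces $\nabla u=u_\nu\,\nu$, whence
\begin{equation*}
P_\nu=\langle\nabla^2 u\,\nabla u,\nu\rangle-u_\nu=u_\nu\,u_{\nu\nu}-u_\nu.
\end{equation*}
Substituting $u_{\nu\nu}=N-(N-1)\,H\,u_\nu$ from the identity above gives
\begin{equation*}
P_\nu=(N-1)\,u_\nu-(N-1)\,H\,u_\nu^2 \quad\text{on }\Gamma.
\end{equation*}
Integrating, and using that the divergence theorem together with $\Delta u=N$ yields $\int_\Gamma u_\nu\,dS_x=N\,|\Omega|$, I obtain
\begin{equation*}
\int_\Gamma P_\nu\,dS_x=(N-1)\left[N\,|\Omega|-\int_\Gamma H\,u_\nu^2\,dS_x\right].
\end{equation*}

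Combining this with $\int_\Omega\Delta P\,dx=\int_\Omega\bigl\{|\nabla^2u|^2-(\Delta u)^2/N\bigr\}\,dx$ and dividing by $N-1$ delivers \eqref{fundamental}. The only delicate point is regularity: the computation requires second normal derivatives on $\Gamma$, so one needs at least $u\in C^{2,\alpha}(\overline\Omega)$. When $\Gamma$ is of class $C^{2}$ this is obtained by standard elliptic regularity up to the boundary (after a brief approximation by $C^{2,\alpha}$ domains if one wants to stay literally within the stated hypotheses), exactly as was done in Theorem \ref{thm:serrinidentity}. I expect this regularity/approximation step to be the only real technicality; the algebra itself is quite direct.
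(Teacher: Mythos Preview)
Your proof is correct and follows essentially the same route as the paper: integrate $\Delta P$ via the divergence theorem, compute $P_\nu$ on $\Gamma$ using $\nabla u=u_\nu\,\nu$ and the identity $u_{\nu\nu}+(N-1)Hu_\nu=N$, and then apply \eqref{volume}. The paper does not spell out the regularity/approximation caveat you raise at the end, but otherwise the arguments are virtually identical.
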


\begin{proof}
Let $P$ be given by \eqref{P-function}. By the divergence theorem we can write:
\begin{equation}
\label{div-p-function}
\int_{\Om} \De P \,dx = \int_{\Ga} P_\nu \, dS_x.
\end{equation}
To compute $P_\nu$, we observe that $\na u$ is
parallel to $\nu$ on $\Ga$, that is $\na u= u_\nu \,\nu$ on $\Ga$. Thus,
$$
P_\nu=\lan D^2 u\, \na u, \nu\ran-u_\nu=u_\nu \lan (D^2 u)\,\nu,\nu\ran-u_\nu=u_{\nu\nu}\, u_\nu-u_\nu.
$$
We recall Reilly's identity from the introduction,
\begin{equation*}
u_{\nu\nu}+(N-1)\,H\,u_\nu=N ,
\end{equation*}
from which we obtain that
$$
u_{\nu\nu}\, u_\nu+(N-1)\,H\, (u_\nu)^2=N\,u_\nu,
$$
and hence
$$
P_\nu=(N-1)\,u_\nu-(N-1)\,H\, (u_\nu)^2
$$
on $\Ga$.
\par
Therefore, \eqref{fundamental} follows from this identity, \eqref{differential-identity}, \eqref{div-p-function} and the formula
\begin{equation}
\label{volume}
\int_\Ga u_\nu\,dS_x=N\,|\Om|,
\end{equation}
that is an easy consequence of the divergence theorem.
\end{proof}

\begin{thm}[Identities for the Soap Bubble Theorem, \cite{MP,MP2}]
\label{thm:identitySBT}
Let $\Om \subset \mathbb R^N$ be a bounded domain with boundary $\Ga$ of class $C^2$ and
denote by $H$ the mean curvature of $\Ga$.
%
%
Let $R$ and $H_0$ be the two positive constants defined in \eqref{def-R-H0}.

If $u$ is the solution of \eqref{serrin1}, then the following two identities hold true:
\begin{multline}
\label{H-fundamental}
\frac1{N-1}\int_{\Om} \left\{ |\na ^2 u|^2-\frac{(\De u)^2}{N}\right\}dx+
\frac1{R}\,\int_\Ga (u_\nu-R)^2 dS_x = \\
\int_{\Ga}(H_0-H)\, (u_\nu)^2 dS_x ,
\end{multline}
\begin{multline}
\label{identity-SBT2}
\frac1{N-1}\int_{\Om} \left\{ |\na ^2 u|^2-\frac{(\De u)^2}{N}\right\}dx+
\frac1{R}\,\int_\Ga (u_\nu-R)^2 dS_x = \\
\int_{\Ga}(H_0-H)\,(u_\nu-q_\nu)\,u_\nu\,dS_x+
\int_{\Ga}(H_0-H)\, (u_\nu-R)\,q_\nu\, dS_x.
\end{multline}
\end{thm}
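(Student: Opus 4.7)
The plan is to derive both identities directly from the Reilly-type fundamental identity \eqref{fundamental} established in Theorem \ref{th:fundamental-identity}, by performing algebraic rearrangements and exploiting two classical facts: the divergence-theorem identity $\int_\Ga u_\nu\,dS_x = N|\Om|$ from \eqref{volume}, and Minkowski's identity $\int_\Ga H\,\langle x-z,\nu\rangle\,dS_x = |\Ga|$ (which holds for any $z\in\RR^N$ by translation invariance).

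For \eqref{H-fundamental}, I would start from the right-hand side of the target identity and show it equals the right-hand side of \eqref{fundamental}. Writing $H_0 = 1/R$ and expanding the square $(u_\nu - R)^2 = u_\nu^2 - 2R\,u_\nu + R^2$, I compute
\begin{equation*}
\int_{\Ga}(H_0-H)\,u_\nu^2\,dS_x - \frac1R\int_\Ga (u_\nu - R)^2\,dS_x
= -\int_\Ga H\,u_\nu^2\,dS_x + 2\int_\Ga u_\nu\,dS_x - R|\Ga|,
\end{equation*}
since the two terms involving $(1/R)\,u_\nu^2$ cancel. Using \eqref{volume} and the defining relation $R|\Ga| = N|\Om|$, this simplifies to $N|\Om| - \int_\Ga H\,u_\nu^2\,dS_x$, which by \eqref{fundamental} equals the left-hand side of \eqref{H-fundamental}.

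For \eqref{identity-SBT2}, I would show that the right-hand sides of the two identities agree, i.e.\ that
\begin{equation*}
\int_{\Ga}(H_0-H)\,u_\nu^2\,dS_x = \int_{\Ga}(H_0-H)\,(u_\nu-q_\nu)\,u_\nu\,dS_x + \int_{\Ga}(H_0-H)\,(u_\nu-R)\,q_\nu\,dS_x.
\end{equation*}
Collecting terms on the right, the combination $(u_\nu - q_\nu)\,u_\nu + (u_\nu - R)\,q_\nu$ simplifies to $u_\nu^2 - R\,q_\nu$, so the claim reduces to the single identity $\int_{\Ga}(H_0-H)\,q_\nu\,dS_x = 0$. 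Since $\na q(x) = x-z$ and $\De q = N$, the divergence theorem gives $\int_\Ga q_\nu\,dS_x = N|\Om|$, while Minkowski's identity gives $\int_\Ga H\,q_\nu\,dS_x = |\Ga|$. Therefore
\begin{equation*}
\int_\Ga (H_0 - H)\,q_\nu\,dS_x = H_0\,N|\Om| - |\Ga| = \frac{|\Ga|}{N|\Om|}\cdot N|\Om| - |\Ga| = 0,
\end{equation*}
which completes the reduction. There is no real obstacle here: both identities are algebraic consequences of \eqref{fundamental} once one recognizes which boundary integrals vanish or reduce to $N|\Om|$ and $|\Ga|$; the only mild care needed is tracking that $q_\nu = \langle x-z,\nu\rangle$ and that Minkowski's identity is translation-invariant, so the conclusion is independent of the choice of $z$ and $a$ in $q$.
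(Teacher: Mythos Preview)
Your proof is correct and follows essentially the same approach as the paper: both derive \eqref{H-fundamental} from \eqref{fundamental} via algebraic manipulation using \eqref{volume} and $R|\Ga|=N|\Om|$, and both obtain \eqref{identity-SBT2} by showing its right-hand side coincides with that of \eqref{H-fundamental} through the vanishing of $\int_\Ga (H_0-H)\,q_\nu\,dS_x$, which rests on Minkowski's identity \eqref{minkowski} and the divergence theorem.
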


\begin{proof}
Since, by \eqref{volume}, we have that
$$
\frac1{R}\,\int_\Ga u_\nu^2\,dS_x=
\frac1{R}\,\int_\Ga (u_\nu-R)^2\,dS_x+N |\Om|,
$$
then
\begin{multline*}
\int_\Ga H\,(u_\nu)^2\,dS_x=H_0\,\int_\Ga (u_\nu)^2\,dS_x+
\int_\Ga \left( H-H_0\right)\,(u_\nu)^2\,dS_x= \\
\frac1{R}\,\int_\Ga (u_\nu-R)^2\,dS_x+N |\Om|+
\int_\Ga ( H-H_0)\,(u_\nu)^2\,dS_x.
\end{multline*}
Thus, \eqref{H-fundamental} follows from this identity and \eqref{fundamental} at once.

Identity \eqref{identity-SBT2} can be proved just by rearranging the right-hand side of \eqref{H-fundamental}.
In fact, straightforward calculations that use \eqref{def-R-H0} and Minkowski's identity
\begin{equation}
\label{minkowski}
\int_\Ga H\,q_\nu\,dS_x=|\Ga|
\end{equation}
tell us that 
\begin{multline*}
\int_{\Ga}(H_0-H)\,(u_\nu-q_\nu)\,u_\nu\,dS_x+\\
\int_{\Ga}(H_0-H)\, (u_\nu-R)\,q_\nu\, dS_x=
\int_\Ga \left( H_0-H \right) u_\nu^2 \, dS_x ,
\end{multline*}
and hence \eqref{identity-SBT2} follows.
\end{proof}

%
%
%
%
%
%
We finally show that, if $\Ga$ is mean-convex, that is $H \ge 0$, \eqref{fundamental-identity2} can also be rearranged into an identity leading to Heintze-Karcher's inequality \eqref{heintze-karcher} below (see \cite{HK}).
%
%

\begin{thm}[\cite{MP}]
\label{thm:identityheintze-karcher}
Let $\Om \subset \mathbb R^N$ be a bounded domain with boundary $\Ga$ of class $C^2$ and
denote by $H$ the mean curvature of $\Ga$.
%
%
Let $u$ be the solution of \eqref{serrin1}. 
\par
If $\Ga$ is mean-convex, then we have the following identity:
\begin{multline}
\label{heintze-karcher-identity}
\frac1{N-1}\,\int_{\Om} \left\{ |\na ^2 u|^2-\frac{(\De u)^2}{N}\right\}\,dx +\int_\Ga\frac{(1-H\,u_\nu)^2}{H}\,dS_x =
\\
\int_\Ga\frac{dS_x}{H}-N |\Om|.
\end{multline}
\end{thm}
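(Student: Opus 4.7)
The identity \eqref{heintze-karcher-identity} is obtained by a purely algebraic rearrangement of the fundamental identity \eqref{fundamental} together with the basic formula \eqref{volume}. The mean-convexity assumption $H\ge 0$ (in fact $H>0$, which is what mean-convexity really means when one needs to divide by $H$) serves only to give meaning to the integrals of $1/H$ and $(1-Hu_\nu)^2/H$ on $\Ga$.

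The plan is the following. First, I would expand the square $(1-Hu_\nu)^2$ and divide by $H$ to get the pointwise identity
$$
\frac{(1-Hu_\nu)^2}{H}=\frac{1}{H}-2\,u_\nu+H\,u_\nu^2.
$$
Integrating this over $\Ga$ and using \eqref{volume}, namely $\int_\Ga u_\nu\,dS_x=N|\Om|$, one gets
$$
\int_\Ga\frac{(1-Hu_\nu)^2}{H}\,dS_x=\int_\Ga\frac{dS_x}{H}-2N|\Om|+\int_\Ga H\,u_\nu^2\,dS_x.
$$

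Second, I would add this equality to \eqref{fundamental}. The two occurrences of $\int_\Ga H\,u_\nu^2\,dS_x$ cancel against each other, and the terms $N|\Om|$ (from \eqref{fundamental}) and $-2N|\Om|$ combine into $-N|\Om|$. What remains on the right-hand side is exactly $\int_\Ga dS_x/H-N|\Om|$, which yields \eqref{heintze-karcher-identity}.

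There is essentially no obstacle here: the argument is a two-line manipulation once \eqref{fundamental} and \eqref{volume} are in hand. The only point that deserves a comment is the finiteness of $\int_\Ga dS_x/H$, which is why one usually imposes strict mean-convexity $H>0$ on $\Ga$; otherwise \eqref{heintze-karcher-identity} should be read in $[0,+\infty]$, both sides being allowed to equal $+\infty$ simultaneously. As a coda, from \eqref{heintze-karcher-identity} one immediately reads off Heintze--Karcher's inequality $\int_\Ga dS_x/H\ge N|\Om|$ (with equality forcing $|\na^2 u|^2=(\De u)^2/N$ a.e.\ in $\Om$ and $Hu_\nu\equiv 1$ on $\Ga$), which in turn, together with Lemma \ref{lem:sphericaldetector}, will be the source of the sphericity conclusion.
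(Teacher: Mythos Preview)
Your proposal is correct and is essentially the same argument as the paper's: both expand $(1-Hu_\nu)^2/H$ pointwise, integrate using \eqref{volume}, and combine with the fundamental identity \eqref{fundamental}. The only cosmetic difference is that the paper first rewrites \eqref{fundamental} as $\frac{1}{N-1}\int_\Om\{|\na^2 u|^2-(\De u)^2/N\}\,dx=\int_\Ga(1-Hu_\nu)u_\nu\,dS_x$ and groups the pointwise identity accordingly, while you add directly to \eqref{fundamental}; your treatment of the case $\int_\Ga dS_x/H=+\infty$ also matches the paper's.
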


\begin{proof}
Since \eqref{volume} holds, from \eqref{fundamental} we obtain that
\begin{equation}
\label{fundamental-identity2}
\frac1{N-1}\int_{\Om} \left\{ |\na ^2 u|^2-\frac{(\De u)^2}{N}\right\}\,dx = \int_{\Ga}(1-H u_\nu)\, u_\nu\,dS_x.
\end{equation}
Notice that \eqref{fundamental-identity2} still holds without the assumption $H\ge0$.

Since $u_\nu$ and $H$ are continuous, the identity
\begin{equation}\label{eq:HKsenzastrictly}
\frac{(1-H\,u_\nu)^2}{H}=-(1-H\,u_\nu) u_\nu+\frac1{H}-u_\nu,
\end{equation}
holds pointwise for $H\ge0$.

If the integral $\int_\Ga \frac{dS_x}{H}$ is infinite, then \eqref{heintze-karcher-identity} trivially holds. Otherwise $1/H$ is finite (and hence $H > 0$) almost everywhere
on $\Ga$ and hence, \eqref{eq:HKsenzastrictly} holds almost everywhere.
Thus, by integrating \eqref{eq:HKsenzastrictly} on $\Ga$, summing the result up to \eqref{fundamental-identity2} and taking into account \eqref{volume},
we get \eqref{heintze-karcher-identity}.
\end{proof}

\section{Symmetry results}\label{sec:symmetry results via integral identities}
%
%
The quantity at the right-hand side of \eqref{differential-identity}, that we call Cauchy-Schwarz deficit for the hessian matrix $\na^2 u$, will play the role of detector of spherical symmetry, as is clear from the following lemma. In what follows, $I$ denotes the identity matrix.

\begin{lem}[Spherical detector]
\label{lem:sphericaldetector}
Let $\Om \subseteq \RR^N$ be a domain and $u \in C^2 (\Om)$.
Then it holds that
\begin{equation}
\label{newton}
|\na^2 u|^2 - \frac{(\De u)^2}{ N} \ge 0 \quad \mbox{in } \Om,
\end{equation}
and the equality sign holds if and only if $u$ is a quadratic polynomial. 

Moreover, if $u$ is the solution of \eqref{serrin1}, the equality sign holds if and only if $\Ga$ is a sphere (and hence $\Om$ is a ball) of radius $R$ given by \eqref{def-R-H0} and
$$
u(x)= \frac{1}{2} \, (|x|^2 - R^2),
$$
up to a translation.
\end{lem}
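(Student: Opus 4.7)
The plan is to prove \eqref{newton} as a pointwise Cauchy--Schwarz statement on the Hessian, and then extract from the equality case the quadratic form of $u$; coupled with the boundary datum in \eqref{serrin1}, this will force $\Om$ to be a ball.

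For the inequality, I would decompose the Hessian into trace and trace-free parts: write $\na^2 u = \frac{\De u}{N}\,I + A$ where $\tr A = 0$. Since $\lan I,A\ran = \tr A = 0$ in the Hilbert--Schmidt (Frobenius) inner product, the decomposition is orthogonal, so
\begin{equation*}
|\na^2 u|^2 = \frac{(\De u)^2}{N} + |A|^2 \ge \frac{(\De u)^2}{N},
\end{equation*}
which is \eqref{newton}. Equality at a point is equivalent to $A=0$ there, i.e.\ $\na^2 u = \la\,I$ with $\la = \De u / N$. If this holds throughout $\Om$, the off-diagonal identities $u_{ij}\equiv 0$ for $i\neq j$ imply that each first partial $u_i$ depends only on $x_i$, so $u$ splits as $u(x)=\sum_{k=1}^N F_k(x_k)$. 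Then $F_k''(x_k)=\la(x)$ simultaneously for every $k$, and since the left-hand side depends only on $x_k$, $\la$ must be a common constant $c$; integrating yields $u(x)=\frac{c}{2}|x|^2+\lan b,x\ran + d = \frac{c}{2}|x-z|^2 + \mathrm{const}$, recovering the rotationally symmetric form of \eqref{quadratic} (up to the scalar $c$).

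For the second part of the lemma, the PDE $\De u = N$ pins down $c=1$, so $u(x)=\frac12(|x-z|^2-a)$. The Dirichlet condition $u=0$ on $\Ga$ then gives $\Ga\subset\{|x-z|^2=a\}$, where $a>0$ because $u<0$ in $\Om$ by the strong maximum principle applied to $\De u = N > 0$ with vanishing boundary values. Since $\Ga$ is a compact $C^2$ hypersurface contained in the connected $(N-1)$-sphere $\pa B_{\sqrt a}(z)$, each of its components is both open and closed in that sphere, hence equals the whole sphere; so $\Ga = \pa B_{\sqrt a}(z)$, and the bounded connected domain $\Om$ must be the corresponding ball. Finally, plugging this ball into $R = N|\Om|/|\Ga|$ gives $R = \sqrt a$, so $a = R^2$ and, up to translating $z$ to the origin, $u(x) = \frac12(|x|^2 - R^2)$.

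The main obstacle I anticipate is the rigidity step that promotes the pointwise relation $\na^2 u=\la(x) I$ to the assertion that $u$ is a single quadratic polynomial of the stated form: this is where the hypothesis $N\ge 2$ is used essentially, since in dimension one the decoupling argument is vacuous and a separation-of-variables trick is needed only when at least two coordinate directions are available. A secondary subtle point is the topological identification of $\Om$ from the inclusion $\Ga\subset\pa B_{\sqrt a}(z)$, which is handled by combining the $C^2$ regularity of $\Ga$ with the connectedness of the ambient sphere.
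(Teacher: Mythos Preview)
Your argument is correct and follows essentially the same line as the paper: the inequality is Cauchy--Schwarz (you phrase it as the orthogonal trace/trace-free splitting, the paper as $\lan \na^2 u, I\ran^2\le |\na^2 u|^2\,|I|^2$), and the equality case forces $\na^2 u=\la(x)I$, after which one shows $\la$ is constant and reads off the ball from the boundary condition.

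Two small points deserve attention. First, your separation-of-variables step ``$u_{ij}\equiv 0$ for $i\neq j$ implies $u_i=G_i(x_i)$ and hence $u=\sum_k F_k(x_k)$'' is only guaranteed on a coordinate box; on a general domain the vanishing of $\pa_j u_i$ does not force $u_i$ to be globally a function of $x_i$ alone. The fix is routine: run your argument locally to get that $u$ is a radial quadratic on every small box, and then use connectedness of $\Om$ (two quadratics agreeing on an open overlap are identical) to patch. The paper instead shows $\la$ is constant by differentiating once more, $u_{iij}=0=\la_j$, noting that for $u\in C^2$ this holds in the distributional sense; your route has the virtue of staying strictly within $C^2$ computations. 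Second, for the radius identification the paper uses the divergence theorem directly, $\sqrt a\,|\Ga|=\int_\Ga (x-z)\cdot\nu\,dS_x=N|\Om|$, which avoids invoking the explicit volume and surface formulas for balls; your computation via $R=N|\Om|/|\Ga|$ is of course equivalent.
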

\begin{proof}
We regard the matrices $\na^2 u$ and $I$ as vectors in $\RR^{N^2}$.
Inequality \eqref{newton} is then the classical Cauchy-Schwarz inequality.
%
%

For the characterization of the equality case, we first consider $u$ solution of \eqref{serrin1} and we set $w(x)= u(x) - |x|^2 /2$. Since $w$ is harmonic, direct computations show that
$$
0=|\na^2 u|^2 - \frac{(\De u)^2}{ N} = | \na w|^2.
$$
Thus, $w$ is affine and $u$ is quadratic. Therefore, $u$ can be written in the form
%
\eqref{quadratic},
for some $z\in\RR^N$ and $a\in\RR$.
\par
Since $u=0$ on $\Ga$, then $|x-z|^2=a$ for $x\in\Ga$, that is $\Ga$ must be a sphere centered at $z$.
Moreover, $a$ must be positive and
$$
\sqrt{a}\,|\Ga|=\int_\Ga |x-z|\,dS_x=\int_\Ga (x-z)\cdot\nu(x)\,dS_x=N\,|\Om|.
$$
In conclusion, $\Ga$ is a sphere centered at $z$ with radius $R$.

If $u$ is any $C^2$ function for which the equality holds in \eqref{newton}, we can still
conclude that $u$ is a quadratic polynomial.
In fact, being \eqref{newton} the classical Cauchy-Schwarz inequality in $\RR^{N^2}$, if the equality sign holds we have that $\na^2 u (x) = \la(x) \, I$.
Since 
$$u_{iij}= u_{iji} = 0 \quad \mbox{and} \quad u_{jjj}= \la_j = u_{iij} , $$
we immediately realize that the function
$\la (x)$ must be a constant $\la$.
Notice that, this conclusion surely holds if $u$ is of class $C^3$. However, even if $u$ is of class $C^2$, the last two differential identities still hold in the sense of distributions and the same conclusion follows.
Thus, by integrating directly the system $\na^2 u = \la \, I$, we obtain that $u$ must be a quadratic polynomial. 
%
%
%
%
\end{proof}

As an immediate corollary of Theorem \ref{thm:serrinidentity} we have the following more general version of Serrin's symmetry result.

\begin{thm}[Symmetry for the torsional rigidity density, \cite{MP2}] 
Let $\Om \subset \mathbb R^N$ be a bounded domain with boundary $\Ga$ of class $C^{1,\al}$, $0<\al\le 1$. Let $R$ be the positive constant defined in \eqref{def-R-H0}, and $u$ be the solution of  \eqref{serrin1}.

If the right-hand side of \eqref{idwps} is non-positive,
$\Ga$ must be a sphere (and hence $\Om$ a ball) of radius $R$. 
The same conclusion clearly holds if either $u_\nu$ is constant on $\Ga$ or $u_\nu - q_\nu = 0$ on $\Ga$.
\end{thm}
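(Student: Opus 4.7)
The plan is to combine the fundamental identity \eqref{idwps} with Lemma \ref{lem:sphericaldetector}. The whole argument reduces to showing that under the hypothesis both sides of \eqref{idwps} must vanish, after which the spherical detector does the work.

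First I would argue that the left-hand side of \eqref{idwps} is non-negative. The strong maximum principle applied to \eqref{serrin1} (note that $\Delta u = N > 0$ and $u=0$ on $\Gamma$) yields $-u > 0$ in $\Omega$, while Newton's inequality \eqref{newton} from Lemma \ref{lem:sphericaldetector} gives the pointwise bound $|\na^2 u|^2 - (\De u)^2/N \ge 0$ in $\Omega$. Consequently the integrand on the left-hand side of \eqref{idwps} is non-negative in $\Omega$.

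If, by assumption, the right-hand side of \eqref{idwps} is non-positive, then the integral on the left must vanish; combined with $-u > 0$, this forces
\begin{equation*}
|\na^2 u|^2 - \frac{(\De u)^2}{N} \equiv 0 \quad \text{in } \Omega.
\end{equation*}
The spherical detector (Lemma \ref{lem:sphericaldetector}, applied to the solution $u$ of \eqref{serrin1}) then yields at once that $\Gamma$ is a sphere of radius $R$, with $u$ of the form prescribed by \eqref{quadratic}, up to a translation.

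It only remains to verify that the two special situations mentioned in the statement fall under this non-positivity hypothesis. If $u_\nu - q_\nu \equiv 0$ on $\Gamma$, the right-hand side of \eqref{idwps} vanishes trivially. If instead $u_\nu \equiv c$ on $\Gamma$ for some constant $c$, then integrating the boundary condition by means of the divergence identity \eqref{volume} gives $c\,|\Gamma| = N|\Omega|$, hence $c = R$ by \eqref{def-R-H0}; therefore $u_\nu^2 - R^2 \equiv 0$ on $\Gamma$ and again the right-hand side of \eqref{idwps} vanishes. In both cases the previous paragraph applies and yields the conclusion. No real obstacle is expected here: the work has already been done in Lemma \ref{lem:sphericaldetector} and in the derivation of \eqref{idwps}, and the statement is essentially a direct reading of those two ingredients.
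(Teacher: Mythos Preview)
Your proposal is correct and follows essentially the same approach as the paper: non-negativity of the left-hand side via the strong maximum principle and Newton's inequality, forcing equality in \eqref{newton}, then invoking Lemma \ref{lem:sphericaldetector}; the two special cases are handled identically. The paper adds only a side remark that when $u_\nu - q_\nu = 0$ on $\Gamma$ one can alternatively conclude directly, since $u-q$ is harmonic with vanishing Neumann data and hence constant.
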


\begin{proof}
If the right-hand side of \eqref{idwps} is non-positive, then the integrand at the left-hand side must be zero,
 being non-negative by \eqref{newton} and the maximum principle for $u$. Then \eqref{newton} must hold with the equality sign, since $u<0$ on $\Om$, by the strong maximum principle. The conclusion follows from Lemma \ref{lem:sphericaldetector}.
\par
Finally, if $u_\nu\equiv c$ on $\Ga$ for some constant $c$, then 
$$
c\,|\Ga|=\int_\Ga u_\nu\,dS_x=N\,|\Om|,
$$
that is $c=R$, and hence we can apply the previous argument.

The same conclusion clearly holds if $u_\nu-q_\nu=0$ on $\Ga$. Notice that in this case a simpler alternative proof is available. In fact, since $u-q$ is harmonic in $\Om$ and $(u-q)_\nu=0$ on $\Ga$, then $u-q$ must be constant  on $\ol{\Om}$.
\end{proof}

\begin{rem}
{\rm
Based on the work of Vogel \cite{Vo}, the regularity assumption on $\Om$ can be dropped, if \eqref{serrin1} and the conditions $u=0$, $u_\nu=c$ on $\Ga$ (for some constant $c$) are stated in a weak sense.
In fact, if $u \in W_{loc}^{1,p} \left( \Om \right)$ is a function such that
$$
\int_{\Om} \na u \cdot \na \phi \, dx + N \int_\Om \phi \, dx =0
$$
for every $\phi \in C_0^\infty \left( \Om \right)$, 
and for all $\ve >0$ there exists a neighborhood $U_\ve \supset \Ga$ such that $|u(x)| < \ve$ and $\left| |\na u| - c \right| < \ve$ for a.e. $x \in U_\ve \cap  \Om $, \cite[Theorem 1]{Vo} ensures that $\Ga$ must be of class $C^2$.
%
%
%
}
\end{rem}

As an immediate corollary of
%
%
Theorem \ref{thm:identitySBT} we have the following
more general version of the Soap Bubble Theorem.

\begin{thm}[Soap Bubble-type Theorem, \cite{MP}] 
\label{th:SBT}
Let $\Ga\subset\RR^N$ be a surface of class  $C^2$, which is the boundary of a bounded domain $\Om\subset\RR^N$, and let $u$ be the solution of \eqref{serrin1}.  
Let $R$ and $H_0$ be the two positive constants defined in \eqref{def-R-H0}.

If the right-hand side of \eqref{H-fundamental} is non-positive, then $\Ga$ must be a sphere
%
%
of radius $R$.
In particular, the same conclusion holds if either the mean curvature $H$ of $\Ga$ satisfies the inequality $H\ge H_0$ or $H$ is constant on $\Ga$.
\end{thm}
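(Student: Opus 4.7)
The plan is to combine the fundamental identity \eqref{H-fundamental} with the spherical detector Lemma \ref{lem:sphericaldetector}. The key observation is that the left-hand side of \eqref{H-fundamental} is a sum of two manifestly non-negative quantities: the volume integral is non-negative by Newton's inequality \eqref{newton}, while the boundary term $\frac{1}{R}\int_\Ga(u_\nu-R)^2\,dS_x$ is the integral of a square. Hence, if the right-hand side of \eqref{H-fundamental} is non-positive, both left-hand side terms must vanish individually.

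First I would focus on the volume term. Its vanishing, together with the continuity of the integrand on $\Om$ and its pointwise non-negativity, forces
$$|\na^2 u|^2 - \frac{(\De u)^2}{N} \equiv 0 \quad \text{in } \Om.$$
By Lemma \ref{lem:sphericaldetector} applied to $u$ solution of \eqref{serrin1}, this immediately yields that $\Ga$ is a sphere of radius $R$ given by \eqref{def-R-H0}, which is the desired conclusion. (Note that the vanishing of the boundary term — i.e., $u_\nu \equiv R$ on $\Ga$ — is redundant at this stage, since the conclusion already follows from the volume term alone; nevertheless it provides a consistency check.)

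Next I would verify the two sufficient conditions stated in the theorem. If $H \ge H_0$ on $\Ga$, then $(H_0-H)(u_\nu)^2 \le 0$ pointwise on $\Ga$, so the right-hand side of \eqref{H-fundamental} is non-positive and the previous argument applies. If instead $H$ is constant on $\Ga$, say $H\equiv c$, then Minkowski's identity \eqref{minkowski} combined with the divergence theorem gives
$$c \, N|\Om| = c \int_\Ga q_\nu\,dS_x = \int_\Ga H\,q_\nu\,dS_x = |\Ga|,$$
so $c = |\Ga|/(N|\Om|) = H_0$ by \eqref{def-R-H0}. Hence $(H_0-H)(u_\nu)^2 \equiv 0$ on $\Ga$, the right-hand side of \eqref{H-fundamental} vanishes, and the spherical conclusion again follows.

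There is no substantial technical obstacle here: once identity \eqref{H-fundamental} and Lemma \ref{lem:sphericaldetector} are in hand, the argument is essentially a one-line sign chase. The only mildly subtle point is that, in the constant-curvature case, one cannot merely invoke $(H_0-H)(u_\nu)^2 \le 0$ without first identifying the value of the constant, and this identification is precisely where Minkowski's identity \eqref{minkowski} plays its role.
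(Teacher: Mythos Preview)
Your proof is correct and follows essentially the same approach as the paper: both summands on the left-hand side of \eqref{H-fundamental} are non-negative, so non-positivity of the right-hand side forces the Cauchy-Schwarz deficit to vanish identically, and Lemma \ref{lem:sphericaldetector} then yields the spherical conclusion; the constant-curvature case is reduced to the previous one via Minkowski's identity \eqref{minkowski}, exactly as you do.
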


\begin{proof}
If the right-hand side in \eqref{H-fundamental} is non-positive (and this certainly happens if $H\ge H_0$ on $\Ga$), then both summands at the left-hand side must 
be zero, being non-negative.
%
%
\par
The fact that also the first summand is zero gives that the Cauchy-Schwarz deficit for the hessian matrix $\na^2 u$ must be identically zero and the conclusion follows from Lemma \ref{lem:sphericaldetector}.
\par
If $H$ equals some constant, instead, then \eqref{minkowski} tells us that the constant must equal $H_0$, and hence we can apply the previous argument.
\end{proof}

\begin{rem}
{\rm
(i) As pointed out in the previous proof, the assumption of the theorem also gives that the second summand at the left-hand side of \eqref{H-fundamental} must be zero, and hence $u_\nu = R$ on $\Ga$. Thus, Serrin's overdetermining condition \eqref{serrin2} holds {\it before showing that $\Om$ is a ball}.
%
%
\par
(ii) We observe that the assumption that $H\ge H_0$ on $\Ga$ gives that $H\equiv H_0$, anyway, if $\Om$ is strictly star-shaped with respect to some origin $p$. In fact, by Minkowski's identity \eqref{minkowski}, we obtain that
$$
0\le\int_\Ga [H(x)-H_0] \lan(x-p), \nu(x)\ran\,dS_x=|\Ga|-H_0 \int_\Ga \lan(x-p), \nu(x)\ran\,dS_x=0,
$$
and we know that $\lan(x-p), \nu(x)\ran>0$ for $x\in\Ga$.
}
\end{rem}


%
We now show that \eqref{heintze-karcher-identity} implies Heintze-Karcher's inequality (see \cite{HK}). 
We mention that our proof is slightly different from that of A. Ros in \cite{Ro} and relates the equality case for Heintze-Karcher's inequality to a new overdetermined problem, which is described in item (ii) of the following theorem. As it will be clear, \eqref{heintze-karcher-identity} also gives an alternative proof of Alexandrov's theorem via the characterization of the equality sign in
Heintze-Karcher's inequality \eqref{heintze-karcher}.

\begin{thm}[Heintze-Karcher's inequality and a related overdetermined problem, \cite{MP}]
\label{thm:heintze-karcher}
Let $\Ga\subset\RR^N$ be a mean-convex surface of class  $C^2$, which is the boundary of a bounded domain $\Om\subset\RR^N$, and denote by $H$ its mean curvature. Then,

\begin{enumerate}[(i)]
\item Heintze-Karcher's inequality
\begin{equation}
\label{heintze-karcher}
\int_\Ga \frac{dS_x}{H}\ge N |\Om|
\end{equation}
holds and the equality sign is attained if and only if $\Om$ is a ball;
\item $\Om$ is a ball if and only if the solution $u$ of \eqref{serrin1} satisfies 
\begin{equation}\label{equa:overdeteitemi}
u_\nu(x)=1/H(x) \quad \mbox{for every} \quad x\in\Ga .
\end{equation}
\end{enumerate}
 
\end{thm}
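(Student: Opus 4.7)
The plan is to read off both parts directly from identity \eqref{heintze-karcher-identity}, combined with Lemma \ref{lem:sphericaldetector} (the spherical detector) and the volume formula \eqref{volume}. Apart from a bit of bookkeeping where $H$ may vanish, everything is an almost immediate consequence of the sign structure of that identity.

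For (i), I would first dispose of the trivial case $\int_{\Ga}dS_x/H=+\infty$, in which \eqref{heintze-karcher} is obvious (with strict inequality). Otherwise, $1/H$ is integrable and, by mean-convexity $H\ge 0$, we have $H>0$ almost everywhere on $\Ga$. The first summand on the left of \eqref{heintze-karcher-identity} is non-negative by Newton's inequality \eqref{newton} together with $-u\ge 0$ on $\ol{\Om}$ (the strong maximum principle, since $\De u=N>0$); the second summand is non-negative a.e. since $H>0$ a.e. Hence the right-hand side of \eqref{heintze-karcher-identity} is $\ge 0$, which is precisely \eqref{heintze-karcher}. For the equality case, equality in \eqref{heintze-karcher} forces both summands on the left to vanish. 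Since $-u>0$ in $\Om$ by the strong maximum principle, the vanishing of the first summand yields $|\na^2 u|^2=(\De u)^2/N$ pointwise in $\Om$, so Lemma \ref{lem:sphericaldetector} forces $u$ to have the quadratic form \eqref{quadratic}; the boundary condition $u=0$ on $\Ga$ then makes $\Ga$ a sphere centered at $z$ with radius $R$. Conversely, a direct computation with $u(x)=(|x-z|^2-R^2)/2$ on a ball shows that both sides of \eqref{heintze-karcher} equal $N|\Om|$.

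For (ii), the ``only if'' direction is again a direct computation: on the ball $B_R(z)$ one has $u_\nu\equiv R$ on $\Ga$ and $H\equiv 1/R$ on $\Ga$, so $u_\nu=1/H$. For the ``if'' direction, suppose $u_\nu(x)=1/H(x)$ for every $x\in\Ga$. Integrating this identity on $\Ga$ and invoking the volume formula \eqref{volume} gives
$$
\int_\Ga \frac{dS_x}{H}=\int_\Ga u_\nu\,dS_x=N\,|\Om|,
$$
i.e. equality in \eqref{heintze-karcher}. Part (i) then yields that $\Om$ is a ball.

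The main obstacle, such as it is, lies in the possibility that $H$ vanishes on a subset of $\Ga$ of positive measure: there the integrands $1/H$ and $(1-Hu_\nu)^2/H$ become infinite, and I would rely on the convention (already adopted in the proof of Theorem \ref{thm:identityheintze-karcher}) that in this case the identity \eqref{heintze-karcher-identity} degenerates to a trivially true statement and \eqref{heintze-karcher} is automatic with strict inequality, so that the equality case of (i) can be analyzed only in the regime $\int_\Ga dS_x/H<\infty$, where $H>0$ a.e. and the argument above goes through. Once this bookkeeping is settled, the rest is a direct reading of \eqref{heintze-karcher-identity}.
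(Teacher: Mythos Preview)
Your proof is correct and follows essentially the same approach as the paper: both read off (i) from the sign structure of \eqref{heintze-karcher-identity} and invoke Lemma~\ref{lem:sphericaldetector} for the equality case, and both handle (ii) via the volume formula \eqref{volume}; your routing of the ``if'' direction of (ii) through the equality case of (i) is just a repackaging of the paper's direct use of the identity. One minor slip: the first summand in \eqref{heintze-karcher-identity} carries no weight $-u$ (unlike \eqref{idwps}), so your appeals to ``$-u\ge 0$'' and ``$-u>0$ in $\Om$'' are unnecessary there---Newton's inequality alone gives non-negativity, and vanishing of the integral of a continuous non-negative integrand already forces it to vanish pointwise.
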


\begin{proof}
(i) Both summands at the left-hand side of \eqref{heintze-karcher-identity} are non-negative and hence \eqref{heintze-karcher} follows.
If the right-hand side is zero, those summands must be zero. The vanishing of the first summand implies that $\Om$ is a ball, as already noticed. Note in passing that the vanishing of the second summand gives that $u_\nu=1/H$ on $\Ga$, which also implies radial symmetry, by item (ii).

(ii) It is clear that, if $\Om$ is a ball, then \eqref{equa:overdeteitemi} holds. Conversely, it is easy to check that the right-hand side and the second summand of the left-hand side of \eqref{heintze-karcher-identity} are zero when \eqref{equa:overdeteitemi} occurs.
Thus, the conclusion follows from Lemma \ref{lem:sphericaldetector}.
\par
Notice that assumption \eqref{equa:overdeteitemi} implies that $H$ must be positive, since $u_{\nu}$ is positive and finite.     
\end{proof}

\begin{rem}
{\rm
(i) As already noticed in \cite{Ro}, the characterization of the equality sign of Heintze-Karcher's inequality given in item (i) of Theorem \ref{thm:heintze-karcher}, leads to a different proof of the Soap Bubble Theorem. In fact, if $H$ equals some constant on $\Ga$, by using Minkowski's identity \eqref{minkowski} we know that such a constant must have the value $H_0$ in \eqref{def-R-H0}.
Thus, \eqref{heintze-karcher} holds with the equality sign, and hence $\Ga$ is a sphere by item (i) of Theorem \ref{thm:heintze-karcher}.

(ii) Item (ii) of Theorem \ref{thm:heintze-karcher} could be proved also by using \eqref{fundamental-identity2} instead of \eqref{heintze-karcher-identity}. In fact, it is immediate to check that the right-hand side of \eqref{fundamental-identity2} is zero when \eqref{equa:overdeteitemi} occurs.
}
\end{rem}

We conclude this section by giving an alternative proof of Serrin's theorem that can have its own interest. Unfortunately, it only works for star-shaped domains.

\begin{thm}[Theorem 2.4, \cite{MP}]
Let $\Om\subset\RR^N$, $N\ge2$, be a bounded domain with boundary $\Ga$ of class $C^2$.
%
%
Assume that $\lan(x-p), \nu(x)\ran>0$ for every $x\in\Ga$ and some $p\in\Om$, and let $u$ be the solution of \eqref{serrin1}.
\par
Then, $\Om$ is a ball if and only if $u$ satisfies \eqref{serrin2}.
\end{thm}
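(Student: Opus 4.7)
The ``only if'' direction is immediate: if $\Om=B_R(p)$, then $u(x)=(|x-p|^2-R^2)/2$ solves \eqref{serrin1} and satisfies \eqref{serrin2}. For the nontrivial direction assume $u_\nu\equiv R$ on $\Ga$ (where $R=N|\Om|/|\Ga|$ necessarily, by integrating $\Delta u=N$ over $\Om$). My approach would combine the Pohozaev-Rellich identity \eqref{Pohozaev} with a Weinberger-type maximum principle argument, using the star-shape center $p$ as the distinguished origin.

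First, I would introduce the $P$-function defined in \eqref{P-function}. By \eqref{differential-identity} and the Cauchy-Schwarz inequality \eqref{newton}, $\Delta P=|\na^2 u|^2-N\ge 0$ in $\Om$, so $P$ is subharmonic. Since $u\equiv 0$ and $|\na u|=u_\nu=R$ on $\Ga$, we have $P\equiv R^2/2$ on $\Ga$, and the weak maximum principle gives the pointwise bound $P\le R^2/2$ throughout $\Om$.

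Next, I would apply identity \eqref{Pohozaev} with $z=p$. With $u_\nu=R$ and using the divergence theorem to evaluate $\int_\Ga \langle x-p,\nu\rangle\,dS_x=N|\Om|$, the identity becomes
\begin{equation*}
(N+2)\int_\Om |\na u|^2\,dx=R^2\,N\,|\Om|.
\end{equation*}
The star-shape hypothesis, giving $q_\nu=\langle x-p,\nu\rangle>0$ on $\Ga$, is what singles out this particular presentation (and, as warned in the statement, is the essential limitation of the argument, since it is used to keep all the Pohozaev boundary terms with a definite sign). Combining the above with the elementary identity $\int_\Om|\na u|^2dx=-N\int_\Om u\,dx$ (obtained by multiplying $\Delta u=N$ by $u$ and integrating by parts using $u|_\Ga=0$), one computes
\begin{equation*}
\int_\Om P\,dx=\tfrac12\int_\Om|\na u|^2\,dx-\int_\Om u\,dx=\tfrac{N+2}{2N}\int_\Om|\na u|^2\,dx=\tfrac{R^2|\Om|}{2}.
\end{equation*}

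Finally, the integrated equality $\int_\Om(R^2/2-P)\,dx=0$, together with the pointwise non-negativity $R^2/2-P\ge 0$ in $\Om$, forces $P\equiv R^2/2$ in $\Om$. Hence $\Delta P\equiv 0$, that is $|\na^2u|^2\equiv N$ in $\Om$, and Lemma \ref{lem:sphericaldetector} concludes that $u$ is a quadratic polynomial of the form \eqref{quadratic}, so $\Om$ is a ball of radius $R$. The main conceptual point (and potential sticking point) is the precise match between the Pohozaev value and the Weinberger upper bound: it is this match that promotes the pointwise inequality $P\le R^2/2$ to pointwise equality, after which the spherical detector lemma does the rest. The star-shape hypothesis is what makes the choice $z=p\in\Om$ geometrically natural and keeps the signs in \eqref{Pohozaev} under control.
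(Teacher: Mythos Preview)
Your argument is correct, but it is essentially Weinberger's original proof and---contrary to what you write---it does \emph{not} use the star-shape hypothesis at all. The Pohozaev identity \eqref{Pohozaev} holds for any $C^{1,\alpha}$ domain; when you compute $\int_\Ga (u_\nu)^2 q_\nu\,dS_x = R^2\int_\Ga \langle x-p,\nu\rangle\,dS_x = R^2 N|\Om|$ you are simply applying the divergence theorem, and the sign of $\langle x-p,\nu\rangle$ is irrelevant. So your route actually proves the stronger (classical) statement without star-shapedness, and your remarks about $\langle x-p,\nu\rangle>0$ ``keeping signs under control'' do not reflect what your computation does.

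The paper's proof is genuinely different and \emph{does} use star-shapedness in an essential way. It also starts from the subharmonicity of $P$ and the fact that $P$ is constant on $\Ga$, but instead of matching $\int_\Om P$ against Pohozaev it looks at the normal derivative: Hopf gives $P_\nu\ge 0$ on $\Ga$, which rewrites as $1-H u_\nu\ge 0$. Then Minkowski's identity \eqref{minkowski} yields $\int_\Ga (1-H R)\langle x-p,\nu\rangle\,dS_x=0$, and now the pointwise positivity $\langle x-p,\nu\rangle>0$ forces $H u_\nu\equiv 1$ on $\Ga$. The conclusion follows from the overdetermined condition $u_\nu=1/H$ (item (ii) of Theorem~\ref{thm:heintze-karcher}). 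This explains the paper's caveat that ``unfortunately, it only works for star-shaped domains'': the interest is in the alternative route through the mean curvature, not in the result itself, which Weinberger's argument (i.e.\ yours) already covers in full generality.
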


\begin{proof}
It is clear that, if $\Om$ is a ball, then \eqref{serrin2} holds. Conversely, we shall check that the right-hand side of \eqref{fundamental} is zero when \eqref{serrin2} occurs.
\par
Let $u_\nu$ be constant on $\Ga$; by \eqref{volume} we know that that constant equals the value $R$ given in \eqref{def-R-H0}. Also, notice that $1-H u_\nu\ge 0$ on $\Ga$. In fact, the function $P$ in \eqref{P-function}
is subharmonic in $\Om$, since $\De P\ge 0$ by \eqref{differential-identity}. Thus, it attains its maximum on $\Ga$, where it is constant.
We thus have that
$$
0\le P_\nu=u_\nu u_{\nu \nu}-u_\nu=(N-1)\,(1-H u_\nu)\,u_\nu \ \mbox{ on } \ \Ga.
$$
Now,
\begin{multline*}
0\le\int_\Ga [1-H(x) u_\nu(x)]\,\lan (x-p),\nu(x)\ran\,dS_x=\\
\int_\Ga [1-H(x) R]\,\lan (x-p),\nu(x)\ran\,dS_x=0,
\end{multline*}
by \eqref{minkowski}. Thus, $1-H u_\nu\equiv 0$ on $\Ga$ and hence item (ii) of Theorem \ref{thm:heintze-karcher} applies. 
\end{proof}

\chapter{Radial symmetry for \texorpdfstring{$p$}{p}-harmonic functions in exterior and punctured domains}\label{chap:PogAA}
\chaptermark{Radial symmetry for $p$-harmonic functions}

In this chapter, we collect symmetry results obtained in \cite{Pog}, involving $p$-harmonic functions in exterior and punctured domains.

\section{Motivations and statement of results}

The \textit{electrostatic $p$-capacity} of a bounded open set $\Om \subset \RR^N$, $1<p<N$, is defined by
\begin{equation}\label{def:Capacity}
\pCap_p(\Om)= \inf \left\lbrace \int_{\RR^N} | \na w|^p : \; w \in C^{\infty}_{0} (\RR^N), \; w \ge 1 \; \text{in} \; \Om \right\rbrace
\end{equation}

Under appropriate sufficient conditions, there exists a unique minimizing function $u$ of \eqref{def:Capacity}; such function is called the {\it $p$-capacitary potential} of $\Om$, and satisfies
\begin{equation}
\label{Problemcapacity}
\De_p u = 0 \, \mbox{ in } \RR^N \setminus \ol{\Om} , \; u=1 \, \mbox{ on } \Ga, \; \lim_{|x|\rightarrow \infty} u(x)=0 ,
\end{equation}
where $\Ga$ is the boundary of $\Om$ and $\De_p$ denotes the $p$-Laplace operator defined by
\begin{equation*}
\De_p u = \dv \left( |\na u|^{p-2} \na u \right).
\end{equation*}


It is well known that the $p$-capacity could be equivalently defined by means of the $p$-capacitary potential $u$ as
\begin{equation}\label{eq:caponboundary}
\pCap_p(\Om) = \int_{\RR^N \setminus \ol{\Om} } |\na u|^p \, dx = \int_\Ga | \na u|^{p-1} \, dS_x,
\end{equation}
where the second equality follows by integration by parts (see the proof of Lemma~\ref{lem:asymptoticcap}).

In Section \ref{sec:exterior} we consider Problem \eqref{Problemcapacity} under Serrin's overdetermined condition given by
\begin{equation}\label{eq:overdetermination}
| \na u| = c  \mbox{ on } \Ga,
\end{equation}
and we prove the following result.

\begin{thm}[\cite{Pog}]\label{thm:Serrinesterno}
Let $\Om \subset \RR^N$ be a bounded domain (i.e., a connected bounded open set).
For $1<p<N$, the problem \eqref{Problemcapacity}, \eqref{eq:overdetermination} admits a weak solution if and only if $\Om$ is a ball.
\end{thm}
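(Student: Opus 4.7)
The plan is to adapt the integral-identity / $P$-function philosophy used earlier in the thesis (Theorem \ref{thm:serrinidentity}, Lemma \ref{lem:sphericaldetector}) to the exterior $p$-harmonic setting. The easy implication (ball $\Rightarrow$ overdetermined condition) is a direct computation from the explicit radial $p$-capacitary potential
$$
u_0(x)=\left(\frac{R}{|x|}\right)^{(N-p)/(p-1)},\quad |x|\ge R,
$$
for which one checks that $u_0=1$ on $\pa B_R$, $|\na u_0|=(N-p)/[(p-1)R]=:c$ on $\pa B_R$, and the pointwise identity
\begin{equation}\label{eq:radialPzero}
|\na u_0|^p = c^p\,u_0^{p(N-1)/(N-p)}\quad\text{in }\RR^N\setminus\ol{B_R}.
\end{equation}

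For the nontrivial implication, I would first invoke standard regularity for $p$-harmonic functions to obtain $u\in C^{1,\al}(\RR^N\setminus\Om)$ up to $\Ga$, together with the sharp asymptotic expansion at infinity (Kichenassamy--V\'eron/Serrin),
$$
u(x)=\gamma\,|x|^{-(N-p)/(p-1)}+o\bigl(|x|^{-(N-p)/(p-1)}\bigr),\quad
|\na u(x)|=\tfrac{N-p}{p-1}\,\gamma\,|x|^{-(N-1)/(p-1)}+o(\cdots),
$$
as $|x|\to\infty$, where the coefficient $\gamma$ is determined by $\pCap_p(\Om)$. By \eqref{eq:caponboundary} together with the overdetermined condition \eqref{eq:overdetermination}, one has $\pCap_p(\Om)=c^{p-1}|\Ga|$, which pins down $\gamma$ in terms of $c$ and $|\Ga|$. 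This data then gives that
\begin{equation}\label{eq:Pfunction-exterior}
P(x):=|\na u(x)|^p - c^p\,u(x)^{p(N-1)/(N-p)}
\end{equation}
satisfies $P=0$ on $\Ga$ (because $u=1$ and $|\na u|=c$ there) and $P(x)\to 0$ as $|x|\to\infty$ (by matching the asymptotic expansion with \eqref{eq:radialPzero}).

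The second step is a Weinberger-type argument: using $\De_p u=0$, one computes, in the open region $\{x\in\RR^N\setminus\ol{\Om}:|\na u(x)|>0\}$ where $u$ is smooth, a differential identity of the form $\cL P \ge 0$, for a suitable linearization $\cL$ of $\De_p$ (the standard Payne--Philippin/Garofalo--Lewis computation). The right-hand side of this identity is a Cauchy--Schwarz deficit in $\na^2 u$, completely analogous to \eqref{newton}. Combined with $P=0$ on $\Ga$ and $P\to 0$ at infinity, the maximum principle on the unbounded domain (applied on the annular regions $\{1/n<u<1\}$ and letting $n\to\infty$, with the decay rates to kill the outer boundary terms) forces $P\equiv 0$ throughout $\RR^N\setminus\ol{\Om}$. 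From this and the equality case in the underlying Cauchy--Schwarz inequality, one concludes, exactly as in Lemma \ref{lem:sphericaldetector}, that $\na^2 u$ has the rank-one-plus-scalar structure compatible with radial symmetry, so the level sets of $u$ are concentric spheres. In particular $\Ga=\{u=1\}$ is a sphere.

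The main obstacle is twofold. First, the $p$-Laplacian is degenerate at critical points of $u$, so the $P$-function computation and the maximum principle must be performed on the open set $\{|\na u|>0\}$; one then needs to rule out ``internal'' critical points (or argue that they form a negligible set that can be bypassed by approximation), which in the exterior setting is aided by the fact that, outside a large ball, $|\na u|>0$ by the asymptotics. Second, the integrations by parts must be carefully justified up to infinity, which is handled by truncating on $\{1/n<u<1\}$ and using the sharp decay rates of step one to pass to the limit. A minor point is that, since the theorem is stated for weak solutions, one first upgrades the regularity so that \eqref{eq:overdetermination} is meaningful in the classical sense on $\Ga$.
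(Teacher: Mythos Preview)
Your plan has a genuine gap at the core step. From $\mathcal{L}P\ge 0$ together with $P=0$ on $\Ga$ and $P\to 0$ at infinity, the maximum principle only yields $P\le 0$ in $\RR^N\setminus\ol\Om$; it does \emph{not} force $P\equiv 0$. Even in the classical interior Weinberger argument, subharmonicity of the $P$-function gives only a one-sided bound, and a separate Rellich--Pohozaev identity is needed to close. Moreover, the $P$-function for which the strong maximum principle is actually established in the exterior $p$-harmonic setting (Garofalo--Sartori) is the \emph{ratio} $|\na u|^p/u^{p(N-1)/(N-p)}$, not your difference; the two differ by the positive factor $u^{p(N-1)/(N-p)}$, which changes the linearized operator, so the differential inequality you invoke for your $P$ is not justified as stated.

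The paper's proof proceeds differently. With the ratio $P$-function, both $P_{|\Ga}$ and $\lim_{|x|\to\infty}P$ are explicit \emph{nonzero} constants, and the classical isoperimetric inequality gives $\lim_{|x|\to\infty}P\le P_{|\Ga}$. The strong maximum principle then yields that $P$ attains its maximum on $\Ga$, hence $P_\nu\le 0$ there. Expanding $P_\nu$ and using $u_{\nu\nu}=-\tfrac{N-1}{p-1}\,H\,u_\nu$ (from $\De_p u=0$ on $\Ga$) turns this into the curvature inequality $H\ge H_0$ on $\Ga$. The conclusion then follows from the Soap Bubble-type Theorem~\ref{th:SBT}, which is precisely the new ingredient that removes the star-shapedness hypothesis of Garofalo--Sartori. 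Your plan never produces a curvature inequality and never invokes Theorem~\ref{th:SBT} or the isoperimetric inequality, so it cannot close without these pieces.
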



By a weak solution in the statement of Theorem \ref{thm:Serrinesterno} we mean a function $u \in W_{loc}^{1,p} \left( \RR^N \setminus \ol{\Om} \right)$ such that
$$
\int_{\RR^N \setminus \ol{\Om}} |\na u|^{p-2} \, \na u \cdot \na \phi \, dx=0
$$
for every $\phi \in C_0^\infty \left( \RR^N \setminus \ol{\Om} \right)$
and satisfying the boundary conditions in the weak sense, i.e.: for all $\ve >0$ there exists a neighborhood $U_\ve \supset \Ga$ such that $|u(x)- 1| < \ve$ and $\left| |\na u| - c \right| < \ve$ for a.e. $x \in U_\ve \cap \left( \RR^N \setminus \ol{\Om} \right)$.

Notice the complete absence in Theorem \ref{thm:Serrinesterno} of any smoothness assumption as well as of any other assumption on the domain $\Om$.

Under the additional assumption that $\Om$ is star-shaped, Theorem \ref{thm:Serrinesterno} has been proved by Garofalo and Sartori in \cite{GS}, by extending to the case $1<p<N$ the tools developed in the case $p=2$ by Payne and Philippin (\cite{PP2}, \cite{Ph}).
The proof in \cite{GS}, which combines integral identities and a maximum principle for an appropriate $P$-function, bears a resemblance to Weinberger's proof (\cite{We}) of symmetry for the archetype torsion problem \eqref{serrin1}
under Serrin's overdetermined condition
\eqref{serrin2}.
%

%
%
%
%

To prove Theorem \ref{thm:Serrinesterno}, we improve on the arguments used in \cite{GS} and we exploit, as a new crucial ingredient, Theorem \ref{th:SBT}, that is the Soap Bubble-type Theorem proved via integral identities in Section \ref{sec:symmetry results via integral identities}.

%


Symmetry for Problem \eqref{Problemcapacity}, \eqref{eq:overdetermination} was first obtained by Reichel (\cite{Re1}, \cite{Re2}) by adapting the method of moving planes introduced by Serrin (\cite{Se}) to prove symmetry for the overdetermined torsion problem \eqref{serrin1}, \eqref{serrin2}.
In Reichel's works (\cite{Re1}, \cite{Re2}) the star-shapedness assumption is not requested, but the domain $\Om$ is a priori assumed to be $C^{2,\al}$ and the solution $u$ is assumed to be of class $C^{1, \al}(\RR^N \setminus \Om)$.

For completeness let us mention that many alternative proofs and improvements in various directions of symmetry results for Serrin's problems relative to the equations \eqref{serrin1} and \eqref{Problemcapacity} have been obtained in the years and can be found in the literature: for the overdetermined torsion problem \eqref{serrin1}, \eqref{serrin2} see for example \cite{PS}, \cite{GL}, \cite{DP}, \cite{BH}, \cite{BNST}, \cite{FK}, \cite{CiS}, \cite{WX}, \cite{MP2}, \cite{BC}, and the surveys \cite{Mag}, \cite{NT}, \cite{Ka}; for the exterior overdetermined problem \eqref{Problemcapacity}, \eqref{eq:overdetermination} where the domain $\Om$ is assumed to be convex see \cite{MR}, \cite{BCS}, \cite{BC}, and \cite{FMP}.
%
%


In Section \ref{sec:casoconforme}, we establish the result corresponding to Theorem \ref{thm:Serrinesterno} in the special case $1<p=N$.
In this case, the problem corresponding to \eqref{Problemcapacity} is (see e.g. \cite{CC}):
\begin{equation}
\label{ConformalProblemcapacity}
\De_N u = 0 \, \mbox{ in } \RR^N \setminus \ol{\Om} , \; u=1 \, \mbox{ on } \Ga, \; u(x) \sim - \ln{|x|}  \text{ as } |x|\to \infty ,
\end{equation}
where $\sim$ means that
\begin{equation}\label{eq:confasymppre}
c_1 \le \frac{u(x)}{ ( - \ln{|x|} ) } \le c_2, \text{ as } |x| \to \infty
\end{equation}
for some positive constants $c_1$,$c_2$.
What we prove is the following.
\begin{thm}[\cite{Pog}]\label{thm:conforme}
Let $\Om \subset \RR^N$, $1<N$, be a bounded domain.
The problem \eqref{ConformalProblemcapacity} with the overdetermined condition \eqref{eq:overdetermination}
admits a weak solution if and only if $\Om$ is a ball.
\end{thm}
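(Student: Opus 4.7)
The plan is to adapt the proof of Theorem \ref{thm:Serrinesterno} to the conformal regime $p=N$. Sufficiency is immediate: on the exterior of a ball $B_\rho(z)$, the explicit solution $u(x)=\gamma\,\ln(\rho/|x-z|)+1$ with $\gamma=\rho c$ solves \eqref{ConformalProblemcapacity} and satisfies \eqref{eq:overdetermination}. The ``only if'' direction will be obtained in three steps that parallel the ones behind Theorem \ref{thm:Serrinesterno}: refined asymptotics at infinity, a $P$-function argument giving a one-sided pointwise estimate, and an integral identity combined with the Soap Bubble-type Theorem \ref{th:SBT}.

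First I would refine the asymptotic information. Combining the two-sided bound $c_1\le u(x)/(-\ln|x|)\le c_2$ with interior gradient estimates for the $N$-Laplacian and comparison against the fundamental solution $-\gamma\,\ln|x|$, one obtains the existence of constants $\gamma>0$ and $\sigma>0$ for which
\begin{equation*}
u(x)=-\gamma\,\ln|x|+O(1),\qquad \na u(x)=-\gamma\,\frac{x}{|x|^2}+O(|x|^{-1-\sigma}),
\end{equation*}
as $|x|\to\infty$, with analogous control on $\na^2 u$. The coefficient $\gamma$ is identified by computing the flux of $|\na u|^{N-2}\na u$ through $\Ga$, that is, by the conformal analog of \eqref{eq:caponboundary}. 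These decays are strong enough to make every boundary integral on large spheres $\pa B_R$ vanish as $R\to\infty$ in the integration-by-parts arguments below.

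Second, following the scheme of Payne--Philippin adopted in \cite{GS}, I would introduce the $P$-function
\begin{equation*}
P(x)=|\na u(x)|^N\,e^{-N\,u(x)/\gamma},
\end{equation*}
whose normalization is dictated by the radial model case (in which $P$ is constant). A direct computation, exploiting $\De_N u=0$, shows that $P$ is a subsolution of a suitable linear elliptic operator (possibly degenerate at the critical set of $u$). Together with the boundedness of $P$ at infinity given by Step 1 and the overdetermined condition $|\na u|=c$ on $\Ga$, the strong maximum principle then forces the one-sided bound $P\le c^N\,e^{-N/\gamma}$ in $\RR^N\setminus\ol\Om$. In the final step, I would integrate on $B_R\setminus\ol\Om$ the Cauchy--Schwarz deficit for $\na^2 v$ weighted by an appropriate factor, for a natural auxiliary function $v$ related to $u$ (the ansatz $v=e^{-u/\gamma}$, turning the logarithmic profile into a torsion-like profile on the transformed geometry, is a reasonable guess); letting $R\to\infty$, the contribution on $\pa B_R$ disappears by the asymptotics, and the resulting right-hand side becomes a boundary integral on $\Ga$ whose integrand carries the sign of $c^N\,e^{-N/\gamma}-P$, hence is non-positive. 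Lemma \ref{lem:sphericaldetector} combined with Theorem \ref{th:SBT} then forces $\Ga$ to be a sphere.

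The hardest step will be the second one: designing the correct $P$-function and verifying the degenerate elliptic inequality it satisfies, so that the strong maximum principle and Hopf lemma apply in spite of the critical set $\{\na u=0\}$ and of the borderline decay intrinsic to the case $p=N$. Once the right $P$ is in hand, the remaining integral-identity manipulation and the invocation of Theorem \ref{th:SBT} should essentially parallel those in the proof of Theorem \ref{thm:Serrinesterno}.
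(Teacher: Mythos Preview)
Your Step~1 matches the paper, but from there the paper takes a much shorter route that bypasses $P$-functions, maximum principles, and Theorem~\ref{th:SBT} entirely. Since $u$ is $N$-harmonic, the field $X=N\langle x,\na u\rangle\,|\na u|^{N-2}\na u-|\na u|^N x$ is divergence-free; integrating $\dv X=0$ over $B_R\setminus\ol\Om$ and sending $R\to\infty$ gives $\int_\Ga\langle X,\nu\rangle\,dS_x=\lim_{R\to\infty}\int_{\pa B_R}\langle X,\nu_{B_R}\rangle\,dS_x$. Using $\na u=u_\nu\nu$ and $|\na u|=c$ on $\Ga$, the left side equals $(N-1)c^N\,N|\Om|$, while the asymptotics of Step~1 make the right side $(N-1)c^N\,|\Ga|^{N/(N-1)}/\om_N^{1/(N-1)}$. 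One lands directly on the equality case of the classical isoperimetric inequality, hence $\Om$ is a ball.

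Your adaptation of the $p<N$ scheme has a genuine gap at Step~2. To conclude $P\le P_{|\Ga}$ from the strong maximum principle you must first know $\limsup_{|x|\to\infty}P\le P_{|\Ga}$. With $P=|\na u|^N e^{-Nu/\gamma}$, the behavior of $P$ at infinity is governed not only by the leading coefficient $\gamma$ but by the full $O(1)$ remainder in $u(x)=-\gamma\ln|x|+O(1)$, which is a priori unknown. In the case $p<N$ the analogous comparison reduced to the isoperimetric inequality because one had the \emph{two} identities \eqref{eq:caponboundary} and \eqref{eq:Poho}, which together pinned down $c$ in terms of $|\Om|$ and $|\Ga|$. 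When $p=N$ the factor $N-p$ in \eqref{eq:Poho} vanishes, so only the flux relation survives and there is no a priori way to verify the comparison you need. The paper's insight is that the surviving Pohozaev information is not an ingredient feeding a $P$-function inequality but \emph{is} the conclusion itself. Step~3, as you yourself flag, remains speculative; note also that $v=e^{-u/\gamma}$ is unbounded at infinity in the conformal regime, so the torsion-like picture you are hoping for does not materialize.
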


A proof of Theorem \ref{thm:conforme} that uses the method of moving planes is contained in \cite{Re2}, under the additional a priori smoothness assumptions $\Ga \in C^{2, \al}$, $u \in C^{1,\al}(\RR^N \setminus \Om)$.
Our proof via integral identities seems to be new and cannot be found in the literature unless for the classical case $p=N=2$, which has been treated with similar arguments in \cite{Mar} (for piecewise smooth domains) and \cite{MR} (for Lipschitz domains). Moreover, in Theorem \ref{thm:conforme} no assumptions on the domain $\Om$ are made.

We mention that a related symmetry result for the $N$-capacitary potential in a bounded (smooth) star-shaped ring domain has been established in \cite{PP3}.


In Section \ref{sec:Interior}, we show how the same ideas used in our proof of Theorem \ref{thm:Serrinesterno} can be adapted to give a symmetry result for a similar problem in a bounded punctured domain. More precisely, we prove the following theorem concerning the problem
\begin{equation}\label{Pdelta}
- \De_p u = K \, \de_0 \, \mbox{ in } \Om , \, u=c \, \mbox{ on } \Ga,
\end{equation} 
under Serrin's overdetermined condition 
\begin{equation}\label{eq:interno-overdetermination}
| \na u| = 1  \mbox{ on } \Ga ,
\end{equation}
where with $\de_0$ we denote the Dirac delta centered at the origin $0 \in \Om$ and $K$ is some positive normalization constant.


\begin{thm}[\cite{Pog}]
\label{thm:Serrininterno}
Let $\Om \subset \RR^N$ be a bounded star-shaped domain.
For $1<p<N$, the problem \eqref{Pdelta}, \eqref{eq:interno-overdetermination}
admits a weak solution if and only if $\Om$ is a ball centered at the origin.
\end{thm}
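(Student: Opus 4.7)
The plan is to adapt to the punctured interior setting the strategy used for the exterior problem in Theorem \ref{thm:Serrinesterno}. I would combine an appropriate P-function (\`a la Payne--Philippin--Garofalo--Sartori) with a Rellich--Pohozaev identity on $\Om_\eps=\Om\setminus\overline{B_\eps(0)}$, using the star-shapedness of $\Om$ with respect to $0$ to control the contribution on $\Ga$ and the asymptotic behavior of $u$ near the origin to control the contribution on $\pa B_\eps$. The ``if'' direction is immediate by direct computation on a ball, so I focus on the ``only if'' direction.

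By the classical theory of isolated singularities of quasi-linear elliptic equations (Serrin, Kichenassamy--V\'eron), any weak solution of \eqref{Pdelta} lies in $C^{1,\al}(\overline{\Om}\setminus\{0\})$ and has the asymptotic behavior $u(x) = \gamma\,|x|^{-(N-p)/(p-1)} + O(1)$ as $x\to 0$, with $\gamma=\gamma(K,N,p)>0$, together with the corresponding controlled asymptotics for $\na u$ and $|\na u|^{p-2}\na u$. The comparison principle forces $u>c$ in $\Om\setminus\{0\}$, whence $\na u=-\nu$ on $\Ga$ under the condition $|\na u|=1$. The $p$-Laplace Rellich--Pohozaev identity on $\Om_\eps$ reads
\[
\tfrac{N-p}{p}\!\int_{\Om_\eps}\!|\na u|^p\,dx = \int_{\pa\Om_\eps}\!\!\left\{\tfrac{|\na u|^p}{p}\lan x,\nu\ran -|\na u|^{p-2}\lan\na u,\nu\ran\lan\na u,x\ran\right\}dS_x;
\]
on $\Ga$ the integrand reduces to $-\tfrac{p-1}{p}\lan x,\nu\ran$ by $|\na u|=1$ and $\na u=-\nu$, yielding $-\tfrac{p-1}{p}N|\Om|$ after the divergence theorem, while on $\pa B_\eps$ the integral admits an explicit limit as $\eps\to 0$ computed from the Step~1 asymptotics.

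In parallel, I would introduce a P-function of Payne--Philippin--Garofalo--Sartori type, built out of $|\na u|^p$ and a suitable function of $u$, chosen to be constant on the model radial capacitary profile. A Bochner-type computation for $\De_p$ shows that this $P$ is a subsolution of a linear elliptic equation in non-divergence form on $\{|\na u|>0\}$, so the strong maximum principle on $\Om_\eps$, combined with the asymptotics at $0$ and the overdetermined condition on $\Ga$, gives a one-sided bound for $P$ in terms of its (constant) value on $\Ga$. Matching this inequality against the Pohozaev identity produces an integral inequality in which the equality sign is forced; unpacking this equality yields the analogue of the Cauchy--Schwarz pointwise equality \eqref{newton} of Lemma \ref{lem:sphericaldetector}, which forces the level sets of $u$ to be concentric spheres. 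The location of the singularity then pins the common center at $0$, and $\Ga=\{u=c\}$ must be a sphere centered at the origin, so $\Om$ is a ball centered at $0$.

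The main obstacle I expect is the careful passage to the limit $\eps\to 0$ in the boundary integrals over $\pa B_\eps$: this requires sharp decay estimates in the Serrin--Kichenassamy--V\'eron asymptotics of Step~1 in order to show that all the singular contributions cancel or converge to explicit quantities, and one must simultaneously justify the P-function machinery across the \emph{a priori} unknown critical set of $u$ and near the singular point. This is handled by working throughout on $\Om_\eps$, applying the linearized strong maximum principle away from critical points, and passing to the limit using the explicit decay rates.
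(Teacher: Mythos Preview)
Your plan diverges substantially from the paper's argument. The paper does \emph{not} use a Pohozaev identity for the interior problem, and it never aims at a pointwise Cauchy--Schwarz rigidity of the type in Lemma~\ref{lem:sphericaldetector}. Instead, the proof mirrors the exterior case almost verbatim: with the same P-function $P=|\na u|^p/u^{p(N-1)/(N-p)}$ (after normalizing the additive constant so that $u=c=\frac{p-1}{N-p}\frac{N|\Om|}{|\Ga|}$ on $\Ga$), the Kichenassamy--V\'eron asymptotics and the isoperimetric inequality give $\lim_{|x|\to 0}P(x)\le P_{|\Ga}$, so the strong maximum principle forces $P_\nu\ge 0$ on $\Ga$. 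A direct computation of $P_\nu$ then yields the \emph{pointwise} inequality $H\le H_0$ on $\Ga$. Star-shapedness enters only here: multiplying $H\le H_0$ by $\lan x-z,\nu\ran\ge 0$ (with $z$ the star center, possibly $z\neq 0$) and invoking Minkowski's identity forces $H\equiv H_0$, and Alexandrov's theorem (or Theorem~\ref{th:SBT}) concludes.

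What you sketch is closer in spirit to the Weinberger-type argument of Enciso--Peralta-Salas, which the paper itself cites as an alternative that avoids star-shapedness. That route can be made to work, but your proposal has a real gap at the decisive step: ``matching this inequality against the Pohozaev identity produces an integral inequality in which the equality sign is forced'' is asserted without mechanism, and for $p\neq 2$ the relevant rigidity is \emph{not} inequality~\eqref{newton} of Lemma~\ref{lem:sphericaldetector} (which is specific to the Laplacian and the torsion function) but the $p$-Bochner identity and its equality case. You also assume star-shapedness with respect to $0$, whereas the hypothesis is star-shapedness with respect to some point of $\Om$; note that in your Pohozaev computation on $\Ga$ you actually only used the divergence theorem, so as written your outline never exploits the hypothesis at all.
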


It should be noticed, however, that in Theorem \ref{thm:Serrininterno} we need to assume $\Om$ to be star-shaped, restriction that is not present in the proofs of Payne and Schaefer (\cite{PS})(for the case $p=2$), Alessandrini and Rosset (\cite{AR}), and Enciso and Peralta-Salas (\cite{EP}).
We mention that the proof in \cite{AR} uses an adaptation of the method of moving planes, the proof in \cite{EP} is in the wake of Weinberger, and both of them also cover the special case $p=N$.



For $p=2$, Problem \eqref{Problemcapacity} arises naturally in electrostatics. In this context $u$ is the (normalized) potential of the electric field $\na u$ generated by a conductor $\Om$.
We recall that when $\Om$ is in the electric equilibrium, the electric field in the interior of $\Om$ is null, and hence the electric potential $u$ is constant in $\Om$ (i.e. $u \equiv 1$ in $\Om$); moreover, the electric charges present in the conductor are distributed on the boundary $\Ga$ of $\Om$.
It is also known that the electric field $\na u$ on $\Ga$ is orthogonal to $\Ga$ -- i.e. $\na u = u_\nu \, \nu$, where
%
%
$\nu$ denotes the outer unit normal with respect to $\Om$ and $u_\nu$ denotes the derivative of $u$ in the direction $\nu$
-- and its intensity is given\footnote{More precisely, in free space the intensity of the electric field on $\Ga$ is given by $|\na u|_{| \Ga} = - u_\nu = \frac{\rho}{\eps_0}$, where $\rho$ is the surface charge density over $\Ga$ and $\eps_0$ is the vacuum permittivity. We ignored the constant $\eps_0$ to be coherent with the mathematical definition of capacity given in \eqref{def:Capacity}.} by the surface charge density over $\Ga$. In this context the capacity is defined as the total electric charge needed to induce the potential $u$, that is 
$$\pCap(\Om)= \int_\Ga | \na u | \, dS_x, $$
in accordance with \eqref{eq:caponboundary} for $p=2$.
Following this physical interpretation Theorem \ref{thm:Serrinesterno} simply states that the electric field on the boundary $\Ga$ of the conductor $\Om$ is constant -- or equivalently that the charges present in the conductor are uniformly distributed on $\Ga$ (i.e. the surface charge density is constant over $\Ga$) -- if and only if $\Om$ is a round ball.


Another result of interest in the same context that is related to Problem \eqref{Problemcapacity}, \eqref{eq:overdetermination} is a Poincar\'e's theorem known as the isoperimetric inequality for the capacity, stating that, among sets having given volume, the ball minimizes $\pCap(\Om)$.
We mention that a proof of this inequality that hinges on rearrangement techniques can be found in \cite[Section 1.12]{PZ} (see also \cite{Ja} for a useful review of that proof).
Here, -- as done in the introduction for the isoperimetric problem and Saint Venant's principle -- we just want to underline the relation present between this result and Problem \eqref{Problemcapacity}, \eqref{eq:overdetermination} (for $p=2$).
In fact, once that the existence of a minimizing set $\Om$ is established, we can show through the technique of shape derivatives that the solution of \eqref{Problemcapacity} in $\Om$ also satisfies the overdetermined condition \eqref{eq:overdetermination} on $\Ga$; the reasoning is the following.


We consider the evolution of the domains $\Om_t$ given by
\begin{equation*}
\Om_t = \cM_t (\Om), 
\end{equation*}
where $\Om= \Om_0$ is fixed, and $\cM_t: \RR^N \to \RR^N$ is a mapping such that
$$
\cM_0 (x) =x, \; \cM'_0 (x) = \phi(x) \nu(x),
$$
where the symbol $'$ means differentiation with respect to $t$, $\phi$ is any compactly supported continuous function, and $\nu$ is a proper extension of the unit normal vector field to a tubular neighborhood of $\Ga_0$.
%
%
Thus, we consider $u(t,x)$, solution of Problem \eqref{Problemcapacity} in $\Om= \Om_t$, and the two functions (in the variable $t$) $\pCap (\Om_t)$ and $| \Om_t|$.
%
%
%
Since $\Om=\Om_0$ is the domain that minimizes $\pCap(\Om_t)$ among all the domains in the one-parameter family $\left\lbrace \Om_t \right\rbrace_{t \in \RR}$ that have prescribed volume $|\Om_t|=V$, by using the method of Lagrange multipliers and Hadamard's variational formula (see \cite[Chapter 5]{HP}), standard computations lead to prove that there exists a number $\la$ such that
\begin{equation*}
\int_{\Ga} \phi(x) \left[ u_\nu^2 (x) - \la \right] \, dS_x = 0,
\end{equation*}
where we have set $u(x)=u(0,x)$.
Since $\phi$ is arbitrary, we deduce that $u_\nu^2 \equiv \la$ on $\Ga$, that is, $u$ satisfies the overdetermined condition \eqref{eq:overdetermination} on $\Ga$.



Other applications of Problem \eqref{Problemcapacity} are related to quantum theory, acoustic, theory of musical instruments, and the study of heat, electrical and fluid flow (see for example \cite{CFG}, \cite{DZH}, \cite{BC} and references therein).

Finally, we just mention that also Problem \eqref{Pdelta} arises in electrostatics for $p=2$: in this case, Theorem \ref{thm:Serrininterno} states that the electric field on a conducting hypersurface enclosing a charge is constant if and only if the conductor is a sphere centered at the charge.

\section{The exterior problem: proof of Theorem \ref{thm:Serrinesterno}}\label{sec:exterior}
\sectionmark{The exterior problem: proof of Theorem \ref{thm:Serrinesterno}}

In order to prove Theorem \ref{thm:Serrinesterno}, we start by collecting all the necessary ingredients.
In this section $u$ denotes a weak solution to \eqref{Problemcapacity},\eqref{eq:overdetermination}, in the sense explained in the Introduction, and it holds that $1<p<N$.



\begin{rem}[On the regularity]\label{rem:regularity}
{\rm
Due to the degeneracy or singularity of the $p$-Laplacian (when $p \neq 2$) at the critical points of $u$, $u$ is in general only $C^{1, \al}_{loc}(\RR^N \setminus \ol{\Om})$ (see \cite{Di}, \cite{Le}, \cite{To}), whereas it is $C^{\infty}$ in a neighborhood of any of its regular points thanks to standard elliptic regularity theory (see \cite{GT}). However, as already noticed in \cite{GS}, the additional assumption given by the weak boundary condition \eqref{eq:overdetermination} ensures that $u$ can be extended to a $C^2$-function in a neighborhood of $\Ga$, so that by using the work of Vogel \cite{Vo} we get that $\Ga$ is of class $C^2$. Thus, by \cite[Theorem 1]{Li} it turns out that $u$ is $C^{1, \al}_{loc} (\RR^N \setminus \Om)$.
As a consequence we can now interpret the boundary condition in \eqref{Problemcapacity} and the one in \eqref{eq:overdetermination} in the classical strong sense.
}
\end{rem}

\begin{rem}
{\rm
More precisely, by using Vogel's work \cite{Vo}, which is based on the deep results on free boundaries contained in \cite{AC} and \cite{ACF}, one can prove that $\Ga$ is of class $C^{2,\al}$ from each side. Even if 
%
%
here we do not need this refinement, it should be noticed that in light of this remark the arguments contained in \cite{Re2} give an alternative and complete proof of Theorem \ref{thm:Serrinesterno} (and also of Theorem \ref{thm:conforme}). In fact, the smoothness assumptions of \cite[Theorem 1]{Re2} are satisfied.
}
\end{rem}


By using the ideas contained in \cite{GS} together with a result of Kichenassamy and V\'eron (\cite{KV}), we can recover the following useful asymptotic expansion for $u$ as $|x|$ tends to infinity.
\begin{lem}[Asymptotic expansion, \cite{Pog}]\label{lem:asymptoticcap}
As $|x|$ tends to infinity it holds that
\begin{equation}\label{eq:cap-asymptotic u}
u(x) = \frac{p-1}{N-p} \left( \frac{\pCap_p (\Om)}{\om_N} \right)^{\frac{1}{p-1}} |x|^{- \frac{N-p}{p-1}} + o (|x|^{- \frac{N-p}{p-1}}).
\end{equation}
\end{lem}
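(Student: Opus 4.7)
The plan is to combine two ingredients: an asymptotic expansion theorem for positive $p$-harmonic functions in exterior domains due to Kichenassamy and V\'eron, and a flux identity derived from the fact that $|\na u|^{p-2}\na u$ is divergence-free.

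First I would invoke the Kichenassamy--V\'eron asymptotic result \cite{KV}. Since $u$ is positive, $p$-harmonic outside the compact set $\ol{\Om}$, and tends to $0$ at infinity, their theorem produces a constant $\gamma \ge 0$ such that
\begin{equation*}
u(x) = \gamma\, |x|^{-\frac{N-p}{p-1}} + o\bigl(|x|^{-\frac{N-p}{p-1}}\bigr),
\end{equation*}
together with the matching $C^1$-asymptotic
\begin{equation*}
\na u(x) = -\gamma\,\tfrac{N-p}{p-1}\, |x|^{-\frac{N-1}{p-1}}\, \tfrac{x}{|x|} + o\bigl(|x|^{-\frac{N-1}{p-1}}\bigr),
\end{equation*}
as $|x|\to\infty$. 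All that remains is to identify the value of $\gamma$.

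To pin down $\gamma$, I would derive a flux identity by integrating the equation $\dv\bigl(|\na u|^{p-2}\na u\bigr)=0$ on the annulus $B_R \setminus \ol{\Om}$ for $R$ large. The divergence theorem gives that the outward flux of $|\na u|^{p-2}\na u$ through $\partial B_R$ balances the flux through $\Ga$ (computed with the outer normal $-\nu$ of the annulus). On $\Ga$, the boundary condition $u\equiv 1$, being a maximum of $u$, forces $\na u = -|\na u|\,\nu$, so that the $\Ga$-flux equals $\int_{\Ga}|\na u|^{p-1}\,dS_x$, which by \eqref{eq:caponboundary} is exactly $\pCap_p(\Om)$. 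On the other hand, inserting the gradient asymptotic and using that $\partial B_R$ has $(N-1)$-measure $\om_N R^{N-1}$, a short computation yields
\begin{equation*}
\int_{\partial B_R} |\na u|^{p-2}\,\na u \cdot \tfrac{x}{|x|}\, dS_x \longrightarrow -\left(\gamma\,\tfrac{N-p}{p-1}\right)^{p-1}\om_N
\end{equation*}
as $R\to\infty$. Matching the two expressions gives $\gamma = \tfrac{p-1}{N-p}\bigl(\pCap_p(\Om)/\om_N\bigr)^{1/(p-1)}$, which is exactly \eqref{eq:cap-asymptotic u}.

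The main subtlety is ensuring that the Kichenassamy--V\'eron result supplies not only the $u$-asymptotic but also the $C^1$-asymptotic for $\na u$ needed in the flux computation; without gradient control, one cannot pass to the limit under the integral on $\partial B_R$. The degenerate case $\gamma=0$ is then ruled out automatically, since the flux identity would contradict $\pCap_p(\Om)>0$.
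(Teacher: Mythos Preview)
Your proposal is correct and follows essentially the same route as the paper: invoke the Kichenassamy--V\'eron asymptotic (including the $C^1$-expansion), then identify the constant via the flux identity obtained by integrating $\dv(|\na u|^{p-2}\na u)=0$ on $B_R\setminus\ol{\Om}$ and passing $R\to\infty$. The only point the paper makes explicit that you leave implicit is the preliminary two-sided barrier $c_1\mu(x)\le u(x)\le c_2\mu(x)$ for large $|x|$, obtained by the weak comparison principle against radial fundamental solutions; this is the hypothesis under which \cite[Remark~1.5]{KV} applies, so you should state it rather than assume it follows automatically from positivity and decay to zero.
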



The computations leading to determine the constant of proportionality in \eqref{eq:cap-asymptotic u} originate from \cite{GS}, where they have been used to give a complete proof -- which works without invoking \cite{KV} -- of a different but related result
(\cite[Theorem~3.1]{GS}), which is described in more details in Remark \ref{rem:Garofaoloasymptotic} below.
We mention that, later, in \cite{CoS} the same computations have been treated with tools of convex analysis and used together with the result contained in \cite[Remark 1.5]{KV} to prove Lemma \ref{lem:asymptoticcap} for convex sets.

\begin{proof}[Proof of Lemma \ref{lem:asymptoticcap}]
As noticed in \cite{GS}, if $u$ is a solution of \eqref{Problemcapacity}, then the weak comparison principle for the $p$-Laplacian (see \cite{HKM}) implies the existence of positive constants $c_1$,$c_2$, $R_0$ such that
$$
c_1 \mu(x) \le u(x) \le c_2 \mu(x), \quad \text{ if } |x| \ge R_0,
$$
where $\mu(x)$ denotes the radial fundamental solution of the $p$-Laplace operator given by
\begin{equation*}
\mu(x) = \frac{(p-1)}{(N-p)} \frac{1}{\om_N^{\frac{1}{p-1}}} |x|^{\frac{p-N}{p-1}}.
\end{equation*}
Thus we can apply the result of Kichenassamy and V\'eron (\cite[Remark 1.5]{KV}) and state that there exists a constant $\ga$ such that
\begin{equation}\label{eq:passKV}
\lim_{|x| \to \infty} \frac{u(x)}{\mu(x)} = \ga, \quad \lim_{|x| \to \infty} |x|^{\frac{N-p}{p-1} + | \al|}  D^\al \left( u - \ga \mu \right)=0,
\end{equation}
for all multi-indices $\al=(\al_1, \dots, \al_N)$ with $|\al|= \al_1+ \dots + \al_N \ge 1$.
To establish \eqref{eq:cap-asymptotic u} now it is enough to prove that
\begin{equation}\label{eq:0gamma}
\ga = \pCap_p (\Om)^{\frac{1}{p-1}};
\end{equation}
this can be easily done, as already noticed in \cite{GS}, through the following integration by parts that holds true by the $p$-harmonicity of $u$:
\begin{equation}\label{eq:capintegrperasymptotic}
- \int_{\Ga} |\na u|^{p-2} u_{\nu} \, dS_x = - \lim_{R \to \infty} \int_{\pa B_R} |\na u|^{p-2} u_{\nu_{B_R}} \, dS_x .
\end{equation}
Here as in the rest of the chapter $\nu_{B_R}$ denotes the outer unit normal with respect to the ball $B_R$ of radius $R$, $\nu$ is the outer unit normal with respect to $\Om$, and $u_\nu$ (resp.
$u_{\nu_{B_R}}$) is the derivative of $u$ in the direction $\nu$ (resp. $\nu_{B_R}$). 
The left-hand side of \eqref{eq:capintegrperasymptotic} is exactly $\pCap_p (\Om)$ as it is clear by \eqref{eq:caponboundary} and the fact that 
\begin{equation}\label{eq:capnormagradmenodernormale}
| \na u|= - u_\nu \text{ on } \Ga ;
\end{equation}
moreover, the limit in the right-hand side of \eqref{eq:capintegrperasymptotic} can be explicitly computed by using the second equation in \eqref{eq:passKV} (with $|\al|=1$) and it turns out to be $\ga^{p-1}$. Thus, \eqref{eq:0gamma} is proved and \eqref{eq:cap-asymptotic u} follows.

For completeness, we explain here how to prove the second identity in \eqref{eq:caponboundary}: we take the limit for $R \to \infty$ of the following integration by parts made on $B_R \setminus \ol{\Om} $ and we note that the integral on $\pa B_R$ converge to zero due to
\eqref{eq:passKV}:
\begin{equation*}
\int_{B_R \setminus \ol{\Om} } |\na u|^p \, dx = \int_{\pa B_R} u | \na u|^{p-2} u_{\nu_{B_R}} \, dS_x - \int_{\Ga} |\na u|^{p-2} u_\nu \, dS_x .
\end{equation*}
The desired identity is thus proved just by recalling \eqref{eq:capnormagradmenodernormale}.
\end{proof}


It is well known that the value $c$ of $| \na u|$ on $ \Ga$ appearing in the overdetermined condition \eqref{eq:overdetermination} can be explicitly computed.

\begin{lem}[Explicit value of $c$ in \eqref{eq:overdetermination}]
The constant $c$ appearing in \eqref{eq:overdetermination} equals
\begin{equation}\label{eq:valoreunu serrinesterno}
c = \frac{N-p}{p-1} \, H_0,
\end{equation}
where $H_0$ is the constant defined in \eqref{def-R-H0}.
Moreover, the following explicit expression of the $p$-capacity of $\Om$ holds:
\begin{equation}\label{eq:cap in terms isop}
\pCap_p (\Om) = \left( \frac{N-p}{p-1} \right)^{p-1} \frac{| \Ga|^p}{(N | \Om|)^{p-1}}.
\end{equation}
\end{lem}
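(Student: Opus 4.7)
The plan is to combine the already-proven identity $\pCap_p(\Om) = \int_\Ga |\na u|^{p-1}\,dS_x$ from \eqref{eq:caponboundary} with a Pohozaev-type integral identity obtained from the $p$-harmonicity of $u$ in the exterior. This gives two equations in the two unknowns $c$ and $\pCap_p(\Om)$, which can be solved explicitly.

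\textbf{Step 1 (algebraic relation from the overdetermination).} By \eqref{eq:caponboundary} and the overdetermined condition \eqref{eq:overdetermination}, I have immediately
\begin{equation*}
\pCap_p(\Om)=\int_\Ga |\na u|^{p-1}\,dS_x = c^{\,p-1}\,|\Ga|.
\end{equation*}

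\textbf{Step 2 (Pohozaev identity).} Consider on $\RR^N\setminus\ol{\Om}$ the vector field
\begin{equation*}
F=\frac{|\na u|^p}{p}\,x-(x\cdot\na u)\,|\na u|^{p-2}\na u.
\end{equation*}
A direct computation, using $\De_p u=0$ and the elementary identity $\na(x\cdot\na u)=\na u+\na^2 u\,x$, yields
\begin{equation*}
\dv F=\frac{N-p}{p}\,|\na u|^p.
\end{equation*}
I would then integrate this identity over the annular region $B_R\setminus\ol{\Om}$ and apply the divergence theorem. On $\Ga$ (with outer normal $-\nu$ relative to $\RR^N\setminus\ol{\Om}$), using $u\equiv 1$ (so $\na u=u_\nu\nu$) and \eqref{eq:capnormagradmenodernormale} (so $u_\nu=-c$, $|\na u|=c$), the boundary integrand collapses to $\tfrac{(p-1)c^p}{p}(x\cdot\nu)$, and $\int_\Ga x\cdot\nu\,dS_x=N|\Om|$ by the divergence theorem on $\Om$.

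\textbf{Step 3 (vanishing at infinity and conclusion).} The asymptotic expansion of Lemma \ref{lem:asymptoticcap} (together with \eqref{eq:passKV} applied to first derivatives) shows that $|\na u|^p$ and $|x\cdot\na u||\na u|^{p-2}|u_{\nu_{B_R}}|$ decay like $|x|^{-p(N-1)/(p-1)}$ at infinity. Multiplying by the extra factor $|x|$ from $F\cdot\nu_{B_R}$ and by $|x|^{N-1}$ from the area element gives a factor $R^{(p-N)/(p-1)}\to 0$, since $p<N$; this also ensures $|\na u|^p$ is integrable on $\RR^N\setminus\ol{\Om}$. Letting $R\to\infty$ in Step 2 therefore yields
\begin{equation*}
\frac{N-p}{p}\,\pCap_p(\Om)=\frac{(p-1)\,c^p}{p}\,N|\Om|.
\end{equation*}
Substituting $\pCap_p(\Om)=c^{\,p-1}|\Ga|$ from Step 1 gives $(N-p)|\Ga|=(p-1)N|\Om|\,c$, i.e.\ \eqref{eq:valoreunu serrinesterno}, and then re-inserting the value of $c$ into $\pCap_p(\Om)=c^{\,p-1}|\Ga|$ produces \eqref{eq:cap in terms isop}.

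\textbf{Main obstacle.} The only delicate point is the rigorous vanishing of the surface integral on $\pa B_R$ as $R\to\infty$; everything else is pointwise differential identities and the divergence theorem. This is precisely where Lemma \ref{lem:asymptoticcap}, and specifically the derivative estimates from \eqref{eq:passKV}, are indispensable. Once those are invoked the decay rates match and the argument goes through routinely.
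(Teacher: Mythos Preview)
Your proof is correct and follows essentially the same approach as the paper: both combine the relation $\pCap_p(\Om)=c^{p-1}|\Ga|$ from \eqref{eq:caponboundary} with the same Rellich--Pohozaev identity (your vector field $F$ is exactly the one underlying the paper's \eqref{eq:provaconforme}--\eqref{eq:Poho}), and both use the asymptotics of Lemma~\ref{lem:asymptoticcap} to kill the boundary terms at infinity. The only cosmetic difference is that you carry the factor $1/p$ in $F$ and evaluate the $\Ga$-boundary term directly, whereas the paper first states \eqref{eq:Poho} in the form $(N-p)\int|\na u|^p=(p-1)\int_\Ga|\na u|^p\langle x,\nu\rangle$ and then plugs in $|\na u|=c$.
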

\proof
It is enough to use \eqref{eq:caponboundary} together with the following Rellich-Pohozaev-type identity:
\begin{equation}\label{eq:Poho}
(N-p)\int_{\RR^N \setminus \ol{\Om} } |\na u|^p \, dx = (p-1)\int_\Ga |\na u|^p <x, \nu> \, dS_x .
\end{equation}
In fact, by using \eqref{eq:overdetermination} in \eqref{eq:caponboundary} and \eqref{eq:Poho} we deduce respectively that
\begin{equation*}
\pCap_p (\Om) = c^{p-1} | \Ga| \quad \text{and} \quad \pCap_p (\Om)=\frac{p-1}{N-p} c^p N | \Om| ,
\end{equation*}
from which we get \eqref{eq:valoreunu serrinesterno} and \eqref{eq:cap in terms isop}.

Equation \eqref{eq:Poho} comes directly by taking the limit for $R \to \infty$ of the following integration by parts made on $B_R \setminus \ol{\Om} $ and noting that the integrals on $\pa B_R$ converge to zero due to the asymptotic going of $u$ at infinity given by \eqref{eq:cap-asymptotic u}:
\begin{multline}\label{eq:provaconforme}
(N-p)\int_{B_R \setminus \ol{\Om} } |\na u|^p \, dx = 
\\
p \int_\Ga |\na u|^{p-2} <x, \na u> u_\nu \, dS_x - \int_\Ga |\na u|^p <x, \nu> \, dS_x -
\\
 p \int_{\pa B_R } |\na u|^{p-2} <x, \na u> u_{\nu_{B_R}} \, dS_x + \int_{\pa B_R } |\na u|^p <x, \nu_{B_R}> \, dS_x .
\end{multline}
Equation \eqref{eq:Poho} is in fact proved just by recalling that 
\begin{equation}\label{eq:capgraddernormnormale}
\na u = u_\nu \, \nu \text{ on }\Ga.
\end{equation}
\endproof


\vspace{1pt}
\noindent
\textbf{The P-function.}
As last ingredient, we introduce the $P$-function
\begin{equation}
\label{def:P}
P= \frac{|\na u|^p}{u^{\frac{p(N-1)}{(N-p)}}}.
\end{equation}
Notice that in the radial case, i.e. if $\Om= B_R (x_0)$ is a ball of radius $R$ centered at the point $x_0$, we have that
\begin{equation}
\label{eq:esplicitaradiale1}
u(x) = \left( \frac{R}{|x  - x_0  |} \right)^{\frac{N-p}{p-1}}
\end{equation}
and thus
$$
P \equiv \left( \frac{N-p}{p-1} \right)^{p} R^{-p}.
$$

In \cite{GS} the authors have studied extensively the properties of the function $P$.
In particular, in \cite[Theorem 2.2]{GS} it is proved that $P$ satisfies the strong maximum principle, i.e. the function $P$ cannot attain a local maximum at an interior point of $\RR^N \setminus \ol{ \Om}$, unless $P$ is constant. We mention that this property for the case $p=2$ was first established in \cite{PP2}.

\vspace{1pt}

Now that we collected all the ingredients, we are in position to give the proof of Theorem \ref{thm:Serrinesterno}.


\begin{proof}[Proof of Theorem \ref{thm:Serrinesterno}]

By \eqref{eq:cap-asymptotic u} it is easy to check that
\begin{equation*}
\lim_{|x| \rightarrow \infty} P(x)= \left( \frac{N-p}{p-1} \right)^{\frac{p(N-1)}{N-p}} \left( \frac{ \om_N }{ \pCap_p(\Om)} \right)^{\frac{p}{N-p}} ,
\end{equation*}
from which, by using \eqref{eq:cap in terms isop}, we get
\begin{equation}\label{eq:prova2}
\lim_{|x| \rightarrow \infty} P(x)= \left( \frac{N-p}{p-1} \right)^{p} \left( \frac{ \om_N \left( N | \Om| \right)^{p-1} }{ | \Ga|^p } \right)^{\frac{p}{N-p}}.
\end{equation}

Moreover, by recalling the boundary condition in 
\eqref{Problemcapacity}, \eqref{eq:overdetermination}, and \eqref{eq:valoreunu serrinesterno}
we can compute that
\begin{equation}\label{eq:prova}
P_{| \Ga} =  \left( \frac{N-p}{p-1} \right)^p  \left( \frac{| \Ga| }{ N | \Om| } \right)^p.
\end{equation}

By using the classical isoperimetric inequality (see, e.g., \cite{BZ})
\begin{equation}\label{eq:ISOPerimetrica}
\frac{| \Ga|^{\frac{N}{N-1}}}{N \om_N^{\frac{1}{N-1}}} \ge |\Om|,
\end{equation}
by \eqref{eq:prova2} and \eqref{eq:prova} it is easy to check that
$$
\lim_{|x| \rightarrow \infty} P(x) \le P_{| \Ga}.
$$
Hence, by the strong maximum principle proved in \cite[Theorem 2.2]{GS}, $P$ attains its maximum on $\Ga$ and thus we can affirm that
\begin{equation}\label{eq:00}
P_{\nu} \le 0 ,
\end{equation}
where, $\nu$ is still the outer unit normal with respect to $\Om$.
If we directly compute $P_{\nu}$ and we use \eqref{eq:capgraddernormnormale}, we find
\begin{equation}\label{eq:01}
P_\nu = p \, u^{- \frac{p(N-1)}{N-p}} |\na u|^{p-2}  \left\lbrace u_{\nu \nu} u_\nu - \frac{N-1}{N-p} \, |\na u|^2 \, \frac{u_\nu}{u} \right\rbrace.
\end{equation}
By the well known differential identity
\begin{equation*}
\De_p u = | \na u|^{p-2} \left\lbrace (p-1) u_{\nu \nu} + (N-1) H u_\nu \right\rbrace \quad \mbox{ on $\Ga$    }
\end{equation*}
and the $p$-harmonicity of $u$ we deduce that
\begin{equation}\label{eq:02}
u_{\nu \nu}= - \frac{N-1}{p-1} H u_\nu.
\end{equation} 
By combining \eqref{eq:00}, \eqref{eq:01}, and \eqref{eq:02} we get
\begin{equation*}
p (N-1)  u^{- \frac{p(N-1)}{N-p}} |\na u|^p  \left\lbrace \frac{- u_{\nu }}{(N-p) u} - \frac{H}{p-1} \right\rbrace \le 0,
\end{equation*}
from which, by using the fact that $u=1$ on $\Ga$, \eqref{eq:overdetermination}, \eqref{eq:capnormagradmenodernormale}, and \eqref{eq:valoreunu serrinesterno} we get
$$H_0 -H \le 0.$$
We can now conclude by using Theorem \ref{th:SBT}.
\end{proof}

\begin{rem}
{\rm
Since the solution of \eqref{Problemcapacity} in a ball is explicitly known, as a corollary of Theorem \ref{thm:Serrinesterno} we get that $u$ is spherically symmetric about the center $x_0$ of (the ball) $\Om$ and it is given by \eqref{eq:esplicitaradiale1} with $R= \frac{N-p}{(p-1)c}$.
}
\end{rem}

\begin{rem}\label{rem:Garofaoloasymptotic}
{\rm
As already mentioned before, in \cite{GS} a result slightly different from Lemma~\ref{lem:asymptoticcap} is used. In fact, in \cite[Theorem 3.1]{GS} it is proved, independently from the work \cite{KV}, that if $P$ takes its supremum at infinity, then the asymptotic expansion \eqref{eq:cap-asymptotic u} holds true and hence
$\lim_{|x| \to \infty} P(x)$ exists and it is given by \eqref{eq:prova2}; clearly, that result would be sufficient to complete the proof of Theorem \ref{thm:Serrinesterno} without invoking Lemma \ref{lem:asymptoticcap}.
}
\end{rem}

\begin{rem}
{\rm
In \cite{GS}, instead of the classical isoperimetric inequality, the authors use the nonlinear version of the isoperimetric inequality for the capacity mentioned in the Introduction stating that, for any bounded open set $A$, if $B_\rho$ denotes a ball such that $|B_\rho|= \frac{ \om_N }{N} \rho^N=|A|$,  it holds that
\begin{equation}\label{eq:Poincare}
\pCap_p (A) \ge \pCap_p (B_\rho) ,
\end{equation}
with equality if and only if $A$ is a ball (for a proof see, e.g., \cite{Ge}).

Since the $p$-capacity of a ball is known explicitly (see, e.g., \cite[Equation (4.7)]{GS}):
\begin{equation*}
\pCap_p(B_\rho)= \om_N \left( \frac{N-p}{p-1} \right)^{p-1} \rho^{N-p} ,
\end{equation*}
if we take $B_\rho$ such that $|B_\rho|= |\Om|$, for $\Om$ \eqref{eq:Poincare} becomes
\begin{equation}\label{eq:isop cap}
\pCap_p (\Om) \ge \om_N \left( \frac{N-p}{p-1} \right)^{p-1} \left( \frac{N |\Om|}{\om_N} \right)^{\frac{N-p}{N}}.
\end{equation}

Now we notice that, since \eqref{eq:cap in terms isop} holds, \eqref{eq:isop cap} is equivalent to the classical isoperimetric inequality \eqref{eq:ISOPerimetrica}.
In fact, if we put \eqref{eq:cap in terms isop} in \eqref{eq:isop cap}, it is easy to check that \eqref{eq:isop cap} becomes exactly \eqref{eq:ISOPerimetrica}.
}
\end{rem}




\section{The case \texorpdfstring{$1<p=N$}{1<p=N}: proof of Theorem \ref{thm:conforme}}\label{sec:casoconforme}

\sectionmark{The case \texorpdfstring{$1<p=N$}{1<p=N}: proof of Theorem \ref{thm:conforme}}

In the present section, we consider $u$ solution to the exterior problem \eqref{ConformalProblemcapacity}, \eqref{eq:overdetermination}, and $1<N$. 
In order to give the proof of Theorem \ref{thm:conforme}, we collect all the necessary ingredients in the following remark.

%
%
%

\begin{rem}
{\rm
(i)(On the regularity). We notice that the regularity results invoked in Remark \ref{rem:regularity} hold when $p=N$, too.

(ii)(Asymptotic expansion). By \eqref{eq:confasymppre}, the result of Kichenassamy and V\'eron (\cite[Remark 1.5]{KV}) applies also in this case and hence we have that \eqref{eq:passKV} holds with 
$$
\mu(x)= - \frac{ \ln{| x |} }{\om_N^{\frac{1}{N-1}}}.
$$
It is easy to check that also Identity \eqref{eq:capintegrperasymptotic} still holds (with $p$ replaced by $N$); by computing the limit in the right-hand side, and by using \eqref{eq:overdetermination} and the fact that $|\na u| = - u_\nu$ in the left-hand side, \eqref{eq:capintegrperasymptotic} leads to 
$$
c^{N-1} | \Ga|= \ga^{N-1},
$$
from which we deduce the following asymptotic expansion for $u$ at infinity:
\begin{equation}\label{eq:conformasymptoticexp}
u(x)= - c \left( \frac{|\Ga|}{\om_N} \right)^{\frac{1}{N-1}} \, \ln{|x|} + O(1).
\end{equation}
}
\end{rem}

We are ready now to prove Theorem \ref{thm:conforme}.

\begin{proof}[Proof of Theorem \ref{thm:conforme}]
We just find the analogous of the Rellich-Pohozaev-type identity \eqref{eq:provaconforme} when $p=N$; since $u$ is $N$-harmonic, now the vector field
$$X= N <x, \na u> | \na u|^{N-2} \na u - | \na u|^N x$$
is divergence-free and thus integration by parts leads to
\begin{equation}\label{eq:conformeintegrbyparts}
\int_{\Ga} <X, \nu> \, dS_x= \lim_{R \to \infty} \int_{\pa B_R} <X, \nu_{B_R}> \, dS_x.
\end{equation}

For the left-hand side in \eqref{eq:conformeintegrbyparts}, by using that $\na u = u_\nu \, \nu$ on $\Ga$ and \eqref{eq:overdetermination} we immediately find that
$$
\int_{\Ga} <X, \nu> \, dS_x= (N-1) c^N N|\Om|.
$$
For the right-hand side in \eqref{eq:conformeintegrbyparts}, by the asymptotic expansion \eqref{eq:conformasymptoticexp} we easily compute that
$$
\lim_{R \to \infty} \int_{\pa B_R} <X, \nu_{B_R}> \, dS_x = (N-1) c^N \frac{|\Ga|^{\frac{N}{N-1}}}{\om_N^{\frac{1}{N-1}}}.
$$
Thus, \eqref{eq:conformeintegrbyparts} becomes
$$
N |\Om| = \frac{|\Ga|^\frac{N}{N-1}}{\om_N^{\frac{1}{N-1}}},
$$
that is the equality case of the classical isoperimetric inequality. Hence $\Om$ must be a ball.
\end{proof}

\begin{rem}
{\rm
As a corollary of Theorem \ref{thm:conforme} we get that $u$ is spherically symmetric about the center $x_0$ of (the ball) $\Om$ and it is given by
$$
u(x)= - c \left( \frac{|\Ga|}{\om_N} \right)^{\frac{1}{N-1}} \, \ln{|x - x_0|} ,
$$
up to an additive constant.
}
\end{rem}

\section{The interior problem: proof of Theorem \ref{thm:Serrininterno}}\label{sec:Interior}

\sectionmark{The interior problem: proof of Theorem \ref{thm:Serrininterno}}

In the present section $u$ is a weak solution to \eqref{Pdelta}, \eqref{eq:interno-overdetermination}, and $1<p<N$.
By a weak solution of \eqref{Pdelta}, \eqref{eq:interno-overdetermination} we mean a function $u \in W^{1,p}_{loc} \left( \Om \setminus \left\lbrace 0 \right\rbrace \right)$ such that
$$
\int_{\Om} |\na u|^{p-2} \, \na u \cdot \na \phi \, dx=0
$$
for every $\phi \in C_0^\infty \left( \Om \setminus \left\lbrace 0 \right\rbrace \right)$ and satisfying the boundary condition in \eqref{Pdelta} and the one in \eqref{eq:interno-overdetermination} in the weak sense explained after Theorem \ref{thm:Serrinesterno}.

In order to give our proof of Theorem \ref{thm:Serrininterno}, we collect all the necessary ingredients in the following remark.

\begin{rem}
{\rm
(i)(On the regularity). The regularity results presented for the exterior problem in
Remark \ref{rem:regularity} hold in the same way also for the interior problem, so that, reasoning as explained there, we can affirm that $u$ can be extended to a $C^2$-function in a neighborhood of $\Ga$, $\Ga$ is of class $C^2$, and $u \in C^{1, \al}_{loc} (\ol{\Om})$.

(ii)(Explicit value of $K$ in \eqref{Pdelta}). It is easy to show that the normalization constant $K$ that appears in \eqref{Pdelta} must take the value
$$
K= |\Ga |,
$$
to be compatible with the overdetermined condition \eqref{eq:interno-overdetermination} (see for example \cite{EP}).

(iii)(Asymptotic expansion). As a direct application of \cite[Theorem 1.1]{KV}, we deduce that the asymptotic behaviour of the solution $u$ of \eqref{Pdelta} near the origin is given by
\begin{equation}
\label{asymptoticu}
u(x) = \frac{p-1}{N-p} \left( \frac{|\Ga|}{ \om_N} \right)^{\frac{1}{p-1}} \, |x|^{- \frac{N-p}{p-1}} + o(|x|^{- \frac{N-p}{p-1}}).
\end{equation}
We mention that this expansion has been used also in \cite{EP}.

}
\end{rem}

\vspace{1pt}
\noindent
\textbf{The P-function.}
We consider again the P-function defined in \eqref{def:P}; by virtue of the weak $p$-harmonicity of $u$ in $\Om \setminus \lbrace 0 \rbrace$, \cite[Theorem 2.2]{GS} ensures that the strong maximum principle holds for $P$ in $\Om \setminus \lbrace 0 \rbrace$.


\vspace{1pt}


We are ready now to prove Theorem \ref{thm:Serrininterno}; as announced in the Introduction, the proof uses arguments similar to those used in the proof of Theorem \ref{thm:Serrinesterno}.


\begin{proof}[Proof of Theorem \ref{thm:Serrininterno}]
As already noticed in \cite{EP}, concerning the existence of a weak solution to the overdetermined problem \eqref{Pdelta}, \eqref{eq:interno-overdetermination}, the actual value of the function $u$ on $\Ga$ is irrelevant, since any function differing from $u$ by a constant is $p$-harmonic whenever $u$ is.
Thus, let us now fix the constant $c$ that appears in \eqref{Pdelta} as
\begin{equation}\label{def:cSBT}
c= \frac{p-1}{N-p} \left( \frac{N |\Om|}{|\Ga|} \right);
\end{equation}
with this choice and by recalling \eqref{eq:interno-overdetermination} we have that
$$
P_{| \Ga} = \left( \frac{p-1}{N-p} \right)^{- \frac{p(N-1)}{N-p}} \left( \frac{N| \Om|}{| \Ga|} \right)^{- \frac{p(N-1)}{N-p}}.
$$
Moreover, by using \eqref{asymptoticu} we find also that
$$
\lim_{|x| \rightarrow 0} P(x)= \left( \frac{p-1}{N-p} \right)^{- \frac{p(N-1)}{N-p}}\left( \frac{|\Ga|}{\om_N} \right)^{- \frac{p}{N-p}}.
$$
By the isoperimetric inequality \eqref{eq:ISOPerimetrica} it is easy to check that
$$
\lim_{|x| \rightarrow 0} P(x) \le P_{| \Ga} ,
$$
and hence, by the maximum principle proved in \cite[Theorem 2.2]{GS}, we realize that $P$ attains its maximum on $\Ga$. We thus have that
\begin{equation}\label{eq:MPtoP}
0\le P_{\nu}.
\end{equation}
By a direct computation, with exactly the same manipulations used for the exterior problem in the proof of Theorem \ref{thm:Serrinesterno}, we find that
\begin{equation}\label{eq:PnuSBT}
P_{\nu} = p (N-1)  u^{- \frac{p(N-1)}{N-p}} |\na u|^p  \left\lbrace \frac{- u_{\nu}}{(N-p) u} - \frac{H}{p-1} \right\rbrace.
\end{equation}

By coupling \eqref{eq:MPtoP} with \eqref{eq:PnuSBT}, we can deduce that
\begin{equation*}
H \le \frac{p-1}{N-p} \left(\frac{- u_\nu}{u}\right),
\end{equation*}
that by using \eqref{def:cSBT}, \eqref{Pdelta}, \eqref{eq:interno-overdetermination}, and the fact that $|\na u|= - u_\nu$ on $\Ga$, leads to
\begin{equation}\label{eq:penultima SBT}
H \le H_0,
\end{equation}
where $H_0$ is the constant defined in \eqref{def-R-H0}.

Since $\Om$ is star-shaped with respect to a point $z \in \Om$ (possibly distinct from $0$), we have that $<(x - z) ,\nu >$ is non-negative on $\Ga$.
Thus, multiplying \eqref{eq:penultima SBT} by $<(x - z) ,\nu>$,
%
%
and integrating over $\Ga$, we get
\begin{equation*}
\int_\Ga H <(x - z ) ,\nu> \, dS_x \le |\Ga|.
\end{equation*}
By recalling the regularity of $\Ga$ and {\it Minkowski's identity}
\begin{equation*}
\int_\Ga H <(x - z ) ,\nu> \, dS_x = |\Ga| ,
\end{equation*}
we deduce -- as already noticed in \cite[Proof of Theorem 1.1]{GS} -- that the equality sign must hold in \eqref{eq:penultima SBT}, that is
$$
H \equiv H_0.
$$
Thus, the conclusion follows by the classical Alexandrov's Soap Bubble Theorem (\cite{Al2}) or, if we want, again by Theorem \ref{th:SBT}.
\end{proof}

\begin{rem}
{\rm
As a corollary of Theorem \ref{thm:Serrininterno} we get that $u$ is spherically symmetric about the center $0$ of (the ball) $\Om$ and it is given by
$$
u(x) = \frac{p-1}{N-p} \left( \frac{|\Ga|}{ \om_N} \right)^{\frac{1}{p-1}} \, |x|^{- \frac{N-p}{p-1}} ,
$$
up to an additive constant.
}
\end{rem}

\chapter{Some estimates for harmonic functions}\label{chapter:various estimates}

As already sketched in the introduction, the desired stability estimates for the spherical symmetry of $\Om$ will be obtained by linking the oscillation on $\Ga$ of the harmonic function $h= q - u$, where $u$ is the solution of \eqref{serrin1} and $q$ is the quadratic polynomial defined in \eqref{quadratic},
to the integrals
$$
\int_\Om (-u)|\na^2 h|^2 dx \quad \mbox{and} \quad \int_\Om |\na^2 h|^2 dx.
$$
%
%
\par
In order to fulfill this agenda, in Sections \ref{sec:harmonic functions in weighted spaces} and \ref{sec:estimates for the oscillation of harmonic functions} we collect some useful estimates for harmonic functions that have their own interest. 

%
%
Then, in Section \ref{sec:estimates for h} we deduce some inequalities for the particular harmonic function $h= q- u$ that will be useful for the study of the stability issue addressed in Chapter \ref{chapter:Stability results}.
%
%

In Sections \ref{sec:estimates for the oscillation of harmonic functions} and \ref{sec:estimates for h}, some pointwise estimates for the torsional rigidity density will be useful. We collect them in the following section.

Before we start, we set some relevant notations.
For a point $z\in\Om$, $\rho_i$ and $\rho_e$ denote the radius of the largest ball centered at $z$ and contained in $\Om$ and that of the smallest ball that contains $\Om$ with the same center, that is
\begin{equation}
\label{def-rhos}
\rho_i=\min_{x\in\Ga}|x-z| \ \mbox{ and } \ \rho_e=\max_{x\in\Ga}|x-z|.
\end{equation}
%
%
%
%
%
%
The diameter of $\Om$ is indicated by $d_\Om$, while $\de_\Ga (x)$ denotes the distance of a point $x$ to the boundary $\Ga$.
We recall that if $\Ga$ is of class $C^2$, $\Om$ has the properties of the uniform interior and exterior sphere condition, whose respective radii we have designated by $r_i$ and $r_e$. In other words,
there exists $r_e > 0$ (resp. $r_i>0$) such that for each $p \in \Ga$ there exists a ball contained in $\RR^N \setminus \ol{\Om}$ (resp. contained in $\Om$) of radius $r_e$ (resp. $r_i$) such that
its closure intersects $\Ga$ only at $p$.

Finally, if $\Ga$ is of class $C^{1,\al}$, the unique solution of \eqref{serrin1} is of class at least
$C^{1,\al}(\ol{\Om})$.
%
%
Thus, we can define
\begin{equation}
\label{bound-gradient}
M = \max_{\ol{\Om}} |\na u| = \max_\Ga u_\nu .
\end{equation}

\section{Pointwise estimates for the torsional rigidity density}\label{sec:gradient estimate for torsion}
We start with the following lemma in which we relate $u(x)$ with the distance function $\de_\Ga (x)$.
\begin{lem}
\label{lem:relationdist}
Let $\Om\subset\RR^N$, $N\ge 2$, be a bounded domain such that $\Ga$ is made of regular points for the Dirichlet problem, and let $u$ be the solution of \eqref{serrin1}. Then
$$
-u(x)\ge\frac12\,\de_\Ga(x)^2 \ \mbox{ for every } \ x\in\ol{\Om}.
$$
Moreover, if $\Ga$ is of class $C^{2}$, then it holds that
\begin{equation}
\label{eq:relationdist}
-u(x) \ge \frac{r_i}{2}\,\de_\Ga(x)\ \mbox{ for every } \ x\in\ol{\Om}.
\end{equation}
\end{lem}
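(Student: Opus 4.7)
The plan is to prove both inequalities via explicit comparison with the radial torsion function on appropriately chosen interior balls, exploiting the fact that the radial torsion function on a ball $B_\rho(z)$ is exactly $q(x)=\frac12(|x-z|^2-\rho^2)$, which satisfies $\Delta q = N$ and vanishes on $\partial B_\rho(z)$. Since $u\le 0$ on $\overline\Om$ (because $\Delta u=N>0$ and $u=0$ on $\Ga$ by the assumption that $\Ga$ consists of regular points, so the weak maximum principle applies), a standard comparison will always be available.

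For the first inequality, fix $x_0\in\Om$ and set $\rho:=\de_\Ga(x_0)$, so $B_\rho(x_0)\subseteq\Om$. Consider $w(x)=\tfrac12(|x-x_0|^2-\rho^2)$ on $B_\rho(x_0)$. Since $\Delta(u-w)=0$ in $B_\rho(x_0)$ and $u-w=u\le 0$ on $\partial B_\rho(x_0)$ (as $w$ vanishes there and $u\le0$ on $\overline\Om$), the maximum principle gives $u\le w$ throughout $B_\rho(x_0)$; evaluating at $x_0$ yields $u(x_0)\le -\rho^2/2$, which is the desired inequality at $x_0$. The boundary case $x_0\in\Ga$ is trivial.

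For the second inequality, the natural idea is to use the uniform interior sphere condition of radius $r_i$ to obtain a comparison ball of fixed size (rather than of radius $\de_\Ga(x)$). Given $x\in\ol\Om$ with $\de_\Ga(x)\le r_i$, pick $y\in\Ga$ realizing the distance, let $z_y$ be the center of the interior ball $B_{r_i}(z_y)\subseteq\Om$ tangent to $\Ga$ at $y$, and note that $x$ lies on the segment from $y$ to $z_y$ with $|x-z_y|=r_i-\de_\Ga(x)$. Comparing $u$ with $w_y(x):=\tfrac12(|x-z_y|^2-r_i^2)$ on $B_{r_i}(z_y)$ exactly as above yields
\[
-u(x)\ge r_i\,\de_\Ga(x)-\tfrac12\de_\Ga(x)^2=\de_\Ga(x)\Bigl(r_i-\tfrac12\de_\Ga(x)\Bigr)\ge \tfrac{r_i}{2}\,\de_\Ga(x),
\]
using $\de_\Ga(x)\le r_i$ in the last step. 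When instead $\de_\Ga(x)>r_i$, the first part of the lemma already gives $-u(x)\ge\tfrac12\de_\Ga(x)^2\ge\tfrac{r_i}{2}\de_\Ga(x)$, completing the proof.

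Neither step presents a genuine obstacle: the arguments are textbook applications of the maximum principle combined with the geometric data of the interior sphere condition. The only subtlety to record carefully is the splitting into the two regimes $\de_\Ga(x)\le r_i$ and $\de_\Ga(x)>r_i$ in the second part, since the linear lower bound degrades near $\de_\Ga(x)=r_i$ and must be stitched together with the quadratic bound from part one in order to obtain a clean global estimate on all of $\ol\Om$.
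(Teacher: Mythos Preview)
Your proof is correct and follows essentially the same approach as the paper: comparison with the radial torsion function on $B_{\de_\Ga(x)}(x)$ for the first inequality, and on the interior tangent ball of radius $r_i$ for the second, with the same split into the regimes $\de_\Ga(x)\le r_i$ and $\de_\Ga(x)>r_i$. The only cosmetic difference is that the paper factors $-u(x)\ge\tfrac12(r_i+|x-z_y|)(r_i-|x-z_y|)\ge\tfrac{r_i}{2}\,\de_\Ga(x)$ directly, whereas you expand and then use $\de_\Ga(x)\le r_i$; both are equivalent.
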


\begin{proof}
If every point of $\Ga$ is regular, then a unique solution $u\in C^0(\ol{\Om})\cap C^2(\Om)$ exists for \eqref{serrin1}. Now, for $x\in\Om$, let $r=\de_\Ga(x)$ and consider the ball $B=B_r(x)$. 
Let $w$ be the solution of \eqref{serrin1} in $B$, that is $w(y)=(|y-x|^2-r^2)/2$. By comparison we have that $w\ge u$ on $\ol{B}$ and hence, in particular, $w(x) \ge u(x)$. Thus, we infer the first inequality in the lemma.
\par
If $\Ga$ is of class $C^{2}$, \eqref{eq:relationdist} certainly holds if $\de_\Ga(x)\ge r_i$. If $\de_\Ga (x) < r_i$, instead, let $z$ be the closest point in $\Ga$ to $x$ and call $B$ the ball of radius $r_i$ touching $\Ga$ at $z$ and containing $x$. Up to a translation, we can always suppose that the center of the ball $B$ is the origin $0$. If $w$ is the solution of \eqref{serrin1} in $B$, that is $w(y)=\left(|y|^2- r_i^2 \right)/2$, by comparison we have that $w \ge u$ in $B$, and hence
$$
-u(x) \ge \frac12\,(|x|^2-r_i^2)=
\frac12\,( r_i + |x| )(r_i-|x|)\ge\frac12\,r_i\,(r_i-|x|).
$$
This implies \eqref{eq:relationdist}, since $r_i-|x|=\de_\Ga(x)$.
\end{proof}

In the remaining part of this section we present a simple method to estimate the number $M$ (defined in \eqref{bound-gradient})
%
%
in a quite general domain. The following lemma results from a simple inspection and by the uniqueness for the Dirichlet problem.

\begin{lem}[Torsional rigidity density in an annulus]\label{soluzioneincoronacircolare}
\label{lem:torsion-annulus}

Let $A=A_{r,R}\subset \RR^N$ be the annulus centered at the  origin and radii $0<r<R$,
and set $\ka=r/R$.
\par
Then, the solution $w$ of the Dirichlet problem 
\begin{equation*}
\label{torsion-annulus}
\Delta w = N \ \textrm{ in } \ A, \quad w = 0 \ \textrm{ on } \ \pa A,
\end{equation*}
is defined for $r\le |x|\le R$ by
\begin{equation*}
w(x) =
\begin{cases}
\displaystyle\frac12\, |x|^2 +\frac{R^2}{2}\,(1-\ka^2)\,\frac{\log(|x|/r)}{\log\ka} -\frac{r^2}{2} 
\ &\mbox{ for } \ N=2, 
\vspace{5pt} \\
\displaystyle\frac12\,|x|^2 +\frac12\,\frac{R^2}{1-\ka^{N-2}}\,\left\{ (1-\ka^2)\,(|x|/r)^{2-N}+\ka^N-1\right\} \ &\mbox{ for } \ N \ge 3.
\end{cases}
\end{equation*}
\end{lem}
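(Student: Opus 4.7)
The plan is to exploit the rotational invariance of both the domain $A_{r,R}$ and the equation $\Delta w=N$ together with uniqueness of the Dirichlet problem in $A_{r,R}$ (which is a classical bounded domain with smooth boundary), to conclude that the solution $w$ must be radially symmetric. Writing $w(x)=v(|x|)$, the equation reduces to the linear ODE
\begin{equation*}
v''(\rho)+\frac{N-1}{\rho}\,v'(\rho)=N \quad \text{on } (r,R),\qquad v(r)=v(R)=0.
\end{equation*}

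Next, I would produce an explicit general solution. A particular solution is $v_p(\rho)=\rho^2/2$, since $\Delta(|x|^2/2)=N$. The homogeneous equation $v''+\tfrac{N-1}{\rho}v'=0$ is equidimensional: its fundamental set is $\{1,\log\rho\}$ for $N=2$ and $\{1,\rho^{2-N}\}$ for $N\geq 3$. Hence
\begin{equation*}
v(\rho)=\tfrac{1}{2}\rho^2+c_1+c_2\,\log\rho\quad(N=2),\qquad v(\rho)=\tfrac{1}{2}\rho^2+c_1+c_2\,\rho^{2-N}\quad(N\geq 3).
\end{equation*}

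Then I would impose the two boundary conditions $v(r)=v(R)=0$ to determine $c_1,c_2$. Subtracting the two equations immediately gives $c_2$; in the case $N\geq 3$ one finds
\begin{equation*}
c_2\,r^{2-N}=\frac{(R^{2}-r^{2})/2}{1-(R/r)^{2-N}}=\frac{R^{2}(1-\kappa^{2})}{2(1-\kappa^{N-2})},
\end{equation*}
and analogously for $N=2$ one finds $c_2=\tfrac{R^{2}(1-\kappa^{2})}{2\log\kappa}$. Substituting back into $v(r)=0$ yields $c_1$, and writing $\log\rho=\log(\rho/r)+\log r$, respectively $\rho^{2-N}=r^{2-N}(\rho/r)^{2-N}$, absorbs $c_2\log r$ (resp.\ $c_2 r^{2-N}$) into the constant term to produce the exact form in the statement.

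The only nontrivial part is the algebraic bookkeeping needed to rewrite the constant term $c_1+c_2\log r$ (resp.\ $c_1+c_2 r^{2-N}$) so as to match the expressions $-r^{2}/2$ and $\tfrac{1}{2}\tfrac{R^{2}(\kappa^{N}-1)}{1-\kappa^{N-2}}$ displayed in the lemma; this is a short computation using $r=\kappa R$. Once the explicit radial formula is obtained, uniqueness of the Dirichlet problem identifies it with $w$ and the lemma follows.
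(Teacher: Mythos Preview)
Your proof is correct. The paper itself does not give a detailed proof; it simply states that the lemma ``results from a simple inspection and by the uniqueness for the Dirichlet problem,'' meaning one verifies directly that the displayed formula satisfies $\Delta w=N$ and vanishes at $|x|=r$ and $|x|=R$, and then invokes uniqueness.

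Your approach is slightly different in spirit: rather than verifying the formula a posteriori, you derive it constructively by reducing to the radial ODE, writing down the general solution $\tfrac{1}{2}\rho^2$ plus the fundamental set of the homogeneous equation, and solving the $2\times 2$ linear system for the constants. This has the modest advantage of explaining where the formula comes from, whereas the paper's ``inspection'' route is shorter but presupposes the answer. Both arguments rely on the same uniqueness statement to finish.
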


\medskip

\begin{thm}[A bound for the gradient on $\Ga$, \cite{MP}]
\label{thm:boundary-gradient}
Let $\Om\subset\RR^N$ be a bounded domain that satisfies the uniform interior and exterior conditions with radii $r_i$ and $r_e$ and let $u\in C^1(\ol{\Om})\cap C^2(\Om)$ be a solution
of \eqref{serrin1} in $\Om$.
\par
Then, we have that
\begin{equation}
\label{gradient-estimate}
r_i\le |\na u| \le 
c_N\,\frac{d_\Om(d_\Om+r_e)}{r_e} \ \mbox{ on } \ \Ga,
\end{equation}
where $d_\Om$ is the diameter of $\Om$ and 
$c_N=3/2$ for $N=2$ and $c_N=N/2$ for $N\ge 3$.
\end{thm}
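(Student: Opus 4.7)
The plan is to prove the two inequalities separately by building radial barriers from the explicit solutions of \eqref{serrin1} on balls and annuli, as provided by Lemma \ref{lem:torsion-annulus}. In both cases, the barrier $w$ will be compared to $u$ via the maximum principle and touch it tangentially at a boundary point; the desired bound on $|\na u|$ then drops out by matching normal derivatives.

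For the lower bound $|\na u|\ge r_i$, fix $x_0\in\Ga$ and, by the uniform interior sphere condition, pick $z\in\Om$ with $B_{r_i}(z)\subset\Om$ and $x_0\in\pa B_{r_i}(z)$. Set $w(y)=\tfrac12(|y-z|^2-r_i^2)$, so that $\De w=N$ in $B_{r_i}(z)$ and $w=0$ on its boundary. Since $u\le 0=w$ on $\pa B_{r_i}(z)$ and $u-w$ is harmonic in $B_{r_i}(z)$, the maximum principle gives $u\le w$ in $B_{r_i}(z)$, with equality at $x_0$. Because the outward normal $\nu$ to $\Om$ at $x_0$ coincides with the outward normal to $B_{r_i}(z)$, a first-order expansion of $w-u$ in the inward direction gives $u_\nu(x_0)\ge w_\nu(x_0)=r_i$, and since $\na u=u_\nu\,\nu$ on $\Ga$ this yields $|\na u(x_0)|\ge r_i$.

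The upper bound uses a symmetric construction based on the exterior sphere condition. Fix $x_0\in\Ga$ and $z_0$ with $B_{r_e}(z_0)\subset\RR^N\setminus\ol{\Om}$ touching $\Ga$ at $x_0$, and set $R=d_\Om+r_e$. For any $y\in\ol{\Om}$ the triangle inequality gives $r_e\le|y-z_0|\le|y-x_0|+r_e\le d_\Om+r_e$, so $\Om\subset A:=B_R(z_0)\setminus\ol{B_{r_e}(z_0)}$. Let $w$ be the radial solution of $\De w=N$ in $A$ vanishing on $\pa A$, whose explicit form is given by Lemma \ref{lem:torsion-annulus}. Then $w\le 0=u$ on $\Ga$, $w-u$ is harmonic in $\Om$, and the maximum principle yields $w\le u$ in $\Om$, with equality at $x_0\in\Ga\cap\pa B_{r_e}(z_0)$. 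The same tangential-contact argument now gives $u_\nu(x_0)\le w_\nu(x_0)$; noting that the outward normal to $\Om$ at $x_0$ points toward $z_0$, one finds $w_\nu(x_0)=-w'(r_e)$ with the prime denoting differentiation in the radial variable centered at $z_0$.

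It remains to verify that $-w'(r_e)\le c_N\,d_\Om(d_\Om+r_e)/r_e$ with the stated $c_N$. Setting $\ka=r_e/R$ and differentiating the formula of Lemma \ref{lem:torsion-annulus}, one obtains
\begin{equation*}
-w'(r_e)=-r_e+\frac{(N-2)\,R^2(1-\ka^2)}{2\,r_e\,(1-\ka^{N-2})}\quad(N\ge 3),
\end{equation*}
and the analogous logarithmic expression
\begin{equation*}
-w'(r_e)=-r_e+\frac{R^2(1-\ka^2)}{2\,r_e\,|\log\ka|}\quad(N=2).
\end{equation*}
Using the identity $R^2(1-\ka^2)=d_\Om(R+r_e)$ together with the elementary lower bounds $1-\ka^{N-2}\ge(1-\ka)(1+\ka+\cdots+\ka^{N-3})$ for $N\ge 3$ and $|\log\ka|\ge 1-\ka=d_\Om/R$ for $N=2$, the right-hand side reduces to polynomial expressions that can be estimated by the claimed constants $c_2=3/2$ and $c_N=N/2$ for $N\ge 3$. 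This purely algebraic verification is the main computational hurdle; the geometric content of the argument (barriers on interior/exterior tangent balls plus the Hopf-type comparison at the contact point) is the short and natural part.
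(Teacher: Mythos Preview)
Your proof is correct and follows essentially the same route as the paper's: interior and exterior tangent balls furnish radial barriers, the maximum principle gives $w\ge u$ (interior) and $w\le u$ (exterior) with equality at the contact point, and the gradient bound follows by comparing normal derivatives. The only cosmetic difference is that the paper takes the \emph{smallest} concentric annulus containing $\Om$ and bounds its outer radius by $d_\Om+r_e$ at the end, whereas you fix $R=d_\Om+r_e$ from the start; since the final estimate goes through $\sup_{0<\ka<1} f(\ka)=c_N$, either choice works.

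One caution on the last step: the factorization $1-\ka^{N-2}=(1-\ka)(1+\ka+\cdots+\ka^{N-3})$ is an identity, not an inequality, and combining it with $|\log\ka|\ge 1-\ka$ alone does not immediately produce the sharp constants $c_N=N/2$ for all $N\ge 3$ (try $N=5$). What is really needed is exactly the paper's claim that $f(\ka)\le N/2$ on $(0,1)$, with the supremum attained as $\ka\to 1^-$; both you and the paper leave this ``tedious'' calculus verification to the reader, so the level of detail matches.
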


\begin{proof}
We first prove the first inequality in \eqref{gradient-estimate}. Fix any  $p\in\Ga$.
Let $B=B_{r_i}$ be the interior ball touching $\Ga$ at $p$ and place the origin of cartesian axes at the center of $B$.
\par
If $w$ is the solution of \eqref{serrin1} in $B$, that is $w(x)= (|x|^2-r_i^2)/2$, by comparison we have that $w\ge u$ on $\ol{\Om}$ and hence, since $u(p)=w(p)=0$, we obtain:
$$
u_\nu(p)\ge w_\nu(p)=r_i.
$$
\par
To prove the second inequality, we place the origin of axes at the center of  the exterior ball $B=B_{r_e}$ touching $\Ga$ at $p$. Denote by $A$ the smallest annulus containing $\Om$, concentric with $B$ and having $\pa B$ as internal boundary and let $R$ be the radius of its external boundary.
\par
If $w$ is the solution of \eqref{serrin1} in $A$, by comparison we have that $w\le u$ on $\ol{\Om}$.
Moreover, since $u(p)=w(p)=0$, we have that
$$
u_\nu(p)\le w_\nu(p).
$$
By Lemma \ref{lem:torsion-annulus} we then compute that
$$
w_\nu(p) =\frac{R (R-r_e)}{r_e}\,f(\ka)
$$
where, for $0<\ka<1$,
%
%
\begin{equation*}
f(\ka)=
\begin{cases}
\displaystyle \frac{2\ka^2\log(1/\ka)+\ka^2-1}{2(1-\ka) \log(1/\ka)}
\ &\mbox{ for } \ N=2, 
\vspace{5pt} \\
\displaystyle\frac{2 \ka^N-N \ka^2+N-2}{2(1-\ka)(1-\ka^{N-2})} \ &\mbox{ for } \ N \ge 3.
\end{cases}
\end{equation*}
Notice that $f$ is bounded since it can be extended to a continuous function on $[0,1]$.
Tedious calculations yield that 
$$
\sup_{0<\ka<1} f(\ka)=
\begin{cases}
\frac32 \ &\mbox{ for } \ N=2, \\
\frac{N}2 \ &\mbox{ for } \ N\ge 3.
\end{cases}
$$
Finally, observe that $R\le d_\Om+r_e$.
\end{proof}

\begin{rem}\label{rem:boundgradienttorsionconvexr_eremoving}
{\rm
%
(i) Notice that, when $\Om$ is convex, we can choose $r_e=+\infty$ in \eqref{gradient-estimate} and obtain
\begin{equation}
\label{bound-M-convex}
M\le c_N\,d_\Om.
\end{equation}

%
%

(ii) To the best of our knowledge, inequality \eqref{gradient-estimate}, established in \cite{MP}, was not 
present in the literature for general smooth domains.
%
%
Other estimates are given in \cite{PP} for planar strictly convex domains (but the same argument can be generalized to general dimension for strictly mean convex domains) and in \cite{CM} for strictly mean convex domains in general dimension.
In particular, in \cite[Lemma 2.2]{CM} the authors prove that there exists a universal constant $c_0$ such that
\begin{equation*}
| \na u | \leq c_0 | \Om |^{1/N} \mbox{ in } \ol{\Om}.
\end{equation*}
%
%
}
\end{rem}

\section{Harmonic functions in weighted spaces}\label{sec:harmonic functions in weighted spaces}
To start, we briefly report on some Hardy-Poincar\'e-type inequalities for harmonic functions that are present in the literature. 
%
%
The following lemma can be deduced from the work of Boas and Straube \cite{BS}, which improves a result of Ziemer \cite{Zi}.
%
%
In what follows, for a set $A$ and a function $v: A \to \RR$, $v_A$ denotes the {\it mean value of $v$ in $A$} that is
$$
v_A= \frac{1}{|A|} \, \int_A v \, dx.
$$
Also, for a function $v:\Om \to \RR$ we define
$$
\nr \de_\Ga^\al \, \na v \nr_{p,\Om} = \left( \sum_{i=1}^N \nr \de_\Ga^\al \,  v_i \nr_{p,\Om}^p \right)^\frac{1}{p} \quad \mbox{and} \quad
\nr \de_\Ga^\al \, \na^2 v \nr_{p,\Om} = \left( \sum_{i,j=1}^N \nr \de_\Ga^\al \, v_{ij} \nr_{p,\Om}^p \right)^\frac{1}{p},
$$
for $0 \le \al \le 1$ and $p \in [1, \infty)$.

\begin{lem}[\cite{BS}]
\label{lem:two-inequalities}
Let $\Om\subset\RR^N$, $N\ge 2$, be a bounded domain with boundary $\Ga$ of class $C^{0,\al}$,
$0 \le \al \le 1$, let $z$ be a point in $\Om$, and consider $p \in \left[ 1, \infty \right)$. Then,
\par
(i) there exists a positive constant $ \mu_{p, \al} ( \Om, z ) $, such that
\begin{equation}
\label{harmonic-quasi-poincare}
\nr v \nr_{p,\Om} \le \mu_{p, \al} ( \Om, z)^{-1} \nr \de_\Ga^{\al} \, \na v  \nr_{p, \Om},
\end{equation}
for every harmonic function $v$ in $\Om$ such that $v(z)=0$;
\par
(ii)
there exists a positive constant, $\ol{\mu}_{p, \al} (\Om)$ such that
\begin{equation}
\label{harmonic-poincare}
\nr v - v_\Om \nr_{p,\Om} \le \ol{\mu}_{p, \al} (\Om)^{-1} \nr \de_\Ga^{\al} \, \na v  \nr_{p, \Om},
\end{equation}
for every function $v$ which is harmonic in $\Om$.
\par
In particular, if $\Ga$ has a Lipschitz boundary, the number $\al$ can be replaced by any exponent in $[0,1]$. 
\end{lem}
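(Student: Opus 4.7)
The plan is to follow the classical Whitney/chain-of-cubes strategy that underlies Hardy-Poincar\'e inequalities on non-smooth domains, with harmonicity playing the role of a self-improvement tool that compensates for a possibly rough boundary. The first observation is that (ii) implies (i): if $v$ is harmonic with $v(z)=0$, then by the mean value property on the ball $B=B_r(z)$ with $r=\de_\Ga(z)/2$ one has $v_B=v(z)=0$, whence
\[
|v_\Om|=|v_\Om-v_B|\le |B|^{-1}\!\int_B |v-v_\Om|\,dx\le |B|^{-1/p}\,\nr v-v_\Om\nr_{p,\Om},
\]
so that $\nr v\nr_{p,\Om}\le \nr v-v_\Om\nr_{p,\Om}+|\Om|^{1/p}|v_\Om|$ is controlled by $\nr v-v_\Om\nr_{p,\Om}$ with a constant depending on $\de_\Ga(z)$; coupling with (ii) produces (i).

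For (ii) I would fix a Whitney decomposition $\{Q_j\}$ of $\Om$ with $\mathrm{diam}(Q_j)\sim r_j\sim \de_\Ga(Q_j)$ and apply the standard Poincar\'e inequality on each cube,
\[
\nr v-v_{Q_j}\nr_{p,Q_j}^p\le C\,r_j^p\,\nr\na v\nr_{p,Q_j}^p\le C\,d_\Om^{p(1-\al)}\!\int_{Q_j}\de_\Ga^{p\al}|\na v|^p\,dx,
\]
where the second step exploits $\de_\Ga\sim r_j$ on $Q_j$ and $r_j\le d_\Om$. To pass from local to global means I would invoke a Boman/John chain joining each $Q_j$ to a fixed central cube $Q_0$, available as soon as $\Om$ admits a John decomposition (in particular for Lipschitz $\Om$); the standard chain estimate
\[
|v_{Q_j}-v_{Q_0}|^p\le C\sum_{Q_k\in\mathrm{chain}(j)}|Q_k|^{-1}\,\nr v-v_{Q_k}\nr_{p,Q_k}^p,
\]
together with the bounded-overlap property of the family of chains, lets one sum over $j$ and then replace $v_{Q_0}$ by $v_\Om$, yielding (ii).

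\textbf{The role of harmonicity.} In the Lipschitz case the Whitney+chain argument never uses the equation satisfied by $v$, so the inequalities hold for every $v\in W^{1,p}(\Om)$ (with $v(z)=0$ in (i)) and any weight exponent $\al\in[0,1]$ is admissible; this is exactly the last sentence of the statement. For a merely $C^{0,\al}$ boundary with $\al<1$, the John chain condition can degenerate along cusps, and this is where harmonicity is used: the interior derivative estimate
\[
|\na v(x)|\le C\,\de_\Ga(x)^{-1-N/p}\,\nr v\nr_{p,B(x,\de_\Ga(x)/2)},
\]
coming from the mean value property, allows one to trade derivatives for values inside the offending regions, reducing the estimate to a Poincar\'e-type inequality on the ``good'' part of $\Om$. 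The main difficulty is therefore geometric rather than analytic: ensuring that a chain condition with constants depending only on $N$, $p$, $\al$, $z$ and the coarse geometry of $\Om$ is actually available in the class of domains considered; once the cover is in place, the remaining estimates reduce to a routine combination of H\"older's inequality and a bounded-overlap summation.
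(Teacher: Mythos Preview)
The paper does not prove this lemma at all: it simply records that (i) and (ii) are instances of the general Hardy--Poincar\'e result of Boas and Straube \cite{BS} (Examples~2.5 and~2.1 there, respectively), and then writes down the variational characterizations of the constants $\mu_{p,\al}(\Om,z)$ and $\ol{\mu}_{p,\al}(\Om)$. So your proposal is not an alternative proof so much as a sketch of what lies behind the cited reference. Your reduction of (i) to (ii) via the mean value property is correct and is exactly the device the paper uses later, in the proof of Lemma~\ref{lem:John-two-inequalities}, to derive the analogous pointwise-normalized inequality from the mean-zero one (there phrased through Lemma~\ref{lem:mediaor-mediauguale}). Your Whitney-plus-chain outline for (ii) in the Lipschitz case is the standard Boman--John argument and is fine; this is also the mechanism behind Hurri-Syrj\"anen's result \cite{HS} that the paper invokes for the improved exponents in Lemma~\ref{lem:John-two-inequalities}. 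The one place where your sketch is genuinely incomplete is the $C^{0,\al}$ case with $\al<1$: you correctly identify that the chain condition may fail near H\"older cusps and that harmonicity must supply the missing ingredient via interior gradient estimates, but you stop short of showing how the weight exponent $\al$ is forced to match the boundary regularity exponent. That matching is the actual content of \cite{BS}, and carrying it out requires a more careful accounting of how the Whitney cube sizes degenerate near a $C^{0,\al}$ boundary; without it your argument does not yet establish the lemma in the stated generality.
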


\begin{proof}
The assertions (i) and (ii) are easy consequences of a general result of Boas and Straube (see \cite{BS}).
%
%
In case (i), we apply \cite[Example 2.5]{BS}). In case (ii), \cite[Example 2.1]{BS} is appropriate. In fact, \cite[Example 2.1]{BS} proves \eqref{harmonic-poincare} for every function $v \in L^p (\Om) \cap W^{1,p}_{loc} (\Om)$. The addition of the harmonicity of $v$ in \eqref{harmonic-poincare} clearly gives a better constant. 
\par
The (solvable) variational problems 
%
%
\begin{equation*}
\mu_{p, \al} (\Om, z) = 
\min \left\{ \nr \de_\Ga^{\al} \, \na v  \nr_{p, \Om} : \nr v \nr_{p, \Om} = 1, \,\De v =0 \text{ in } \Om, \, v(z) = 0 \right\}
\end{equation*}
and
%
%
\begin{equation*}
\ol{\mu}_{p, \al} (\Om) = 
\min \left\{ \nr \de_\Ga^{\al} \, \na v  \nr_{p, \Om} : \nr v \nr_{p, \Om} = 1, \,\De v =0 \text{ in } \Om, \, v_\Om = 0 \right\}
\end{equation*}
then characterize the two constants.
\end{proof}

\begin{rem}
{\rm
When $\al=0$ we understand the boundary of $\Om$ to be locally the graph of a continuous function. In this case, \eqref{harmonic-quasi-poincare} is exactly the result contained in \cite{Zi}.
Also, in the case $p=2$ and $\al=0$ from \eqref{harmonic-quasi-poincare} and \eqref{harmonic-poincare} we recover the Poincar\'e-type inequalities proved and used in \cite{MP}.
%
}
\end{rem}

\medskip

The following corollary describes a couple of applications of the Hardy-Poincar\'e inequalities that we have just presented.

\begin{cor}\label{cor:Poincareaigradienti}
Let $\Om\subset\RR^N$, $N\ge 2$, be a bounded domain with boundary $\Ga$ of class $C^{0,\al}$, $0 \le \al \le 1$, and let $v$ be a harmonic function in $\Om$.

(i) If $z$ is a critical point of $v$ in $\Om$, then it holds that
\begin{equation*}
\nr \na v \nr_{p, \Om} \le \mu_{p, \al} ( \Om, z)^{-1} \nr \de_\Ga^{\al} \, \na^2 v  \nr_{p, \Om}.
\end{equation*}

(ii) If 
$$\int_\Om \na v \, dx = 0,$$ 
%
%
then it holds that
\begin{equation*}
\nr \na v \nr_{p, \Om} \le \ol{\mu}_{p, \al} (\Om)^{-1} \nr \de_\Ga^{\al} \, \na^2 v  \nr_{p, \Om}.
\end{equation*}
\end{cor}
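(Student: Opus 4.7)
The plan is to reduce both statements to Lemma \ref{lem:two-inequalities} by applying it componentwise to the first partial derivatives $v_1,\dots,v_N$ of $v$. The key observation is that each $v_i$ is itself a harmonic function in $\Omega$ (since the Laplacian commutes with partial derivatives), so Lemma \ref{lem:two-inequalities} is applicable to each of them as long as the appropriate normalization hypothesis (vanishing at $z$, or zero mean) is satisfied.

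For part (i), the hypothesis that $z$ is a critical point of $v$ means exactly that $v_i(z)=0$ for every $i=1,\dots,N$. Thus inequality \eqref{harmonic-quasi-poincare} applied to each $v_i$ gives
\begin{equation*}
\nr v_i\nr_{p,\Om}^{\,p}\le \mu_{p,\al}(\Om,z)^{-p}\,\nr \de_\Ga^\al\,\na v_i\nr_{p,\Om}^{\,p}.
\end{equation*}
Summing over $i=1,\dots,N$ and recalling the definitions
\begin{equation*}
\nr \na v\nr_{p,\Om}^{\,p}=\sum_{i=1}^N\nr v_i\nr_{p,\Om}^{\,p},\qquad \nr \de_\Ga^\al\,\na^2 v\nr_{p,\Om}^{\,p}=\sum_{i=1}^N\nr \de_\Ga^\al\,\na v_i\nr_{p,\Om}^{\,p},
\end{equation*}
then taking $p$-th roots, yields the first inequality.

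For part (ii), the hypothesis $\int_\Om \na v\,dx=0$ means $(v_i)_\Om=0$ for each $i$, so that \eqref{harmonic-poincare} applied to each $v_i$ gives
\begin{equation*}
\nr v_i\nr_{p,\Om}=\nr v_i-(v_i)_\Om\nr_{p,\Om}\le \ol{\mu}_{p,\al}(\Om)^{-1}\,\nr \de_\Ga^\al\,\na v_i\nr_{p,\Om}.
\end{equation*}
Proceeding exactly as in part (i) (raising to the $p$-th power, summing over $i$, and taking $p$-th roots) concludes the proof.

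There is no real obstacle here: the only point worth double-checking is the compatibility of the two side conditions with differentiation. For (i) this is trivial since $\na v(z)=0$ is inherited by each component; for (ii) it is also immediate, as $\int_\Om v_i\,dx=0$ is precisely the scalar mean-value condition required by \eqref{harmonic-poincare}. The harmonicity of each $v_i$, needed to invoke Lemma \ref{lem:two-inequalities}, follows at once from the harmonicity of $v$ together with Weyl's lemma (or simply from $\De v_i=(\De v)_i=0$).
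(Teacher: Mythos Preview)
Your proof is correct and follows essentially the same approach as the paper: apply Lemma~\ref{lem:two-inequalities} componentwise to the harmonic functions $v_i$, raise to the $p$-th power, sum over $i$, and take the $p$-th root. The only difference is that you spell out a few more details (harmonicity of the $v_i$, compatibility of the normalization conditions) than the paper does.
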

\begin{proof}
Since $\na v (z)=0$ (respectively $\int_\Om \na v \, dx =0$), we can apply \eqref{harmonic-quasi-poincare} (respectively \eqref{harmonic-poincare}) to each first partial derivative $v_i$ of $v$, $i=1, \dots, N$. If we raise to the power of $p$ those inequalities and sum over $i=1, \dots, N$, the conclusion easily follows.
\end{proof}

We now present a simple lemma which will be useful in the sequel to manipulate the left-hand side of Hardy-Poincar\'e-type inequalities.

\begin{lem}\label{lem:mediaor-mediauguale}
Let $\Om$ be a domain with finite measure and let $A \subseteq \Om$ be a set of positive measure. If $v \in L^p(\Om)$, then for every $\la \in \RR$
\begin{equation}\label{eq:mediaor-mediauguale}
\nr v - v_A \nr_{p,\Om} \le \left[ 1+ \left( \frac{|\Om|}{|A|} \right)^{\frac{1}{p}} \right] \, \nr v - \la \nr_{p,\Om}.
\end{equation}
%
%
\end{lem}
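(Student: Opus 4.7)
The plan is to reduce the claim to the triangle inequality in $L^p(\Om)$ combined with the standard Jensen/Hölder bound on averages. First I would write
\begin{equation*}
\nr v - v_A \nr_{p,\Om} \le \nr v - \la \nr_{p,\Om} + \nr \la - v_A \nr_{p,\Om},
\end{equation*}
which isolates a constant term, since both $\la$ and $v_A$ are real numbers. The whole task then reduces to controlling $|v_A - \la|$ by $\nr v - \la\nr_{p,\Om}$ up to a factor depending only on $|\Om|$ and $|A|$.

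Next I would compute $\nr \la - v_A\nr_{p,\Om} = |\la - v_A| \, |\Om|^{1/p}$, again using that the integrand is constant. To bound $|v_A - \la|$ I would observe that
\begin{equation*}
|v_A - \la| = \left| \frac{1}{|A|} \int_A (v - \la) \, dx \right| \le \frac{1}{|A|} \int_A |v - \la| \, dx,
\end{equation*}
and then apply Hölder's inequality (with conjugate exponents $p$ and $p/(p-1)$, interpreted appropriately when $p=1$) on $A$ to get
\begin{equation*}
\int_A |v - \la| \, dx \le |A|^{1 - 1/p} \left( \int_A |v - \la|^p \, dx \right)^{1/p} \le |A|^{1-1/p} \, \nr v - \la\nr_{p,\Om},
\end{equation*}
since $A \subseteq \Om$. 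Substituting back yields $|v_A - \la| \le |A|^{-1/p} \nr v - \la\nr_{p,\Om}$, and hence
\begin{equation*}
\nr \la - v_A\nr_{p,\Om} \le \left( \frac{|\Om|}{|A|} \right)^{1/p} \nr v - \la\nr_{p,\Om}.
\end{equation*}

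Plugging this into the initial triangle inequality produces \eqref{eq:mediaor-mediauguale} at once. There is no real obstacle here: the only thing worth being careful about is the case $p=1$, where the Hölder step degenerates to $\int_A |v-\la|\,dx \le \nr v - \la\nr_{1,\Om}$ directly, so the same bound still holds with the factor $(|\Om|/|A|)^{1/1}$ replaced consistently.
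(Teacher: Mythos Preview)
Your proof is correct and follows essentially the same approach as the paper: triangle inequality plus the Hölder bound $|v_A - \lambda| \le |A|^{-1/p}\|v-\lambda\|_{p,\Omega}$, combined with $\|\lambda - v_A\|_{p,\Omega} = |\Omega|^{1/p}|v_A - \lambda|$. The only difference is cosmetic ordering (you open with the triangle inequality, the paper closes with it).
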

\begin{proof}
By H\"older's inequality, we have that 
\begin{equation*}
|v_A - \la| \le \frac1{|A|} \int_A|v-\la| \, dx \le 
|A|^{-1/p} \nr v- \la \nr_{p,A} \le
|A|^{-1/p} \nr v - \la \nr_{p,\Om}.
\end{equation*}
Since $|v_A - \la|$ is constant, we then infer that
$$
\nr v_A-\la\nr_{p,\Om}=|\Om|^{1/p} |v_A - \la| \le \left( \frac{|\Om|}{|A|} \right)^{\frac{1}{p}} \, \nr v- \la \nr_{p,\Om}.
$$
Thus, \eqref{eq:mediaor-mediauguale} follows by an application of the triangular inequality.
\end{proof}

Other versions of Hardy-Poincar\'e inequalities different from those presented in Lemma \ref{lem:two-inequalities} can be deduced from the work of Hurri-Syrj\"anen \cite{HS}, which was stimulated by \cite{BS}.

In order to state these results, we introduce the notions of $b_0$-John domain and $L_0$-John domain with base point $z \in \Om$. Roughly speaking, a domain is a $b_0$-John domain (resp. a $L_0$-John domain with base point $z$) if it is possible to travel from one point of the domain to another (resp. from $z$ to another point of the domain) without going too close to the boundary.

A domain $\Om$ in $\RR^N$ is a {\it $b_0$-John domain}, $b_0 \ge 1$, if each pair of distinct points $a$ and $b$ in $\Om$ can be joined by a curve $\ga: \left[0,1 \right] \rightarrow \Om$ such that
%
%
\begin{equation*}
\de_\Ga (\ga(t)) \ge b_0^{-1} \min{ \left\lbrace |\ga(t) - a|, |\ga(t) - b| \right\rbrace  }.
\end{equation*}

A domain $\Om$ in $\RR^N$ is a {\it $L_0$-John domain with base point $z \in \Om$}, $L_0 \ge 1$, if each point $x \in \Om$ can be joined to $z$ by a curve $\ga: \left[0,1 \right] \rightarrow \Om$ such that
\begin{equation*}
\de_\Ga (\ga(t)) \ge L_0^{-1} |\ga(t) - x|.
\end{equation*}

It is known that, for bounded domains, the two definitions are quantitatively equivalent (see \cite[Theorem 3.6]{Va}).
%
%
The two notions could be also defined respectively through the so-called {\it $b_0$-cigar} and {\it $L_0$-carrot} properties (see \cite{Va}).

\begin{lem}\label{lem:John-two-inequalities}
Let $\Om \subset \RR^N$ be a bounded $b_0$-John domain, and consider three numbers $r, p, \al$ such that $1 \le p \le r \le \frac{Np}{N-p(1 - \al )}$, $p(1 - \al)<N$, $0 \le \al \le 1$.
Then,

(i) there exists a positive constant $ \mu_{r,p, \al} ( \Om, z ) $, such that
\begin{equation}
\label{John-harmonic-quasi-poincare}
\nr v \nr_{r,\Om} \le \mu_{r, p, \al} ( \Om, z)^{-1} \nr \de_\Ga^{\al} \, \na v  \nr_{p, \Om},
\end{equation}
for every function $v$ which is harmonic in $\Om$ and such that $v(z)=0$;
\par
(ii) there exists a positive constant, $\ol{\mu}_{r, p, \al} (\Om)$ such that
\begin{equation}
\label{John-harmonic-poincare}
\nr v - v_\Om \nr_{r,\Om} \le \ol{\mu}_{r, p, \al} (\Om)^{-1} \nr \de_\Ga^{\al} \, \na v  \nr_{p, \Om},
\end{equation}
for every function $v$ which is harmonic in $\Om$.
\end{lem}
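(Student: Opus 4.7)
The plan is to obtain both statements as specializations to harmonic functions of the weighted Poincaré-type inequality for bounded $b_0$-John domains proved by Hurri-Syrj\"anen in \cite{HS}, following the same scheme already used to prove Lemma \ref{lem:two-inequalities} from the Boas-Straube results. Concretely, \cite{HS} yields that, under exactly the admissibility conditions $1\le p\le r\le Np/(N-p(1-\al))$, $p(1-\al)<N$, $0\le\al\le1$, every function $w\in W^{1,p}_{loc}(\Om)$ satisfies
$$
\nr w - w_\Om \nr_{r,\Om} \le C(\Om, r, p, \al, b_0)\,\nr \de_\Ga^{\al}\,\na w \nr_{p,\Om},
$$
with a constant that depends only on the listed parameters.

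Statement (ii) is then immediate: the function $v - v_\Om$ has gradient $\na v$, so applying the Hurri-Syrj\"anen inequality to $w=v$ gives \eqref{John-harmonic-poincare}. The existence of a best constant $\ol{\mu}_{r,p,\al}(\Om)$ is obtained from the variational characterization
$$
\ol{\mu}_{r,p,\al}(\Om) = \inf\bigl\{ \nr \de_\Ga^{\al}\,\na v \nr_{p,\Om} : \De v=0\text{ in }\Om,\ v_\Om=0,\ \nr v\nr_{r,\Om}=1 \bigr\},
$$
whose positivity is guaranteed by the inequality itself, and whose attainment follows by the direct method, using weak compactness of the minimizing sequence in the weighted space and the stability of harmonicity under weak limits.

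For statement (i), I would pass from the mean-zero normalization of (ii) to the pointwise normalization $v(z)=0$ by exploiting the smoothness of harmonic functions. By the triangle inequality,
$$
\nr v \nr_{r,\Om} \le \nr v - v_\Om \nr_{r,\Om} + |v_\Om|\,|\Om|^{1/r},
$$
and since $v(z)=0$ one has $|v_\Om|=|v_\Om - v(z)|$. The mean value property applied on the ball $B_{\de_\Ga(z)/2}(z)\subset\Om$ rewrites $v(z)$ as an average of $v$, so that $|v_\Om - v(z)|$ is controlled, via H\"older's inequality, by $\nr v-v_\Om\nr_{r,B_{\de_\Ga(z)/2}(z)}\le\nr v-v_\Om\nr_{r,\Om}$, times a factor depending on $N$, $r$, $|\Om|$ and $\de_\Ga(z)$. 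Combining this with (ii) yields \eqref{John-harmonic-quasi-poincare}, and the best constant $\mu_{r,p,\al}(\Om,z)$ is then characterized by the variational problem analogous to the one above, with the mean-value constraint replaced by $v(z)=0$.

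The main obstacle I anticipate lies not in these manipulations, which are essentially routine once harmonicity is used to give meaning to the pointwise value and to the variational problem, but rather in pinning down the precise form of the Hurri-Syrj\"anen inequality: one must verify that \cite{HS} is stated under hypotheses covering the full admissibility range $1\le p\le r\le Np/(N-p(1-\al))$ with the weight $\de_\Ga^\al$, and that the ensuing constant genuinely depends only on $\Om$, $r$, $p$, $\al$ (and the John constant $b_0$, which is itself a geometric feature of $\Om$). Once this is in place, the derivation of (i) and (ii) is a direct adaptation of the proof of Lemma \ref{lem:two-inequalities}.
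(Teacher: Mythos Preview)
Your approach is essentially the same as the paper's: derive (ii) from the Hurri-Syrj\"anen inequality, then pass to (i) via the mean value property on a ball centered at $z$ and the triangle inequality (the paper packages the latter step as Lemma~\ref{lem:mediaor-mediauguale} applied with $A=B_{\de_\Ga(z)}(z)$ and $\la=v_\Om$, which is exactly your computation). The one wrinkle you should be aware of is that \cite[Theorem~1.3]{HS} is stated with the $r$-mean $v_{r,\Om}$ (the minimizer of $\inf_{\la}\nr v-\la\nr_{r,\Om}$), not the ordinary mean $v_\Om$; the paper bridges this with Lemma~\ref{lem:mediaor-mediauguale} applied with $A=\Om$ and $\la=v_{r,\Om}$, which costs only a factor of~$2$.
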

\begin{proof}
In \cite[Theorem 1.3]{HS} it is proved that there exists a constant $c=c(N,\, r, \, p,\, \al, \, \Om)$ such that
\begin{equation}\label{eq:risultatodiHSconr-media}
\nr v - v_{r, \Om} \nr_{r,\Om} \le c \, \nr \de_\Ga^{\al} \, \na v  \nr_{p, \Om} ,
\end{equation}
for every $v \in L^1_{loc}(\Om)$ such that $\de_\Ga^\al \, \na v  \in L^p(\Om)$.
Here, $v_{r,\Om}$ denotes the {\it $r$-mean of $v$ in $\Om$} which is defined -- following \cite{IMW} -- as the unique minimizer of the problem
$$\inf_{\la \in \RR} \nr v - \la \nr_{r,\Om}.$$
Notice that, in the case $r=2$, $v_{2, \Om}$ is the classical mean value of $v$ in $\Om$, i.e. $v_{2,\Om} = v_\Om$,
as can be easily verified.

By using Lemma \ref{lem:mediaor-mediauguale} with $A= \Om$ and $\la = v_{r, \Om}$ we thus prove \eqref{John-harmonic-poincare} for every $v \in L^1_{loc}(\Om)$ such that $\de_\Ga^\al \, \na v  \in L^p(\Om)$.
The assumption of the harmonicity of $v$ in \eqref{John-harmonic-poincare} clearly gives a better constant.

Inequality \eqref{John-harmonic-quasi-poincare} can be deduced from \eqref{John-harmonic-poincare} by applying Lemma \ref{lem:mediaor-mediauguale} with $A= B_{\de_\Ga (z)} (z)$ and $\la = v_\Om$ and recalling that, since $v$ is harmonic, by the mean value property it holds that
$v(z)= v_{B_{\de_\Ga (z)} (z)}.$
%

The (solvable) variational problems 
%
%
\begin{equation*}
\mu_{r, p, \al} (\Om, z) = 
\min \left\{ \nr \de_\Ga^{\al} \, \na v  \nr_{p, \Om} : \nr v \nr_{r, \Om} = 1, \,\De v =0 \text{ in } \Om, \, v(z) = 0 \right\}
\end{equation*}
and
%
%
\begin{equation*}
\ol{\mu}_{r, p, \al} (\Om) = 
\min \left\{ \nr \de_\Ga^{\al} \, \na v  \nr_{p, \Om} : \nr v \nr_{r, \Om} = 1, \,\De v =0 \text{ in } \Om,  v_{\Om} = 0 \right\}
\end{equation*}
then characterize the two constants. 
\end{proof}

\begin{rem}
{\rm
When $r=p$, \eqref{John-harmonic-quasi-poincare} and \eqref{John-harmonic-poincare} become exactly \eqref{harmonic-quasi-poincare} and \eqref{harmonic-poincare}.
However, it should be noticed that \eqref{harmonic-quasi-poincare} and \eqref{harmonic-poincare} proved in \cite{BS} also hold true without the restriction $p (1-\al) < N$ assumed in \cite{HS}.
%
%
}
\end{rem}

%
%

From Lemma \ref{lem:John-two-inequalities} we can derive estimates for the derivatives of harmonic functions, as already done in Corollary \ref{cor:Poincareaigradienti} for Lemma \ref{lem:two-inequalities}.

\begin{cor}\label{cor:JohnPoincareaigradienti}
Let $\Om\subset\RR^N$, $N\ge 2$, be a bounded $b_0$-John domain and let $v$ be a harmonic function in $\Om$. Consider three numbers $r, p, \al$ such that $1 \le p \le r \le \frac{Np}{N-p(1 - \al )}$, $p(1 - \al)<N$, $0 \le \al \le 1$.

(i) If $z$ is a critical point of $v$ in $\Om$, then it holds that
\begin{equation*}
\nr \na v \nr_{r, \Om} \le \mu_{r, p, \al} ( \Om, z)^{-1} \nr \de_\Ga^{\al} \, \na^2 v  \nr_{p, \Om}.
\end{equation*}

(ii) If
$$\int_\Om \na v \, dx = 0,$$
%
%
then it holds that
\begin{equation*}
\nr \na v \nr_{r, \Om} \le \ol{\mu}_{r, p, \al} (\Om)^{-1} \nr \de_\Ga^{\al} \, \na^2 v  \nr_{p, \Om}.
\end{equation*}
\end{cor}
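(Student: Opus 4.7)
The plan is to mirror the proof of Corollary \ref{cor:Poincareaigradienti}, but since now we have two possibly different exponents $r \ge p$, an extra elementary step will be needed to rearrange the sums over the components.

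First, I would fix $i \in \{1, \dots, N\}$ and apply Lemma \ref{lem:John-two-inequalities} to the function $v_i$. This is legitimate because partial derivatives of harmonic functions are harmonic, so $v_i$ satisfies the hypotheses of the lemma. For part (i), the hypothesis $v_i(z) = 0$ follows from $z$ being a critical point of $v$; for part (ii), the hypothesis $(v_i)_\Om = 0$ is exactly $\frac{1}{|\Om|}\int_\Om v_i \, dx = 0$, which follows from $\int_\Om \na v \, dx = 0$. Thus, in the two cases respectively, \eqref{John-harmonic-quasi-poincare} and \eqref{John-harmonic-poincare} give
\begin{equation*}
\| v_i \|_{r, \Om} \le \mu_{r, p, \al}(\Om, z)^{-1} \| \de_\Ga^\al \, \na v_i \|_{p, \Om},
\quad
\| v_i \|_{r, \Om} \le \ol{\mu}_{r, p, \al}(\Om)^{-1} \| \de_\Ga^\al \, \na v_i \|_{p, \Om}.
\end{equation*}

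Next, I would raise each inequality to the power $r$ and sum over $i = 1, \dots, N$. On the left-hand side this immediately gives $\| \na v \|_{r, \Om}^r$, in accordance with the notation introduced in Section \ref{sec:harmonic functions in weighted spaces}. On the right-hand side, expanding the definition yields sums of the form
\begin{equation*}
\sum_{i=1}^N \| \de_\Ga^\al \, \na v_i \|_{p, \Om}^r = \sum_{i=1}^N \Bigl( \sum_{j=1}^N \| \de_\Ga^\al \, v_{ij} \|_{p, \Om}^p \Bigr)^{r/p}.
\end{equation*}
Since $r/p \ge 1$, the elementary inequality $\sum_k a_k^t \le (\sum_k a_k)^t$ (valid for $t \ge 1$ and $a_k \ge 0$) allows one to bring the outer sum inside the exponent, yielding the bound
\begin{equation*}
\sum_{i=1}^N \| \de_\Ga^\al \, \na v_i \|_{p, \Om}^r \le \Bigl( \sum_{i,j=1}^N \| \de_\Ga^\al \, v_{ij} \|_{p, \Om}^p \Bigr)^{r/p} = \| \de_\Ga^\al \, \na^2 v \|_{p, \Om}^r.
\end{equation*}

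Finally, taking $r$-th roots in both inequalities delivers the two desired estimates. I expect no substantial obstacle: the case $r = p$ reduces to a direct repetition of the argument in Corollary \ref{cor:Poincareaigradienti}, and the only additional ingredient for $r > p$ is the elementary power-sum inequality mentioned above. The statement for $\al = 0$ is also included, since Lemma \ref{lem:John-two-inequalities} covers that range.
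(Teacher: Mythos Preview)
Your proof is correct and follows essentially the same route as the paper: apply the Hardy--Poincar\'e inequality of Lemma~\ref{lem:John-two-inequalities} to each harmonic partial derivative $v_i$, raise to the $r$-th power, sum, and use the power-sum inequality $\sum_i x_i^{r/p} \le (\sum_i x_i)^{r/p}$ valid for $r/p \ge 1$. Your write-up is in fact slightly more careful than the paper's, which (by an apparent typo) cites \eqref{harmonic-quasi-poincare}--\eqref{harmonic-poincare} instead of the John-domain versions \eqref{John-harmonic-quasi-poincare}--\eqref{John-harmonic-poincare}.
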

\begin{proof}
Since $\na v (z)=0$ (respectively $\int_\Om \na v \, dx = 0,$), we can apply \eqref{harmonic-quasi-poincare} (respectively \eqref{harmonic-poincare}) to each first partial derivative $v_i$ of $v$, $i=1, \dots, N$. If we raise to the power of $r$ those inequalities and sum over $i=1, \dots, N$, the conclusion easily follows in view of the inequality
$$
\sum_{i=1}^N x_i^{\frac{r}{p}} \le \left( \sum_{i=1}^N x_i \right)^{\frac{r}{p}} 
$$
that holds for every $(x_1, \dots, x_N) \in \RR^N$ with $x_i \ge 0$ for $i=1, \dots, N$, since $r/p \ge 1$.
\end{proof}

\begin{rem}[Tracing the geometric dependence of the constants]\label{rem:stime mu HS in item ii}
{\rm
In this remark, we explain how to trace the dependence on a few geometrical parameters of the constants in the relevant inequalities.

(i) The proof of \cite{HS} has the benefit of giving an explicit upper bound for the constant $c$ appearing in \eqref{eq:risultatodiHSconr-media}, from which, by following the steps of our proof, we can deduce explicit estimates for $\mu_{r,p,\al}(\Om,z)^{-1}$ and $\ol{\mu}_{r, p, \al} (\Om)^{-1}$.
In fact, we easily show that
\begin{equation*}
\ol{\mu}_{r, p, \al} (\Om)^{-1} \le k_{N,\, r, \, p,\, \al} \, b_0^N |\Om|^{\frac{1-\al}{N} +\frac{1}{r} +\frac{1}{p} } ,
\end{equation*}
\begin{equation*}
\mu_{r,p,\al}(\Om,z)^{-1} \le k_{N,\, r, \, p, \,\al} \, \left(\frac{b_0}{\de_\Ga (z)^{\frac{1}{r} }}\right)^N |\Om|^{\frac{1-\al}{N} +\frac{2}{r} +\frac{1}{p} } .
\end{equation*}

A better estimate for $\mu_{r,p,\al}(\Om,z)$ can be obtained for $L_0$-John domains with base point $z$. Since the computations are tedious and technical we present them in Appendix \ref{appendix:estimates-mu-for-L0John} and here we just report the final estimate, that is,
\begin{equation}\label{eq:estimatemu-r-p-al-generalizingFeappendix}
\mu_{r,p,\al}(\Om,z)^{-1} \le k_{N,r,p,\al} \, L_0^N |\Om|^{\frac{1-\al}{N} +\frac{1}{r} +\frac{1}{p} } .
\end{equation}

(ii) In the sequel we will also need to trace the dependence on relevant geometrical quantities of the constants $\ol{\mu}_{p, 0} (\Om)$ and $\mu_{p,0}(\Om,z)$ appearing in \eqref{harmonic-poincare} and \eqref{harmonic-quasi-poincare} in the case $\al=0$.

In \cite[Theorem 8.5]{HS1},
where the author proves \eqref{harmonic-poincare} in the case $\al=0$ for $b_0$-John domains, 
an explicit upper bound for $\ol{\mu}_{p,0} (\Om)^{-1}$, in terms of $b_0$ and $d_{\Om}$ only, can be found. We warn the reader that the definition of John domain used there is different from the definitions that we gave in this thesis, but it is equivalent in view of \cite[Theorem 8.5]{MarS}. Explicitly, by putting together \cite[Theorem 8.5]{HS1} and \cite[Theorem 8.5]{MarS} one finds that
\begin{equation*}
\ol{\mu}_{p,0} (\Om)^{-1} \le k_{N, \, p} \, b_0^{3N(1 + \frac{N}{p})} \, d_\Om.
\end{equation*}

Reasoning as in the proof of \eqref{John-harmonic-quasi-poincare}, from this estimate one can also deduce a bound for $\mu_{p,0}(\Om,z)$. In fact, by applying Lemma \ref{lem:mediaor-mediauguale} with $A= B_{\de_\Ga (z)} (z)$ and $\la = v_\Om$ and recalling the mean value property of $v$, from \eqref{harmonic-poincare} and the bound for $\ol{\mu}_{p,0} (\Om)$, we easily compute that
\begin{equation*}
\mu_{p,0} (\Om, z)^{-1} \le k_{N, \, p} \, \left( \frac{|\Om|}{\de_\Ga (z)^N} \right)^\frac{1}{r} \,b_0^{3N(1 + \frac{N}{p})} \, d_\Om.
\end{equation*}

A better estimate for $\mu_{p,0}(\Om,z)$ can be obtained for $L_0$-John domains with base point $z$, that is,
\begin{equation}\label{eq:estimatemu-p-0-generalizingFeappendix}
\mu_{p, 0}(\Om,z)^{-1} \le k_{N, \, p} \, L_0^{3N(1 + \frac{N}{p})} \, d_\Om .
\end{equation}
Complete computations to obtain \eqref{eq:estimatemu-p-0-generalizingFeappendix} can be found in Appendix \ref{appendix:estimates-mu-for-L0John}.

(iii) A domain of class $C^{2}$ is obviously a $b_0$-John domain and a $L_0$-John domain with base point $z$ for every $z \in \Om$. In fact, by the definitions, it is not difficult to prove the following bounds
%
%
\begin{equation*}
b_0 \le  \frac{d_\Om}{r_i} ,
\end{equation*}
%
%
\begin{equation*}
L_0 \le \frac{d_\Om}{\min[r_i, \de_\Ga (z)] } .
\end{equation*}

Thus, for $C^2$-domains items (i) and (ii) inform us that the following estimates hold
\begin{equation*}
\ol{\mu}_{r, p, \al} (\Om)^{-1} \le k_{N,\, r, \, p,\, \al} \, \left( \frac{d_\Om}{r_i} \right)^N |\Om|^{\frac{1-\al}{N} +\frac{1}{r} +\frac{1}{p} } ,
\end{equation*}
\begin{equation*}
\mu_{r,p,\al}(\Om,z)^{-1} \le k_{N,r,p,\al} \, \left( \frac{d_\Om}{\min[r_i, \de_\Ga (z)] } \right)^N |\Om|^{\frac{1-\al}{N} +\frac{1}{r} +\frac{1}{p} } ,
\end{equation*}
\begin{equation*}
\ol{\mu}_{p,0} (\Om)^{-1} \le k_{N, \, p} \,  \frac{d_\Om^{3N(1 + \frac{N}{p}) + 1 }  }{r_i^{3N(1 + \frac{N}{p})}  }  ,
\end{equation*}
\begin{equation*}
\mu_{p, 0}(\Om,z)^{-1} \le k_{N, \, p} \, \frac{d_\Om^{3N(1 + \frac{N}{p}) + 1 }  }{\min[r_i, \de_\Ga (z)]^{3N(1 + \frac{N}{p})} } .
\end{equation*}
}
\end{rem}

\medskip

To conclude this section, as a consequence of Lemma \ref{lem:identityfortraceineq}, we present a weighted trace inequality.
%
%
We mention that the following proof modifies an idea of W. Feldman \cite{Fe} for our purposes.
%
 
\begin{lem}[A trace inequality for harmonic functions]
\label{lem:genericv-trace inequality}
Let $\Om\subset\RR^N$, $N\ge 2$, be a bounded domain with boundary $\Ga$ of class $C^2$ and let $v$ be a harmonic function in $\Om$.

(i) If $z$ is a critical point of $v$ in $\Om$, then it holds that
\begin{equation*}
\int_{\Ga} |\na v|^2 dS_x \le \frac{2}{r_i} \left(1+\frac{N}{r_i\, \mu_{2,2, \frac{1}{2} }(\Om,z)^2 } \right)  \int_{\Om} (-u) |\na^2 v|^2 dx.
\end{equation*}

(ii) If 
$$\int_\Om \na v \, dx = 0,$$ 
then it holds that
\begin{equation*}
\int_{\Ga} |\na v|^2 dS_x \le \frac{2}{r_i} \left(1+\frac{N}{r_i\, \ol{\mu}_{2,2, \frac{1}{2} }(\Om)^2 } \right)  \int_{\Om} (-u) |\na^2 v|^2 dx.
\end{equation*}
\end{lem}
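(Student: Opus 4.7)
The plan is to start from the integral identity \eqref{eq:identityfortraceinequality} of Lemma \ref{lem:identityfortraceineq}, namely
\begin{equation*}
\int_{\Ga} |\na v|^2 u_{\nu} \, dS_x = N \int_{\Om} |\na v|^2 dx + 2 \int_{\Om} (-u) |\na^2 v|^2 dx,
\end{equation*}
and bound both sides in a way that isolates the boundary integral we want. On the left-hand side, Theorem \ref{thm:boundary-gradient} gives the one-sided pointwise estimate $u_\nu \ge r_i$ on $\Ga$, so
\begin{equation*}
\int_{\Ga} |\na v|^2 \, dS_x \le \frac{1}{r_i}\int_{\Ga} |\na v|^2 u_\nu \, dS_x = \frac{N}{r_i}\int_{\Om} |\na v|^2 \, dx + \frac{2}{r_i}\int_{\Om}(-u)|\na^2 v|^2\,dx.
\end{equation*}
It remains to absorb the first summand on the right into the second one.

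To do this, I would apply the Hardy--Poincar\'e inequality for the gradient of a harmonic function with weight $\de_\Ga^{1/2}$ (i.e.\ Corollary \ref{cor:JohnPoincareaigradienti} with $r=p=2$ and $\al=1/2$). In case (i), since $z$ is a critical point of $v$, this yields
\begin{equation*}
\int_{\Om}|\na v|^2\, dx \le \mu_{2,2,1/2}(\Om,z)^{-2}\int_{\Om}\de_\Ga(x)\,|\na^2 v|^2\,dx,
\end{equation*}
while in case (ii), the zero-mean condition $\int_\Om \na v\,dx=0$ allows the same step with the constant $\ol{\mu}_{2,2,1/2}(\Om)^{-1}$. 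At this point the weight is $\de_\Ga$, but Lemma \ref{lem:relationdist} gives $\de_\Ga(x)\le \tfrac{2}{r_i}(-u(x))$ on $\ol{\Om}$, so we may replace $\de_\Ga$ by $\tfrac{2}{r_i}(-u)$ and obtain
\begin{equation*}
\int_{\Om}|\na v|^2\,dx \le \frac{2}{r_i\,\mu_{2,2,1/2}(\Om,z)^{2}}\int_{\Om}(-u)|\na^2 v|^2\,dx
\end{equation*}
(and analogously for (ii) with $\ol{\mu}_{2,2,1/2}(\Om)$). Plugging this into the previous display gives exactly the asserted inequalities.

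The proof is essentially assembly rather than innovation: every ingredient has been prepared earlier in the chapter (the integral identity, the lower bound on $u_\nu$, the weighted Poincar\'e inequality, and the comparison between $-u$ and $\de_\Ga$). The only subtlety to check is that Corollary \ref{cor:JohnPoincareaigradienti} is applicable with the parameters $r=p=2$, $\al=1/2$: we need $1\le p\le r\le Np/(N-p(1-\al))$ and $p(1-\al)<N$, which reduces to $1<N$ (satisfied) and $r=p=2$ (trivially satisfied). The main mildly delicate point is ensuring that $\na v$ has the right ``normalization'' so that the Hardy--Poincar\'e inequality applies componentwise: in case (i) this is because $\na v(z)=0$ makes each harmonic component $v_i$ vanish at $z$, and in case (ii) the vector equation $\int_\Om \na v\,dx=0$ means each $v_i$ has zero mean on $\Om$.
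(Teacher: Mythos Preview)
Your proof is correct and follows exactly the same route as the paper: start from identity \eqref{eq:identityfortraceinequality}, use $u_\nu\ge r_i$ on $\Ga$ from Theorem \ref{thm:boundary-gradient}, apply Corollary \ref{cor:JohnPoincareaigradienti} with $r=p=2$, $\al=1/2$, and then replace $\de_\Ga$ by $\tfrac{2}{r_i}(-u)$ via \eqref{eq:relationdist}. The paper's proof is just a more condensed version of what you wrote.
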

\begin{proof}
Since the term $u_\nu$ at the left-hand side of \eqref{eq:identityfortraceinequality} can be bounded from below by $r_i$, by an adaptation of Hopf's lemma (see Theorem \ref{thm:boundary-gradient}), it holds that
\begin{equation*}
r_i \, \int_{\Ga} |\na v|^2 dS_x \le N \int_{\Om} |\na v|^2 dx + 2 \int_{\Om} (-u) |\na^2 v|^2 dx.
\end{equation*}
Thus, the conclusion follows from this last formula, Corollary \ref{cor:JohnPoincareaigradienti} with $r=p=2$ and $\al=1/2$, and \eqref{eq:relationdist}.
\end{proof}

\section{An estimate for the oscillation of harmonic functions}\label{sec:estimates for the oscillation of harmonic functions}

In this section, we single out the key lemma that will produce most of the stability estimates of Chapter \ref{chapter:Stability results} below. It contains an inequality  for the oscillation of a harmonic function $v$ in terms of its $L^p$-norm and of a bound for its gradient.
We point out that the following lemma is new and generalizes the estimates proved and used in \cite{MP, MP2} for $p=2$.

To this aim, we define the {\it parallel set} as
$$
\Om_\si=\{ y\in\Om: \de_\Ga (y) >\si\} \quad \mbox{ for } \quad 0<\si \le r_i.
$$

\begin{lem}
\label{lem:Lp-estimate-oscillation-generic-v}
Let $\Om\subset\RR^N$, $N\ge 2$, be a bounded domain with boundary $\Ga$ of class $C^2$ and let $v$ be a harmonic function in $\Om$ of class $C^1 (\ol{\Om})$. Let $G$ be an upper bound for the gradient of $v$ on $\Ga$.
\par
Then, there exist two constants $a_{N,p}$ and $\al_{N,p}$ depending only on $N$ and $p$ such that if
\begin{equation}
\label{smallness-generic-v}
\nr v - v_{\Om} \nr_{p, \Om} \le \al_{N,p} \, r_{i}^{\frac{N+p}{p}}  G  
\end{equation}
holds, we have that
\begin{equation}
\label{Lp-stability-generic-v}  
\max_{\Ga} v - \min_{\Ga} v \le a_{N,p} \,  G^{ \frac{N}{N+p} } \, \nr v - v_{\Om} \nr_{p, \Om}^{ p/(N+p) }.
\end{equation} 
\end{lem}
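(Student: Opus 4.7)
The plan is to bound the oscillation of $v$ on $\Gamma$ at two scales: a ``boundary layer'' contribution controlled via a uniform gradient bound, plus an ``interior'' contribution on the parallel set $\Omega_\sigma$ controlled via the mean value property, and then to optimize over the parameter $\sigma \in (0, r_i]$.

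First, since $v$ is harmonic in $\Omega$ and lies in $C^1(\overline{\Omega})$, each partial derivative $v_i$ is harmonic, so $|\nabla v|^2 = \sum_i v_i^2$ satisfies $\Delta |\nabla v|^2 = 2 \sum_i |\nabla v_i|^2 \ge 0$, that is, $|\nabla v|^2$ is subharmonic. By the maximum principle and the assumed bound on $\Gamma$, I then obtain
$$|\nabla v| \le G \quad \text{throughout } \overline{\Omega}.$$

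Next, fix $\sigma \in (0, r_i]$ and, using the uniform interior sphere condition, for any $x \in \Gamma$ define $y(x) = x - \sigma \nu(x)$; the ball of radius $r_i$ tangent to $\Gamma$ at $x$ from inside contains the ball $B_\sigma(y(x))$, so $y(x) \in \overline{\Omega_\sigma}$. Integrating $|\nabla v| \le G$ along the segment from $x$ to $y(x)$ gives $|v(x) - v(y(x))| \le G\sigma$. Applying this to the points of $\Gamma$ at which $v$ attains its maximum and minimum yields
$$\max_\Gamma v - \min_\Gamma v \le 2G\sigma + \bigl( \max_{\overline{\Omega_\sigma}} v - \min_{\overline{\Omega_\sigma}} v \bigr).$$
For the second summand I use the harmonicity of $v$: for any $y \in \overline{\Omega_\sigma}$ the ball $B_\sigma(y)$ lies in $\Omega$, so the mean value property and H\"older's inequality give
$$|v(y) - v_\Omega| = \Bigl| \frac{1}{|B_\sigma|} \int_{B_\sigma(y)} (v - v_\Omega) \, dw \Bigr| \le |B_\sigma|^{-1/p} \, \nr v - v_\Omega \nr_{p, \Omega} = c_{N,p} \, \sigma^{-N/p} \, \nr v - v_\Omega \nr_{p, \Omega},$$
whence $\max_{\overline{\Omega_\sigma}} v - \min_{\overline{\Omega_\sigma}} v \le 2 c_{N,p} \, \sigma^{-N/p} \, \nr v - v_\Omega \nr_{p, \Omega}$.

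Putting the two contributions together I get, for every $\sigma \in (0, r_i]$,
$$\max_\Gamma v - \min_\Gamma v \le 2 G \sigma + 2 c_{N,p} \, \sigma^{-N/p} \, \nr v - v_\Omega \nr_{p, \Omega}.$$
Elementary calculus shows that the right-hand side is minimized by the choice $\sigma_\ast = k_{N,p} \bigl(\nr v - v_\Omega \nr_{p, \Omega} / G\bigr)^{p/(N+p)}$ for an explicit constant $k_{N,p}$, and at $\sigma_\ast$ the right-hand side equals $a_{N,p} \, G^{N/(N+p)} \, \nr v - v_\Omega \nr_{p, \Omega}^{p/(N+p)}$; this is precisely \eqref{Lp-stability-generic-v}. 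The only obstacle is to make sure that $\sigma_\ast$ is an admissible choice, i.e. that $\sigma_\ast \le r_i$: this translates directly into the smallness condition \eqref{smallness-generic-v} with $\alpha_{N,p}$ determined by $k_{N,p}$. No other step is genuinely delicate; the argument is a calibrated two-scale estimate enabled by the subharmonicity of $|\nabla v|^2$ and the mean value property.
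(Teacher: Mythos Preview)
Your proof is correct and follows essentially the same approach as the paper: bound the oscillation by a boundary-layer term $2G\sigma$ (via the gradient bound, which the paper also justifies by noting that $|\nabla v|$ attains its maximum on $\Gamma$) plus an interior term controlled by the mean value property and H\"older's inequality, then optimize in $\sigma\in(0,r_i]$. The only cosmetic difference is that the paper applies the mean value estimate directly at the two points $y_i=x_i-\sigma\nu(x_i)$ and $y_e=x_e-\sigma\nu(x_e)$ rather than bounding the oscillation over all of $\overline{\Omega_\sigma}$, but this yields the same inequality and the same optimal $\sigma_\ast$.
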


\begin{proof}
Since $v$ is harmonic it attains its extrema on the boundary $\Ga$.
Let $x_i$ and $x_e$ be points in $\Ga$ that respectively minimize and maximize $v$ on $\Ga$ and, for 
$$
0<\si \le r_i,
$$
define the two points in $y_i, y_e\in\pa\Om_\si$ by
$y_j=x_j-\si\nu(x_j)$, $j=i, e$. 
\par
By the fundamental theorem of calculus we have that
\begin{equation}\label{eq:prova-TFCI-generic v}
v(x_j)= v(y_j) + \int_0^\si \lan\na v(x_j-t\nu(x_j)),\nu(x_j)\ran\,dt.
\end{equation}
%
%
\par
Since $v$ is harmonic and $y_j\in \overline{ \Om }_\si $, $j=i, e$, we can use the mean value property for the balls with radius $\si$ centered at  $y_j$ and obtain: 
\begin{multline*}
|v(y_j) - v_{\Om}| \le \frac1{|B|\, \si^N}\,\int_{B_\si(y_j)}|v - v_{\Om} |\,dy\le \\
\frac{1}{ \left[ |B|\, \si^N \right]^{1/p} } \, \left[\int_{B_\si(y_j)}|v - v_{\Om} |^p\,dy\right]^{1/p}\le 
\frac1{ \left[ |B|\, \si^N \right]^{1/p} } \, \left[\int_{\Om}|v- v_{\Om}|^p\,dy\right]^{1/p} 
\end{multline*}
after an application of H\"older's inequality and by the fact that $B_\si(y_j) \subseteq \Om$. This and \eqref{eq:prova-TFCI-generic v} then yield that
\begin{equation*}
\max_{\Ga} v - \min_{\Ga} v \le 2 \, \left[  \frac{\nr v - v_{\Om} \nr_{p, \Om} }{ |B|^{1/p} \, \si^{N/p}}+ \si \, G \right] ,
\end{equation*}
for every $0<\si \le r_i$.
Here we used that 
%
%
%
$|\na v|$ attains its maximum on $\Ga$, being $v$ harmonic.
\par
Therefore, by minimizing the right-hand side of the last inequality, we can conveniently choose 
$$
\si=\left(\frac{N\,\nr v - v_{\Om} \nr_{p, \Om} }{ p \, |B|^{1/p}\, G }\right)^{p/(N+p)} 
$$
and obtain \eqref{Lp-stability-generic-v}, if $\si \le r_i$;  \eqref{smallness-generic-v} will then follow. The explicit computation immediately shows that
\begin{equation}
\label{eq:costantia_Nal_Nlemmagenericv}
a_{N,p}= \frac{ 2 (N+p) }{N^{\frac{N}{N+2}} p^{\frac{p}{N+p}} } \,|B|^{\frac{1}{N+p}}
\quad \mbox{and} \quad \al_{N,p}= \frac{ p }{N} \, |B|^{\frac{1}{p} } .
\end{equation}

Notice that, the fact that \eqref{Lp-stability-generic-v} holds if \eqref{smallness-generic-v} is verified, remains true even if we replace in \eqref{smallness-generic-v} and \eqref{Lp-stability-generic-v} $v_\Om$ by any $\la \in \RR$.
\end{proof}

%
%
%
%

In the following corollary, we present a geometric sufficient condition on the domain that makes \eqref{smallness-generic-v} verified.
To this aim, for $x,$ $y \in \Om$ we denote by $d_\Om (x, y)$ the {\it intrinsic distance of $x$ to $y$ in $\Om$} induced by the euclidean metric,
%
%
that is
\begin{multline*}
d_\Om (x, y) = 
\\
\inf \left\lbrace \int_0^1 |\ga' (t)| \, dt : \, \ga: \left[ 0, 1 \right] \to \Om \text{ piecewise $C^1$, $\ga(0)=x$, $\ga(1)=y$} \right\rbrace .
\end{multline*}

\begin{cor}
Under the assumptions of Lemma \ref{lem:Lp-estimate-oscillation-generic-v}, \eqref{smallness-generic-v} holds true -- and hence also \eqref{Lp-stability-generic-v} does -- if the following inequality is verified 
%
%
\begin{equation}\label{eq:nuovacondizionegeometricainlemmaosc}
\int_\Om d_\Om (x_M,x)^p \, dx \le \al_{N,p}^p \, r_{i}^{N+p},
\end{equation}
for a point $x_M$ such that $v(x_M)= v_{\Om}$.
\end{cor}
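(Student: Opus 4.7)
The plan is to control $|v(x) - v_\Omega|$ pointwise by $G \cdot d_\Omega(x_M, x)$ via the fundamental theorem of calculus along curves, and then integrate to recover the $L^p$-norm estimate \eqref{smallness-generic-v}.

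First, I would observe that since $v$ is harmonic in $\Omega$ and of class $C^1(\overline{\Omega})$, the function $|\nabla v|^2$ is subharmonic in $\Omega$, so by the maximum principle $|\nabla v| \le G$ on all of $\overline{\Omega}$, not merely on $\Gamma$. The existence of a point $x_M \in \Omega$ with $v(x_M) = v_\Omega$ follows from the continuity of $v$ together with the connectedness of $\Omega$ (the continuous function $v$ attains every value between $\min_\Gamma v$ and $\max_\Gamma v$).

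Next, I would fix any $x \in \Omega$ and any piecewise $C^1$ curve $\gamma \colon [0,1] \to \Omega$ with $\gamma(0) = x_M$ and $\gamma(1) = x$. By the fundamental theorem of calculus,
\begin{equation*}
|v(x) - v(x_M)| = \left| \int_0^1 \langle \nabla v(\gamma(t)), \gamma'(t) \rangle \, dt \right| \le G \int_0^1 |\gamma'(t)| \, dt.
\end{equation*}
Taking the infimum over all admissible curves $\gamma$ yields
\begin{equation*}
|v(x) - v_\Omega| = |v(x) - v(x_M)| \le G \, d_\Omega(x_M, x),
\end{equation*}
for every $x \in \Omega$.

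Raising to the $p$-th power and integrating over $\Omega$ gives
\begin{equation*}
\nr v - v_\Omega \nr_{p,\Omega}^p \le G^p \int_\Omega d_\Omega(x_M, x)^p \, dx \le \alpha_{N,p}^p \, r_i^{N+p} \, G^p,
\end{equation*}
by the hypothesis \eqref{eq:nuovacondizionegeometricainlemmaosc}. Extracting the $p$-th root produces exactly \eqref{smallness-generic-v}, and hence \eqref{Lp-stability-generic-v} follows from Lemma~\ref{lem:Lp-estimate-oscillation-generic-v}. There is no real obstacle here: the only delicate points are the pointwise bound $|\nabla v| \le G$ throughout $\overline{\Omega}$ (a consequence of harmonicity, not just the boundary bound) and the measurability/integrability of the function $x \mapsto d_\Omega(x_M, x)$, which is Lipschitz with constant $1$ on $\Omega$ and therefore Borel measurable and bounded by $\diam(\Omega)$ times a factor depending on the geometry.
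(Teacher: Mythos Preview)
Your proof is correct and follows essentially the same route as the paper: both use the fundamental theorem of calculus along piecewise $C^1$ curves joining $x_M$ to $x$, bound $|\nabla v|$ by $G$ throughout $\Omega$ (via the maximum principle for the subharmonic $|\nabla v|^2$), take the infimum over curves to obtain $|v(x)-v_\Omega|\le G\,d_\Omega(x_M,x)$, and then integrate. Your additional remarks on the existence of $x_M$ and the measurability of $d_\Omega(x_M,\cdot)$ are not in the paper's proof but are harmless clarifications.
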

\begin{proof}
The fact that \eqref{smallness-generic-v} holds true if \eqref{eq:nuovacondizionegeometricainlemmaosc} is verified follows from the inequality
\begin{equation}\label{eq:disuguaglianzanuovacondistanzaintrinseca}
\nr v - v_{\Om} \nr_{p, \Om} \le G \, \nr d_\Om (x_M,x) \nr_{p,\Om},
\end{equation}  
that can be proved by using the fundamental theorem of calculus.
In fact, if $\ga: \left[ 0, 1 \right] \to \Om$  is any piecewise $C^1$ curve from $x_M$ to $x$, the fundamental theorem of calculus informs us that
$$
v(x)- v(x_M)= \int_0^1 < \na v( \ga (t)), \ga'(t)> \, dt .
$$
Thus, we deduce that
$$
|v(x)- v(x_M)| \le G \, \int_0^1 | \ga'(t) | \, dt ,
$$
and \eqref{eq:disuguaglianzanuovacondistanzaintrinseca} easily follows from the definition
of $d_\Om(x,y)$, since $v(x_M) = v_\Om$.
\end{proof}

\section{Estimates for \texorpdfstring{$h=q-u$}{h=q-u}}\label{sec:estimates for h}
We now turn back our attention to the harmonic function $h = q-u$.

The following lemma is immediate.
\begin{lem}
Let $u$ be the solution of \eqref{serrin1} and set $h=q-u$, where $q$ is defined in \eqref{quadratic}.
Then, $h$ is harmonic in $\Om$ and it holds that:
\begin{equation}\label{L2-norm-hessian}
|\na^2 h|^2 = | \na ^2 u|^2 - \frac{(\De u)^2}{N}.
\end{equation}

Moreover, if the center $z$ of the polynomial $q$
%
%
is chosen in $\Om$, then the oscillation of $h$ on $\Ga$ can be bounded from below as follows:
\begin{equation}
\label{oscillationvolume}
\max_{\Ga} h-\min_{\Ga} h \ge \frac12\,(|\Om|/|B|)^{1/N}(\rho_e-\rho_i) ,
\end{equation}
or also:
\begin{equation}
\label{oscillation}
\max_{\Ga} h-\min_{\Ga} h \ge \frac{r_i}{2}(\rho_e-\rho_i) ,
\end{equation}
if $\Ga$ is of class $C^2$.
\end{lem}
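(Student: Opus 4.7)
The plan is to verify all four statements by direct computation, proceeding in the order listed.

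First, since $\De q = \De(\frac{1}{2}|x-z|^2) = N$ and $\De u = N$ by \eqref{serrin1}, I immediately get $\De h = \De q - \De u = 0$, so $h$ is harmonic in $\Om$. For the second claim, note that $\na^2 q \equiv I$ (the $N \times N$ identity matrix), so $\na^2 h = I - \na^2 u$ and therefore
\begin{equation*}
|\na^2 h|^2 = |I|^2 - 2\,\tr(\na^2 u) + |\na^2 u|^2 = N - 2\,\De u + |\na^2 u|^2 = |\na^2 u|^2 - N,
\end{equation*}
using $\De u = N$ in the last step. Since $(\De u)^2/N = N$, this is exactly \eqref{L2-norm-hessian}.

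For the oscillation inequalities, I first observe that $u \equiv 0$ on $\Ga$, so $h = q$ on $\Ga$. Since $z \in \Om$, the definitions \eqref{def-rhos} of $\rho_i$ and $\rho_e$ give
\begin{equation*}
\max_\Ga h - \min_\Ga h = \max_\Ga q - \min_\Ga q = \tfrac{1}{2}(\rho_e^2 - \rho_i^2) = \tfrac{1}{2}(\rho_e + \rho_i)(\rho_e - \rho_i).
\end{equation*}
Thus both inequalities reduce to lower bounds for $\rho_e + \rho_i$, and since $\rho_i \ge 0$ it suffices to bound $\rho_e$ from below.

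For \eqref{oscillationvolume}, I note that since $z \in \Om$ and $\rho_e$ is the maximal distance from $z$ to $\Ga$, the ball $B_{\rho_e}(z)$ contains $\Om$; comparing volumes yields $|B|\,\rho_e^N \ge |\Om|$, hence $\rho_e \ge (|\Om|/|B|)^{1/N}$. For \eqref{oscillation}, when $\Ga$ is of class $C^2$ the uniform interior sphere condition provides some ball $B_{r_i}(p) \subseteq \Om \subseteq B_{\rho_e}(z)$; the inclusion of these two balls forces $\rho_e \ge r_i + |z - p| \ge r_i$. Plugging these bounds into the expression for $\max_\Ga h - \min_\Ga h$ finishes the proof.

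There is no real obstacle here — the lemma is a direct computation combined with an elementary geometric comparison. The only point worth a moment of attention is the geometric step of bounding $\rho_e$, but both cases (volume comparison and interior sphere comparison) are standard one-line arguments.
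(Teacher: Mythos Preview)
Your proof is correct and follows essentially the same approach as the paper: both compute $\De h = 0$ and $|\na^2 h|^2$ directly from $\na^2 q = I$, identify $\max_\Ga h - \min_\Ga h = \tfrac12(\rho_e^2-\rho_i^2)$ via $h=q$ on $\Ga$, and then bound $\rho_e+\rho_i \ge \rho_e$ from below using $B_{\rho_e}(z)\supseteq\Om$ (giving the volume bound) and the interior sphere condition (giving $\rho_e\ge r_i$). Your argument is in fact slightly more detailed than the paper's, which simply asserts $\rho_e\ge r_i$ and mentions $|\Om|\ge |B|\,r_i^N$ as an alternative route.
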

\begin{proof}
Since by a direct computation it is immediate to check that $\De q = N$, the harmonicity of $h$ follows.
Simple and direct computations also give \eqref{L2-norm-hessian}.
Notice that $h=q$ on $\Ga$. Thus, by choosing $z$ in $\Om$, from \eqref{def-rhos} we get 
$$\max_{\Ga} h-\min_{\Ga} h=\frac12\,(\rho_e^2-\rho_i^2) .$$
Hence, \eqref{oscillationvolume} follows from the inequality $\rho_e+\rho_i\ge\rho_e\ge (|\Om|/|B|)^{1/N}$, that holds true since $B_{\rho_e}\supseteq\Om$.

If $\Ga$ is of class $C^2$, \eqref{oscillation} follows by noting that
%
%
$\rho_e+\rho_i \ge \rho_e \ge r_i$, or, if we want, also from \eqref{oscillationvolume} and the trivial inequality $|\Om| \ge |B| \, r_i^N$.
\end{proof}

By exploiting the additional information that we have about $h$, we now modify Lemma \ref{lem:Lp-estimate-oscillation-generic-v} to
directly link $\rho_e - \rho_i$ to the $L^p$-norm of $h$.
%
%
We do it in the following lemma which generalizes to the case of any $L^p$-norm \cite[Lemma 3.3]{MP}, that holds for $p=2$.

%
%

\begin{lem}
\label{lem:L2-estimate-oscillation}
Let $\Om\subset\RR^N$, $N\ge 2$, be a bounded domain with boundary of class $C^2$.
Set $h=q-u$, where $u$ is the solution of \eqref{serrin1} and $q$ is any quadratic polynomial as in \eqref{quadratic} with $z\in\Om$. 
\par
Then, there exists a positive constant $C$ such that
\begin{equation}
\label{L2-stability}
\rho_e-\rho_i\le C \, \nr h - h_{\Om} \nr_{p, \Om}^{ p/(N+p) }.
\end{equation}
The constant $C$ depends on $N$, $p$, $d_\Om$, $r_i$, $r_e$. If $\Om$ is convex the dependence on $r_e$ can be removed.
%
%
\end{lem}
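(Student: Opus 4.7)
The plan is to apply Lemma~\ref{lem:Lp-estimate-oscillation-generic-v} to the harmonic function $v=h=q-u$, which is of class $C^1(\ol{\Om})$ because $u$ is (by the $C^2$ regularity of $\Ga$ and standard elliptic regularity). The left-hand side of \eqref{L2-stability} will be controlled via \eqref{oscillation}, while the right-hand side appears directly in \eqref{Lp-stability-generic-v}.

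First, I would produce an upper bound $G$ for $|\na h|$ on $\Ga$ depending only on $N$, $d_\Om$, $r_e$. Since $\na q(x)=x-z$ with $z\in\Om$, one has $|\na q|\le d_\Om$ on $\ol{\Om}$. Combining this with the pointwise bound \eqref{bound-gradient} from Theorem~\ref{thm:boundary-gradient} yields
\begin{equation*}
\max_\Ga|\na h|\le d_\Om + c_N\,\frac{d_\Om(d_\Om+r_e)}{r_e}=:G=G(N,d_\Om,r_e).
\end{equation*}
In the convex case \eqref{bound-M-convex} gives $G\le d_\Om+c_N\,d_\Om$, eliminating $r_e$.

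Next, I would split into two cases according to whether the smallness hypothesis \eqref{smallness-generic-v} is satisfied by $h$.

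\emph{Case 1: $\nr h-h_\Om\nr_{p,\Om}\le\al_{N,p}\,r_i^{(N+p)/p}\,G$.} Then Lemma~\ref{lem:Lp-estimate-oscillation-generic-v} applies and gives
\begin{equation*}
\max_\Ga h-\min_\Ga h\le a_{N,p}\,G^{N/(N+p)}\,\nr h-h_\Om\nr_{p,\Om}^{p/(N+p)}.
\end{equation*}
Combining with the lower bound \eqref{oscillation} on the oscillation of $h$,
\begin{equation*}
\rho_e-\rho_i\le\frac{2\,a_{N,p}}{r_i}\,G^{N/(N+p)}\,\nr h-h_\Om\nr_{p,\Om}^{p/(N+p)}.
\end{equation*}

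\emph{Case 2: $\nr h-h_\Om\nr_{p,\Om}>\al_{N,p}\,r_i^{(N+p)/p}\,G$.} In this degenerate regime there is nothing to prove from the oscillation lemma, but the trivial bound $\rho_e-\rho_i\le d_\Om$ together with Case 2's hypothesis yields
\begin{equation*}
\rho_e-\rho_i\le d_\Om\le\frac{d_\Om}{\bigl(\al_{N,p}\,r_i^{(N+p)/p}\,G\bigr)^{p/(N+p)}}\,\nr h-h_\Om\nr_{p,\Om}^{p/(N+p)}.
\end{equation*}

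Taking $C$ to be the maximum of the two constants appearing in Cases 1 and 2 yields \eqref{L2-stability}, with the stated dependence on $N,p,d_\Om,r_i,r_e$; for convex $\Om$ the improved bound on $G$ removes the dependence on $r_e$. There is no real obstacle here, as the argument is essentially a bookkeeping exercise combining Lemma~\ref{lem:Lp-estimate-oscillation-generic-v} with Theorem~\ref{thm:boundary-gradient} and the elementary lower bound \eqref{oscillation}; the only mild subtlety is the case split, which is needed because Lemma~\ref{lem:Lp-estimate-oscillation-generic-v} only applies when its smallness hypothesis is in force.
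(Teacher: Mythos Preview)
Your proof is correct and follows essentially the same approach as the paper: apply Lemma~\ref{lem:Lp-estimate-oscillation-generic-v} to $h$ with a gradient bound $G$ coming from Theorem~\ref{thm:boundary-gradient}, combine with \eqref{oscillation}, and split into two cases depending on whether the smallness hypothesis \eqref{smallness-generic-v} holds. The only cosmetic difference is that the paper first writes $G=M+d_\Om$ and bounds $M$ via \eqref{gradient-estimate} at the end, while you substitute the explicit bound for $M$ from the outset.
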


\begin{proof}
%
%
%
By direct computations it is easy to check that
\begin{equation*}
| \na h | \le M + d_\Om \quad \mbox{on } \ol{\Om},
%
%
\end{equation*}
where $M$ is the maximum of $|\na u|$ on $\ol{\Om}$, as defined in \eqref{bound-gradient}.
Thus, we can apply Lemma \ref{lem:Lp-estimate-oscillation-generic-v} with $v=h$ and $G= M + d_\Om$.
By means of \eqref{oscillation} we deduce that
\eqref{L2-stability} holds with 
\begin{equation}\label{eq:constantCmaxnuovolemmaoscillation}
C= 2 \, a_{N,p} \, \frac{ (M + d_\Om )^{ \frac{N}{N+p} } }{ r_i } ,
\end{equation} 
if
\begin{equation*}
\nr h - h_{\Om} \nr_{p, \Om} \le \al_{N,p} \, (M + d_\Om) \, r_{i}^{\frac{N+p}{p}} .
\end{equation*}
Here,
$a_{N,p}$ and $\al_{N,p}$ are the constants defined in \eqref{eq:costantia_Nal_Nlemmagenericv}. 
On the other hand, if
\begin{equation*}
\nr h - h_{\Om} \nr_{p, \Om} > \al_{N,p} \, (M + d_\Om) \, r_{i}^{\frac{N+p}{p}} ,
\end{equation*}
it is trivial to check that \eqref{L2-stability} is verified with
$$C= \frac{d_\Om}{ \left[ \al_{N,p} \, (M + d_\Om) \right]^{\frac{p}{N+p}} \, r_{i} }.$$
Thus, \eqref{L2-stability} always holds true if we choose
the maximum between this constant and that in \eqref{eq:constantCmaxnuovolemmaoscillation}. We then can easily see that the following constant will do:
%
%
\begin{equation*}
C= \max{ \left\lbrace 2 \, a_{N,p} , \al_{N,p}^{- \frac{p}{N+p}} \right\rbrace } \, \frac{d_\Om^\frac{N}{N+p} }{r_i} \,  \left( 1 + \frac{M}{d_\Om} \right)^{ \frac{N}{N+p} } .
\end{equation*}
%
%
Now, by means of \eqref{gradient-estimate}, we obtain the constant
\begin{equation*}
C= \max{ \left\lbrace 2 \, a_{N,p} , \al_{N,p}^{- \frac{p}{N+p}} \right\rbrace } \, \frac{d_\Om^\frac{N}{N+p} }{r_i} \,  \left( 1 + c_N \, \frac{d_\Om + r_e}{r_e} \right)^{ \frac{N}{N+p} }.
\end{equation*}
If $\Om$ is convex, the dependence on $r_e$ can be avoided and we can choose
\begin{equation*}
C= \left( 1 + c_N \right)^{ \frac{N}{N+p} } \, \max{ \left\lbrace 2 \, a_{N,p} , \al_{N,p}^{- \frac{p}{N+p}} \right\rbrace } \, \frac{d_\Om^\frac{N}{N+p} }{r_i},
\end{equation*}
in light of \eqref{bound-M-convex}.
\end{proof}

%
%

%
%

For Serrin's overdetermined problem, Theorem \ref{thm:serrin-W22-stability} below will be crucial.
There, we associate the oscillation of $h$, and hence $\rho_e - \rho_i$, with the weighted $L^2$-norm of its Hessian matrix.
%
%
%

To this aim, we now choose the center $z$ of the quadratic polynomial $q$ in \eqref{quadratic} to be any critical point of $u$ in $\Om$. Notice that the (global) minimum point of $u$ is always attained in $\Om$. 
%
%
With this choice we have that $\na h (z) =0$.
%
%
We emphasize that the result that we present here improves (for every $N \ge 2$) the exponents of estimates obtained in \cite{MP2}.

\begin{thm}
\label{thm:serrin-W22-stability} 
Let $\Om\subset\RR^N$, $N\ge 2$, be a bounded domain with boundary $\Ga$ of class $C^{2}$
and $z$ be any critical point in $\Om$ of the solution $u$ of \eqref{serrin1}.
Consider the function  $h=q-u$, with $q$ given by \eqref{quadratic}.

There exists a positive constant $C$ such that
\begin{equation}\label{eq:C-provastab-serrin-W22}
\rho_e-\rho_i\le C\, \nr \de_\Ga^{\frac{1}{2} } \, \na^2 h  \nr_{2,\Om}^{\tau_N} ,
\end{equation}
with the following specifications:
%
%
\begin{enumerate}[(i)]
\item $\tau_2 = 1$;
\item $\tau_3$ is arbitrarily close to one, in the sense that for any $\theta>0$, there exists a positive constant $C$ such that  \eqref{eq:C-provastab-serrin-W22} holds with $\tau_3 = 1- \theta$;
\item $\tau_N = 2/(N-1)$ for $N \ge 4$.
\end{enumerate}

The constant $C$
%
%
depends on $N$, $r_i$, $r_e$, $d_\Om$, $\de_\Ga (z)$,
and $\theta$ (only in the case $N=3$).
\end{thm}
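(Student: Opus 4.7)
The plan is to combine three ingredients applied to the harmonic function $h=q-u$ with $\na h(z)=0$: the oscillation bound \eqref{oscillation}, the $L^p$-oscillation estimate of Lemma \ref{lem:L2-estimate-oscillation} (replaced by Morrey's imbedding when $N=2$), and a two-step chain of Poincar\'e-type inequalities, namely the Poincar\'e--Sobolev inequality of Lemma \ref{lem:John-two-inequalities}(ii) with $\al=0$, followed by the weighted Hardy--Poincar\'e inequality of Corollary \ref{cor:JohnPoincareaigradienti}(i) with $\al=1/2$ applied componentwise to the harmonic vector field $\na h$. Since $\Ga\in C^2$ and $z\in\Om$, Remark \ref{rem:stime mu HS in item ii}(iii) guarantees that $\Om$ is a $L_0$-John domain with base point $z$, and all Hardy--Poincar\'e constants can be estimated in terms of $N$, $d_\Om$, $r_i$, $\de_\Ga(z)$.

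\textbf{Case $N\ge 4$ (optimal chain).} Set $p=\frac{2N}{N-1}$ and $r=\frac{2N}{N-3}$. First, apply Corollary \ref{cor:JohnPoincareaigradienti}(i) with the triple $(p,2,1/2)$ to $\na h$: the admissibility conditions $2\le p\le \frac{2N}{N-1}$ and $2(1-1/2)=1<N$ are verified with equality in the first, giving
\begin{equation*}
\nr \na h \nr_{p,\Om}\le C_1\, \nr \de_\Ga^{1/2}\,\na^2 h\nr_{2,\Om}.
\end{equation*}
Second, since $p<N$ (precisely the place where $N\ge 4$ is used) and $\frac{Np}{N-p}=r$, Lemma \ref{lem:John-two-inequalities}(ii) with the triple $(r,p,0)$ gives
\begin{equation*}
\nr h-h_\Om\nr_{r,\Om}\le C_2\,\nr \na h\nr_{p,\Om}.
\end{equation*}
Finally, Lemma \ref{lem:L2-estimate-oscillation} with exponent $r$ yields
\begin{equation*}
\rho_e-\rho_i\le C_3\,\nr h-h_\Om\nr_{r,\Om}^{r/(N+r)}.
\end{equation*}
A direct computation gives $\frac{r}{N+r}=\frac{2}{N-1}=\tau_N$, and composing the three inequalities closes the case.

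\textbf{Case $N=3$.} The above chain degenerates because $p=\frac{2N}{N-1}=3=N$. Fix $\theta\in(0,1)$, take $p=3-\theta$ and $r=\frac{3(3-\theta)}{\theta}$. Both Corollary \ref{cor:JohnPoincareaigradienti}(i) with $(p,2,1/2)$ (note $p\le 3$) and Lemma \ref{lem:John-two-inequalities}(ii) with $(r,p,0)$ (note $p<3$ and $\frac{Np}{N-p}=r$) apply. The same chain as above, combined with Lemma \ref{lem:L2-estimate-oscillation}, produces
\begin{equation*}
\rho_e-\rho_i\le C\,\nr \de_\Ga^{1/2}\,\na^2 h\nr_{2,\Om}^{r/(3+r)},\qquad \frac{r}{3+r}=1-\frac{\theta}{3}.
\end{equation*}
Renaming $\theta/3$ gives $\tau_3$ arbitrarily close to $1$ (at the cost of a blowing-up constant $C$).

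\textbf{Case $N=2$.} Here we bypass Lemma \ref{lem:L2-estimate-oscillation}: since $4>2=N$ and $\Om$ is Lipschitz, Morrey's imbedding gives $W^{1,4}(\Om)\hookrightarrow C^{0,1/2}(\ol\Om)$. By Morrey together with the classical Poincar\'e--Wirtinger inequality (Lemma \ref{lem:two-inequalities}(ii) with $p=4$, $\al=0$),
\begin{equation*}
\max_{\ol\Om} h-\min_{\ol\Om} h\le C\,\nr \na h\nr_{4,\Om}.
\end{equation*}
Since $h$ is harmonic, the left-hand side equals $\max_\Ga h-\min_\Ga h$. Corollary \ref{cor:JohnPoincareaigradienti}(i) with $(r,p,\al)=(4,2,1/2)$ is admissible (one has $r\le \frac{2N}{N-1}=4$), whence
\begin{equation*}
\nr \na h\nr_{4,\Om}\le C'\,\nr \de_\Ga^{1/2}\,\na^2 h\nr_{2,\Om}.
\end{equation*}
Combining with \eqref{oscillation} yields $\tau_2=1$.

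\textbf{Main obstacle.} The delicate point is the balance of exponents: the Hardy--Poincar\'e inequality with weight $\de_\Ga^{1/2}$ lets one climb one derivative but imposes the ceiling $p\le \frac{2N}{N-1}$, while the subsequent Poincar\'e--Sobolev step requires $p<N$. These two constraints are simultaneously satisfiable with equality only when $N\ge 4$, producing the sharp $\tau_N=2/(N-1)$; the borderline dimensions $N=2,3$ require the ad-hoc adjustments described above. Tracking the dependence of each constant (on $r_i$, $r_e$, $d_\Om$, $\de_\Ga(z)$, and — for $N=3$ — on $\theta$) via Remark \ref{rem:stime mu HS in item ii} and Theorem \ref{thm:boundary-gradient} then yields the claimed form of $C$.
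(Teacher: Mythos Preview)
Your proposal is correct and follows essentially the same approach as the paper's proof: the same two-step chain (weighted Hardy--Poincar\'e from Corollary \ref{cor:JohnPoincareaigradienti}(i) with $\al=1/2$, followed by Poincar\'e--Sobolev from Lemma \ref{lem:John-two-inequalities}(ii) with $\al=0$), closed by Lemma \ref{lem:L2-estimate-oscillation} for $N\ge 3$ and by Morrey's imbedding for $N=2$. The only cosmetic difference is your parameterization in the $N=3$ case ($p=3-\theta$ instead of the paper's $p=3(1-\theta)$), which yields $\tau_3=1-\theta/3$ rather than $1-\theta$ directly, but your relabeling handles this.
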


\begin{proof}
For the sake of clarity, we will always use the letter $c$ to denote the constants in all the inequalities appearing in the proof.
Their explicit computation will be clear by following the steps of the proof.

(i) 
Let $N=2$.
By the Sobolev immersion theorem (see for instance \cite[Theorem 3.12]{Gi} or \cite[Chapter 5]{Ad}), we have that there is a constant $c$ such that, for any $v\in W^{1,4 }(\Om)$, we have that
\begin{equation}
\label{eq:immersionSerrinN2}
\frac{|v(x)-v(y)|}{|x-y|^{\frac{1}{2} } }\le c\,\nr v\nr_{W^{1,4}(\Om)} \ \mbox{ for any  $x, y\in\ol{\Om}$ with $x\not=y$}.
\end{equation}

Applying \eqref{harmonic-poincare} with $v=h$, $p=4$, and $\al=0$ leads to
$$
\nr h - h_{\Om} \nr_{W^{1,4}(\Om)}\le c \, \nr \na h\nr_{4,\Om}.
$$
Since $\na h(z)=0$, we can apply item (i) of Corollary \ref{cor:JohnPoincareaigradienti} with $r=4$, $p=2$, and $\al=1/2$ to $h$ and obtain that
$$
\nr \na h \nr_{4,\Om} \le c \,  \nr \de_\Ga^{\frac{1}{2} } \, \na^2 h  \nr_{2,\Om} .
$$
Thus, we have that
$$
\nr h - h_{\Om} \nr_{W^{1,4}(\Om)}\le c \, \nr \de_\Ga^{\frac{1}{2} } \, \na^2 h \nr_{2,\Om} .
$$

By using the last inequality together with \eqref{eq:immersionSerrinN2}, by choosing $v= h- h_{\Om}$ and noting that $|x-y| \le d_\Om$ for any  $x, y\in\ol{\Om}$, we have that
\begin{equation*}
\max_\Ga h-\min_\Ga h \le 
c \, \nr \de_\Ga^{\frac{1}{2} } \, \na^2 h  \nr_{2,\Om}.
\end{equation*}
Thus, by recalling \eqref{oscillation} we get that \eqref{eq:C-provastab-serrin-W22} holds with $\tau_2=1$.

(ii) Let $N=3$.
By applying to the function $h$ \eqref{harmonic-poincare} with $r= \frac{3(1- \theta)}{\theta}$,
$p=3(1 -\theta )$, $\al=0$, and item (i) of Corollary \ref{cor:JohnPoincareaigradienti} with
$r=3 ( 1 - \theta )$, $p=2$, $\al=1/2$, we get
$$
\nr h - h_{\Om} \nr_{\frac{3( 1 - \theta)}{\theta}, \Om} \le c \, \nr \de_\Ga^{\frac{1}{2} } \, \na^2 h \nr_{2,\Om}.
$$
Thus, by recalling Lemma \ref{lem:L2-estimate-oscillation} we have that \eqref{eq:C-provastab-serrin-W22} holds true with $\tau_3 = 1- \theta $.

(iii) Let $N \ge 4$. Since $\na h(z)=0$, we can apply to $h$ item (i) of Corollary \ref{cor:JohnPoincareaigradienti} with $r=\frac{2N}{N-1}$, $p=2$, $\al=1/2$, and obtain that
\begin{equation*}
\nr \na h \nr_{\frac{2N}{N-1},\Om} \le c \, \nr \de_\Ga^{\frac{1}{2} } \, \na^2 h  \nr_{2,\Om} .
\end{equation*}
Being $N \ge 4$, we can apply \eqref{John-harmonic-poincare} with $v=h$, $r=\frac{2N}{N-3}$, $p=\frac{2N}{N-1}$, $\al=0$, and get
\begin{equation*}
\nr h- h_{ \Om} \nr_{\frac{2N}{N-3} } \le c \, \nr \na h \nr_{\frac{2N}{N-1},\Om}.
\end{equation*}
Thus,
$$
\nr h- h_{ \Om} \nr_{\frac{2N}{N-3} } \le c \, \nr \de_\Ga^{\frac{1}{2} } \, \na^2 h  \nr_{2,\Om},
$$
and by Lemma \ref{lem:L2-estimate-oscillation} we get that \eqref{eq:C-provastab-serrin-W22} holds with $\tau_N = 2/(N-1)$.
\end{proof}

\begin{rem}[On the constant $C$]\label{rem:dipendenzecostanti}
{\rm
The constant $C$ can be shown to depend only on the parameters mentioned in the statement of Theorem \ref{thm:serrin-W22-stability}. 
%
%
In fact, the parameters $\mu_{p, 0} (\Om,z)$, $\ol{\mu}_{p, 0} (\Om)$, $\mu_{r,p, \al} (\Om,z)$, $\ol{\mu}_{r,p,\al} (\Om)$,  can be estimated by using item (iii) of Remark \ref{rem:stime mu HS in item ii}.
To remove the dependence on the volume, then one can use the trivial bound
$$
|\Om|^{1/N} \le |B|^{1/N} d_\Om /2 .
$$
%
%

We recall that if $\Om$ has the strong local Lipschitz property (for the definition see \cite[Section 4.5]{Ad}), the immersion constant (that we used in the proof of item (i) of Theorem \ref{thm:serrin-W22-stability}) depends only on $N$ and the two Lipschitz parameters of the definition (see \cite[Chapter 5]{Ad}).
In our case $\Om$ is of class $C^2$, hence obviously it has the strong local Lipschitz property and the two Lipschitz parameters can be easily estimated in terms of $\min \lbrace r_i,r_e \rbrace $.
}
\end{rem}

In the case of Alexandrov's Soap Bubble Theorem, we have to deal with \eqref{H-fundamental} or \eqref{identity-SBT2}, where just the unweighted $L^2$-norm of the Hessian of $h$ appears.
Thus, the appropriate result in this case is Theorem \ref{thm:SBT-W22-stability} below, in which we
%
%
improve (for every $N\ge 4$) the exponents of estimates obtained in \cite{MP}.

\begin{thm}
\label{thm:SBT-W22-stability}
Let $\Om\subset\RR^N$, $N\ge 2$, be a bounded domain with boundary $\Ga$ of class $C^{2}$
and $z$ be any critical point in $\Om$ of the solution $u$ of \eqref{serrin1}.
%
Consider the function  $h=q-u$, with $q$ given by \eqref{quadratic}.
%
%

There exists a positive constant $C$ such that
\begin{equation}
\label{W22-stability}
\rho_e-\rho_i\le C\, \nr \na^2 h\nr_{2,\Om}^{\tau_N} ,
\end{equation}
with the following specifications:
%
%
\begin{enumerate}[(i)]
\item $\tau_N=1$ for $N=2$ or $3$;
\item $\tau_4$ is arbitrarily close to one, in the sense that for any $\theta>0$, there exists a positive constant $C$ such that \eqref{W22-stability} holds with $\tau_4= 1- \theta $;
\item $\tau_N = 2/(N-2) $ for $N\ge 5$.
\end{enumerate}

The constant $C$ depends on $N$, $r_i$, $r_e$, $d_\Om$, $\de_\Ga (z)$, and $\theta$ (only in the case $N=4$).
\end{thm}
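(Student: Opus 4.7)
The plan is to mimic the structure of the proof of Theorem \ref{thm:serrin-W22-stability}, but replacing the weight exponent $\al=1/2$ by $\al=0$, since now the Hessian appears in the unweighted $L^2(\Om)$ norm. This shift is responsible for the index shift $N\mapsto N-1$ in the thresholds and exponents (dimensions $N=2,3$ play the role of the dimension $N=2$, dimension $N=4$ plays the role of dimension $N=3$, and dimensions $N\ge 5$ play the role of dimensions $N\ge 4$). The chain of ingredients consists of: an oscillation estimate (either the Sobolev embedding into H\"older spaces or Lemma \ref{lem:L2-estimate-oscillation} combined with \eqref{oscillation}), a Hardy--Poincar\'e inequality (Lemma \ref{lem:John-two-inequalities}) applied to $h-h_\Om$, and the gradient Hardy--Poincar\'e inequality (Corollary \ref{cor:JohnPoincareaigradienti}) applied to $\na h$, which is legitimate because $\na h(z)=0$ by the choice of $z$ as a critical point of $u$.

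For item (i), I would argue by direct Sobolev embedding. In dimension $N=3$, applying Corollary \ref{cor:JohnPoincareaigradienti} with $r=6$, $p=2$, $\al=0$ (the constraints $p(1-\al)=2<3=N$ and $r\le Np/(N-p)=6$ are both satisfied) yields $\nr \na h\nr_{6,\Om}\le c\,\nr \na^2 h\nr_{2,\Om}$. The Sobolev embedding $W^{1,6}(\Om)\hookrightarrow C^{0,1/2}(\ol\Om)$, valid in $N=3$ since $6>3$, together with the standard Poincar\'e inequality for $h-h_\Om$, then gives
\begin{equation*}
\max_\Ga h-\min_\Ga h\le c\,d_\Om^{1/2}\,\nr \na h\nr_{6,\Om}\le c\,\nr \na^2 h\nr_{2,\Om},
\end{equation*}
and \eqref{W22-stability} with $\tau_3=1$ follows from \eqref{oscillation}. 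For $N=2$, the same scheme works with $r=4$ and $p=4/3$, $\al=0$ (the admissibility constraints read $4/3<2$ and $4\le 2\cdot(4/3)/(2-4/3)=4$): H\"older's inequality absorbs $\nr \na^2 h\nr_{4/3,\Om}\le c\,|\Om|^{1/4}\nr \na^2 h\nr_{2,\Om}$, and $W^{1,4}(\Om)\hookrightarrow C^{0,1/2}(\ol\Om)$ in two dimensions delivers the oscillation.

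For items (ii) and (iii), the Sobolev embedding into continuous functions is no longer available, so I would instead appeal to Lemma \ref{lem:L2-estimate-oscillation}, which turns an $L^{r_2}$ estimate on $h-h_\Om$ into a bound on $\rho_e-\rho_i$ of order $(\cdot)^{r_2/(N+r_2)}$. The strategy is to chain Corollary \ref{cor:JohnPoincareaigradienti} (with $p=2$, $\al=0$, giving $\nr \na h\nr_{r_1,\Om}\le c\,\nr \na^2 h\nr_{2,\Om}$ for $r_1\le 2N/(N-2)$ whenever $N>2$) with Lemma \ref{lem:John-two-inequalities} applied to $h$ (with $p=r_1$, $\al=0$, giving $\nr h-h_\Om\nr_{r_2,\Om}\le c\,\nr \na h\nr_{r_1,\Om}$ for $r_2\le Nr_1/(N-r_1)$, provided $r_1<N$). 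For $N\ge 5$, the choice $r_1=2N/(N-2)$ is admissible, since $2N/(N-2)<N$, and yields $r_2=2N/(N-4)$; a direct computation then gives $r_2/(N+r_2)=2/(N-2)$, which is exactly the desired $\tau_N$. For $N=4$ the first step is critical ($2N/(N-2)=4=N$), so I would pick $r_1=4-\theta'$ for small $\theta'>0$; then $r_2\le 4(4-\theta')/\theta'$ is arbitrarily large and $r_2/(N+r_2)$ can be made arbitrarily close to $1$, yielding $\tau_4=1-\theta$.

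The main subtle point will be keeping track of the dependence of the constant $C$ on $N$, $d_\Om$, $r_i$, $r_e$, $\de_\Ga(z)$ (and $\theta$ only when $N=4$). For this, I would use the explicit estimates for $\mu_{r,p,\al}(\Om,z)^{-1}$ and $\ol{\mu}_{r,p,\al}(\Om)^{-1}$ collected in Remark \ref{rem:stime mu HS in item ii}, the Sobolev embedding constants (which in the strong local Lipschitz case depend only on $N$ and the Lipschitz parameters of $\Ga$, controlled by $\min\{r_i,r_e\}$ through the $C^2$-regularity), the constant appearing in Lemma \ref{lem:L2-estimate-oscillation}, and the bound $|\Om|^{1/N}\le |B|^{1/N}d_\Om/2$ to eliminate the volume, as explained in Remark \ref{rem:dipendenzecostanti}. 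No genuinely new ideas beyond those used for Theorem \ref{thm:serrin-W22-stability} are required; the main technical obstacle is merely verifying that, with $\al=0$, all the admissibility constraints of Lemma \ref{lem:John-two-inequalities} are still met and choosing the exponents $r_1,r_2$ so that the two steps of the cascade fit together correctly.
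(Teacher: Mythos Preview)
Your proposal is correct and follows essentially the same approach as the paper. The only minor variation is in item (i): the paper applies the Sobolev embedding $W^{2,2}(\Om)\hookrightarrow C^{0,\gamma}(\ol\Om)$ directly (after controlling $\nr h-h_\Om\nr_{W^{2,2}}$ via \eqref{harmonic-poincare} and Corollary \ref{cor:Poincareaigradienti} with $p=2$, $\al=0$), whereas you first pass to $W^{1,r}$ through Corollary \ref{cor:JohnPoincareaigradienti} and then embed $W^{1,r}\hookrightarrow C^{0,\gamma}$; for items (ii) and (iii) your choices of exponents coincide with the paper's.
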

\begin{proof}
As done in the proof of Theorem \ref{thm:serrin-W22-stability}, for the sake of clarity, we will always use the letter $c$ to denote the constants in all the inequalities appearing in the proof.
Their explicit computation will be clear by following the steps of the proof. By reasoning as described in Remark \ref{rem:dipendenzecostanti}, one can easily check that those constants depend only on the geometric parameters of $\Om$ mentioned in the statement of the theorem.

(i) Let $N=2$ or $3$.
By the Sobolev immersion theorem (see for instance \cite[Theorem 3.12]{Gi} or \cite[Chapter 5]{Ad}), we have that there is a constant $c$ such that, for any $v\in W^{2,2}(\Om)$, we have that
\begin{equation}
\label{eq:immersionSBTN23}
\frac{|v(x)-v(y)|}{|x-y|^\ga}\le c\, \nr v\nr_{W^{2,2}(\Om)} \ \mbox{ for any  $x, y\in\ol{\Om}$ with $x\not=y$},
\end{equation}
where $\ga$ is any number in $(0,1)$ for $N=2$ and $\ga=1/2$ for $N=3$. 

Since $\na h(z)=0$, we can apply item (i) of Corollary \ref{cor:Poincareaigradienti} with $p=2$ and $\al= 0$ to $h$ and obtain that
\begin{equation*}
\int_{\Om} |\na h|^2 \, dx \le c \, \int_{\Om} | \na^2 h|^2 \, dx.
\end{equation*}
Using this last inequality together with \eqref{harmonic-poincare} with $v= h$, $p=2$, $\al=0$, leads to
$$
\nr h - h_{\Om} \nr_{W^{2,2}(\Om)}\le c \,\nr \na^2 h\nr_{2,\Om}.
$$

Hence, by using \eqref{eq:immersionSBTN23} with $v= h-h_{ \Om}$ and noting that $|x-y| \le d_\Om$ for any  $x, y\in\ol{\Om}$, we get that
\begin{equation*}
\max_\Ga h-\min_\Ga h \le c \, \nr \na^2 h\nr_{2,\Om}.
\end{equation*}
Thus, by recalling \eqref{oscillation} we get that \eqref{W22-stability} holds with $\tau_N=1$.

(ii) Let $N=4$.
By applying \eqref{harmonic-poincare} with $r= \frac{4( 1 - \theta)}{\theta}$, $p= 4 ( 1 -\theta )$, $\al=0$, and item (i) of Corollary \ref{cor:JohnPoincareaigradienti} with $r=4 ( 1 - \theta )$, $p=2$, $\al= 0$, for $v=h$ we get
$$
\nr h - h_{ \Om} \nr_{\frac{4( 1 - \theta)}{\theta}, \Om} \le c \, \nr \na^2 h \nr_{2,\Om}.
$$
Thus, by Lemma \ref{lem:L2-estimate-oscillation} we conclude that \eqref{W22-stability} holds with $\tau_4= 1- \theta$.

(iii) Let $N\ge 5$. Applying \eqref{John-harmonic-poincare} with $r= \frac{2N}{N-4}$, $p= \frac{2N}{N-2}$, $\al=0$, and item (i) of Corollary \ref{cor:JohnPoincareaigradienti} with $r={\frac{2N}{N-2}}$, $p=2$, $\al=0$, by choosing $v=h$ we get that
$$
\nr h - h_{ \Om} \nr_{\frac{2N}{N-4},\Om} \le c \, \nr\na^2 h\nr_{2,\Om}.
$$
By Lemma \ref{lem:L2-estimate-oscillation}, we have that \eqref{W22-stability} holds
true with $\tau_N = 2/(N-2)$.
\end{proof}

%
%

\begin{rem}\label{rem:removing-distance-convex}
{\rm
The parameter $\de_\Ga (z)$ is used only to give an estimate of the parameters $\mu_{p, 0} (\Om,z)$ and $\mu_{r,p,\al} (\Om,z)$ in terms of an explicit geometrical quantity.

Moreover, in case $\Om$ is convex, by using $\de_\Ga (z)$ we 
are able to completely remove the dependence of the constants on $z$.
%
%
Indeed, in this case $u$ has a unique minimum point $z$ in $\Om$, since $u$ is analytic and the level sets of $u$ are convex by a result in \cite{Ko} (see \cite{MS}, for a similar argument), and hence we have only one choice for the point $z$. 
Thus, an estimate of $\de_\Ga(z)$ from below can be obtained, first, by putting together arguments in \cite[Lemma 2.6]{BMS} and \cite[Remark 2.5]{BMS}, to obtain that
$$
\de_\Ga(z)\ge \frac{k_N}{|\Om|\, d_\Om^{N-1}}\,\left[ \max_{\ol{\Om}}(-u) \right]^N.
$$
Secondly, if we set $x$ to be a point in $\Om$ such that $\de_\Ga (x)= r_i$, by recalling Lemma \ref{lem:relationdist} we easily find that
\begin{equation*}
\max_{\ol{\Om}}(-u) \ge - u(x) \ge \frac{r_i^2}{2}.
\end{equation*}
%
%
%
%
All in all, we have that
$$
\de_\Ga(z)\ge k_N\,\frac{r_i^{2N}}{|\Om|\, d_\Om^{N-1}}.
$$

Thus, if $\Om$ is convex, we can affirm that the constants $C$, appearing in Theorems \ref{thm:serrin-W22-stability} and \ref{thm:SBT-W22-stability}, depend on $N$, $r_i$, and $d_\Om$, only. In fact, as already noticed in item (i) of Remark \ref{rem:boundgradienttorsionconvexr_eremoving}, in this case also the parameter $r_e$ can be removed, being $r_e=+\infty$.
%
%
}
\end{rem}

Even if $\Om$ is not convex, the dependence on $\de_\Ga (z)$ can still be removed in some other cases. In fact, we can do it by choosing the point $z$ appearing in \eqref{quadratic} differently, as explained in the following remark. 
\par

\begin{rem}
\label{remarkprova:nuova scelte z}
{\rm
(i) We can choose $z$ as the center of mass of $\Om$. 
In fact, if $z$ is the center of mass of $\Om$, we have that
\begin{multline*}
\int_\Om \na h(x)\,dx=\int_\Om [x-z-\na u(x)]\,dx=\\
\int_\Om x\,dx-|\Om|\,z-\int_\Ga u(x)\,\nu(x)\,dS_x=0.
\end{multline*}
Thus, we can use item (ii) of Corollaries \ref{cor:Poincareaigradienti}, \ref{cor:JohnPoincareaigradienti} instead of item (i). In this way, in the estimates of Theorems \ref{thm:serrin-W22-stability} and \ref{thm:SBT-W22-stability} we simply obtain the same constants with $\mu_{p, 0}(\Om,z)$ and $\mu_{r,p, \al}(\Om,z)$ replaced by $\ol{\mu}_{p, 0}(\Om)$ and $\ol{\mu}_{r, p, \al}(\Om)$. Thus, we removed the presence of $\de_\Ga (z)$ and hence the dependence on $z$.

It should be noticed that, in this case the extra assumption that $z \in \Om$ is needed, since we want that the ball $B_{\rho_i}(z)$ be contained in $\Om$. 
\par
(ii) As done in \cite{Fe}, another possible way to choose $z$ is $z= x_0 - \na u (x_0)$,
where $x_0 \in \Om$ is any point such that $\de_\Ga (x_0) \ge r_i$. In fact, we obtain that $\na h (x_0)=0$ and we can thus use \eqref{harmonic-quasi-poincare} and \eqref{John-harmonic-quasi-poincare}, with $\mu_{p, 0}(\Om,z)$ and $\mu_{r,p,\al}(\Om,z)$ replaced by $\mu_{p, 0}(\Om,x_0)$ and $\mu_{r,p,\al}(\Om,x_0)$.
%
%
Thus, by recalling item (iii) of Remark \ref{rem:stime mu HS in item ii} it is clear that we removed the dependence on $\de_\Ga (x_0)$ in our constants that, as already noticed, comes from the estimation of the parameters $\mu_{p, 0} (\Om, x_0)$ and $\mu_{r,p,\al} (\Om, x_0)$.  
\par
As in item (i), we should additionally require that $z \in \Om$, to be sure that the ball $B_{\rho_i}(z)$ be contained in $\Om$.
}
\end{rem}

\chapter{Stability results}\label{chapter:Stability results}

In this chapter, we collect our results on the stability of the spherical configuration by putting together the identities derived in Chapter \ref{chapter:integral identities and symmetry results} and the estimates obtained in Chapter \ref{chapter:various estimates}.

\section{Stability for Serrin's overdetermined problem}\label{sec:stability Serrin}

In light of \eqref{L2-norm-hessian}, \eqref{idwps} can be rewritten in terms of the harmonic function $h$, as stated in the following.
\begin{lem}
Under the same assumptions of Theorem \ref{thm:serrinidentity}, if we set $h=q-u$, then
%
%
it holds that
\begin{equation}\label{idwps-h}
\int_{\Om} (-u)\, |\na ^2 h|^2\,dx=
\frac{1}{2}\,\int_\Ga ( R^2-u_\nu^2)\, h_\nu\,dS_x. 
\end{equation}
\end{lem}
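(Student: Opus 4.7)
The plan is to derive this identity as an immediate reformulation of the fundamental identity \eqref{idwps} from Theorem \ref{thm:serrinidentity}, using only the definition $h = q - u$ and the differential identity \eqref{L2-norm-hessian} already established in the preceding lemma. No new integration by parts or analytic ingredient is needed; the content is entirely bookkeeping.

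First, I would rewrite the left-hand side of \eqref{idwps}. Since $q$ is a quadratic polynomial with $\Delta q = N$, and since $u$ solves $\Delta u = N$ in $\Omega$, the function $h = q - u$ is harmonic in $\Omega$. In particular, the Cauchy--Schwarz deficit $|\nabla^2 u|^2 - (\Delta u)^2/N$ equals exactly $|\nabla^2 h|^2$ by \eqref{L2-norm-hessian}. Substituting this into the left-hand side of \eqref{idwps} immediately gives $\int_\Omega (-u)\,|\nabla^2 h|^2\,dx$.

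Next, for the right-hand side, I would use the definition $h = q - u$ to note that on $\Gamma$ we have $h_\nu = q_\nu - u_\nu$, i.e. $u_\nu - q_\nu = -h_\nu$. Consequently,
\begin{equation*}
(u_\nu^2 - R^2)(u_\nu - q_\nu) = -(u_\nu^2 - R^2)\,h_\nu = (R^2 - u_\nu^2)\,h_\nu,
\end{equation*}
and integrating over $\Gamma$ turns the right-hand side of \eqref{idwps} into $\tfrac12 \int_\Gamma (R^2 - u_\nu^2)\,h_\nu\,dS_x$, which is precisely the desired expression.

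There is effectively no obstacle here: the statement is a trivial algebraic rewriting of \eqref{idwps} once one observes that $h$ is harmonic and that the tangential data of $h$ on $\Gamma$ encodes the difference $q_\nu - u_\nu$. The point of isolating \eqref{idwps-h} as a separate lemma is presentational, since this is the form of the identity that will be coupled with the estimates of Chapter \ref{chapter:various estimates} to produce the stability results of Section \ref{sec:stability Serrin}.
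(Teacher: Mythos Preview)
Your proof is correct and follows essentially the same approach as the paper: invoke \eqref{L2-norm-hessian} to rewrite the left-hand side of \eqref{idwps}, and use $h_\nu = q_\nu - u_\nu$ to rewrite the right-hand side. The paper's proof is even more terse, but the content is identical.
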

\begin{proof}
Simple computations give that $h_\nu=q_\nu-u_\nu$.
By using this identity and \eqref{L2-norm-hessian}, \eqref{idwps-h} easily follows from \eqref{idwps}.
\end{proof}

%

\par
In light of \eqref{eq:relationdist}, Theorem \ref{thm:serrin-W22-stability} gives an estimate from below of the left-hand side of \eqref{idwps-h}. Now, we will take care of its right-hand side and prove our main result for Serrin's problem. The result that we present here, improves (for every $N \ge 2$) the exponents in the estimate obtained in \cite[Theorem 1.1]{MP2}.

\begin{thm}[Stability for Serrin's problem]
\label{thm:Improved-Serrin-stability}
Let $\Om\subset\RR^N$, $N\ge2$, be a bounded domain with boundary $\Ga$ of class $C^2$ and $R$ be the constant defined in \eqref{def-R-H0}.
Let $u$ be the solution of problem \eqref{serrin1} and 
$z\in\Om$ be any of its critical points.
\par
There exists a positive constant $C$ such that
\begin{equation}
\label{general improved stability serrin C}
\rho_e-\rho_i\le C\,\nr u_\nu - R \nr_{2,\Ga}^{\tau_N} ,
\end{equation}
with the following specifications:
%
%
%
\begin{enumerate}[(i)]
\item $\tau_2 = 1$;
\item $\tau_3$ is arbitrarily close to one, in the sense that for any $\theta>0$, there exists a positive constant $C$ such that  \eqref{general improved stability serrin C} holds with $\tau_3 = 1- \theta$;
\item $\tau_N = 2/(N-1)$ for $N \ge 4$.
\end{enumerate}

The constant $C$ depends on $N$, $r_i$, $r_e$, $d_\Om$, $\de_\Ga (z)$,
and $\theta$ (only in the case $N=3$).
\end{thm}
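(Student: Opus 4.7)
The plan is to combine the fundamental identity \eqref{idwps-h} with the trace inequality of Lemma \ref{lem:genericv-trace inequality}, exploit the pointwise lower bound for $-u$ from Lemma \ref{lem:relationdist}, and then apply Theorem \ref{thm:serrin-W22-stability}. Concretely, set $h = q - u$ with $z$ any critical point of $u$; since $\na q(z) = 0$, one has $\na h(z) = 0$, so $z$ is also a critical point of $h$, which makes the trace inequality of item (i) of Lemma \ref{lem:genericv-trace inequality} applicable to $h$.

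First, I would apply the Cauchy--Schwarz inequality to the right-hand side of \eqref{idwps-h}, writing $R^2 - u_\nu^2 = (R - u_\nu)(R + u_\nu)$ and using the pointwise bound $|u_\nu + R| \le M + R$ on $\Ga$, where $M$ is controlled via \eqref{gradient-estimate}. This yields
\begin{equation*}
\int_\Om (-u)\, |\na^2 h|^2 \, dx \le \tfrac{1}{2}(M+R)\, \|u_\nu - R\|_{2,\Ga}\, \|h_\nu\|_{2,\Ga}.
\end{equation*}
Next, since $|h_\nu| \le |\na h|$ on $\Ga$, the trace inequality of Lemma \ref{lem:genericv-trace inequality}(i) applied to the harmonic function $h$ (with base point $z$) gives a constant $C_1 = C_1(N, r_i, d_\Om, \delta_\Ga(z))$ with
\begin{equation*}
\|h_\nu\|_{2,\Ga}^2 \le \int_\Ga |\na h|^2 \, dS_x \le C_1 \int_\Om (-u)\, |\na^2 h|^2 \, dx.
\end{equation*}
Plugging this into the previous estimate and solving the resulting inequality in the quantity $\left(\int_\Om (-u)|\na^2 h|^2 dx\right)^{1/2}$ produces
\begin{equation*}
\int_\Om (-u)\, |\na^2 h|^2 \, dx \le C_2\, \|u_\nu - R\|_{2,\Ga}^{2},
\end{equation*}
for a constant $C_2$ depending on the allowed parameters.

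To pass from the weighted integral $\int_\Om(-u)|\na^2 h|^2 dx$ to the quantity $\|\delta_\Ga^{1/2} \na^2 h\|_{2,\Om}^2$ appearing in Theorem \ref{thm:serrin-W22-stability}, I invoke the distance estimate \eqref{eq:relationdist}, namely $-u \ge (r_i/2)\,\delta_\Ga$ on $\ol{\Om}$, which gives
\begin{equation*}
\|\delta_\Ga^{1/2}\, \na^2 h\|_{2,\Om}^2 = \int_\Om \delta_\Ga\, |\na^2 h|^2\, dx \le \frac{2}{r_i} \int_\Om (-u)\, |\na^2 h|^2\, dx \le C_3\, \|u_\nu - R\|_{2,\Ga}^{2}.
\end{equation*}
Raising to the power $\tau_N/2$ and applying Theorem \ref{thm:serrin-W22-stability} to control $\rho_e - \rho_i$ by $\|\delta_\Ga^{1/2}\na^2 h\|_{2,\Om}^{\tau_N}$, we obtain \eqref{general improved stability serrin C} with the stated exponents $\tau_N$ and a constant $C$ depending only on $N$, $r_i$, $r_e$, $d_\Om$, $\delta_\Ga(z)$, and (only for $N=3$) on $\theta$.

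The routine obstacle is the bookkeeping for the constants, especially tracking the dependence on $\delta_\Ga(z)$ through the Hardy--Poincar\'e constants $\mu_{r,p,\alpha}(\Om,z)$ used in Theorem \ref{thm:serrin-W22-stability} and in the trace inequality; the conceptual main point, however, is the algebraic trick of feeding the trace bound back into the Cauchy--Schwarz estimate of \eqref{idwps-h} to gain a full power of $\|u_\nu - R\|_{2,\Ga}^2$ on the weighted Hessian integral, which is what makes the exponent $\tau_N$ sharpen to $1$ in dimension $N = 2$ and improve the previous bounds of \cite{MP2} in all dimensions.
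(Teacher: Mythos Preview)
Your proposal is correct and follows essentially the same route as the paper: apply H\"older to the right-hand side of \eqref{idwps-h}, feed the trace inequality of Lemma~\ref{lem:genericv-trace inequality}(i) back into it to obtain $\int_\Om(-u)|\na^2 h|^2\,dx \le C\,\|u_\nu-R\|_{2,\Ga}^2$, convert the weight $-u$ to $\de_\Ga$ via \eqref{eq:relationdist}, and conclude by Theorem~\ref{thm:serrin-W22-stability}. The only cosmetic difference is that the paper first isolates the bound \eqref{ineq-feldman} on $\|h_\nu\|_{2,\Ga}$ and then substitutes, whereas you solve directly for the weighted Hessian integral; the content is the same.
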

\begin{proof}
We have that 
$$
\int_\Ga ( R^2-u_\nu^2)\, h_\nu\,dS_x\le (M+R)\,\nr u_\nu-R\nr_{2,\Ga} \nr h_\nu\nr_{2,\Ga},
$$
after an an application of H\"older's inequality.
Thus, by item (i) of Lemma \ref{lem:genericv-trace inequality} with $v=h$, \eqref{idwps-h}, and this inequality, we infer that
\begin{multline*}
\nr h_\nu\nr_{2,\Ga}^2\le \frac{2}{r_i} \left(1+\frac{N}{r_i\, \mu_{2,2,\frac{1}{2} }(\Om,z)^2 } \right)  \int_{\Om} (-u) |\na^2h|^2 dx \le \\ 
\frac{M+R}{r_i} \left(1+\frac{N}{r_i\, \mu_{2,2,\frac{1}{2} }(\Om,z)^2} \right)\nr u_\nu-R\nr_{2,\Ga} \nr h_\nu\nr_{2,\Ga},
\end{multline*}
and hence
\begin{equation}
\label{ineq-feldman}
\nr h_\nu\nr_{2,\Ga}\le \frac{M+R}{r_i} \left(1+\frac{N}{r_i\, \mu_{2,2,\frac{1}{2} }(\Om,z)^2} \right)\nr u_\nu-R\nr_{2,\Ga}.
\end{equation}
Therefore,
\begin{multline*}
\int_\Om |\na ^2 h|^2 \de_\Ga (x)\, dx \le \frac{2}{r_i}\int_\Om (-u) |\na^2 h|^2 dx \le \\
 \left(\frac{M+R}{r_i}\right)^2 \left(1+\frac{N}{r_i \, \mu_{2,2,\frac{1}{2} }(\Om,z)^2} \right)\nr u_\nu-R\nr_{2,\Ga}^2,
\end{multline*}
by Lemma \ref{lem:relationdist}.
These inequalities and Theorem \ref{thm:serrin-W22-stability} then give the desired conclusion.

We recall that $\mu_{2,2,\frac{1}{2} }(\Om,z)$ appearing in the constant in the last inequality can be estimated in terms of $d_\Om$ and $\min \left[ r_i, \de_\Ga (z) \right]$, by proceeding as described in Remark \ref{rem:dipendenzecostanti}.
The ratio $R$ can be estimated (from above) in terms of $|\Om|^{1/N}$ -- just by using the isoperimetric inequality; in turn, $|\Om|^{1/N}$ can be bounded in terms of $d_\Om$ by proceeding as described in Remark \ref{rem:dipendenzecostanti}. 
Finally, as usual, $M$ can be estimated by means of \eqref{gradient-estimate}. 
\end{proof}

If we want to measure the deviation of $u_\nu$ from $R$ in $L^1$-norm, we get a smaller (reduced by one half) stability exponent. The following result improves \cite[Theorem 3.6]{MP2}.

\begin{thm}[Stability with $L^1$-deviation]
\label{thm:Serrin-stability}
Theorem \ref{thm:Improved-Serrin-stability} still holds
with \eqref{general improved stability serrin C} replaced by 
\begin{equation*}
\rho_e-\rho_i\le C\,\nr u_\nu - R \nr_{1,\Ga}^{\tau_N/2} .
\end{equation*}
%
%
%
\end{thm}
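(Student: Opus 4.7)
The plan is to deduce this theorem as an immediate corollary of Theorem \ref{thm:Improved-Serrin-stability} via a one-line interpolation between $L^1$ and $L^\infty$ norms on $\Ga$. The starting observation is that $u_\nu$ is bounded on $\Ga$: by Theorem \ref{thm:boundary-gradient} one has $\|u_\nu\|_{\infty,\Ga}=M\le c_N\,d_\Om(d_\Om+r_e)/r_e$, and clearly $|u_\nu-R|\le M+R$ pointwise on $\Ga$. Hence the factor $M+R$ can be controlled entirely by the geometric parameters already appearing in Theorem \ref{thm:Improved-Serrin-stability} (and $R\le N|\Om|/|\Ga|$ can itself be bounded above via the isoperimetric inequality in terms of $d_\Om$, as done in the previous proof).

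The key step will be the elementary inequality
\begin{equation*}
\nr u_\nu - R\nr_{2,\Ga}^2 = \int_\Ga (u_\nu-R)^2\,dS_x \le (M+R)\,\int_\Ga |u_\nu-R|\,dS_x = (M+R)\,\nr u_\nu - R\nr_{1,\Ga},
\end{equation*}
which immediately yields
\begin{equation*}
\nr u_\nu - R\nr_{2,\Ga}\le (M+R)^{1/2}\,\nr u_\nu - R\nr_{1,\Ga}^{1/2}.
\end{equation*}
Raising this to the power $\tau_N$ and plugging into the stability estimate \eqref{general improved stability serrin C} provided by Theorem \ref{thm:Improved-Serrin-stability} gives at once
\begin{equation*}
\rho_e-\rho_i\le C\,\nr u_\nu - R\nr_{2,\Ga}^{\tau_N}\le C\,(M+R)^{\tau_N/2}\,\nr u_\nu - R\nr_{1,\Ga}^{\tau_N/2},
\end{equation*}
with the exponents $\tau_N$ specified in items (i)--(iii) of the previous statement.

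There is essentially no obstacle to overcome: once $M$ is under control, the interpolation trick does all the work, costing exactly the factor $1/2$ in the stability exponent. The constant in the new estimate, namely $C\,(M+R)^{\tau_N/2}$, depends on the same quantities $N,\, r_i,\, r_e,\, d_\Om,\, \de_\Ga(z)$ (and $\theta$ in the case $N=3$) as the constant in Theorem \ref{thm:Improved-Serrin-stability}, so no new geometric assumption is needed.
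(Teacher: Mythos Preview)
Your proof is correct, but the route differs from the paper's. The paper does not invoke Theorem \ref{thm:Improved-Serrin-stability} as a black box; instead it returns to the fundamental identity \eqref{idwps-h},
\[
\int_{\Om} (-u)\, |\na ^2 h|^2\,dx=\frac{1}{2}\,\int_\Ga ( R^2-u_\nu^2)\, h_\nu\,dS_x,
\]
and bounds the right-hand side pointwise by $(M+R)(M+d_\Om)\,|u_\nu-R|$, using $|u_\nu+R|\le M+R$ and $|h_\nu|\le M+d_\Om$. This directly controls $\nr\de_\Ga^{1/2}\na^2 h\nr_{2,\Om}^2$ by the $L^1$-deviation, and then Theorem \ref{thm:serrin-W22-stability} finishes the job. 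Your argument instead applies the pointwise bound $|u_\nu-R|\le M+R$ \emph{after} the $L^2$-stability result is established, via the interpolation $\nr u_\nu-R\nr_{2,\Ga}^2\le (M+R)\,\nr u_\nu-R\nr_{1,\Ga}$. Both approaches exploit the same boundedness of $u_\nu$ and lose exactly a factor $1/2$ in the exponent; yours is shorter and treats the $L^1$-result as a genuine corollary, while the paper's version keeps the argument self-contained at the level of the integral identity and avoids composing two estimates (which can sometimes give a cleaner constant, though that is not tracked here).
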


\begin{proof}
Instead of applying H\"older's inequality to the right-hand side of \eqref{idwps-h}, we just use the rough bound:
%
%
\begin{equation*}
\int_{\Om} (-u)\, |\na ^2 h|^2\,dx \le
\frac{1}{2}\, \left( M + R \right)\,(M + d_{\Om}) \, \int_\Ga \left| u_\nu - R \right| \, dS_x,
\end{equation*}
since $(u_\nu+R)\,|h_\nu|\le ( M + R)\,(M + d_{\Om})$ on $\Ga$. The conclusion then follows from similar arguments.
\end{proof}

%
%

\begin{rem}
{\rm
If $\Om$ is convex, in view of Remark \ref{rem:removing-distance-convex} we can claim that
the constants $C$ of Theorems \ref{thm:Improved-Serrin-stability} and \ref{thm:Serrin-stability} depend only on $N$, $r_i$, $d_\Om$ (and $\theta$ only in the case $N=3$). 

The dependence of $C$ on $\de_\Ga (z)$ can be removed also in the cases described in Remark \ref{remarkprova:nuova scelte z}.
Regarding the case described in item (i) of that remark, we notice that, since in that situation $z$ is chosen as the center of mass of $\Om$, in the proof of Theorem \ref{thm:Improved-Serrin-stability} we will use item (ii) of Lemma \ref{lem:genericv-trace inequality} instead of item (i), so that $\mu_{2,2,\frac{1}{2}} (\Om, z)$ will be replaced by $\ol{\mu}_{2,2,\frac{1}{2}} (\Om)$.
}
\end{rem}

\bigskip

Since the estimates in Theorems \ref{thm:Improved-Serrin-stability} and \ref{thm:Serrin-stability} do not depend on the particular critical point chosen, as a corollary, we obtain results of closeness to a union of balls:
here, we just illustrate the instance of Theorem \ref{thm:Improved-Serrin-stability}.

\begin{cor}[Closeness to an aggregate of balls]
\label{cor:Serrin-stability-aggregate}
Let $\Ga$, $R$ and $u$ be as in Theorem \ref{thm:Improved-Serrin-stability}.
Then, there exist points $z_1, \dots, z_n$ in $\Om$, $n\ge 1$, and corresponding numbers
%
%
\begin{equation*}
\rho_i^j=\min_{x\in\Ga}|x-z_j| \ \mbox{ and } \ \rho_e^j=\min_{x\in\Ga}|x-z_j|,
\quad j=1, \dots, n,
\end{equation*}
such that
%
%
\begin{equation*}
\bigcup_{j=1}^n B_{\rho_i^j}(z_j)\subset\Om\subset \bigcap_{j=1}^n B_{\rho_e^j}(z_j)
\end{equation*}
and
$$
\max_{1\le j\le n}(\rho_e^j-\rho_i^j)\le C\,\nr u_\nu - R \nr_{2,\Ga}^{\tau_N} .
$$
Here, the exponent $\tau_N$ and the constant $C$ are those of Theorem \ref{thm:Improved-Serrin-stability}.
\par
The number $n$ can be chosen as the number of connected components of the set $\cM$ of all the local minimum points of the solution $u$ of \eqref{serrin1}.
\end{cor}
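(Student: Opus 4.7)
The plan is to reduce the corollary to $n$ separate applications of Theorem \ref{thm:Improved-Serrin-stability}, exploiting the fact that its conclusion holds for \emph{any} critical point of $u$ in $\Om$, with only the constant $C$ depending on that choice. First I would check that $\cM\neq\varnothing$: by the strong maximum principle $u<0$ in $\Om$ while $u=0$ on $\Ga$, so $u$ attains its global minimum at an interior point, which lies in $\cM$. Let $\cM_1,\dots,\cM_n$ be the connected components of $\cM$ and, for each $j$, pick an arbitrary $z_j\in\cM_j\subset\Om$; each such $z_j$ is in particular a critical point of $u$.

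Next, applying Theorem \ref{thm:Improved-Serrin-stability} to each $z_j$ produces a constant $C_j=C_j(N,r_i,r_e,d_\Om,\de_\Ga(z_j),\theta)$ together with the inequality $\rho_e^j-\rho_i^j\le C_j\,\nr u_\nu-R\nr_{2,\Ga}^{\tau_N}$, where $\rho_i^j,\rho_e^j$ are as in the statement. Setting $C:=\max_{1\le j\le n}C_j$ yields the claimed stability bound on $\max_j(\rho_e^j-\rho_i^j)$. For the geometric inclusions: $\rho_i^j=\min_{x\in\Ga}|x-z_j|=\de_\Ga(z_j)$ gives $B_{\rho_i^j}(z_j)\subset\Om$, hence $\bigcup_j B_{\rho_i^j}(z_j)\subset\Om$; conversely, $\rho_e^j=\max_{x\in\Ga}|x-z_j|$ (the second display in the corollary should read $\max$ in place of $\min$) coincides with $\max_{x\in\overline{\Om}}|x-z_j|$ --- as seen by extending the ray from $z_j$ through any interior point until it exits $\Om$ --- so $\Om\subset B_{\rho_e^j}(z_j)$ for each $j$, and therefore $\Om\subset\bigcap_j B_{\rho_e^j}(z_j)$.

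The main obstacle I expect is justifying that $n<\infty$, so that the maximum $C=\max_j C_j$ is actually finite. This should follow from $u$ being real analytic in $\Om$ (as a solution of $\De u=N$, which has analytic coefficients), making $\{\na u=0\}$ a semi-analytic set with locally finitely many components, together with Theorem \ref{thm:boundary-gradient}, which gives $u_\nu\ge r_i>0$ on $\Ga$ and hence prevents any sequence of interior critical points from accumulating at $\Ga$. The set $\cM$ is therefore a compact subset of $\Om$ with finitely many connected components, which also ensures $\min_{1\le j\le n}\de_\Ga(z_j)>0$ and makes $C$ inherit the same structural dependence (on $N$, $r_i$, $r_e$, $d_\Om$, $\min_j\de_\Ga(z_j)$, and $\theta$ in the case $N=3$) as the constant of Theorem \ref{thm:Improved-Serrin-stability}.
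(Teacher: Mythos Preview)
Your approach is essentially the same as the paper's: pick one $z_j$ from each connected component of $\cM$ and apply Theorem \ref{thm:Improved-Serrin-stability} at each of these critical points. The paper's proof is extremely terse (two sentences), and you supply details it omits --- nonemptiness of $\cM$, the geometric inclusions, and the typo in the definition of $\rho_e^j$. Your discussion of why $n<\infty$ (via analyticity of $u$ and the boundary gradient estimate forcing critical points away from $\Ga$) goes beyond what the paper justifies; the paper simply does not address this point, and its phrase ``the constant $C$ are those of Theorem \ref{thm:Improved-Serrin-stability}'' should be read as allowing $C$ to depend on $\de_\Ga(z_j)$ for each $j$, so that the displayed inequality is really $n$ separate bounds rather than a single one with a uniform constant.
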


\begin{proof}
We pick one point $z_j$ from
each connected component of the set of local minimum points of $u$. By applying Theorem \ref{thm:Improved-Serrin-stability} to each $z_j$, the conclusion is then evident.
\end{proof}

\begin{rem}\label{rem:serrin-stimequantitativefissandoparametri}
{\rm
The estimates presented in Theorems \ref{thm:Improved-Serrin-stability}, \ref{thm:Serrin-stability} may be interpreted as stability estimates, once that we fixed some a priori bounds on the relevant parameters: here, we just illustrate the case of Theorem \ref{thm:Improved-Serrin-stability}. 
Given three positive constants $\ol{d}$, $\ul{r}$, and $\ul{\de}$, let ${\mathcal D}={\mathcal D} (\ol{d}, \ul{r}, \ul{\de})$ be the class of bounded domains $\Om\subset\RR^N$ with boundary of class $C^{2}$, such that 
$$
d_{ \Om} \le \ol{d}, \quad r_i(\Om), r_e(\Om)\ge \ul{r}, \quad \de_\Ga (z) \ge \ul{\de}.
$$
Then, for every $\Om\in {\mathcal D} $, we have that
$$
\rho_e-\rho_i\le C\, \nr u_\nu - R \nr_{2,\Ga}^{\tau_N},
$$
where $\tau_N$ is that appearing in \eqref{general improved stability serrin C} and $C$ is a constant depending on $N$, $\ol{d}$, $\ul{r}$, $\ul{\de}$ (and $\theta$ only in the case $N=3$).
\par
If we relax the a priori assumption that $\Om\in {\mathcal D}$ (in particular if we remove the lower bound $\ul{r}$), it may happen that, 
as the deviation $\nr u_\nu - R \nr_{2,\Ga}$ tends to $0$, $\Om$ tends to the ideal configuration of two or more disjoint balls, while $C$ diverges since $\ul{r}$ tends to $0$. The configuration of more balls connected with tiny (but arbitrarily long) tentacles has been quantitatively studied in \cite{BNST}.
}
\end{rem}

\section{Stability for Alexandrov's Soap Bubble Theorem}\label{sec:Alexandrov's SBT}


This section is devoted to the stability issue for the Soap Bubble Theorem.

As already noticed, in light of \eqref{L2-norm-hessian}, \eqref{identity-SBT2} can be rewritten in terms of $h$, as stated in the following.

\begin{lem}
Under the same assumptions of Theorem \ref{thm:identitySBT}, if we set $h=q-u$, then
%
%
it holds that
\begin{multline}\label{identity-SBT-h}
\frac1{N-1}\int_{\Om} |\na ^2 h|^2 dx+
\frac1{R}\,\int_\Ga (u_\nu-R)^2 dS_x = \\
-\int_{\Ga}(H_0-H)\,h_\nu\,u_\nu\,dS_x+
\int_{\Ga}(H_0-H)\, (u_\nu-R)\,q_\nu\, dS_x.
\end{multline}
\end{lem}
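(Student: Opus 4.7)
The identity \eqref{identity-SBT-h} is obtained from \eqref{identity-SBT2} by a direct substitution, using the two basic facts that tie $h=q-u$ to $u$ and $q$. So the plan is simply to transform the two terms in which $u$ (rather than $h$) appears explicitly, and to leave the remaining terms untouched.

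First, I would address the volume integral on the left-hand side of \eqref{identity-SBT2}. By \eqref{L2-norm-hessian}, which was already established in the previous section as an elementary consequence of $\Delta q = N$, we have
\begin{equation*}
|\na^2 u|^2 - \frac{(\De u)^2}{N} = |\na^2 h|^2 \quad \text{in } \Om.
\end{equation*}
Integrating this pointwise identity over $\Om$ immediately rewrites the first summand of the left-hand side in \eqref{identity-SBT2} as $\frac{1}{N-1}\int_\Om |\na^2 h|^2\,dx$, which is exactly the first summand of the left-hand side of \eqref{identity-SBT-h}. The second summand on the left-hand side is identical in both identities, so no manipulation is needed there.

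Next, I would handle the right-hand side. Since $h = q-u$, differentiating in the direction of the outer unit normal $\nu$ on $\Ga$ gives $h_\nu = q_\nu - u_\nu$, and hence
\begin{equation*}
u_\nu - q_\nu = -h_\nu \quad \text{on } \Ga.
\end{equation*}
Substituting this into the first boundary integral on the right-hand side of \eqref{identity-SBT2} transforms $(H_0-H)(u_\nu-q_\nu)u_\nu$ into $-(H_0-H)\,h_\nu\,u_\nu$, which is precisely the first boundary integral of \eqref{identity-SBT-h}. The second boundary integral $\int_\Ga (H_0-H)(u_\nu - R)\,q_\nu\,dS_x$ already appears in the target identity, so again no change is required.

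There is no real obstacle here: the lemma is a straightforward rewriting, and the only ingredients are the differential identity \eqref{L2-norm-hessian} (which is essentially Newton's inequality with equality deficit encoded in $h$) and the linearity of the normal derivative. The content of \eqref{identity-SBT-h} lies entirely in the theorem it descends from; the present lemma simply reexpresses the right- and left-hand sides in the notation that will be most convenient for the stability arguments of Section \ref{sec:Alexandrov's SBT}, where the harmonic function $h$ is the central object.
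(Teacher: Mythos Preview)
Your proof is correct and follows exactly the approach the paper takes: the paper treats this lemma as an immediate rewriting of \eqref{identity-SBT2} via \eqref{L2-norm-hessian} and the relation $h_\nu=q_\nu-u_\nu$, and does not even include a formal proof environment for it. Your write-up simply spells out these two substitutions in detail.
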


Next, we derive the following lemma, that parallels and is a useful consequence of Lemma \ref{lem:genericv-trace inequality}.

\begin{lem}
\label{lem:improving-SBT}
Let $\Om\subset\RR^N$, $N\ge2$, be a bounded domain with boundary $\Ga$ of class $C^2$.
%
%
Denote by $H$ the mean curvature of $\Ga$ and let $H_0$ be the constant defined in \eqref{def-R-H0}.
\par
Then, the following inequality holds:
\begin{equation}
\label{improving-SBT}
\nr  u_\nu-R\nr_{2,\Ga} \le 
R\left\{ d_\Om +\frac{M (M+R)}{r_i}\left(1+\frac{N}{r_i \, \mu_{2,2,\frac{1}{2} }(\Om,z)^2 }\right)\right\} \nr H_0-H\nr_{2,\Ga}.
\end{equation}
\end{lem}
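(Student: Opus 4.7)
The natural starting point is identity \eqref{identity-SBT-h}, since it already isolates the quantity $\nr u_\nu - R\nr_{2,\Ga}^2$ on the left-hand side and features $H_0-H$ as a factor in both boundary integrals on the right-hand side. The plan is to discard the non-negative Hessian integral $\frac{1}{N-1}\int_\Om |\na^2 h|^2\,dx$ on the left, and then bound the right-hand side from above by means of H\"older's inequality. Applied to each summand separately this yields
\begin{equation*}
\frac{1}{R}\nr u_\nu - R\nr_{2,\Ga}^2 \le \nr H_0 - H\nr_{2,\Ga}\,\Bigl(\nr u_\nu\nr_{\infty,\Ga}\,\nr h_\nu\nr_{2,\Ga} + \nr q_\nu\nr_{\infty,\Ga}\,\nr u_\nu - R\nr_{2,\Ga}\Bigr).
\end{equation*}
The two $L^\infty$-factors are immediately controlled by the obvious bounds $|u_\nu| \le M$ on $\Ga$ (by definition of $M$ in \eqref{bound-gradient}) and $|q_\nu(x)| = |\lan x-z,\nu(x)\ran| \le |x-z| \le d_\Om$ on $\Ga$, since $z\in\Om$.

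The remaining task, and the only non-elementary step, is to estimate $\nr h_\nu\nr_{2,\Ga}$ by a multiple of $\nr u_\nu - R\nr_{2,\Ga}$. This is precisely the content of inequality \eqref{ineq-feldman} obtained in the course of the proof of Theorem \ref{thm:Improved-Serrin-stability}: combining the Serrin-type identity \eqref{idwps-h} with the trace inequality of Lemma \ref{lem:genericv-trace inequality}(i) (applied to the harmonic function $v=h$, which vanishes in gradient at the critical point $z$) and estimating $R^2 - u_\nu^2 = (R-u_\nu)(R+u_\nu)$ by H\"older's inequality, one obtains
\begin{equation*}
\nr h_\nu\nr_{2,\Ga}\le \frac{M+R}{r_i} \left(1+\frac{N}{r_i \, \mu_{2,2,\frac{1}{2} }(\Om,z)^2} \right)\nr u_\nu-R\nr_{2,\Ga}.
\end{equation*}
Plugging this into the inequality above yields
\begin{equation*}
\frac{1}{R}\nr u_\nu - R\nr_{2,\Ga}^2 \le \left\{d_\Om + \frac{M(M+R)}{r_i}\left(1+\frac{N}{r_i\,\mu_{2,2,\frac{1}{2}}(\Om,z)^2}\right)\right\}\nr H_0 - H\nr_{2,\Ga}\,\nr u_\nu - R\nr_{2,\Ga},
\end{equation*}
after which dividing through by $\nr u_\nu - R\nr_{2,\Ga}$ (which may be assumed positive, the inequality being trivial otherwise) and multiplying by $R$ gives exactly \eqref{improving-SBT}.

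The only genuine difficulty is the second step, which is borrowed from the analysis of Serrin's problem. The insight is that the Serrin identity \eqref{idwps-h} provides the needed $L^2$-control of $h_\nu$ by $u_\nu - R$ in a form that is completely independent of the mean curvature, so that it can be plugged into the SBT identity \eqref{identity-SBT-h} without circularity. All other ingredients -- H\"older's inequality, the sign of the discarded summand, the pointwise bounds on $u_\nu$ and $q_\nu$ -- are routine.
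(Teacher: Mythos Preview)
Your proof is correct and follows essentially the same approach as the paper: discard the Hessian term in \eqref{identity-SBT-h}, apply H\"older's inequality with the pointwise bounds $u_\nu\le M$ and $|q_\nu|\le d_\Om$, then invoke \eqref{ineq-feldman} and cancel one factor of $\nr u_\nu-R\nr_{2,\Ga}$. The only cosmetic addition is your explicit handling of the trivial case $\nr u_\nu-R\nr_{2,\Ga}=0$, which the paper leaves implicit.
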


\begin{proof}
Discarding the first summand on the left-hand side of \eqref{identity-SBT-h} and applying H\"older's inequality on its right-hand side gives that
$$
\frac1{R} \nr  u_\nu-R\nr_{2,\Ga}^2 \le \nr H_0-H\nr_{2,\Ga}\left( M \nr h_\nu\nr_{2,\Ga} + d_\Om \,\nr  u_\nu-R\nr_{2,\Ga} \right),
$$ 
since $u_\nu\le M$ and $|q_\nu|\le d_\Om $ on $\Ga$. Thus, inequality \eqref{ineq-feldman}
implies that
\begin{multline*}
\nr  u_\nu-R\nr_{2,\Ga}^2 \le 
\\
R\left\{ d_\Om +\frac{M (M+R)}{r_i}\left(1+\frac{N}{r_i \, \mu_{2,2,\frac{1}{2} }(\Om,z)^2}\right)\right\} \nr H_0-H\nr_{2,\Ga} \nr  u_\nu-R\nr_{2,\Ga},
\end{multline*}
from which \eqref{improving-SBT} follows at once.
\end{proof}

We are now ready to prove our main result for Alexandrov's Soap Bubble Theorem. The result that we present here, improves (for every $N \ge 4$) the exponents of estimates obtained in \cite[Theorem 1.2]{MP2}.

\begin{thm}[Stability for the Soap Bubble Theorem] 
\label{thm:SBT-improved-stability}
Let $N\ge 2$ and let $\Ga$ be a surface of class $C^{2}$, which is the boundary of a bounded domain $\Om\subset\RR^N$. Denote by $H$ the mean curvature of $\Ga$ and let $H_0$ be the constant defined in \eqref{def-R-H0}.
\par
Then,
%
%
for some point $z\in\Om$
there exists a positive constant $C$ such that
\begin{equation}
\label{SBT-improved-stability-C}
\rho_e-\rho_i \le C\,\nr H_0-H\nr_{2,\Ga}^{\tau_N} ,
\end{equation}
with the following specifications:
%
%
%
\begin{enumerate}[(i)]
\item $\tau_N=1$ for $N=2$ or $3$;
\item $\tau_4$ is arbitrarily close to one, in the sense that for any $\theta>0$, there exists a positive constant $C$ such that \eqref{SBT-improved-stability-C} holds with $\tau_4= 1- \theta $;
\item $\tau_N = 2/(N-2) $ for $N\ge 5$.
\end{enumerate}

The constant $C$ depends on $N$, $r_i$, $r_e$, $d_\Om$, $\de_\Ga (z)$, and $\theta$ (only in the case $N=4$).
\end{thm}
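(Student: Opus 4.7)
The plan is to combine three ingredients already at our disposal: the integral identity \eqref{identity-SBT-h} (the ``$h$-version'' of \eqref{identity-SBT2}), the weighted stability bound from Theorem \ref{thm:SBT-W22-stability}, and the curvature-to-gradient comparison of Lemma \ref{lem:improving-SBT}, together with the trace inequality \eqref{ineq-feldman}. First I would fix $z\in\Om$ to be a critical point of $u$ (for instance, a local minimum), so that $\na h(z)=0$ and the hypotheses of both Theorem \ref{thm:SBT-W22-stability} and Lemma \ref{lem:improving-SBT} are satisfied. The constant $C$ will then depend on $N$, $d_\Om$, $r_i$, $r_e$, and $\de_\Ga(z)$, exactly as in those results.

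Next I would convert \eqref{identity-SBT-h} into an estimate for $\nr\na^2 h\nr_{2,\Om}$ in terms of $\nr H_0-H\nr_{2,\Ga}$. Discarding the (non-negative) second summand on the left-hand side of \eqref{identity-SBT-h} gives
\begin{equation*}
\frac1{N-1}\int_{\Om} |\na^2 h|^2\,dx \le \left|\int_{\Ga}(H_0-H)\,h_\nu\,u_\nu\,dS_x\right| + \left|\int_{\Ga}(H_0-H)\,(u_\nu-R)\,q_\nu\,dS_x\right|.
\end{equation*}
Applying H\"older's inequality on each summand, using $u_\nu\le M$ and $|q_\nu|\le d_\Om$ on $\Ga$, and then invoking the trace estimate \eqref{ineq-feldman} for $\nr h_\nu\nr_{2,\Ga}$ and Lemma \ref{lem:improving-SBT} for $\nr u_\nu-R\nr_{2,\Ga}$, the right-hand side is controlled by $C\,\nr H_0-H\nr_{2,\Ga}^2$, with a constant depending only on the parameters listed above. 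Therefore
\begin{equation*}
\nr\na^2 h\nr_{2,\Om} \le C\,\nr H_0-H\nr_{2,\Ga}.
\end{equation*}

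Finally I would feed this bound into Theorem \ref{thm:SBT-W22-stability}, which gives $\rho_e-\rho_i\le C\,\nr\na^2 h\nr_{2,\Om}^{\tau_N}$ with precisely the exponents $\tau_N$ claimed in the statement: $\tau_N=1$ for $N=2,3$, $\tau_4=1-\te$ for any $\te>0$, and $\tau_N=2/(N-2)$ for $N\ge 5$. Composing the two inequalities yields \eqref{SBT-improved-stability-C}.

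The main obstacle is not really the chain of inequalities itself---every link is already in place---but making sure the constants accumulated along the way depend only on the geometric quantities $N$, $r_i$, $r_e$, $d_\Om$, $\de_\Ga(z)$ (and $\te$ when $N=4$). In particular, one has to recall that $R$ can be estimated in terms of $d_\Om$ via the isoperimetric inequality, $M$ can be controlled by \eqref{gradient-estimate}, and the Hardy--Poincar\'e-type constant $\mu_{2,2,1/2}(\Om,z)$ appearing in \eqref{ineq-feldman} can be bounded as in Remark \ref{rem:dipendenzecostanti} through item (iii) of Remark \ref{rem:stime mu HS in item ii}; this is the bookkeeping step that requires care but no new idea.
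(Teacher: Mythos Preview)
Your proposal is correct and follows essentially the same route as the paper: fix $z$ as a local minimum of $u$, discard the second summand on the left of \eqref{identity-SBT-h}, estimate the right-hand side via H\"older together with \eqref{ineq-feldman} and Lemma \ref{lem:improving-SBT} to obtain $\nr\na^2 h\nr_{2,\Om}\le C\,\nr H_0-H\nr_{2,\Ga}$, and then conclude by Theorem \ref{thm:SBT-W22-stability}. Your final paragraph on the bookkeeping of the constants (via \eqref{gradient-estimate}, the isoperimetric bound on $R$, and Remark \ref{rem:stime mu HS in item ii}) also matches the paper's treatment.
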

\begin{proof}
As before, we choose $z\in\Om$ to be any local minimum point of $u$ in $\Om$.
Discarding the second summand on the left-hand side of \eqref{identity-SBT-h} and applying H\"older's inequality on its right-hand side, as in the previous proof, gives that
\begin{multline*}
\frac1{N-1}\,\int_\Om |\na^2 h|^2 dx\le \\ 
R\left\{ d_\Om +\frac{M (M+R)}{r_i}\left(1+\frac{N}{r_i \, \mu_{2,2,\frac{1}{2} }(\Om,z)^2}\right)\right\} \nr H_0-H\nr_{2,\Ga} \nr  u_\nu-R\nr_{2,\Ga}, \le \\
R^2\left\{ d_\Om +\frac{M (M+R)}{r_i}\left(1+\frac{N}{r_i \, \mu_{2,2,\frac{1}{2} }(\Om,z)^2 }\right)\right\}^2 \nr H_0-H\nr_{2,\Ga}^2,
\end{multline*}
where the second inequality follows from Lemma \ref{lem:improving-SBT}.
\par
The conclusion then follows from
Theorem \ref{thm:SBT-W22-stability}.
\end{proof}

\begin{rem}
{\rm
Estimates similar to those of Theorem \ref{thm:SBT-improved-stability} can also be obtained as a direct corollary of Theorem \ref{thm:Improved-Serrin-stability}, by means of \eqref{improving-SBT}. As it is clear, in this way the exponents $\tau_N$ would be worse than those obtained in Theorem \ref{thm:SBT-improved-stability}.
}
\end{rem}

We now present a stability result with a weaker deviation (at the cost of getting a smaller stability exponent), analogous to Theorem \ref{thm:Serrin-stability}.
To this aim, we notice that from \eqref{H-fundamental} and \eqref{L2-norm-hessian} we can easily deduce the following inequality 
%
%
%
\begin{equation}
\label{fundamental-stability2} 
\frac1{N-1}\int_{\Om} |\na^2 h |^2 \,dx + \int_{\Ga}(H_0-H)^-\, (u_\nu)^2\,dS_x \le 
\int_{\Ga}(H_0-H)^+\, (u_\nu)^2\,dS_x
\end{equation}
(here, we use the positive and negative part functions $(t)^+=\max(t,0)$ and $(t)^-=\max(-t,0)$). That inequality tells us that, if we have an {\it a priori} bound $M$ for
$u_\nu$ on $\Ga$, then its left-hand side is small if the integral
\begin{equation*}
\int_{\Ga}(H_0-H)^+\,dS_x
\end{equation*}
is also small.  In particular, if $H$ is not too much smaller than $H_0$, then it cannot be too much larger than $H_0$ and the Cauchy-Schwarz deficit cannot be too large.  
%
%

It is clear that, 
$$
\int_\Ga (H_0-H)^+\,dS_x,
$$
is a deviation weaker than $\nr H_0-H\nr_{1,\Ga}$.

The result that we present here, improves (for every $N \ge 4$) the estimates obtained in \cite[Theorem 4.1]{MP}.

\begin{thm}[Stability with $L^1$-type deviation]
\label{thm:SBT-stability}
Theorem \ref{thm:SBT-improved-stability} still holds with \eqref{SBT-improved-stability-C} replaced by
\begin{equation*}
\rho_e-\rho_i \le C\,  \left\lbrace \int_\Ga (H_0-H)^+\,dS_x \right\rbrace^{\tau_N /2} .
\end{equation*}
\end{thm}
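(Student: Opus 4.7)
The plan is to combine the inequality \eqref{fundamental-stability2}, which follows directly from the key identity \eqref{H-fundamental} via \eqref{L2-norm-hessian}, with the estimate of Theorem \ref{thm:SBT-W22-stability}. The input from the identity is that for the choice of $z\in\Om$ as a local minimum point of $u$ (as in Theorem \ref{thm:SBT-improved-stability}), the harmonic function $h=q-u$ has $\na h(z)=0$, so that Theorem \ref{thm:SBT-W22-stability} applies and yields
\begin{equation*}
\rho_e-\rho_i \le C\,\nr\na^2 h\nr_{2,\Om}^{\tau_N},
\end{equation*}
with the exponent $\tau_N$ as in the statement.

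First, I would bound the right-hand side of \eqref{fundamental-stability2} by replacing $u_\nu^2$ with the uniform upper bound $M^2$, where $M$ is the quantity defined in \eqref{bound-gradient}, controlled via Theorem \ref{thm:boundary-gradient} in terms of $N$, $d_\Om$ and $r_e$. Since the second summand on the left-hand side of \eqref{fundamental-stability2} is non-negative, it can be discarded, giving
\begin{equation*}
\int_{\Om} |\na^2 h |^2 \,dx \le (N-1)\,M^2\int_{\Ga}(H_0-H)^+\,dS_x.
\end{equation*}

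Plugging this estimate into the conclusion of Theorem \ref{thm:SBT-W22-stability} immediately yields
\begin{equation*}
\rho_e-\rho_i \le C\,\left\{\int_\Ga (H_0-H)^+\,dS_x\right\}^{\tau_N /2},
\end{equation*}
where the new constant $C$ absorbs the factor $\bigl((N-1)M^2\bigr)^{\tau_N/2}$ and therefore depends on the same geometric parameters as the constant of Theorem \ref{thm:SBT-improved-stability} (namely $N$, $r_i$, $r_e$, $d_\Om$, $\de_\Ga(z)$, and $\theta$ only when $N=4$). There is no real obstacle here: the proof is just a repackaging of the already established $W^{2,2}$-type stability estimate via the weaker one-sided pointwise bound on $u_\nu$, exploiting the fact that only the positive part of $H_0-H$ enters the non-trivial side of \eqref{fundamental-stability2}.
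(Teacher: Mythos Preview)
Your proposal is correct and follows essentially the same approach as the paper: discard the non-negative term on the left of \eqref{fundamental-stability2}, bound $u_\nu^2\le M^2$ on the right, and plug the resulting bound $\nr\na^2 h\nr_{2,\Om}\le M\sqrt{N-1}\left\{\int_\Ga (H_0-H)^+\,dS_x\right\}^{1/2}$ into Theorem \ref{thm:SBT-W22-stability}.
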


\begin{proof}
%
%
From \eqref{fundamental-stability2}, we infer that
\begin{equation*}
\nr \na^2 h\nr_{2,\Om}\le M\,\sqrt{N-1}\,  \left\lbrace \int_\Ga (H_0-H)^+\,dS_x \right\rbrace^{1/2}
\end{equation*}
%
%

The conclusion then follows from Theorem \ref{thm:SBT-W22-stability}.
\end{proof}

\begin{rem}\label{rem:ontheconstants-SBT-removingdelta-seconvexocasidiremark}
{\rm
(i) If $\Om$ is convex, the dependence of the constants $C$ in Theorems \ref{thm:SBT-improved-stability} and \ref{thm:SBT-stability} can be reduced to the parameters $N$, $r_i$, $d_\Om$ (and $\theta$ only in the case $N=4$), as described in Remark \ref{rem:removing-distance-convex}. The dependence on the parameter $\de_\Ga (z)$ can be removed also in the cases described in Remark \ref{remarkprova:nuova scelte z}.

(ii) Results of closeness to a union of balls analogous to Corollary \ref{cor:Serrin-stability-aggregate} can be easily derived also for Theorems \ref{thm:SBT-improved-stability} and \ref{thm:SBT-stability}.
}
\end{rem}

%
%
%
%

\begin{rem}\label{rem:stimequantitativefissandoparametri}
{\rm
The estimates presented in Theorems \ref{thm:SBT-improved-stability}, \ref{thm:SBT-stability} may be interpreted as stability estimates, once some a priori information is available: here, we just illustrate the case of Theorem \ref{thm:SBT-improved-stability}. 
Given three positive constants $\ol{d}$, $\ul{r}$, and $\ul{\de}$, let $\cS=\cS(\ol{d}, \ul{r}, \ul{\de})$ be the class of connected surfaces $\Ga\subset\RR^N$ of class $C^{2}$, where $\Ga$ is the boundary of a bounded domain $\Om$, such that 
$$
d_{ \Om} \le \ol{d}, \quad r_i(\Om), r_e(\Om)\ge \ul{r}, \quad \de_\Ga (z) \ge \ul{\de}.
$$
Then, for every $\Ga\in\cS$ we have that
$$
\rho_e-\rho_i\le C\, \nr H_0-H\nr_{2,\Ga}^{\tau_N},
$$
where $\tau_N$ is that appearing in \eqref{SBT-improved-stability-C} and the constant $C$ depends on $N$, $\ol{d}$, $\ul{r}$, $\ul{\de}$ (and $\theta$ only in the case $N=4$).
\par
If we relax the a priori assumption that $\Ga\in\cS$ (in particular if we remove the lower bound
$\ul{r}$), it may happen that, 
as the deviation $\nr H_0 - H\nr_{2,\Ga}$ tends to $0$, $\Om$ tends to the ideal configuration of two or more mutually tangent balls, while $C$ diverges since $\ul{r}$ tends to $0$. Such a configuration can be observed, for example, as limit of sets created by truncating (and then smoothly completing) unduloids with very thin necks. This phenomenon (called bubbling) has been quantitatively studied in \cite{CM} by considering strictly mean convex surfaces and by using the uniform deviation $\nr H_0 - H\nr_{\infty,\Ga}$.
}
\end{rem}

\section{Quantitative bounds for an asymmetry}\label{sec:stabbyasymmetrySBT}

Another consequence of Lemma \ref{lem:improving-SBT} is the following inequality that shows an optimal stability exponent for any $N\ge 2$. The number $\cA(\Om)$, defined as
\begin{equation}
\label{asymmetry}
\cA(\Om)=\inf\left\{\frac{|\Om\De B^x|}{|B^x|}: x \mbox{ center of a ball $B^x$ with radius $R$} \right\} ,
\end{equation}
is some sort of asymmetry similar to the so-called Fraenkel asymmetry (see \cite{Fr}). Here, $\Om\De B^x$ denotes the symmetric difference of $\Om$ and $B^x$, and $R$ is the constant defined in \eqref{def-R-H0}.

\begin{thm}[Stability by asymmetry, \cite{MP2}]
\label{th:asymmetry}
Let $N\ge 2$ and let $\Ga$ be a surface of class $C^{2}$, which is the boundary of a bounded domain $\Om\subset\RR^N$. Denote by $H$ the mean curvature of $\Ga$ and let $H_0$ be the constant defined in \eqref{def-R-H0}.
\par
Then it holds that
\begin{equation}
\label{eq:SBTstabinmisura}
\cA(\Om)\le C \, \nr H_0 - H \nr_{2,\Ga},
\end{equation}
for some positive constant $C$.
\end{thm}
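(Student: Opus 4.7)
The plan is to obtain the desired estimate as a chain of three Lipschitz bounds: first relate $\cA(\Om)$ to a suitable $L^2$-norm of $h=q-u$ on $\Ga$, then to $\nr h_\nu\nr_{2,\Ga}$, and finally to $\nr u_\nu-R\nr_{2,\Ga}$, which is already controlled by $\nr H_0-H\nr_{2,\Ga}$ thanks to Lemma \ref{lem:improving-SBT}. This parallels the proof given in \cite{Fe} for Serrin's overdetermined problem.

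More concretely, I would fix $z\in\Om$ to be a critical point of $u$ and set $q(x)=\tfrac12(|x-z|^2-R^2)$, so that $h=q-u$ is harmonic in $\Om$ with $h(x)=\tfrac12(|x-z|^2-R^2)$ on $\Ga$. Taking $B^z:=B_R(z)$ as a competitor in the definition \eqref{asymmetry} of $\cA(\Om)$, a direct geometric argument based on the uniform interior/exterior sphere conditions (writing the thin region $\Om\De B^z$ as a normal tube over $\Ga\cap\partial B^z$ or, where this fails, estimating its width by $\bigl||x-z|-R\bigr|$) yields
\begin{equation*}
\cA(\Om)\le\frac{|\Om\De B^z|}{|B^z|}\le\frac{C}{R+r_i}\int_\Ga \bigl||x-z|^2-R^2\bigr|\,dS_x=\frac{2C}{R+r_i}\int_\Ga |h|\,dS_x,
\end{equation*}
for a constant $C$ depending only on $N,r_i,r_e,d_\Om$. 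By Cauchy--Schwarz the right-hand side is controlled by $|\Ga|^{1/2}\nr h\nr_{2,\Ga}$.

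Next, I would bound $\nr h\nr_{2,\Ga}$ by $\nr h_\nu\nr_{2,\Ga}$ through a boundary Poincar\'e-type inequality for the harmonic function $h$. The natural way to do this is to apply a weighted trace/Hardy inequality in the spirit of Lemma \ref{lem:genericv-trace inequality} to $h-c$, where $c$ is chosen so that the normalized function has zero mean on $\Ga$; the constant $c$ itself is of lower order (it is of size $|h_\Ga|$, which, by the divergence theorem applied to $h=q-u$, is controlled in the same manner). Once this step is carried out, the proof concludes by invoking the bound $\nr h_\nu\nr_{2,\Ga}\le C\nr u_\nu-R\nr_{2,\Ga}$ already established in \eqref{ineq-feldman} and then the Lipschitz estimate $\nr u_\nu-R\nr_{2,\Ga}\le C\nr H_0-H\nr_{2,\Ga}$ of Lemma \ref{lem:improving-SBT}, yielding \eqref{eq:SBTstabinmisura} with $C$ depending on $N,r_i,r_e,d_\Om,\de_\Ga(z)$.

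The main obstacle is the middle step: producing a Poincar\'e-type inequality $\nr h\nr_{2,\Ga}\le C\nr h_\nu\nr_{2,\Ga}$ (after a suitable normalization) with a constant controlled by purely geometric quantities of $\Om$. A naive approach via Lemma \ref{lem:genericv-trace inequality} would give such a bound in terms of $\nr\na^2 h\nr_{2,\Om}$, which is weaker than what is needed; the route I would follow is to mimic \cite{Fe}, where this is done by exploiting the explicit form of $h$ on $\Ga$ together with harmonic-extension arguments, so that the trace of $h$ is directly comparable to the trace of $h_\nu$. It is precisely this step that accounts for why the exponent in \eqref{eq:SBTstabinmisura} is the optimal one, Lipschitz, in every dimension $N\ge 2$.
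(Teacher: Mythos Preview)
Your overall strategy coincides with the paper's: bound $\cA(\Om)$ by $\nr u_\nu-R\nr_{2,\Ga}$, then invoke Lemma~\ref{lem:improving-SBT} (inequality~\eqref{improving-SBT}) to pass to $\nr H_0-H\nr_{2,\Ga}$. The paper, however, does not attempt to reprove the first bound: it simply quotes Feldman's inequality \cite[inequality~(2.14)]{Fe},
\[
\frac{|\Om\De B_R^z|}{|B_R^z|}\le C\,\nr u_\nu-R\nr_{2,\Ga},
\]
with $z$ chosen as in item~(ii) of Remark~\ref{remarkprova:nuova scelte z} (namely $z=x_0-\na u(x_0)$ for some $x_0$ with $\de_\Ga(x_0)\ge r_i$), and concludes in two lines.

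Your attempt to reconstruct Feldman's bound via the chain $\cA(\Om)\lesssim\nr h\nr_{1,\Ga}\lesssim\nr h\nr_{2,\Ga}\lesssim\nr h_\nu\nr_{2,\Ga}\lesssim\nr u_\nu-R\nr_{2,\Ga}$ has a real gap precisely where you flag it. A boundary inequality of the form $\nr h-c\nr_{2,\Ga}\le C\nr h_\nu\nr_{2,\Ga}$ for harmonic $h$ is a Neumann-to-Dirichlet estimate; it is not delivered by Lemma~\ref{lem:genericv-trace inequality} (which bounds $\nr\na v\nr_{2,\Ga}$, not $\nr v\nr_{2,\Ga}$), and producing it with a constant depending only on $N,d_\Om,r_i,r_e$ is nontrivial. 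This is not the route Feldman takes either: in \cite{Fe} the symmetric difference is estimated more directly, without passing through $\nr h\nr_{2,\Ga}$. Your first geometric step (controlling $|\Om\De B^z|$ by $\int_\Ga|h|\,dS_x$) is also only heuristic as written; it would need a careful normal-tube or coarea argument to be made rigorous.

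A minor point: by taking $z$ to be a critical point of $u$ you incur a dependence of $C$ on $\de_\Ga(z)$, whereas the paper's choice of $z$ (and the corresponding estimates for the John constant $L_0$) yields $C=C(N,d_\Om,r_i,|\Ga|/|\Om|)$ with no such dependence.
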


\begin{proof}
We use \cite[inequality (2.14)]{Fe}: we have that 
%
%
\begin{equation*}
\frac{|\Om \De B_R^z|}{|B_R^z|} \le C \, \nr u_\nu - R \nr_{2,\Ga},
\end{equation*}
where $B_R^z$ is a ball of radius $R$ (as defined in \eqref{def-R-H0}) and centered at the point $z$ described in item (ii) of Remark \ref{remarkprova:nuova scelte z}. Hence, we obtain that
$$
\cA(\Om)\le C \, \nr u_\nu - R \nr_{2,\Ga},
$$
by the definition \eqref{asymmetry}.
Thus, thanks to \eqref{improving-SBT}, we obtain \eqref{eq:SBTstabinmisura}.
\par
Here, if we estimate $L_0$ as described in item (iii) of Remark \ref{rem:stime mu HS in item ii}, we can see that $C$ depends on $N, d_\Om, r_i, |\Ga|/|\Om|$.
\end{proof}

\begin{rem}[On the asymmetry $\cA(\Om)$]{\rm

Notice that, for any $x\in \Om$, we have that 

$$
\frac{|\Om \De B_R^x|}{|B_R^x|} \le \frac{|B_{\rho_e}^x \setminus B_{\rho_i}^x|}{|B_R^x|}=
\frac{\rho_e^N-\rho_i^N}{R^N} \le \frac{N \, \rho_e^{N-1}}{R^N} \, (\rho_e - \rho_i),
$$
and $\rho_e\le d_\Om$.
Thus, if $d_\Om/R$ remains bounded and $(\rho_e - \rho_i)/R$ tends to $0$, then the ratio $|\Om \De B_R^x|/|B_R^x|$ does it too.
\par
The converse is not true in general. For example, consider a lollipop made by a ball and a stick with fixed length $L$ and vanishing width; as that width vanishes, the ratio $d_\Om/R$ remains bounded, while $|\Om \De B_R^x|/|B_R^x|$ tends to zero and $\rho_e - \rho_i \ge L >0$.

\bigskip

If we fix $r_i$, $r_e$, and $d_\Om$, we have the following result.
}
\end{rem}

\begin{thm}[\cite{MP2}]
\label{thm:comparing asymmetry}
Let $\Om\subset\RR^N$, $N\ge 2$, be a bounded domain satisfying the uniform interior and exterior sphere conditions with radii $r_i$ and $r_e$, and set $\ul{r}=\min{ \left[ r_i, r_e \right] }$.
\par
Then, we have that
$$
\rho_e - \rho_i \le \max{ \left[ 2 \, d_\Om , \frac{d_\Om^2 }{2 \, \ul{r} } \right] }  \, \cA(\Om)^{\frac{1}{N}} .
$$
\end{thm}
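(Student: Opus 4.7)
The strategy is to take $z$ to be a point in $\Om$ that minimizes $x\mapsto|\Om\De B_R^x|/|B_R^x|$, so that $|\Om\De B_R^z|/|B_R^z|=\cA(\Om)$, and to define $\rho_i,\rho_e$ with respect to this $z$. Two preliminary observations make the argument work. First, by the classical isoperimetric inequality, $R=N|\Om|/|\Ga|\le (|\Om|/\kappa_N)^{1/N}$, where $\kappa_N=|B_1|$; and by the isodiametric inequality, $|\Om|\le\kappa_N(d_\Om/2)^N$, so altogether $R\le d_\Om/2$. Second, set $\eta=\rho_e-\rho_i$ and $h_+=\max(\rho_e-R,0)$, $h_-=\max(R-\rho_i,0)$; a short case-by-case check (using $|B_R|\le|\Om|$ to rule out $R>\rho_e$) gives $h_++h_-\ge\eta$, so, by swapping with the symmetric exterior-sphere argument at the closest boundary point when necessary, one may assume $h:=h_+\ge\eta/2$.

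The core geometric step is to exhibit a Euclidean ball inside $\Om\setminus B_R^z$. Pick $p_e\in\Ga$ with $|p_e-z|=\rho_e$; since $\Ga$ is $C^2$ (implicit in the uniform sphere conditions), the outer normal at $p_e$ must be $\nu(p_e)=(p_e-z)/\rho_e$, and the interior sphere condition provides a ball $B_{r_i}(q_i)\subseteq\Om$ tangent internally to $\Ga$ at $p_e$, with $q_i=p_e-r_i\nu(p_e)$ and hence $|q_i-z|=\rho_e-r_i$. I claim that $B_{r_i}(q_i)\setminus B_R^z$ contains a ball of radius $\min(h/2,r_i)$: if $h\le 2r_i$, take the ball of radius $h/2$ centered at $c:=z+(R+h/2)(p_e-z)/\rho_e$; a direct computation gives $|c-z|=R+h/2$ and $|c-q_i|=r_i-h/2$, so this ball is externally tangent to $\pa B_R^z$ and internally tangent to $\pa B_{r_i}(q_i)$. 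If instead $h>2r_i$, then $|q_i-z|-r_i=\rho_e-2r_i>R$, so $B_{r_i}(q_i)$ itself lies outside $B_R^z$. In either case,
\[
|\Om\De B_R^z|\ge|\Om\setminus B_R^z|\ge \kappa_N\min(h/2,\,r_i)^N,
\]
and dividing by $|B_R^z|=\kappa_N R^N$ yields $\cA(\Om)^{1/N}\ge\min\bigl(h/(2R),\,r_i/R\bigr)$.

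The conclusion is a two-line case split. If $h\le 2r_i$, then $\cA(\Om)^{1/N}\ge h/(2R)$, and combining with $\eta\le 2h$ and $R\le d_\Om/2$ gives $\eta\le 4R\,\cA(\Om)^{1/N}\le 2d_\Om\,\cA(\Om)^{1/N}$. If $h>2r_i$, then $\cA(\Om)^{1/N}\ge r_i/R\ge\ul r/R\ge 2\ul r/d_\Om$, so the trivial bound $\eta\le d_\Om$ multiplied by the resulting inequality $(d_\Om/(2\ul r))\,\cA(\Om)^{1/N}\ge 1$ yields $\eta\le (d_\Om^2/(2\ul r))\,\cA(\Om)^{1/N}$. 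Taking the maximum of the two estimates produces the theorem. The main obstacle I anticipate is the geometric verification of the tangent-ball construction (especially checking the transition at $h=2r_i$, where the midpoint ball degenerates into $B_{r_i}(q_i)$) and correctly handling the degenerate configuration $R<\rho_i$, in which $h_-=0$ but already $h_+\ge\eta$ so the WLOG step still applies; both reduce to elementary arithmetic on the tangency identities and the isodiametric bound $R\le d_\Om/2$.
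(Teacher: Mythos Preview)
Your proof is correct and follows essentially the same approach as the paper's: both arguments place a Euclidean ball of radius $\min(\underline r,\eta/4)$ (you use the sharper $\min(r_i,h/2)$ with $h\ge\eta/2$) inside $\Om\setminus B_R^z$ via the interior touching sphere at the farthest boundary point (or symmetrically in $B_R^z\setminus\Om$ via the exterior sphere at the nearest point), compare volumes, and finish with the bound $R\le d_\Om/2$. Your write-up is somewhat more explicit about the tangency identities for the auxiliary ball, and your final case split on $h$ versus $2r_i$ plays the same role as the paper's split on $\cA(\Om)$ versus $(\underline r/R)^N$.
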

\begin{proof}
Let $x$ be any point in $\Om$.
It is clear that 
\begin{equation}
\label{maxcasiincomparing asymmetry}
\max(\rho_e - R, R- \rho_i) \ge \frac{\rho_e - \rho_i}{2}.
\end{equation}
If that maximum is $\rho_e - R$, at a point $y$ where the ball centered at $x$ with radius $\rho_e$ touches $\Ga$, we consider the interior touching ball $B_{r_i}$ . 
\par
If $2 r_i < (\rho_e - \rho_i)/2$ then $B_{r_i} \subset \Om \setminus B_R^x$ and hence \begin{equation*}
\frac{|\Om \De B_R^x|}{|B_R^x|} \ge \left( \frac{r_i}{R} \right)^N.
\end{equation*}
If, else,  $2 r_i \ge (\rho_e - \rho_i)/2$, $B_{r_i}$ contains a ball of radius $(\rho_e - \rho_i)/4$ still touching $\Ga$ at $y$. Such a ball is contained in $\Om \setminus B_R^x$, and hence
\begin{equation*}
\frac{|\Om \De B_R^x|}{|B_R^x|} \ge \left( \frac{\rho_e - \rho_i}{4 \, R} \right)^N.
\end{equation*}
\par
Thus, we proved that
\begin{equation}\label{eq:asymmetrynuovatogliendoepsilon}
\frac{|\Om \De B_R^x|}{|B_R^x|} \ge \min \left\lbrace \left( \frac{r_i}{R} \right)^N , \left( \frac{\rho_e - \rho_i}{4 \, R} \right)^N \right\rbrace .
\end{equation}
\par
If, else, the maximum in \eqref{maxcasiincomparing asymmetry} is $R- \rho_i$, we proceed similarly, by reasoning on the exterior ball $B_{r_e}$ and $\RR^N\setminus\ol{\Om}$, and we obtain \eqref{eq:asymmetrynuovatogliendoepsilon} with $r_i$ replaced by $r_e$.

Thus, by recalling that $\ul{r}=\min{ \left[ r_i, r_e \right] }$, it always holds that
\begin{equation*}
\frac{|\Om \De B_R^x|}{|B_R^x|} \ge \min \left\lbrace \left( \frac{ \ul{r} }{R} \right)^N , \left( \frac{\rho_e - \rho_i}{4 \, R} \right)^N \right\rbrace ,
\end{equation*}
from which, since $x$ was arbitrarily chosen in $\Om$, we deduce that
$$
\rho_e - \rho_i \le 4 \, R \, \cA(\Om)^{\frac{1}{N}} \quad \mbox{ if } \quad \cA(\Om)\le \left(
\frac{ \ul{r} }{R} \right)^N .
$$
On the other hand, if 
$$
\cA(\Om) > \left( \frac{ \ul{r} }{R} \right)^N
$$
it is trivial to check that
$$
\rho_e - \rho_i \le \frac{d_\Om \, R}{ \ul{r} } \, \cA(\Om)^{\frac{1}{N}}
$$
holds.
Thus, we deduce that
$$
\rho_e - \rho_i \le \max{ \left[ 4 \, R , \frac{d_\Om \, R}{ \ul{r} } \right] }  \, \cA(\Om)^{\frac{1}{N}} .
$$
By proceeding as described at the end of the proof of Theorem \ref{thm:Improved-Serrin-stability}, we can easily obtain the estimate $R \leq d_\Om/2$, from which the conclusion easily follows.
\end{proof}

\begin{rem}
{\rm
Theorem \ref{thm:comparing asymmetry} and \eqref{eq:SBTstabinmisura} give the inequality
$$
\rho_e - \rho_i \le C \, \nr H_0 - H \nr_{2,\Ga}^{1/N}
$$
that, for any $N\ge 2$, is poorer than that obtained in Theorem \ref{thm:SBT-improved-stability}.
}
\end{rem}

\section{Other related stability results}\label{sec:Other stability results mean curvature}

If we suppose that $\Ga$ is mean convex, then we can use Theorem \ref{thm:identityheintze-karcher} to obtain a stability result for Heintze-Karcher inequality.
%
%

The following theorem improves (for every $N\ge 4$) the exponents obtained in \cite[Theorem 4.5]{MP}.
\begin{thm}[Stability for Heintze-Karcher's inequality]
\label{thm:stability-hk}
Let $\Ga$ be a surface of class $C^2$, which is the boundary of a bounded domain $\Om\subset\RR^N$, $N\ge 2$. Denote by $H$ its mean curvature and suppose that $H\ge0$ on $\Ga$.

Then, there exist a point $z\in\Om$ and a positive constant $C$ such that
\begin{equation}
\label{stability-hk-C}
\rho_e-\rho_i\le C\,\left(\int_\Ga\frac{dS_x}{H}-N\,|\Om|\right)^{\tau_N / 2} ,
\end{equation}
with the following specifications:
\begin{enumerate}[(i)]
\item $\tau_N=1$ for $N=2$ or $3$;
\item $\tau_4$ is arbitrarily close to one, in the sense that for any $\theta>0$, there exists a positive constant $C$ such that \eqref{stability-hk-C} holds with $\tau_4= 1- \theta $;
\item $\tau_N = 2/(N-2) $ for $N\ge 5$.
\end{enumerate}

The constant $C$ depends on $N$, $r_i$, $r_e$, $d_\Om$, $\de_\Ga (z)$, and $\theta$ (only in the case $N=4$).
\end{thm}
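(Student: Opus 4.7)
The plan is to combine the Heintze-Karcher identity \eqref{heintze-karcher-identity} with the $W^{2,2}$-stability estimate of Theorem \ref{thm:SBT-W22-stability}, in complete analogy with the proof of Theorem \ref{thm:SBT-improved-stability}. First, I would choose $z \in \Om$ to be any local minimum point of the solution $u$ of \eqref{serrin1} (so that $\na h(z)=0$ for $h=q-u$), and recall from \eqref{L2-norm-hessian} that $|\na^2 h|^2 = |\na^2 u|^2 - (\De u)^2/N$ pointwise in $\Om$.

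Next, I would rewrite the Heintze-Karcher identity \eqref{heintze-karcher-identity} in terms of $h$ as
\begin{equation*}
\frac{1}{N-1}\int_{\Om} |\na^2 h|^2\,dx + \int_\Ga \frac{(1-H\,u_\nu)^2}{H}\,dS_x = \int_\Ga \frac{dS_x}{H}-N|\Om|.
\end{equation*}
Since $\Ga$ is mean convex, the second summand on the left-hand side is non-negative, and discarding it yields the straightforward bound
\begin{equation*}
\nr \na^2 h \nr_{2,\Om}^2 \le (N-1)\left(\int_\Ga\frac{dS_x}{H}-N\,|\Om|\right).
\end{equation*}

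Plugging this inequality into Theorem \ref{thm:SBT-W22-stability} immediately produces \eqref{stability-hk-C} with the claimed exponents $\tau_N$ and a constant $C$ depending on the same parameters. There is essentially no obstacle here: the proof is a direct corollary of the fundamental identity \eqref{heintze-karcher-identity} and of the harmonic-function machinery already developed in Chapter~\ref{chapter:various estimates}. The only delicate points -- and really these are bookkeeping rather than genuine obstacles -- are verifying that the constant $C$ can indeed be expressed in terms of $N$, $r_i$, $r_e$, $d_\Om$ and $\de_\Ga(z)$ only (which follows from the discussion in Remark~\ref{rem:dipendenzecostanti}) and noting, as in Remark~\ref{rem:ontheconstants-SBT-removingdelta-seconvexocasidiremark}, that the dependence on $\de_\Ga(z)$ may be removed when $\Om$ is convex or under the alternative choices of $z$ discussed in Remark~\ref{remarkprova:nuova scelte z}.
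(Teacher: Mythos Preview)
Your proposal is correct and follows essentially the same approach as the paper: choose $z$ as a local minimum point of $u$, use the Heintze-Karcher identity \eqref{heintze-karcher-identity} (together with \eqref{L2-norm-hessian}) to bound $\nr\na^2 h\nr_{2,\Om}^2$ by the Heintze-Karcher deficit, and then invoke Theorem~\ref{thm:SBT-W22-stability}. Your write-up is in fact slightly more explicit than the paper's, which simply records the inequality $\frac{1}{N-1}\int_\Om |\na^2 h|^2\,dx \le \int_\Ga \frac{dS_x}{H} - N|\Om|$ and immediately appeals to Theorem~\ref{thm:SBT-W22-stability}.
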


\begin{proof}
As before, we choose $z\in\Om$ be any local minimum point of $u$ in $\Om$.
Moreover, by \eqref{heintze-karcher-identity} and \eqref{heintze-karcher}, we have that 
$$
\frac1{N-1}\,\int_\Om |\na^2 h|^2\,dx\le \int_\Ga\frac{dS_x}{H}-N\,|\Om|.
$$
Thus, the conclusion follows from Theorem \ref{thm:SBT-W22-stability}.
%
%
%
\end{proof}

Since the deficit of Heintze-Karcher's inequality can be written as
$$
\int_\Ga\frac{dS_x}{H}-N\,|\Om| = \int_\Ga \left( \frac{1}{H} - u_\nu \right) \, dS_x ,
$$
where $u$ is the solution of \eqref{serrin1},
Theorem \ref{thm:stability-hk} also gives a stability estimate for the overdetermined boundary value problem mentioned in item (ii) of Theorem \ref{thm:heintze-karcher}, as stated next. It is clear that the deviation $\int_\Ga \left( 1/H - u_\nu \right) \, dS_x$ is weaker than $\nr 1/H - u_\nu \nr_{1, \Ga}$. The following theorem improves \cite[Theorem 4.8]{MP}.
 
\begin{thm}[Stability for a related overdetermined problem]
\label{thm:OBVP-stability}
%
Let $\Ga$, $\Om$, and $H$ be as in Theorem \ref{thm:stability-hk}.
%
%
Let $u$ be the solution of problem \eqref{serrin1} and $z \in \Om$ be any of its critical points.
\par
There exists a positive constant $C$ such that
\begin{equation}
\label{OBVT-stability-gener-C}
\rho_e-\rho_i\le C\, \left\lbrace \int_\Ga \left( \frac{1}{H} - u_\nu \right) \, dS_x \right\rbrace^{ \tau_N / 2 } ,
\end{equation}
%
%
with the following specifications:
\begin{enumerate}[(i)]
\item $\tau_N=1$ for $N=2$ or $3$;
\item $\tau_4$ is arbitrarily close to one, in the sense that for any $\theta>0$, there exists a positive constant $C$ such that \eqref{OBVT-stability-gener-C} holds with $\tau_4= 1- \theta $;
\item $\tau_N = 2/(N-2) $ for $N\ge 5$.
\end{enumerate}

The constant $C$ depends on $N$, $r_i$, $r_e$, $d_\Om$, $\de_\Ga (z)$, and $\theta$ (only in the case $N=4$).
\end{thm}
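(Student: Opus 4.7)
The plan is to recognize that the deviation appearing in \eqref{OBVT-stability-gener-C} is, in fact, nothing but the deficit of Heintze-Karcher's inequality \eqref{heintze-karcher}, and then invoke Theorem \ref{thm:stability-hk} verbatim.

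First I would rewrite the right-hand side of \eqref{OBVT-stability-gener-C}. The divergence theorem applied to the solution $u$ of \eqref{serrin1} gives identity \eqref{volume}, namely
\begin{equation*}
\int_\Ga u_\nu \, dS_x = N |\Om| .
\end{equation*}
Subtracting this from $\int_\Ga dS_x / H$ yields
\begin{equation*}
\int_\Ga \left( \frac{1}{H} - u_\nu \right) dS_x = \int_\Ga \frac{dS_x}{H} - N |\Om|,
\end{equation*}
so the two deviations coincide exactly (in particular, Heintze-Karcher's inequality \eqref{heintze-karcher} ensures both are non-negative).

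Since $\Ga$ is mean convex by assumption and $z$ is a critical point of $u$ in $\Om$ (so that the hypotheses of Theorem \ref{thm:stability-hk} are fulfilled with the same choice of $z$), applying Theorem \ref{thm:stability-hk} directly yields
\begin{equation*}
\rho_e - \rho_i \le C \left( \int_\Ga \frac{dS_x}{H} - N|\Om| \right)^{\tau_N/2} = C \left\{ \int_\Ga \left( \frac{1}{H} - u_\nu \right) dS_x \right\}^{\tau_N/2},
\end{equation*}
with exactly the exponents $\tau_N$ and the dependence of the constant $C$ as described in items (i)--(iii) of Theorem \ref{thm:stability-hk}, which coincide with those in the statement.

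There is essentially no obstacle: the proof is a one-line corollary of Theorem \ref{thm:stability-hk} combined with \eqref{volume}. The only conceptual point worth emphasizing is that the new deviation is genuinely weaker than $\nr 1/H - u_\nu \nr_{1,\Ga}$, because the integrand $1/H - u_\nu$ need not be pointwise non-negative, even though its integral is; this is why one must go through Heintze-Karcher's identity \eqref{heintze-karcher-identity} (as encoded in the proof of Theorem \ref{thm:stability-hk}), rather than attempting to bound $\na^2 h$ pointwise from this weaker quantity.
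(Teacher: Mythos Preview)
Your proposal is correct and follows exactly the approach the paper takes: the paragraph immediately preceding the theorem statement already observes that $\int_\Ga (1/H - u_\nu)\, dS_x = \int_\Ga dS_x/H - N|\Om|$ via \eqref{volume}, so that Theorem~\ref{thm:stability-hk} applies verbatim. Your additional remark about why the deviation is weaker than the $L^1$-norm is also in the spirit of the paper's comment just before the statement.
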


%

\begin{rem}
{\rm
It is clear that analogs of items (i) and (ii) of Remark \ref{rem:ontheconstants-SBT-removingdelta-seconvexocasidiremark} hold also for
Theorems \ref{thm:stability-hk} and \ref{thm:OBVP-stability}.
%
%
%
}
\end{rem}

\begin{appendices}
\appendixpage
\noappendicestocpagenum
\addappheadtotoc

\chapter{}\label{appendix:estimates-mu-for-L0John}
\section{Hardy-Poincar\'e inequalities for \texorpdfstring{$L_0$}{L0}-John domains}
%
%
In the present appendix, we show how to prove the estimates given in \eqref{eq:estimatemu-r-p-al-generalizingFeappendix} and \eqref{eq:estimatemu-p-0-generalizingFeappendix}.

As already mentioned in the proof of Lemma \ref{lem:John-two-inequalities}, \cite[Theorem 1.3]{HS} states that if $\Om$ is a $b_0$-John domain, then there exists a constant $c=c(N,\, r, \, p,\, \al, \, \Om)$ such that
\begin{equation*}
\inf_{\la \in \RR} \nr v - \la \nr_{r,\Om} \le c \, \nr \de_\Ga^{\al} \, \na v  \nr_{p, \Om} ,
\end{equation*}
for every $v \in L^1_{loc}(\Om)$ such that $\de_\Ga^\al \, \na v  \in L^p(\Om)$.
Moreover, that proof, which exploits the so-called Whitney decomposition, informs us that
\begin{equation*}
c \le k_{N,\, r, \, p,\, \al} \, b_0^N |\Om|^{\frac{1-\al}{N} +\frac{1}{r} +\frac{1}{p} } .
\end{equation*}

In order to prove \eqref{eq:estimatemu-r-p-al-generalizingFeappendix}, we have to look deeper into that proof.
More technically, a careful inspection of the proof of \cite[Theorem 1.3]{HS} will see that also the following result is proved there (even if not explicitly stated).

\begin{thm}[\cite{HS}]\label{thm:HS}
Let $\Om$ be a $L_0$-John domain in $\RR^N$ with base point $z$, and consider three numbers $r, p, \al$ such that $1 \le p \le r \le \frac{Np}{N-p(1 - \al )}$, $p(1 - \al)<N$, $0 \le \al \le 1$. There exists a constant $k= k_{N, \, r, \, p, \, \al}$ such that, for any function $v \in L^1_{loc} (\Om) $ with $\de_\Ga^\al \na u \in L^p (\Om)$, it holds that
\begin{equation*}
\nr v - v_{Q_0} \nr_{r,\Om} \le k \, L_0^N \, |\Om|^{\frac{1-\al}{N} + \frac{1}{r} - \frac{1}{p}} \nr \de_\Ga^{\al} \, \na v  \nr_{p, \Om},
\end{equation*}
where $Q_0$ is any cube centered at $z$ with $d_{Q_0} \le \mathrm{dist}(Q_0, \Ga) \le 4 \, d_{Q_0}$.
\end{thm}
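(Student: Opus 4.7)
The plan is to adapt the Whitney-chain argument underlying the proof of the $b_0$-John version \cite[Theorem 1.3]{HS}, exploiting that in an $L_0$-John domain with base point $z$ the carrot chains emanate from a \emph{fixed} root cube, which gives tighter combinatorial control than the bilateral cigar chains of a generic $b_0$-John domain. As a first step, I would perform a Whitney decomposition $\Om = \bigcup_j Q_j$ into essentially disjoint dyadic cubes with $d_{Q_j} \le \mathrm{dist}(Q_j,\Ga) \le 4\,d_{Q_j}$, distinguishing a root cube $Q_0$ compatible with the hypothesis on $z$; on each Whitney cube $\de_\Ga(x) \sim d_{Q_j}$, so $\nr \de_\Ga^\al\,\na v\nr_{p,Q_j}$ is comparable to $d_{Q_j}^\al\,\nr \na v\nr_{p,Q_j}$. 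By the $L_0$-carrot property, for each $Q_j$ there is a chain $Q_0 = R_0^{(j)},R_1^{(j)},\dots,R_{m_j}^{(j)} = Q_j$ of overlapping Whitney cubes whose sizes and positions are controlled by $L_0$.

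Next, splitting $\nr v - v_{Q_0}\nr_{r,\Om}^r = \sum_j \int_{Q_j}|v - v_{Q_0}|^r\,dx$ and telescoping along each chain reduces the estimate to a local term $\int_{Q_j}|v - v_{Q_j}|^r\,dx$ plus a sum of average differences $|v_{R_k^{(j)}} - v_{R_{k+1}^{(j)}}|$. Both pieces are then controlled by the classical unweighted Sobolev--Poincar\'e inequality on a cube, $\nr v - v_Q\nr_{r,Q} \le C\,d_Q^{1+N/r-N/p}\,\nr \na v\nr_{p,Q}$ (applied to $Q_j$ itself for the first piece and to $R_k^{(j)} \cup R_{k+1}^{(j)}$ for the second), whose range of validity matches precisely the hypotheses $1 \le p \le r \le Np/(N - p(1-\al))$ and $p(1-\al) < N$ once the weight is reinserted via the pointwise bound $\nr \na v\nr_{p,Q} \le d_Q^{-\al}\nr \de_\Ga^\al\,\na v\nr_{p,Q}$ available on each Whitney cube.

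Finally, I would reindex the double sum so that each Whitney cube $R$ is counted once per chain passing through it, and invoke the \emph{shadow inequality} for $L_0$-carrot chains, which bounds the total measure of those $Q_j$ having $R$ in their chain by $C\,L_0^N |R|$; combined with the bounded overlap of the Whitney family, a H\"older inequality in the $\ell^p$--$\ell^r$ duality, and the crude bound $d_Q \le |\Om|^{1/N}$ used to absorb the residual dimensional factor, this yields the claimed constant $k_{N,r,p,\al}\,L_0^N\,|\Om|^{(1-\al)/N + 1/r - 1/p}$. The main obstacle is precisely the shadow step: securing the sharp exponent $L_0^N$ — rather than a higher power of $L_0$ that one would incur in a bilateral $b_0$-cigar argument — requires the unilateral carrot geometry afforded by the base point $z$ and careful combinatorial bookkeeping of the tree structure of the chains, which is the delicate core of the Hurri-Syrj\"anen analysis.
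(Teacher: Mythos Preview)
The paper does not actually supply a proof of this statement: it simply asserts that ``a careful inspection of the proof of \cite[Theorem~1.3]{HS} will see that also the following result is proved there (even if not explicitly stated).'' In other words, the paper treats the theorem as a byproduct of Hurri-Syrj\"anen's argument and defers entirely to \cite{HS} for the details.

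Your proposal goes further than the paper does, by actually sketching the Whitney--chain mechanism that underlies \cite[Theorem~1.3]{HS}: Whitney decomposition with a distinguished root cube $Q_0$, telescoping along carrot chains, local Sobolev--Poincar\'e on cubes with the weight absorbed via $\de_\Ga \sim d_Q$, and the shadow estimate to sum up. This is indeed the skeleton of the argument in \cite{HS}, and your emphasis on the unilateral carrot geometry (all chains rooted at the fixed $Q_0$) as the reason one obtains $L_0^N$ rather than a worse power is well placed. So your approach is not different from the paper's --- it is a fleshing-out of the reference the paper points to. If anything, you have written more than the paper itself commits to; the paper's ``proof'' is the single sentence of attribution preceding the statement.
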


Now we are ready to prove the following lemma, which clearly gives \eqref{eq:estimatemu-r-p-al-generalizingFeappendix}. We mention that the following lemma generalizes the result obtained in \cite[Lemma 8]{Fe} in the particular case $r=2N/(N-1), \, p=2, \, \al=1/2$.

\begin{lem}\label{lem_generalFe}
Let $\Om$ be a $L_0$-John domain in $\RR^N$ with base point $z$, and consider three numbers $r, p, \al$ such that $1 \le p \le r \le \frac{Np}{N-p(1 - \al )}$, $p(1 - \al)<N$, $0 \le \al \le 1$. Then, for every harmonic function $v$ such that $v(z)=0$ it holds that
\begin{equation*}
\nr v \nr_{r,\Om} \le k \, L_0^N \, |\Om|^{\frac{1-\al}{N} + \frac{1}{r} - \frac{1}{p}} \nr \de_\Ga^{\al} \, \na v  \nr_{p, \Om} ,
\end{equation*}
for a constant $k=k_{N, \, r, \, p, \, \al}$
\end{lem}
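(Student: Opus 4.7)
The plan is to combine Theorem~\ref{thm:HS} with the mean-value property of harmonic functions: the former bounds $\nr v-v_{Q_0}\nr_{r,\Om}$ for a suitable cube $Q_0$ centered at $z$, while the normalization $v(z)=0$ together with the harmonicity of $v$ will allow us to absorb the term $v_{Q_0}$ at the cost of a multiplicative constant depending only on $N$ and $L_0$.

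\emph{Choice of $Q_0$.} Since $\Om$ is a $L_0$-John domain with base point $z$, evaluating the carrot inequality $\de_\Ga(\gamma(t))\ge L_0^{-1}|\gamma(t)-x|$ at $t=0$ (where $\gamma(0)=z$) and taking the supremum over $x\in\Om$, together with the inclusion $\Om\subset B_{\max_{x\in\Om}|z-x|}(z)$, yields
$$
\de_\Ga(z)\ge L_0^{-1}(|\Om|/|B|)^{1/N}.
$$
A cube $Q_0$ centered at $z$ with side $\ell$ proportional to $\de_\Ga(z)$, with the dimensional proportionality constant tuned so that the admissibility condition $d_{Q_0}\le\dist(Q_0,\Ga)\le 4\,d_{Q_0}$ of Theorem~\ref{thm:HS} is verified, then satisfies $|Q_0|\ge k_N\,L_0^{-N}|\Om|$.

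\emph{Absorption of $v_{Q_0}$.} Let $B^*$ be the ball of radius $\ell/2$ centered at $z$, so that $B^*\subset Q_0$; the mean-value property for the harmonic function $v$ gives $v_{B^*}=v(z)=0$. Writing
$$
|Q_0|\,v_{Q_0}=\int_{B^*}v\,dx+\int_{Q_0\setminus B^*}v\,dx=\int_{Q_0\setminus B^*}(v-v_{Q_0})\,dx+|Q_0\setminus B^*|\,v_{Q_0},
$$
rearranging, and applying H\"older's inequality together with $|B^*|\ge k_N'|Q_0|$, we obtain
$|v_{Q_0}|\le k_N''\,|Q_0|^{-1/r}\,\nr v-v_{Q_0}\nr_{r,\Om}$. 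Multiplying both sides by $|\Om|^{1/r}$ and inserting the lower bound on $|Q_0|$ from the previous step gives
$$
\nr v_{Q_0}\nr_{r,\Om}\le k_N'''\,L_0^{N/r}\,\nr v-v_{Q_0}\nr_{r,\Om}.
$$

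\emph{Conclusion.} By the triangle inequality, $\nr v\nr_{r,\Om}\le (1+k_N'''L_0^{N/r})\,\nr v-v_{Q_0}\nr_{r,\Om}$, and the desired estimate follows by inserting the bound from Theorem~\ref{thm:HS} and merging the two $L_0$ factors into a single overall power of $L_0$ absorbed in the constant $k_{N,r,p,\al}$ (recalling $L_0\ge 1$). The main technical delicacy lies in the construction of $Q_0$: one needs to verify that the admissibility constants in Theorem~\ref{thm:HS} are compatible with the choice of $\ell$ of order $\de_\Ga(z)$, which reduces to a direct elementary dimensional check involving $\sqrt{N}/2$ (the ratio between the circumscribed and inscribed balls of a cube).
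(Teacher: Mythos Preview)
Your argument is structurally sound, but it does not quite prove the lemma as stated: it yields the power $L_0^{N(1+1/r)}$ rather than $L_0^N$. The loss happens in your final step. After the triangle inequality you have the factor $(1+k_N'''L_0^{N/r})$ multiplying the bound $k\,L_0^N|\Om|^{\frac{1-\al}{N}+\frac1r-\frac1p}\nr\de_\Ga^\al\na v\nr_{p,\Om}$ coming from Theorem~\ref{thm:HS}; since $L_0\ge 1$ this produces $L_0^{N+N/r}$. The extra $L_0^{N/r}$ cannot be ``absorbed in the constant $k_{N,r,p,\al}$'', because that constant is not allowed to depend on $L_0$.

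The paper's proof is close in spirit to yours but treats the correction term differently, and this is precisely where the sharper power is gained. Rather than bounding $|v_{Q_0}|$ by $\nr v-v_{Q_0}\nr_{r,\Om}$ (which feeds back into Theorem~\ref{thm:HS} and costs the extra $L_0^{N/r}$), the paper takes the \emph{circumscribed} ball $B_0=B_{d_{Q_0}/2}(z)\supset Q_0$, uses the mean-value property $v_{B_0}=v(z)=0$, and applies the Sobolev--Poincar\'e inequality on $B_0$ to bound $\nr v\nr_{r,Q_0}$ directly by the gradient term:
\[
\nr v\nr_{r,Q_0}\le k\,|Q_0|^{\frac{1-\al}{N}+\frac1r-\frac1p}\,\nr\de_\Ga^\al\na v\nr_{p,\Om}.
\]
When this is multiplied by $(|\Om|/|Q_0|)^{1/r}$ in the triangle-inequality step, the $|Q_0|^{1/r}$ factors cancel, leaving only $|\Om|^{1/r}|Q_0|^{\frac{1-\al}{N}-\frac1p}$; with $|Q_0|\ge c\,L_0^{-N}d_\Om^N$ (your inequality \eqref{eq:relationvolumeJohn}) and the negative exponent $\frac{1-\al}{N}-\frac1p$, this contributes at most $L_0^{N/p-(1-\al)}\le L_0^N$. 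So the key point your outline misses is to bound $\nr v\nr_{r,Q_0}$ by a \emph{local} gradient estimate instead of recycling the global Theorem~\ref{thm:HS} bound.
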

\begin{proof}
To simplify formulas, in all the inequalities of the proof we will always use the letter $k$ to denote the constants depending only on $N, \, r, \, p, \, \al$.

If we consider 
$$
B_0 = B_{ \frac{d_{Q_0} }{ 2} } (z),
$$
we have that $Q_0 \subset B_0 \subset \Om$.
Since $v$ is harmonic and $v(z)=0$ we have by the mean value property, $v_{B_0}=0$.
The usual (i.e., unweighted) Poincar\'e inequality in $B_0$ informs us that
\begin{multline*}
\nr v \nr_{r, Q_0} \le \nr v \nr_{r, B_0} \le k \, |Q_0|^{\frac{1}{N} + \frac{1}{r} - \frac{1}{p}} \nr \na v \nr_{p, B_0}
\\
\le k \, |Q_0|^{\frac{1}{N} + \frac{1}{r} - \frac{1}{p} - \frac{\al}{N}} \nr \de_\Ga^\al \na v \nr_{p, B_0}
\le  k \, |Q_0|^{\frac{1 -\al}{N} + \frac{1}{r} - \frac{1}{p}} \nr \de_\Ga^\al \na v \nr_{p, \Om}.
\end{multline*}
By using the above estimate and Theorem \ref{thm:HS} we get that
\begin{equation*}
\nr v \nr_{r, \Om} \le \nr v - v_{Q_0} \nr_{r, \Om} + \left( \frac{|\Om|}{|Q_0|} \right)^\frac{1}{r} \nr v \nr_{r, Q_0} \le k \, L_0^N \, |\Om|^{\frac{1-\al}{N} + \frac{1}{r} - \frac{1}{p}} \nr \de_\Ga^{\al} \, \na v  \nr_{p, \Om}.
\end{equation*}
In the last inequality we used two facts: the first is 
\begin{equation}\label{eq:relationvolumeJohn}
\frac{d_\Om^N}{|Q_0|} \le L_0^N ,
\end{equation}
that can be easily verified by using the definition of $L_0$-John domain with base point $z$ with a point $x \in \Om$ such that $|x-z| \ge d_\Om /2$; the second is that, being $L_0 \ge 1$, we have that
$$L_0^{\frac{N-p(1-\al)}{p}} \le L_0^N,$$
since by the assumptions we know that $p \ge 1$ and $p(1- \al) < N$.
\end{proof}

In an analogous way, we can also prove \eqref{eq:estimatemu-p-0-generalizingFeappendix}.
In fact, an inspection of the proof of \cite[Theorem 8.5]{HS1} will see that the following result is proved there (even if not explicitly stated).

\begin{thm}[\cite{HS1}]\label{thm:HS2}
Let $\Om$ be a $L_0$-John domain in $\RR^N$ with base point $z$,
and let $p \in \left[ 1, \infty \right)$. Then, for every function $v\in W^{1,p} (\Om)$, it holds that
\begin{equation*}
\nr v - v_{Q_0} \nr_{p,\Om} \le k_{N,p} \, L_0^{3N(1 + \frac{N}{p})} \, d_\Om \, \nr \na v  \nr_{p, \Om},
\end{equation*}
where $Q_0$ is any cube centered at $z$ with $d_{Q_0} \le \mathrm{dist}(Q_0, \Ga) \le 4 \, d_{Q_0}$.
\end{thm}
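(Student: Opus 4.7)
The plan is to reproduce the chaining argument behind \cite[Theorem 8.5]{HS1}, in the same spirit as Theorem \ref{thm:HS}, but keeping careful track of how every constant depends on the John parameter $L_0$ and the diameter $d_\Om$. I would begin with a Whitney decomposition of $\Om$ into closed dyadic cubes $\{Q_j\}_{j\in J}$ with pairwise disjoint interiors, satisfying $\diam(Q_j)\le \dist(Q_j,\Ga)\le 4\diam(Q_j)$ and the standard finite overlap of the enlarged cubes $\tfrac{5}{4}Q_j$. After enlarging $k_{N,p}$ if necessary, the base cube $Q_0$ may be taken to be a Whitney cube. The $L_0$-John condition with base point $z$ is then used to attach to every $Q_j$ a Boman-type chain $Q_0=Q_{j_0},Q_{j_1},\dots,Q_{j_m}=Q_j$ of Whitney cubes whose consecutive elements have comparable sidelengths and whose small dilates intersect.

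Next, I would apply the local Poincar\'e inequality on each cube of the chain,
\[
\| v - v_{Q_{j_i}}\|_{p,Q_{j_i}}\le k_{N,p}\,\ell(Q_{j_i})\,\|\na v\|_{p,Q_{j_i}},
\]
and telescope to control $|v_{Q_j}-v_{Q_0}|$ by a sum of local gradient averages along the chain. Raising to the $p$-th power, integrating over $\Om$, and invoking the discrete Hardy inequality from \cite{HS1} together with the bounded overlap of the Whitney dilates would then produce an estimate of the form $\| v-v_{Q_0}\|_{p,\Om}\le C(N,p,L_0)\,d_\Om\,\|\na v\|_{p,\Om}$, which is precisely the desired bound up to the explicit form of $C(N,p,L_0)$.

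The only real obstacle is the explicit bookkeeping that leads to the exponent $3N(1+N/p)$ of $L_0$. Three different contributions must be combined. First, the length of the Whitney chain attached to a cube of sidelength $\ell$ is bounded by $k_N L_0\log(d_\Om/\ell)$, which introduces a polynomial factor in $L_0$ when fed into the discrete Hardy inequality. Second, the cardinality of cubes sharing a common step in the chain, needed to exploit the finite overlap, scales as a power of $L_0$. Third, the passage from $\ell(Q_0)$ to $d_\Om$ uses \eqref{eq:relationvolumeJohn}, producing an additional factor $L_0^{N/p}$. None of these steps is conceptually difficult, but combining them into the sharp exponent $3N(1+N/p)$ is the only genuinely delicate point; this is exactly the computation that is carried out, implicitly, in the proof of \cite[Theorem 8.5]{HS1}, and my task would reduce to making each of those implicit estimates quantitative.
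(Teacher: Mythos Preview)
The paper does not give a proof of this theorem at all: it simply asserts that ``an inspection of the proof of \cite[Theorem 8.5]{HS1} will see that the following result is proved there (even if not explicitly stated)'', and then uses the statement as a black box in the subsequent lemma. Your proposal is therefore not in conflict with the paper's argument --- you are attempting to flesh out precisely the inspection of \cite{HS1} that the paper invokes, via the Whitney decomposition and Boman chaining that underlie \cite[Theorem 8.5]{HS1}. In that sense your approach is the same as the paper's, only more explicit; the paper makes no attempt to justify the specific exponent $3N(1+N/p)$ beyond pointing to \cite{HS1} together with \cite[Theorem 8.5]{MarS} (see Remark \ref{rem:stime mu HS in item ii}(ii)), so the delicate bookkeeping you flag is genuinely deferred to those references rather than carried out here.
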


Thus, by proceeding as in Lemma \ref{lem_generalFe}, we can now prove \eqref{eq:estimatemu-p-0-generalizingFeappendix}. We do it in the following lemma.

\begin{lem}
Let $\Om$ be a $L_0$-John domain in $\RR^N$ with base point $z$,
and let $p \in \left[ 1, \infty \right)$. Then, for every harmonic function $v$ such that $v(z)=0$ it holds that
\begin{equation*}
\nr v \nr_{p,\Om} \le k_{N,p} \, L_0^{3N(1 + \frac{N}{p})} \, d_\Om \, \nr \de_\Ga^{\al} \, \na v  \nr_{p, \Om}.
\end{equation*}
\end{lem}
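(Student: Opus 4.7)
The plan is to mimic the structure of Lemma \ref{lem_generalFe}, replacing the use of Theorem \ref{thm:HS} with Theorem \ref{thm:HS2}. Let $Q_0$ be the cube centered at $z$ provided by Theorem \ref{thm:HS2}, satisfying $d_{Q_0}\le \mathrm{dist}(Q_0,\Ga)\le 4\,d_{Q_0}$, and let $B_0:=B_{d_{Q_0}/2}(z)$ be the ball circumscribing $Q_0$. A direct computation using $\mathrm{dist}(Q_0,\Ga)\ge d_{Q_0}$ and the fact that any segment from $z\in Q_0$ to a point of $\Ga$ must first exit $Q_0$ shows $\mathrm{dist}(z,\Ga)> d_{Q_0}/2$; since $B_0$ is connected, contains $z\in\Om$ and is disjoint from $\Ga$, one concludes $Q_0\subset B_0\subset\Om$.

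Since $v$ is harmonic and $v(z)=0$, the mean value property gives $v_{B_0}=v(z)=0$, and the classical (unweighted) Poincar\'e inequality on the ball $B_0$ yields
\begin{equation*}
\nr v\nr_{p,B_0}=\nr v-v_{B_0}\nr_{p,B_0}\le k_{N,p}\,d_{B_0}\nr \na v\nr_{p,B_0}\le k_{N,p}\,d_\Om\nr \na v\nr_{p,\Om}.
\end{equation*}
Next, the triangle inequality and H\"older give
\begin{equation*}
\nr v\nr_{p,\Om}\le \nr v-v_{Q_0}\nr_{p,\Om}+|v_{Q_0}|\,|\Om|^{1/p}\le \nr v-v_{Q_0}\nr_{p,\Om}+\left(|\Om|/|Q_0|\right)^{1/p}\nr v\nr_{p,Q_0}.
\end{equation*}
Theorem \ref{thm:HS2} bounds the first summand by $k_{N,p}\,L_0^{3N(1+N/p)}\,d_\Om\,\nr\na v\nr_{p,\Om}$. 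For the second, I would use $Q_0\subset B_0$ together with the local estimate displayed above, and then combine the John-type inequality \eqref{eq:relationvolumeJohn}, $d_\Om^N/|Q_0|\le L_0^N$, with the trivial $|\Om|\le |B|\,d_\Om^N$, to control the prefactor $(|\Om|/|Q_0|)^{1/p}$ by $k_{N,p}\,L_0^{N/p}$, producing a bound of the form $k_{N,p}\,L_0^{N/p}\,d_\Om\,\nr\na v\nr_{p,\Om}$.

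Finally, since $L_0\ge 1$ and $N/p\le 3N(1+N/p)$, the second summand is absorbed into the first, yielding the desired estimate. The proof is essentially bookkeeping: the only delicate point is the geometric verification that the circumscribed ball $B_0$ of $Q_0$ remains inside $\Om$ (so the mean value property applies at $z$), which is exactly what the hypothesis $\mathrm{dist}(Q_0,\Ga)\ge d_{Q_0}$ of Theorem \ref{thm:HS2} guarantees; all remaining steps amount to tracking the powers of $L_0$ and $d_\Om$ so that the final constant matches $k_{N,p}\,L_0^{3N(1+N/p)}\,d_\Om$.
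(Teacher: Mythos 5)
Your argument is correct and is essentially the paper's own proof: the same ball $B_0=B_{d_{Q_0}/2}(z)$ with the mean value property, the unweighted Poincar\'e inequality on $B_0$, the splitting $\nr v\nr_{p,\Om}\le \nr v-v_{Q_0}\nr_{p,\Om}+(|\Om|/|Q_0|)^{1/p}\nr v\nr_{p,Q_0}$ handled by Theorem \ref{thm:HS2}, and the same bookkeeping via \eqref{eq:relationvolumeJohn} and $L_0^{N/p}\le L_0^{3N(1+N/p)}$. The only difference is that you spell out the (correct) verification that $B_0\subset\Om$, which the paper leaves implicit.
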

\begin{proof}
If we consider
$$
B_0 = B_{\frac{d_{Q_0}}{2}} (z),
$$
we have that $Q_0 \subset B_0 \subset \Om$.
Since $v$ is harmonic and $v(z)=0$ we have by the mean value property, $v_{B_0}=0$.
The usual (i.e., unweighted) Poincar\'e inequality in $B_0$ informs us that
\begin{equation*}
\nr v \nr_{p, Q_0} \le \nr v \nr_{p, B_0} \le k_{N,p} |Q_0|^{\frac{1}{N}} \nr \na v \nr_{p, B_0} \le k_{N,p} |Q_0|^{\frac{1}{N}} \nr \na v \nr_{p, \Om}.
\end{equation*}
By using the above estimate and Theorem \ref{thm:HS2} we get that
\begin{equation*}
\nr v \nr_{p, \Om} \le \nr v - v_{Q_0} \nr_{p, \Om} + \left( \frac{|\Om|}{|Q_0|} \right)^\frac{1}{p} \nr v \nr_{p, Q_0} \le k_{N,p} \, L_0^{3N(1 + \frac{N}{p})} \, d_\Om \, \nr \na v  \nr_{p, \Om}.
\end{equation*}
In the last inequality we used two facts: the first is
$$
\left( \frac{|\Om|}{|Q_0|} \right)^\frac{1}{p} \le k_{N,p} \, L_0^{3N(1 + \frac{N}{p})},
$$
which follows by using \eqref{eq:relationvolumeJohn} together with the inequality 
$$L_0^{\frac{N}{p}} \le L_0^{3N(1 + \frac{N}{p})} $$
that holds trivially since $L_0 \ge1$ and $N/p \le 3N(1 + N / p)$.
The second is the trivial observation that 
$$
\frac{|Q_0|^\frac{1}{N}}{d_\Om}  \le 1,
$$
since $Q_0 \subset \Om$.
\end{proof}

\end{appendices}

%
%

\part*{Other works}
\addcontentsline{toc}{part}{Other works}

\chapter{Stable solutions to some elliptic problems: minimal cones, the Allen-Cahn equation, and blow-up solutions}\label{chap:LNCIME}
\chaptermark{Stable solutions to some elliptic problems}
\centerline{Xavier Cabr\'e\footnote{Universitat Polit\`ecnica de Catalunya, Departament de Matem\`{a}tiques, Diagonal 647, 
08028 Barcelona, Spain
({\tt xavier.cabre@upc.edu})
}\footnote{ ICREA, Pg. Lluis Companys 23, 08010 Barcelona, Spain}
and Giorgio Poggesi\footnote{
Dipartimento di Matematica ed Informatica ``U.~Dini'',
Universit\` a di Firenze, viale Morgagni 67/A, 50134 Firenze, Italy
({\tt giorgio.poggesi@unifi.it})
}
}

%
%


\vspace{1cm}

\begin{quotation}
{\bf Abstract.}
{\small These notes record the lectures for the CIME Summer Course taught by the first author in Cetraro during the week of June 19-23, 2017.
The notes contain the proofs of several results on the classification of stable solutions to some nonlinear elliptic equations. The results are crucial steps within the regularity theory of minimizers to such problems. We focus our attention on three different equations, emphasizing that the techniques and ideas in the three settings are quite similar.

%
%
The first topic is the stability of minimal cones. We prove the minimality of the Simons cone in high dimensions, and we give almost all details in the proof of J.~Simons on the flatness of stable minimal cones in low dimensions.  

%
%
Its semilinear analogue is a conjecture on the Allen-Cahn equation posed by E.~De Giorgi in 1978. This is our second problem, for which we discuss some results, as well as an open problem in high dimensions on the saddle-shaped solution vanishing on the Simons cone.

%
%
The third problem was raised by H.~Brezis around 1996 and concerns the boundedness of stable solutions to reaction-diffusion equations in bounded domains. We present proofs on their regularity in low dimensions and discuss the main open problem in this topic. 

%
%
Moreover, we briefly comment on related results for harmonic maps, free boundary problems, and nonlocal minimal surfaces.}
\end{quotation}


\bigskip

%
%
%
%
%

The abstract
%
%
above gives
an account of the topics treated in these lecture notes.

\section{Minimal cones}
\label{sec:mincones}
In this section we discuss two classical results on the theory of minimal surfaces: Simons flatness result on stable minimal cones in low dimensions and the Bombieri-De Giorgi-Giusti counterexample in high dimensions.
The main purpose of these lecture notes is to present the main ideas and computations leading to these deep results -- and to related ones in subsequent sections. Therefore, to save time for this purpose, we do not consider the most general classes of sets or functions (defined through weak notions), but instead we assume them to be regular enough.

Throughout the notes, for certain results we will refer to three other expositions: the books of Giusti \cite{G}
and of Colding and Minicozzi \cite{ColMinLN}, and the CIME lecture notes of Cozzi and Figalli \cite{CF}.
The notes \cite{CabCapThree} by the first author and Capella have a similar spirit to the current ones and may complement them.

\begin{definition}[Perimeter]
Let $E \subset \RR^n$ be an open set, regular enough. For a given open ball $B_R$ we define the {\it perimeter of} $E$ {\it in} $B_R$ as 
\begin{equation*}
P(E, B_R) := H_{n-1} (\pa E \cap B_R ),
\end{equation*}
where $H_{n-1}$ denotes the $(n-1)$-dimensional Hausdorff measure (see Figure~\ref{fig:1}).
\end{definition}

The interested reader can learn from \cite{G, CF} a more general notion of perimeter (defined by duality or in a weak sense) and the concept of set of finite perimeter.

\begin{definition}[Minimal set]\label{def:minimal set}
We say that an open set (regular enough) $E \subset \RR^n$ is a {\it minimal set} (or a {\it set of minimal perimeter}) if and only if, for every given open ball $B_R$, it holds that
$$P(E, B_R)\leq P(F, B_R)$$
for every open set $F\subset\RR^n$ (regular enough) such that $E \setminus B_R = F\setminus B_R$.
\end{definition}
In other words, $E$ has least perimeter in $B_R$ among all (regular) sets which agree with $E$ outside $B_R$.

\begin{figure}[htbp]
\centering
\includegraphics[scale=.25]{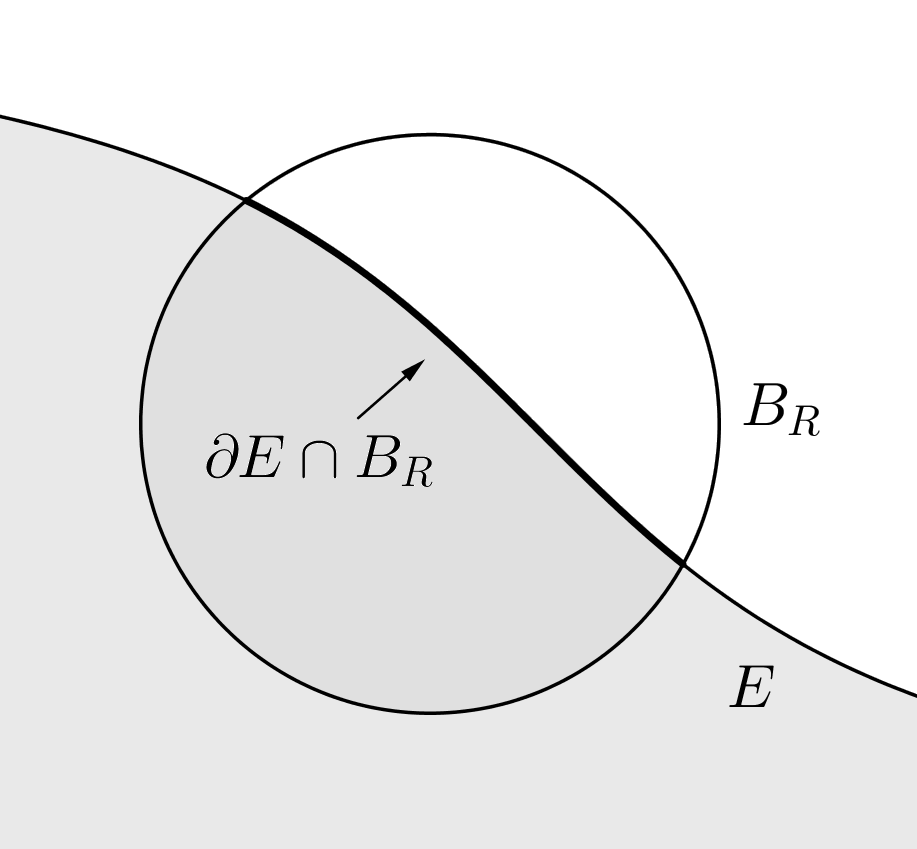}
%
%
\caption{The perimeter of $E$ in $B_R$}
\label{fig:1}       
\end{figure}

To proceed, one considers small perturbations of a given set $E$
and computes the {\it first and second variations of the perimeter functional}. 
To this end,
let $\{\phi_t\}$ be a one-parameter family of maps $\phi_t:\RR^n \to \RR^n$
such that $\phi_0$ is the identity $I$ and all the maps $\phi_t-I$ have compact support (uniformly) contained in $B_R$.

Consider the sets $E_t=\phi_t(E)$. We are interested in the perimeter functional $P(E_t, B_R)$.
One proceeds by choosing $\phi_t=I+t\xi\nu$, which shifts the original set $E$ in the normal direction $\nu$ to its boundary. Here $\nu$ is the outer normal to $E$ and is extended in a neighborhood of $\pa E$ to agree with the gradient of the signed distance function to $\pa E$, as in \cite{G} or in our Subsection \ref{subsec:SimLem} below. On the other hand, $\xi$ is a scalar function with compact support in $B_R$ (see Figure \ref{fig:2}).

\begin{figure}[htbp]
\centering
\includegraphics[scale=.25]{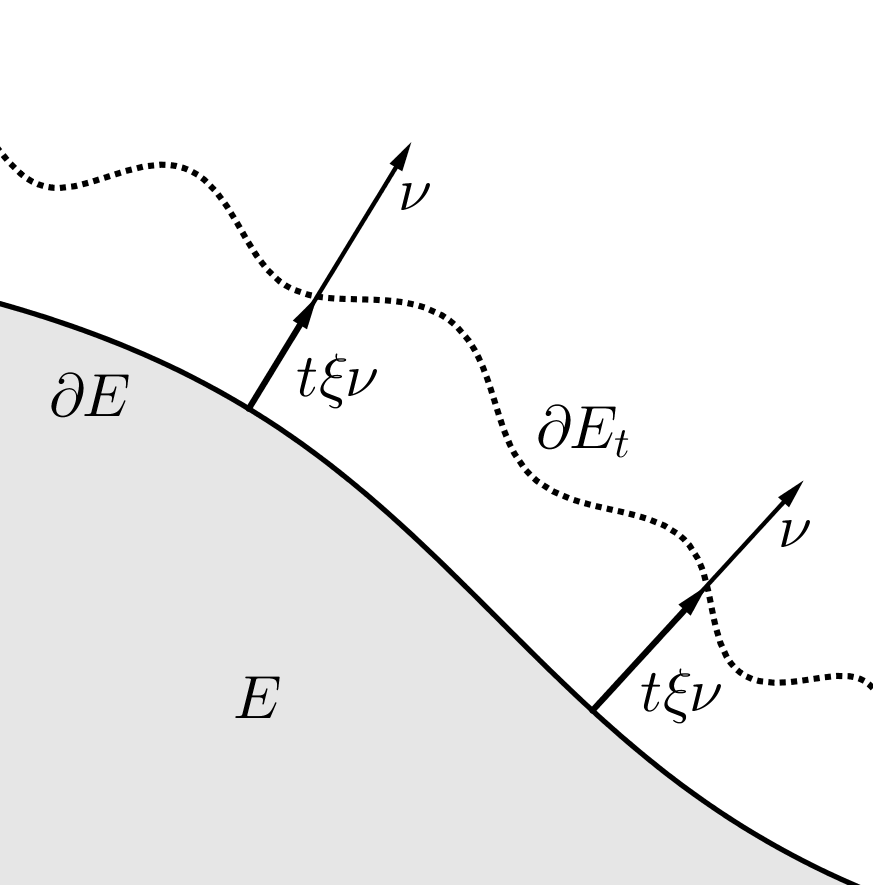}
\caption{A normal deformation $E_t$ of $E$}
\label{fig:2}       
\end{figure}

It can be proved (see chapter 10 of \cite{G}) that the first and second variations of perimeter are given by
\begin{eqnarray}
\left.\frac{d}{dt} P(E_t, B_R) \right|_{t=0} 
&=&\int_{\partial E}{\mathcal H}\xi dH_{n-1},
\label{eq:1-1v}\\
\left.\frac{d^2}{dt^2}  P(E_t, B_R) \right|_{t=0}
&=&\int_{\partial E}\left\{|\delta\xi|^2-(c^2-{\mathcal H}^2)\xi^2
\right\}dH_{n-1},
\label{eq:1-1vBIS}
\end{eqnarray}
where ${\mathcal H}={\mathcal H}(x)$ is the {\it mean curvature} of $\partial E$ at $x$ and $c^2=c^2(x)$ is the sum of the squares 
of the $n-1$ principal
curvatures $k_1, \dots, k_{n-1}$ of $\pa E$ at $x$. More precisely,
$$
\cH(x)= k_1 + \dots + k_{n-1} \quad \mbox{ and } \quad c^2= k_1^2 + \dots + k_{n-1}^2 .
$$
In \eqref{eq:1-1vBIS}, $\delta$ (sometimes denoted by $\na_T$) is the tangential gradient to the surface
$\pa E$, given by
\begin{equation}\label{def:tangentialgradient}
\delta \xi = \na_T \xi = \na \xi - (\na \xi \cdot \nu) \nu
\end{equation}
for any function $\xi$ defined in a neighborhood of $\pa E$. Here $\na$ is the usual Euclidean gradient and $\nu$ is always the normal vector to $\pa E$.
Being $\de$ the tangential gradient, one can check that $\de \xi_{| \pa E}$ depends only on $\xi_{| \pa E}$. It can be therefore computed for functions $\xi: \pa E \to \RR$ defined only on $\pa E$ (and not necessarily in a neighborhood of $\pa E$).


\begin{definition}\label{def:servepertesiLNminimizingminimal}
\begin{enumerate}[(i)]
\item We say that $\pa E$ is a {\it minimal surface} (or a {\it stationary surface}) if the first variation of perimeter vanishes for all balls $B_R$. Equivalently, by \eqref{eq:1-1v}, $\cH = 0$ on $\pa E$.
\item We say that $\pa E$ is a {\it stable minimal surface} if $\cH =0$ and the second variation of perimeter is nonnegative for all balls $B_R$.
\item We say that $\pa E$ is a {\it minimizing minimal surface} if $E$ is a minimal set as in Definition~\ref{def:minimal set}.
\end{enumerate}
\end{definition}
We warn the reader that in some books or articles ``minimal surface'' may mean ``minimizing minimal surface''.

\begin{remark}
\begin{enumerate}[(i)]
\item If $\pa E$ is a minimal surface (i.e., $\cH = 0$), the second variation
of perimeter \eqref{eq:1-1vBIS} becomes
\begin{equation}
\left.\frac{d^2}{dt^2}  P(E_t, B_R) \right|_{t=0} = \int_{\partial E}\left\{|\delta\xi|^2-c^2\xi^2\right\}dH_{n-1} .
\label{eq:minimal-1-2v2}
\end{equation}
\item If $\pa E$ is a minimizing minimal surface, then $\pa E$ is a stable minimal surface. In fact, in this case the function $P(E_t, B_R)$
has a global minimum at $t=0$.
\end{enumerate}
\end{remark}

\subsection{The Simons cone. Minimality}\label{subsec 1.1:MinimalitySimcones}

\begin{definition}[The Simons cone]
The Simons cone $\cSC \subset \RR^{2m}$ is the set
\begin{equation}\label{def:simonscone}
\cSC =\{ x\in\mathbb{R}^{2m}\, : \, x_1^2+\ldots +x_m^2 = x_{m+1}^2
+\ldots +x_{2m}^2 \} .
\end{equation}
In what follows we will also use the following notation:
\begin{equation*}
\cSC =\{ x=(x',x'') \in \RR^m \times \RR^m \, : |x'|^2=|x''|^2 \}.
\end{equation*}
Let us consider the open set
\begin{equation*}
E_S= \left\{x \in \RR^{2m} :  u(x):= |x'|^2 - |x''|^2 < 0 \right\},
\end{equation*}
and notice that $\pa E_S = \cSC$ (see Figure \ref{fig:3}).
\end{definition}

\begin{figure}[htbp]
\centering
\includegraphics[scale=.25]{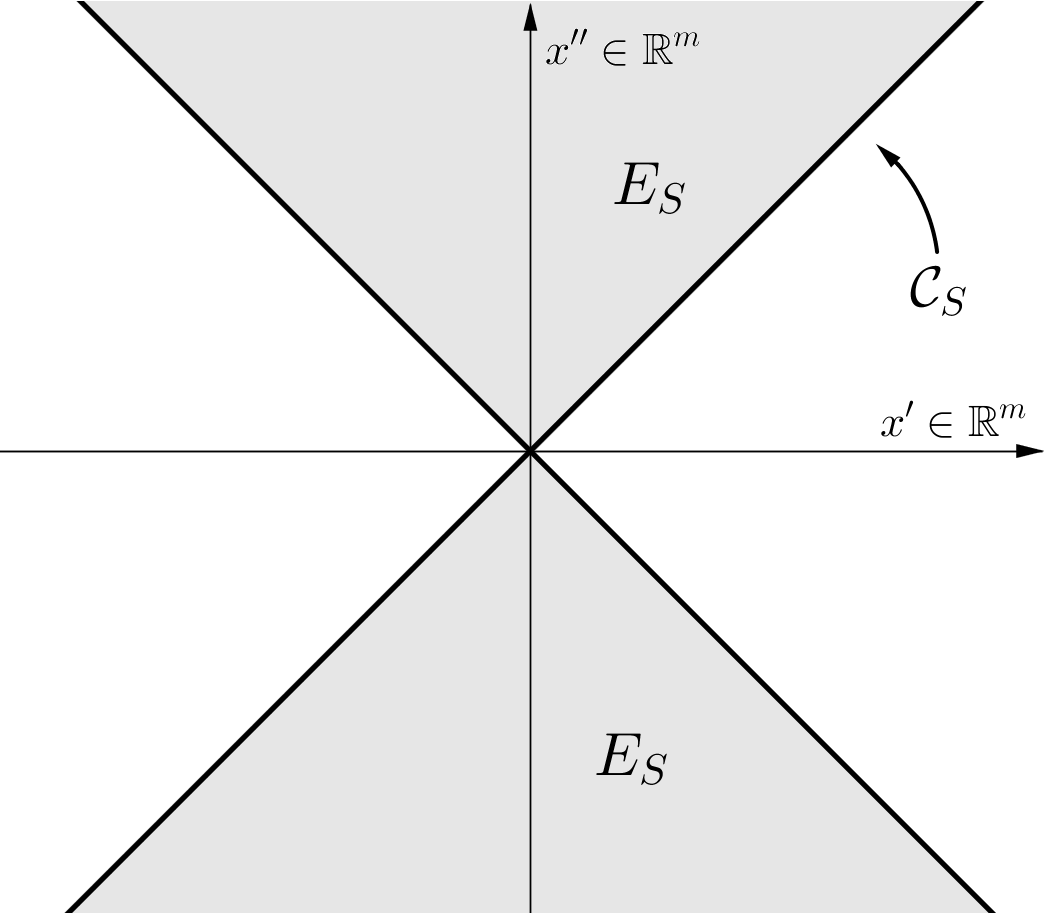}
\caption{The set $E_S$ and the Simons cone $\cSC$}
\label{fig:3}       
\end{figure}

\begin{exercise}
Prove that the Simons cone has zero mean curvature for every integer $m \ge 1$. For this, use the following fact (that you may also try to prove): if
$$
E=\left\{x \in \RR^n :  u(x) < 0 \right\}
$$
for some function $u: \RR^n \to \RR$, then the mean curvature of $\pa E$ is given by
\begin{equation}
\label{mean curvature formula}
{\mathcal H} = \left. \dv \left( \frac{\na u}{| \na u|} \right) \right|_{\pa E}.
\end{equation}
\end{exercise}

\begin{remark}\label{remark:nominimindim2}
It is easy to check that, in $\RR^2$, $\cSC$ is not a minimizing minimal surface. In fact, referring to Figure \ref{fig:4}, the shortest way to go from $P_1$ to $P_2$ is through the straight line. Thus, if we consider as a competitor in $B_R$ the interior of the set
$$F:= \ol{E}_S \cup \ol{T}_1 \cup \ol{T}_2,$$
where $T_1$ is the triangle with vertices $O$, $P_1$, $P_2$, and $T_2$ is the symmetric of $T_1$ with respect to $O$, we have that $F$ has less perimeter in $B_R$ than $E_S$.

\begin{figure}[htbp]
\centering
\includegraphics[scale=.25]{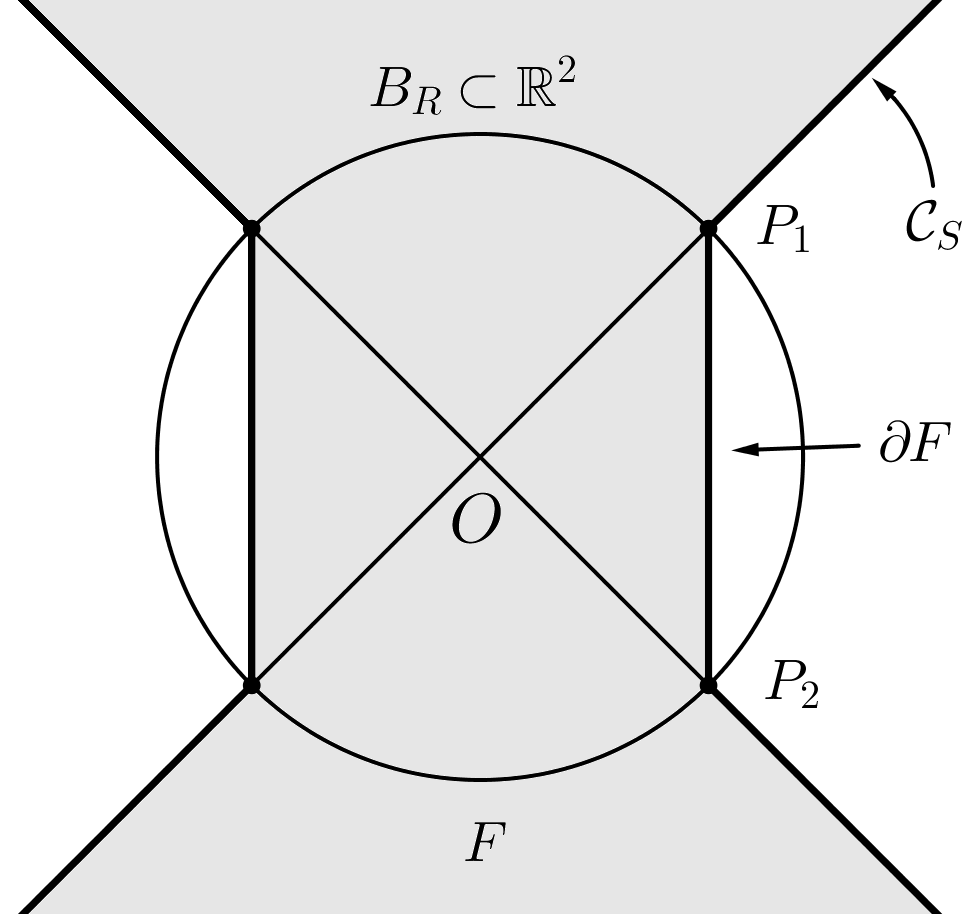}
\caption{The Simons cone $\cSC$ is not a minimizer in $\RR^2$}
\label{fig:4}       
\end{figure}

\end{remark}

In 1969 Bombieri, De Giorgi, and Giusti proved the following result.

\begin{theorem}[Bombieri-De Giorgi-Giusti~\cite{BdGG}]
\label{thm:SimCone} 
If $2m\geq 8$, then $E_S$ is a minimal set in $\mathbb{R}^{2m}$. That is, if  $2m\geq 8$, the Simons cone $\cSC$ is a minimizing minimal surface.
\end{theorem}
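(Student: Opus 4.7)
My plan is to establish minimality by constructing a \emph{calibration}: a divergence-free vector field $X$ on (a neighborhood of $\overline{B}_R$ in) $\RR^{2m}$ with $|X| \le 1$ everywhere and $X = \nu$ along $\cSC$. Given such an $X$, the divergence theorem applied to any admissible competitor $F$ (so that $F \setminus B_R = E_S \setminus B_R$) yields
\begin{align*}
P(E_S, B_R) &= \int_{\cSC \cap B_R} X \cdot \nu \, dH_{2m-1} \\
&= \int_{\pa F \cap B_R} X \cdot \nu_F \, dH_{2m-1} + \int_{(E_S \triangle F) \cap B_R} \dv X \, dx \\
&\le P(F, B_R),
\end{align*}
since the divergence integral vanishes, $|X \cdot \nu_F| \le 1$, and the boundary contributions on $\pa B_R$ cancel because $E_S$ and $F$ coincide there.

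To build $X$ I would produce a smooth foliation of $\RR^{2m} \setminus \cSC$ by $SO(m) \times SO(m)$-invariant hypersurfaces $\{\Sigma_t\}_{t \ne 0}$, all asymptotic to $\cSC$ at infinity, each having zero mean curvature (or at least the correct sign to serve as a barrier), and then define $X(x)$ to be the unit normal at $x$ to the leaf through $x$, chosen to point consistently away from $E_S$. The divergence-free property of $X$ then corresponds precisely to each $\Sigma_t$ being minimal. By $SO(m)\times SO(m)$-invariance, each $\Sigma_t$ is determined by a curve $\gamma_t$ in the open quarter-plane $Q = \{(r,s) : r, s > 0\}$, with $r = |x'|$, $s = |x''|$, and the minimality condition reduces to a nonlinear ODE on $\gamma_t$ involving the weight $r^{m-1} s^{m-1}$ coming from the $(2m-1)$-dimensional area element. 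A natural ansatz is to take the leaves as level sets of a function depending only on $u = |x'|^2 - |x''|^2$ and $v = |x'|^2 + |x''|^2$, for example those of the implicit form $s^2 - r^2 = t\,\psi(r^2 + s^2)$ with $\psi$ to be chosen.

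The hard part, and the place where the hypothesis $2m \ge 8$ is essential, will be producing such a foliation with the correct global and asymptotic behaviour. Computing the mean curvature of the surface of revolution by means of \eqref{mean curvature formula} yields a second-order ODE in which the dimension $m$ enters as an explicit coefficient, and the construction of a global barrier foliation asymptotic to the cone should succeed precisely when this coefficient has the right sign, namely when $2m \ge 8$. This is consistent with Remark~\ref{remark:nominimindim2} and with the known non-minimality of $\cSC$ in the low dimensions $2m \le 6$, and I would expect the sharp dimensional threshold to emerge from a careful linearization about the trivial leaf $\gamma_0 = \{r = s\}$ (which is the cone itself, already known to be stationary by Exercise~1.5). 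Once the foliation, and hence the calibration $X$, is in hand, the minimality of $E_S$ in $B_R$ follows immediately from the divergence-theorem computation displayed above.
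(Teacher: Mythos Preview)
Your calibration strategy is exactly right, and the divergence-theorem computation you display is essentially how one concludes once a calibration is in hand. But you are making the construction of $X$ much harder than necessary. You propose to build $X$ as the unit normal to a foliation of $\RR^{2m}\setminus\cSC$ by \emph{exact} minimal hypersurfaces, obtained by solving the $SO(m)\times SO(m)$-invariant minimal surface ODE in the $(r,s)$-quadrant and analyzing its global behaviour. That is precisely the original Bombieri--De~Giorgi--Giusti route, and it works, but the ODE analysis (existence, monotonicity, asymptotics, non-intersection of the leaves) is genuinely delicate and is the content of their paper.

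The proof presented here (due to De~Philippis and Paolini) bypasses all of that by observing that a calibration need \emph{not} be divergence-free: it suffices that $\dv X \ge 0$ on the side of $\cSC$ where the competitor gains territory and $\dv X \le 0$ on the other side (see Definition~\ref{def:calibrationprima}). With this relaxation, an explicit calibration is available: take $\tilde u(x)=|x'|^4-|x''|^4$ and $X=\nabla\tilde u/|\nabla\tilde u|$. Then $|X|\equiv 1$, $X=\nu_{E_S}$ on $\cSC$, and a direct computation (Exercise~\ref{ex:utilde}) shows that when $m\ge 4$ the sign of $\dv X$ agrees with the sign of $\tilde u$, i.e.\ it is nonnegative outside $E_S$ and nonpositive inside. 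The dimensional threshold $2m\ge 8$ falls out of this single computation rather than from an ODE linearization. Your foliation idea reappears in the paper as an \emph{alternative} proof, but with the level sets $\{\tilde u=\lambda\}$ serving as sub/supersolutions rather than exact solutions; again, no ODE needs to be solved.
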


The following is a clever proof of Theorem \ref{thm:SimCone} found in 2009 by G.~De Philippis and E.~Paolini (\cite{DePP}). It is based on a {\it calibration argument}.
Let us first define
\begin{equation}\label{def:utilde1}
\tilde{u} = |x'|^4 - |x''|^4  ;
\end{equation}
clearly we have that
\begin{equation*}
E_S= \left\{x \in \RR^{2m} :  \tilde{u}(x) < 0 \right\} \, \mbox{ and } \, \pa E_S = \cSC.
\end{equation*}
Let us also consider the vector field
\begin{equation}\label{def:Xvectorfield}
X= \frac{\na \tilde{u}}{ |\na \tilde{u}| } .
\end{equation}

\begin{exercise}\label{ex:utilde}
Check that if $m \ge 4$, $\dv X$ has the same sign as $\tilde{u}$ in $\RR^{2m}$.
\end{exercise}

\begin{proof}[Proof of Theorem  \ref{thm:SimCone}]
By Exercise \ref{ex:utilde} we know that if $m \ge 4$, $\dv X$ has the same sign as $\tilde{u}$, where $\tilde{u}$ and $X$ are defined in \eqref{def:utilde1} and \eqref{def:Xvectorfield}.
Let $F$ be a competitor for $E_S$ in a ball $B_R$, with $F$ regular enough. We have that
$F \setminus B_R = E_S \setminus B_R$.


Set $\Om := F \setminus E_S $ (see Figure \ref{fig:5}). By using the fact that $\dv X \geq 0$ in $\Om$ and the divergence theorem, we deduce that
\begin{equation}
\label{calideph}
0 \leq \int_{\Om} \dv X \, dx= \int_{\partial E_S \cap \ol{\Om} } X \cdot \nu_\Om \, dH_{n-1} + \int_{\partial F \cap \ol{\Om}} X \cdot \nu_\Om \, dH_{n-1}.
\end{equation}

\begin{figure}[htbp]
\centering
\includegraphics[scale=.25]{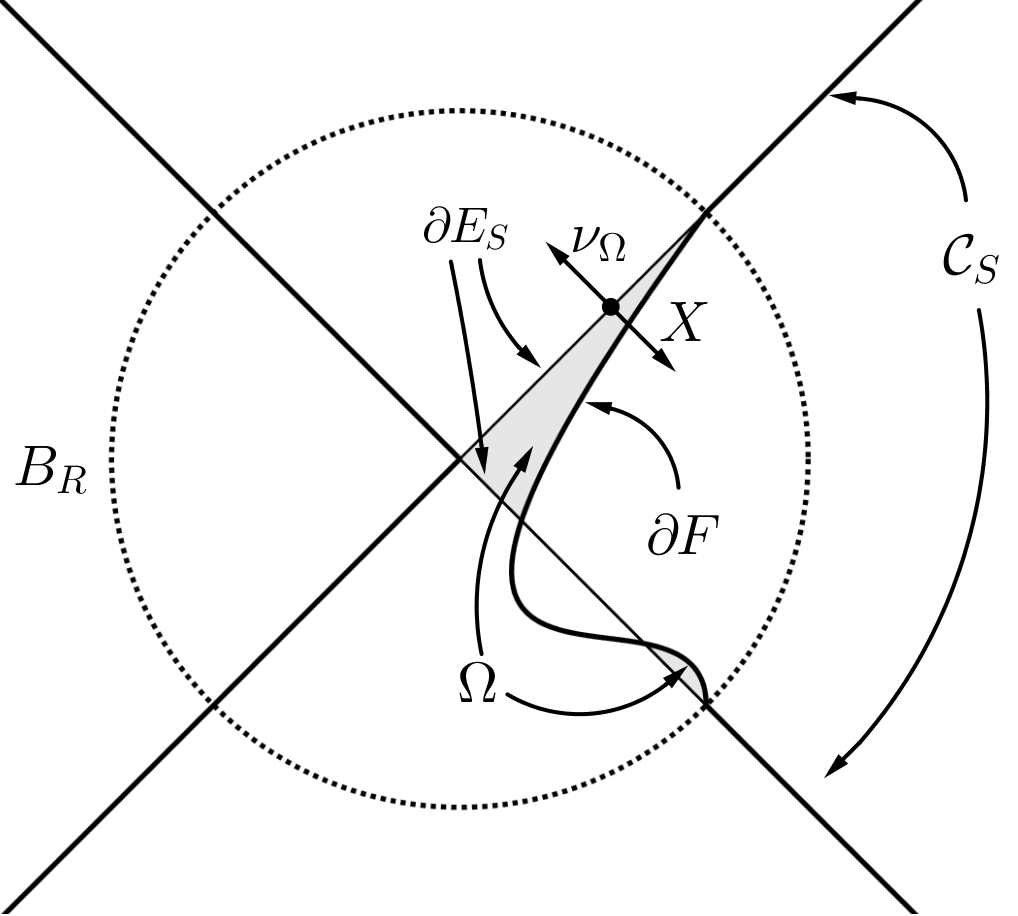}
\caption{A calibration proving that the Simons cone $\cSC$ is minimizing}
\label{fig:5}       
\end{figure}

Since $X= \nu_{E_S}= - \nu_\Om$ on $\partial E_S \cap \ol{\Om}$, and $|X| \leq 1$ (since in fact $|X|= 1$) everywhere (and hence in particular on $\partial F \cap \ol{\Om}$), from \eqref{calideph} we conclude 
\begin{equation}\label{eq:dimcalibration1}
H_{n-1} (\pa E_s \cap \ol{\Om}) \le H_{n-1} (\pa F \cap \ol{\Om}) .
\end{equation}

With the same reasoning it is easy to prove that \eqref{eq:dimcalibration1} holds also for
$\Om := E_S \setminus F$.
Putting both inequalities together, we conclude that $P(E_S,B_R) \leq P(F, B_R)$.

Notice that the proof works for competitors $F$ which are regular enough (since we applied the divergence theorem). However, it can be generalized to very general competitors by using the generalized definition of perimeter, as in \cite[Theorem~1.5]{DePP}.
\end{proof}

Theorem \ref{thm:SimCone} can also be proved with another argument -- but still very much related to the previous one and that also uses the function $\tilde{u} = |x'|^4 - |x''|^4 $. It consists of going to one more dimension $\RR^{2m+1}$ and working with the minimal surface equation for graphs, \eqref{equa:Hgraph} below. This is done in Theorem 16.4 of \cite{G} (see also the proof of Theorem 2.2 in \cite{CabCapThree}).

In the proof above we used a vector field $X$ satisfying the following three properties (with $E= E_S$):
\begin{enumerate}[(i)]
\item $\dv X \geq 0$ in $B_R \setminus E$ and $\dv X \leq 0$ in $E \cap B_R$;
\item $X= \nu_{E}$ on $\partial E \cap B_R$;
\item $|X| \leq 1$ in $B_R$.
\end{enumerate}

\begin{definition}[Calibration]\label{def:calibrationprima}
If $X$ satisfies the three properties above we say that $X$  is a \textit{calibration} for $E$ in $B_R$.
\end{definition}

\begin{exercise}
Use a similar argument to that of our last proof and build a calibration to show that a hyperplane in $\RR^n$ is a minimizing minimal surface.
\end{exercise}

In an appendix, and with the purpose that the reader gets acquainted with another calibration, we present one which solves the isoperimetric problem: balls minimize perimeter among sets of given volume in $\RR^n$.
Note that the first variation (or Euler-Lagrange equation) for this problem is, by Lagrange multipliers,
$\cH = c$, where $c \in \RR$ is a constant.

The following is an alternative proof of Theorem \ref{thm:SimCone}. It uses a {\it foliation argument}, as explained below.
This second proof is probably more transparent (or intuitive) than the previous one and it is used often in minimal surfaces theory, but requires to know the existence of a (regular enough) minimizer (something that was not necessary in the previous proof). This existence result is available and can be proved with tools of the Calculus of Variations (see \cite{CF,G}).

The proof also requires the use of the following important fact. If $\Sigma_1$, $\Sigma_2 \subset B_R$ are two connected hypersurfaces (regular enough), both satisfying $\cH=0$, and such that $\Sigma_1 \cap \Sigma_2 \neq \varnothing$ and $\Sigma_1$ lies on one side of $\Sigma_2$, then $\Sigma_1 \equiv \Sigma_2$ in $B_R$. Lying on one side can be defined as $\Sigma_1 = \pa F_1$, $\Sigma_2 = \pa F_2$, and $F_1 \subset F_2$. The same result holds if $F_1$ satisfies $\cH=0$ and $F_2$ satisfies $\cH \geq 0$.

This result can be proved writing both surfaces as graphs in a neighborhood of a common point $P \in \Sigma_1 \cap \Sigma_2$. The minimal surface equation $\cH=0$ then becomes
\begin{equation}\label{equa:Hgraph}
\dv \left( \frac{\na \fhi_1}{ \sqrt{1+| \na \fhi_1|^2 }  } \right) =0
\end{equation}
for $\fhi_1: \Om \subset \RR^{n-1} \to \RR$ such that $\left( y', \fhi_1 (y') \right) \subset \Om \times \RR $ is a piece of $\Sigma_1$ (after a rotation and translation). Then, assuming that $\fhi_2$ also satisfies \eqref{equa:Hgraph} -- or the appropriate inequality --, one can see that $\fhi_1 - \fhi_2$ is a (super)solution of a second order linear elliptic equation. Since $\fhi_1 - \fhi_2 \geq 0$ (due to the ordering of $\Sigma_1$ and $\Sigma_2$), the strong maximum principle leads to $\fhi_1 - \fhi_2 \equiv 0$ (since $(\fhi_1 -\fhi_2)(0)=0$ at the touching point). See Section 7 of Chapter 1 of \cite{ColMinLN} for more details.

\begin{alternative}[of Theorem \ref{thm:SimCone}]
Note that the hypersurfaces
$$\left\{ x \in \RR^{2m} : \tilde{u} (x) = \la \right\} , $$
with $\la \in \RR$, form a foliation of $\RR^{2m}$, where $\tilde{u}$ is the function defined in \eqref{def:utilde1}.

Let $F$ be a minimizer of the perimeter in $B_R$ among sets that coincide with $E_S$ on $\pa B_R$, and assume that it is regular enough. Since $F$ is a minimizer, in particular $\pa F$ is a solution of the minimal surface equation $\cH=0$.
Since $2m \geq 8$, by \eqref{mean curvature formula} and Exercise \ref{ex:utilde}, the leaves of our foliation $\left\{ x \in \RR^{2m} : \tilde{u} (x) = \la \right\}$ are subsolutions of the same equation for $\la>0$, and supersolutions for $\la<0$ .

If $F \not\equiv E_S$, there will be a first leaf (starting either from $\la= + \infty$ or from $\la = - \infty$) $\left\{ x \in \RR^{2m} : \tilde{u} (x) = \la_* \right\}$, with $\la_* \neq 0$, that touches $\pa F$ at a point in $\ol{B}_R$ that we call $P$ (see Figure \ref{fig:6}).

\begin{figure}[htbp]
\centering
\includegraphics[scale=.25]{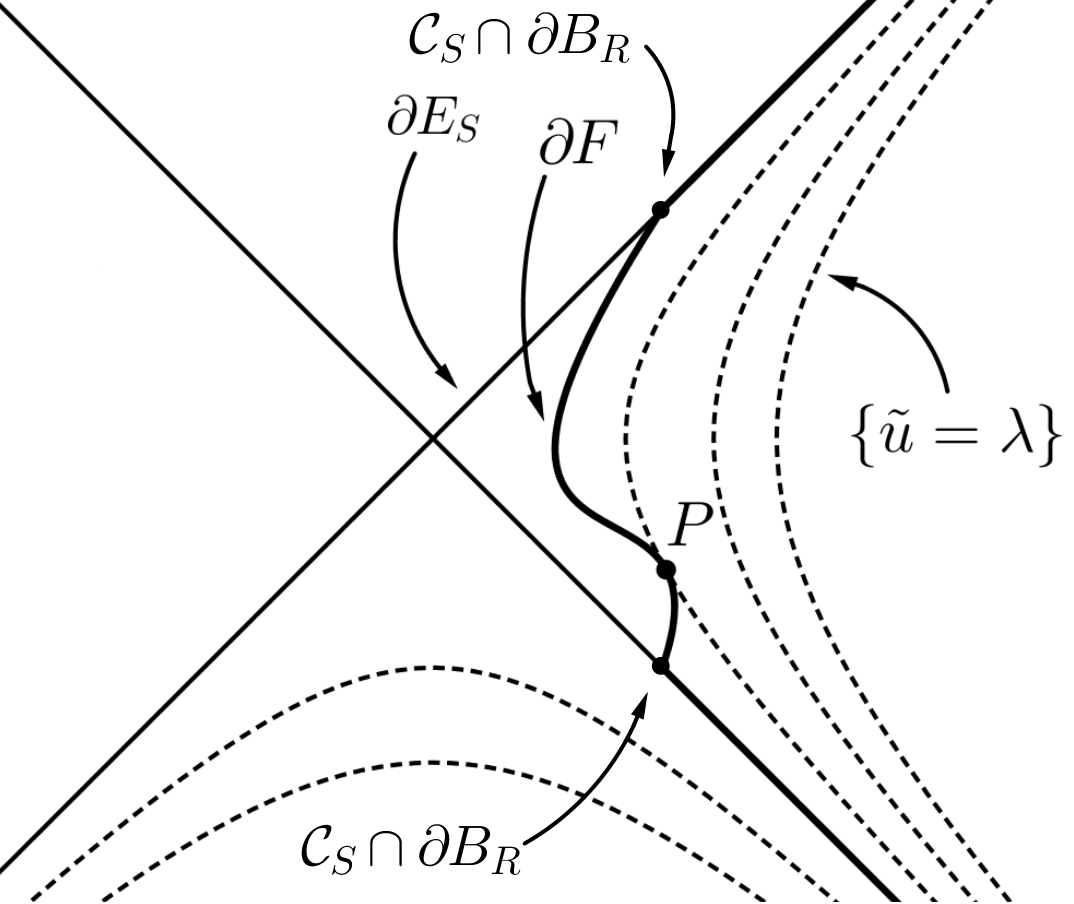}
\caption{The foliation argument to prove that the Simons cone $\cSC$ is minimizing}
\label{fig:6}       
\end{figure}

The point $P$ cannot belong to $\pa B_R$, since it holds that
$$\pa F \cap \pa B_R = \cSC \cap \pa B_R = \left\{ x: \tilde{u} (x) = 0 \right\} \cap \pa B_R  ,$$
and the level sets of $\tilde{u}$ do not intersect each other.
Thus, $P$ must be an interior point of $B_R$. But then we arrive at a contradiction, by the ``strong maximum principle'' argument commented right before this proof, applied with
$\Sigma_1= \pa F$ and $\Sigma_2= \left\{ x \in \RR^{2m} : \tilde{u} (x) = \la_* \right\} $.

As an exercise, write the details to prove the existence of a first leaf touching $\pa F$ at an interior point.

This same foliation argument will be used, in a simpler setting for graphs and the Allen-Cahn equation, in the proof of Theorem \ref{alba} in the next section.
\qed
\end{alternative}

\begin{remark}
The previous foliation argument gives more than the minimality of $\cSC$. It gives {\it uniqueness for the Dirichlet (or Plateau) problem} associated to the minimal surface equation with $\cSC$ as boundary value on $\pa B_R$. 
\end{remark}

\begin{remark}\label{rem:Foliation B-DG-G}
In our alternative proof of Theorem \ref{thm:SimCone} we used a clever foliation made of subsolutions and supersolutions. This sufficed to prove in a simple way Theorem \ref{thm:SimCone}, but required to (luckily) find the auxiliary function $\tilde{u} = |x'|^4 - |x''|^4 $.
Instead, in \cite{BdGG}, Bombieri, De Giorgi, and Giusti considered the foliation made of exact solutions to the minimal surface equation $\cH=0$, when $2 m \ge 8$. To this end, they proceeded as in the following exercise and wrote the minimal surface equation, for surfaces with rotational symmetry in $x'$ and in $x''$, as an ODE in $\RR^2$, finding Equation \eqref{equa:odest} below. They then showed that the solutions of such ODE in the $(s,t)$-plane do not intersect each other (and neither the Simons cone), and thus form a foliation (see Remark \ref{rem:dopohardy} for more information on this).
\end{remark}

\begin{exercise}
Let us set $s= |x'|$ and $t=|x''|$ for $x=(x', x'') \in \RR^m \times \RR^m$.
Check that the following two ODEs are equivalent to the minimal surface equation $\cH=0$ written in the $(s,t)$-variables for surfaces with rotational symmetry in $x'$ and in $x''$.
\begin{enumerate}[(i)]
\item As done in \cite{BdGG}, if we set a parametric representation $ s=s(\tau)$, $t = t(\tau)$, we find
\begin{equation}\label{equa:odest}
s''t'-s't''+(m-1) \left( (s')^2 + (t')^2 \right) \left( \frac{s'}{t} - \frac{t'}{s} \right) = 0 ;
\end{equation}
\item as done in \cite{Da}, if we set $s=e^{z(\te)} \cos (\te)$, $t = e^{z(\te)} \sin (\te)$ we get
$$
z''= \left( 1+(z')^2 \right) \left( (2m -1) - \frac{2(m-1) \cos (2 \te)}{ \sin (2 \te)} \, z' \right) .
$$
\end{enumerate}
The previous ODEs can be found starting from \eqref{mean curvature formula} when $u=u(s,t)$ depends only on $s$ and $t$. Alternatively, they can also be found computing the first variation of the perimeter functional in $\RR^{2m}$ written in the $(s,t)$-variables:
\begin{equation}\label{equa:stenergy}
c \int s^{m-1} t^{m-1} \, dH_1 (s,t),
\end{equation}
for some positive constant $c$, that becomes
%
$$
c \int e^{(2m-1)z(\te)} \, \cos^{m-1} (\te) \, \,  \sin^{m-1} (\te) \, \,  \sqrt{1+ \left( z'(\te) \right)^2 } \, d \te
$$
with the parametrization in point (ii).
\end{exercise}

\begin{remark}
For $n \ge 8$, there exist other minimizing cones, such as some of the {\it Lawson's cones}, defined by
$$
{\mathcal C}_L= \left\{ y= (y', y'') \in \RR^k \times \RR^{n-k} :  |y'|^2= c_{n,k} \, |y''|^2 \right\}
$$
for $k \ge 2$ and $n-k \ge 2$.
For details, see \cite{Da}.
\end{remark}

Notice that if $\pa E$ is a {\it cone} (i.e., $\la \pa E = \pa E$ for every $\la>0$), in the expressions \eqref{eq:1-1v}, \eqref{eq:1-1vBIS}, and \eqref{eq:minimal-1-2v2} we will always consider $\xi$ with compact support outside the origin (thus, not changing the possible singularity of the cone at the origin).

The next theorem was proved by Simons in 1968\footnote{Theorem \ref{thmcone} was proved in 1965 by De Giorgi for $n=3$, in 1966 by Almgren for $n=4$, and finally in 1968 by Simons in any dimension $n \le 7$.} (it is Theorem 10.10 in \cite{G}).
It is a crucial result towards the regularity theory of minimizing minimal surfaces.

\begin{theorem}[Simons~\cite{S}]\label{thmcone}
Let $E \subset \RR^n$ be an open set such that $\pa E$ is a stable minimal cone and 
$\partial E\setminus\{0\}$ is regular. 
Thus, we are assuming $\cH=0$ and
\begin{equation}
\int_{\partial E}\left\{|\delta\xi|^2-c^2\xi^2\right\}dH_{n-1}\geq 0
\label{eq:1-2v2}
\end{equation}
for every $\xi\in C^1(\partial E)$ with 
compact support outside the origin.

If $3 \leq n\leq 7$, then $\partial E$ is a hyperplane.
\end{theorem}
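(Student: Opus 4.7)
The plan is to establish a pointwise Simons-type inequality for the norm $c=|A|$ of the second fundamental form on $\partial E\setminus\{0\}$, then insert $\xi = c\,\varphi$ (with $\varphi$ a suitable cutoff) into the stability inequality \eqref{eq:1-2v2}, and finally exploit the cone's $0$-homogeneity to reduce everything to a one-dimensional weighted inequality that can only hold when $n\leq 7$ if $c\equiv 0$.

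More concretely, the first step is to derive \emph{Simons' identity} on the regular minimal hypersurface $\Sigma:=\partial E\setminus\{0\}$, namely
\begin{equation*}
\tfrac{1}{2}\,\Delta_\Sigma c^2 = -c^4 + |\nabla_\Sigma A|^2,
\end{equation*}
by commuting covariant derivatives of $A$ and using $\mathcal{H}=0$. Combining this with the improved Kato inequality for minimal hypersurfaces, $|\nabla_\Sigma A|^2 \geq \bigl(1+\tfrac{2}{n-1}\bigr)|\nabla_\Sigma c|^2$ (valid away from the zero set of $c$), and rewriting $\tfrac{1}{2}\Delta_\Sigma c^2 = c\,\Delta_\Sigma c + |\nabla_\Sigma c|^2$ gives the key differential inequality
\begin{equation*}
c\,\Delta_\Sigma c \;\geq\; -c^4 + \tfrac{2}{n-1}\,|\nabla_\Sigma c|^2 \quad\text{on } \Sigma.
\end{equation*}

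Next I would plug $\xi = c\,\varphi$ into \eqref{eq:1-2v2}, with $\varphi\in C^1_c(\Sigma)$ supported away from $0$. Expanding $|\delta(c\varphi)|^2$ and integrating by parts the term containing $\Delta_\Sigma c$, the above Simons-type inequality turns \eqref{eq:1-2v2} into a weighted inequality of the form
\begin{equation*}
\tfrac{2}{n-1}\int_\Sigma |\nabla_\Sigma c|^2\,\varphi^2 \; \leq \; \int_\Sigma c^2\,|\nabla_\Sigma \varphi|^2,
\end{equation*}
after cancellation of the quartic term $\int c^4\varphi^2$. At this stage I would exploit the cone structure: since $\partial E$ is $0$-homogeneous and $A$ scales like $|x|^{-1}$, one has $c(x)=c_0(x/|x|)/|x|$, and for a test function of product form $\varphi(x)=\eta(|x|)\,\psi(x/|x|)$ the previous inequality splits as the product of an integral on the link $L:=\partial E\cap\mathbb{S}^{n-1}$ and a one-dimensional radial integral. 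Choosing $\eta$ equal (up to cutoffs near $0$ and $\infty$) to $|x|^{\alpha}$ and optimizing over $\alpha\in\mathbb{R}$, the radial integrals become power integrals whose balance constrains the algebraic quantity $\alpha(\alpha+n-3)$; together with the coefficient $\tfrac{2}{n-1}$ coming from Kato, one arrives at a quadratic inequality in $\alpha$ which admits an admissible solution precisely when $n\leq 7$.

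From here the conclusion follows: choosing $\alpha$ so that the radial optimization is attained (a weighted Hardy inequality on $(0,\infty)$ will justify the limiting cutoff procedure, see also the discussion around \eqref{equa:odest}) and letting $\eta$ approximate $|x|^\alpha$ forces $c\equiv 0$ on $\Sigma$. Thus the regular part of $\partial E$ is totally geodesic, hence contained in a hyperplane through $0$, and since $\partial E$ is a cone regular away from the origin this yields that $\partial E$ itself is a hyperplane. The main technical obstacle I expect is the justification of Simons' identity together with the improved Kato inequality in a form sharp enough to produce the coefficient $\tfrac{2}{n-1}$ (any weaker constant would push the threshold below $n=7$), and secondarily the careful choice of cutoffs near the apex so that the boundary terms in the integration by parts vanish and the power-type test function is reached in the limit.
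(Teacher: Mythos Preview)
Your overall architecture is the right one, and it coincides with the paper's up to the point where you insert $\xi=c\,\eta$ into \eqref{eq:1-2v2} and integrate by parts to reach
\[
\int_{\partial E}\Bigl\{\tfrac12\Delta_{LB}c^{2}-|\delta c|^{2}+c^{4}\Bigr\}\eta^{2}\,dH_{n-1}\le\int_{\partial E}c^{2}|\delta\eta|^{2}\,dH_{n-1}.
\]
The gap is in the pointwise lower bound you feed into the left-hand side. The improved Kato inequality for minimal hypersurfaces gives only
\[
\tfrac12\Delta_{LB}c^{2}-|\delta c|^{2}+c^{4}\;\ge\;\tfrac{2}{n-1}\,|\delta c|^{2},
\]
and on a cone (where $c=c_{0}(\sigma)/r$, $dV=r^{n-2}\,dr\,dV_{L}$) this reduces, after taking a radial test $\eta=\eta(r)$ and using the weighted Hardy infimum $\inf_{\eta}\int r^{n-4}(\eta')^{2}/\int r^{n-6}\eta^{2}=\bigl(\tfrac{n-5}{2}\bigr)^{2}$, to the constraint
\[
\tfrac{2}{n-1}\Bigl(\int_{L}c_{0}^{2}+\int_{L}|\nabla_{L}c_{0}|^{2}\Bigr)\le\Bigl(\tfrac{n-5}{2}\Bigr)^{2}\int_{L}c_{0}^{2}.
\]
Dropping the nonnegative gradient term, a contradiction (forcing $c_{0}\equiv 0$) requires $(n-1)(n-5)^{2}<8$, which holds for $n=4,5,6$ but \emph{fails} for $n=7$ (the left side is $24$). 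Keeping the gradient term does not help: for $n=7$ the inequality merely says $\int_{L}|\nabla_{L}c_{0}|^{2}\le 2\int_{L}c_{0}^{2}$, which is a genuine constraint, not a contradiction. So your scheme, with the Kato coefficient $\tfrac{2}{n-1}$, stops at $n\le 6$; your statement that ``an admissible solution exists precisely when $n\le 7$'' is off by one dimension.

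What the paper does instead is invoke the cone-specific Simons lemma (Lemma~\ref{lmsimons}):
\[
\tfrac12\Delta_{LB}c^{2}-|\delta c|^{2}+c^{4}\;\ge\;\frac{2\,c^{2}}{|x|^{2}}\qquad\text{on }\partial E\setminus\{0\},
\]
with coefficient $2$ \emph{independent of $n$}. This is strictly stronger than what Kato yields (which on a cone gives only $\tfrac{2}{n-1}\cdot c^{2}/|x|^{2}$ plus a tangential term on the link), and its proof genuinely uses the homogeneity of $\nu$ together with a pointwise computation in adapted coordinates, not just $|\nabla A|^{2}\ge(1+\tfrac{2}{n-1})|\nabla c|^{2}$. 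With the constant $2$ in hand, the radial test $\eta=r^{-\alpha}$ for $r\le 1$, $r^{-\beta}$ for $r\ge 1$ requires $\alpha^{2},\beta^{2}<2$ and $\alpha<\tfrac{n-5}{2}<\beta$, which is compatible exactly when $(n-5)^{2}<8$, i.e.\ $3\le n\le 7$. To repair your argument you must replace the Kato step by this sharper cone inequality.
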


\begin{remark}\label{remarkchemiserve:1.17}
Simons result (Theorem~\ref{thmcone}), together with a blow-up argument and a monotonicity formula (as main tools), lead to the flatness of every minimizing minimal surface in all of $\RR^n$ if $n \le 7$ (see \cite[Theorem 17.3]{G} for a proof). The same tools also give the analyticity of every minimal surface that is minimizing in a given ball of $\RR^n$ if $n \le 7$ (see \cite[Theorem 10.11]{G} for a detailed proof). See also \cite{CF} for a great shorter exposition of these results.
\end{remark}

The dimension $7$ in Theorem \ref{thmcone} is optimal, since by Theorem \ref{thm:SimCone} the Simons cone provides a counterexample in dimension $8$.

The following is a very rough explanation of why the minimizer of the Dirichlet (or Plateau) problem is the Simons cone (and thus passes through the origin) in high dimensions -- in opposition with low dimensions, as in Figure \ref{fig:4}, where the minimizer stays away from the origin. In the perimeter functional written in the $(s,t)$-variables \eqref{equa:stenergy}, the Jacobian $s^{m-1} t^{m-1}$ becomes smaller and smaller near the origin as $m$ gets larger. Thus, lengths near $(s,t)=(0,0)$ become smaller as the dimension $m$ increases.

In order to prove Theorem \ref{thmcone}, we start with some important preliminaries. 
Recalling \eqref{def:tangentialgradient}, for $i=1 \dots ,n$, we define the tangential derivative
$$
\de_i \xi := \pa_i \xi - \nu^i \, \nu^k \xi_k ,
$$
where $\nu= \nu_E = (\nu^1, \dots, \nu^n): \pa E \to \RR^n$ is the exterior normal to $E$ on $\pa E$,
$\pa_i \xi = \pa_{x_i} \xi = \xi_i$ are Euclidean partial derivatives, and we used the standard convention of sum $\sum_{k=1}^{n}$ over repeated indices.
As mentioned right after definition \eqref{def:tangentialgradient}, even if to compute $\pa_i \xi$ requires to extend $\xi$ to a neighborhood of $\pa E$, $\de_i \xi$ is well defined knowing $\xi$ only on
$\pa E$ -- since it is a tangential derivative. Note also that we have $n$ tangential derivatives $\de_1, \dots, \de_n$ and, thus, they are linearly dependent, since $\pa E$ is $(n-1)$-dimensional. However, it is easy to check (as an exercise) that
\begin{equation*}
|\de \xi|^2= \sum_{i=1}^n |\de_i \xi|^2 .
\end{equation*}

We next define {\it the Laplace-Beltrami operator} on $\pa E$ by
\begin{equation}\label{def:Laplace-Beltrami}
\DLB \xi := \sum_{i=1}^n \delta_i\delta_i \xi ,
\end{equation}
acting on functions $\xi: \pa E \to \RR$.
For the reader knowing Riemannian calculus, one can check that
$$
\DLB \xi = {\dv}_T (\na_T \xi) = {\dv}_T (\de \xi) , 
$$
where $\na_T=\de$ is the tangential gradient introduced in \eqref{def:tangentialgradient} and $\dv_T$ denotes the (tangential) divergence on the manifold $\pa E$.

According to \eqref{mean curvature formula}, we have that
$$
\cH = {\dv}_T \nu = \sum_{i=1}^n \de_i \nu^i.
$$ 
We will also use the following {\it formula of integration by parts}:
\begin{equation}\label{eq:Giustitypo}
\int_{\pa E} \de_i \phi \, dH_{n-1} = \int_{\pa E} \cH \phi \nu^i dH_{n-1}                                 
\end{equation}
for every (smooth) hypersurface $\pa E$ and $\phi \in C^1 (\pa E)$ with compact support.
Equation \eqref{eq:Giustitypo} is proved in Giusti's book \cite[Lemma 10.8]{G}. However, there are
%
%
two typos in \cite[Lemma 10.8]{G}: $\cH$ is missed in the identity above, and there is an error of a sign in the proof of \cite[Lemma 10.8]{G}.


Replacing $\phi$ by $\phi \fhi$ in \eqref{eq:Giustitypo}, we deduce that 
\begin{equation}\label{equa:intparts2}
\int_{\pa E} \phi \, \de_i \fhi \, dH_{n-1} = - \int_{\pa E} (\de_i \phi) \fhi \, dH_{n-1}  + \int_{\pa E} \cH \phi \fhi \nu^i dH_{n-1} .
\end{equation}
From this, replacing $\phi$ by $\de_i \phi$ in \eqref{equa:intparts2} and using that $\sum_{i=1}^n \nu^i \de_i \phi = \nu \cdot \de \phi=0$, we also have
\begin{equation}\label{equa:intLB}
\int_{\pa E} \de \phi \cdot \de \fhi \, dH_{n-1} = \sum_{i=1}^n \int_{\pa E} \de_i \phi \, \de_i \fhi \, dH_{n-1} = - \int_{\pa E} (\DLB \phi) \fhi \, dH_{n-1} .
\end{equation}
 
\begin{remark}\label{rem:Jacobi operator}
For a minimal surface $\partial E$, the second variation of perimeter given by \eqref{eq:minimal-1-2v2} can also be rewritten, after \eqref{equa:intLB}, as 
$$
\int_{\partial E} \left\{ -  \DLB \xi -c^2 \xi \right\} \xi \, dH_{n-1}.
$$
The operator $- \DLB -c^2$ appearing in this expression is called {\it the Jacobi operator}. It is the linearization at the minimal surface $\pa E$ of the minimal surface equation $\cH =0$. 
\end{remark}

Towards the proof of Simons theorem, let us now take $\xi=\tilde{c}\eta$ in 
\eqref{eq:1-2v2}, where $\tilde{c}$ and $\eta$ are still arbitrary ($\eta$ with 
compact support outside the origin) and will be chosen later.
We obtain
\begin{eqnarray*}
0 &\le& \int_{\partial E} \left\{ |\delta\xi|^2-c^2 \xi^2 \right\} dH_{n-1} \\
&=& \int_{\partial E} \left\{ \tilde{c}^2 |\delta \eta|^2 + \eta^2 | \delta \tilde{c} |^2
+ \tilde{c} \de \tilde{c} \cdot \de \eta^2 - c^2 \tilde{c}^2 \eta^2 \right\} dH_{n-1} \\
&=&  \int_{\partial E} \left\{  \tilde{c}^2|\delta\eta|^2 - ( \DLB \tilde{c} +c^2 \tilde{c} ) \tilde{c} \eta^2 \right\} dH_{n-1} ,
\end{eqnarray*}
where at the last step we used integration by parts \eqref{equa:intparts2}.
This leads to the inequality
\begin{equation*}
\int_{\partial E}  \left\{ \DLB \tilde{c} +c^2 \tilde{c} \right\} \tilde{c} \eta^2  dH_{n-1} \le  \int_{\partial E} \tilde{c}^2|\delta\eta|^2 dH_{n-1} ,
\end{equation*}
where the term $ \DLB \tilde{c} +c^2 \tilde{c}$ appearing in the first integral is the linearized or Jacobi operator at $\pa E$ acting on $\tilde{c}$.

Now we make the choice $\tilde{c} = c$ and we arrive, as a consequence of stability, to
\begin{equation}\label{eq:Disuguaglianza prima di Teo Simons}
\int_{\partial E} \left\{  \frac{1}{2} \DLB c^2 -  | \delta c|^2 +c^4 \right\} \eta^2  dH_{n-1} \le  \int_{\partial E} c^2|\delta\eta|^2 dH_{n-1} .
\end{equation}

At this point, Simons proof of Theorem \ref{thmcone} uses the following inequality for the Laplace-Beltrami operator $\De_{LB}$ of $c^2$
(recall that $c^2$ is the sum of the squares of the principal curvatures 
of $\partial E$), in the case
when $\partial E$ is a stationary cone.

\begin{lemma}[Simons lemma~\cite{S}]
\label{lmsimons}
Let $E\subset\RR^n$ be an open set such that $\partial E$
is a cone with zero mean curvature and $\partial E\setminus\{0\}$ is regular.
Then, $c^2$ is homogeneous of degree $-2$ and, in $\partial E\setminus\{0\}$, we have
\[
\frac{1}{2} \DLB c^2 - |\delta c|^2 + c^4  \geq \frac{2c^2}{|x|^2}.
\]
\end{lemma}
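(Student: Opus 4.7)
My plan is to prove the statement in three steps: first the homogeneity, then a reduction via Simons' identity, and finally a refined Kato-type inequality that captures the cone structure. For the homogeneity, I would use the dilation invariance $\lambda\,\partial E=\partial E$: under $x\mapsto\lambda x$, the unit normal is preserved while tangent vectors scale by $\lambda$, so each principal curvature scales by $\lambda^{-1}$ and $c^{2}=\sum_{j}k_{j}^{2}$ is homogeneous of degree $-2$. The same cone structure has a second, more subtle consequence that will drive the rest of the proof: since the ray through $0$ and $x$ is contained in $\partial E$, the radial direction $\hat r=x/|x|$ is a principal direction with vanishing principal curvature. In an orthonormal frame $\{e_{1}=\hat r,e_{2},\ldots,e_{n-1}\}$ this reads $h_{1j}\equiv 0$ for all $j$, where $A=(h_{ij})$ is the second fundamental form.

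Next, I would invoke Simons' identity, which on a minimal hypersurface of $\RR^{n}$ follows from two covariant differentiations of the Codazzi equation together with $\mathcal{H}=0$: $\DLB h_{ij}=-c^{2}h_{ij}$. Contracting with $h_{ij}$ and using the product rule,
\[
\tfrac12\,\DLB c^{2}=|\nabla A|^{2}-c^{4},\qquad |\nabla A|^{2}:=\sum_{i,j,k}h_{ij,k}^{2},
\]
where $h_{ij,k}$ are the covariant derivatives in an orthonormal frame. Substituting this into the left-hand side of the desired inequality reduces it to the purely tensorial Kato-type bound
\[
|\nabla A|^{2}-|\delta c|^{2}\;\ge\;\frac{2c^{2}}{|x|^{2}}.
\]

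For this last inequality I would work at a fixed point $x_{0}$ with $r_{0}=|x_{0}|$, in an orthonormal frame with $e_{1}=\hat r$ diagonalising $A$. The warped-product metric $dr^{2}+r^{2}g_{L}$ of the cone yields Christoffel symbols $\Gamma^{c}_{11}=\Gamma^{c}_{1j}=0$ and $\Gamma^{c}_{j1}=r^{-1}\delta^{c}_{j}$ for $j\ge 2$, and homogeneity gives $\partial_{r}h_{ii}=-h_{ii}/r$. Working these into the definition of $h_{ij,k}$, one finds at $x_{0}$ the explicit values $h_{ii,1}=-\lambda_{i}/r_{0}$ and, by Codazzi (full symmetry of $h_{ij,k}$ in all three indices), the ``mixed'' components $h_{1j,k}=-r_{0}^{-1}\lambda_{k}\,\delta_{jk}$ for $j,k\ge 2$, with all other components containing an index $1$ equal to zero. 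Splitting $|\nabla A|^{2}$ by the value of $k$, the $k=1$ slice contributes $c^{2}/r_{0}^{2}$ and the $k\ge 2$ slices contribute $T+2c^{2}/r_{0}^{2}$, with $T:=\sum_{i,j,k\ge 2}h_{ij,k}^{2}$; the extra $2c^{2}/r_{0}^{2}$ is produced precisely by the Codazzi-forced mixed components $h_{1k,k}$. On the other side, the scalar identity $c\,\delta_{k}c=\sum_{i,j}h_{ij}h_{ij,k}$ gives $(\delta_{1}c)^{2}=c^{2}/r_{0}^{2}$ exactly, while a Cauchy--Schwarz estimate $(\sum_{i\ge 2}\lambda_{i}h_{ii,k})^{2}\le c^{2}\sum_{i\ge 2}h_{ii,k}^{2}\le c^{2}T_{k}$ yields $\sum_{k\ge 2}(\delta_{k}c)^{2}\le T$. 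Subtracting produces the required lower bound $2c^{2}/|x_{0}|^{2}$.

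The hard point is this last step: one must isolate the components of $\nabla A$ that are rigidly determined by the cone (via Codazzi symmetry and the scaling of $A$) and verify that they contribute strictly more to $|\nabla A|^{2}$ than to the gradient of the scalar $c=|A|$, which is only sensitive to the ``diagonal'' combinations $\sum_{i,j}h_{ij}h_{ij,k}$. Both features of the cone flagged at the start---$h_{1j}\equiv 0$ and $\partial_{r}h_{ij}=-h_{ij}/r$---are indispensable: dropping either leaves only the standard Kato inequality $|\nabla A|^{2}\ge|\delta c|^{2}$ with no $|x|^{-2}$ improvement.
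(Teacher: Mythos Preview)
Your proof is correct and follows a genuinely different route from the paper's. The paper (following Giusti) works extrinsically with the tangential derivatives $\delta_i$ of the unit normal $\nu=\nabla d$ in $\RR^n$: it first obtains $\frac{1}{2}\DLB c^2 + c^4 = \sum_{\alpha,\beta,\gamma}(\delta_\gamma\delta_\alpha\nu^\beta)^2$ from the commutation rule for $\DLB\delta_k$ and the Jacobi equation $\DLB\nu^j+c^2\nu^j=0$, then chooses coordinates with $\nu(x_0)=e_n$ and $x_0$ on the $e_{n-1}$-axis, and finally uses $\delta_i\nu^{n-1}=0$ together with the Euler relation $\delta_{n-1}\delta_i\nu^\alpha=\mp|x|^{-1}\delta_i\nu^\alpha$ to isolate the term $2c^2/|x|^2$. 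Your intrinsic argument replaces this with the two-step reduction ``Simons' identity $\DLB h_{ij}=-c^2 h_{ij}$ $\Rightarrow$ improved Kato inequality $|\nabla A|^2-|\delta c|^2\ge 2c^2/|x|^2$'', and the cone hypothesis enters as the pair of facts $h_{1j}\equiv 0$ and $\partial_r A=-A/r$ in the frame $e_1=\hat r$. The paper itself remarks that intrinsic proofs along these lines are in Simons' original paper and in Colding--Minicozzi; your write-up is essentially that approach. The extrinsic computation has the virtue of staying entirely within the $\delta_i$-calculus set up in the notes, without invoking Codazzi or the warped-product connection; your version is conceptually cleaner and makes the appearance of the constant $2$ transparent (three copies of $c^2/r_0^2$ from the fully symmetric radial components of $\nabla A$, minus one from $(\delta_1 c)^2$).
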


In Subsection \ref{subsec:SimLem} we will give an outline of the proof of this result.
We now use Lemma \ref{lmsimons} to complete the proof of Theorem \ref{thmcone}.

\begin{proof}[Proof of Theorem \ref{thmcone}]
By using \eqref{eq:Disuguaglianza prima di Teo Simons} together with Lemma~\ref{lmsimons} we obtain
\begin{equation}
0\leq\int_{\partial
E}c^2\left\{|\delta\eta|^2-\frac{2\eta^2}{|x|^2}\right\}dH_{n-1}
\label{eq:1-mcstb}
\end{equation}
for every $\eta\in C^1(\partial E)$ with compact 
support outside the origin.
By approximation, the same holds for $\eta$ Lipschitz instead of $C^1$.


If $r=|x|$, we now choose $\eta$ to be the Lipschitz function
$$
\eta=
\bigg \{
\begin{array}{rl}
r^{- \al} \quad \text{if } r \le 1 \ \\
r^{- \be} \quad \text{if } r \ge 1 . \\
\end{array}
$$
By directly computing
\begin{equation}\label{equaz:provaetadelta}
|\de \eta|^2=
\bigg \{
\begin{array}{rl}
\al^2 r^{- 2 \al -2} \quad \text{if } r \le 1 \ \\
\be^2 r^{- 2 \be -2} \quad \text{if } r \ge 1 , \\
\end{array}
\end{equation}
we realize that if 
\begin{equation}\label{equaz:provaalfabeta}
\al^2<2 \, \mbox{ and } \, \be^2<2 ,
\end{equation}
then in \eqref{eq:1-mcstb} we have $|\de \eta|^2 - 2 \eta^2 / r^2 <0$.
If $\eta$ were an admissible function in \eqref{eq:1-mcstb}, we would then conclude that $c^2 \equiv 0$ on
$\pa E$. This is equivalent to $\pa E$ being an hyperplane.

Now, for $\eta$ to have compact support and hence be admissible, we need to cut-off $\eta$ near $0$ and infinity. As an exercise, one can check that the cut-offs work (i.e., the tails in the integrals tend to zero) if (and only if)
\begin{equation}\label{equazion:cutoffex}
\int_{\partial E}c^2 | \de \eta|^2  dH_{n-1}<\infty,
\end{equation}
or equivalently, since they have the same homogeneity,
$$
\int_{\partial E}c^2\frac{\eta^2}{|x|^2} \, dH_{n-1}<\infty.
$$
By recalling that the Jacobian on $\pa E$ (in spherical coordinates) is $r^{(n-1)-1}$, \eqref{equaz:provaetadelta}, and that, by Lemma~\ref{lmsimons}, $c^2$ is homogeneous of degree $-2$, we deduce that
%
%
\eqref{equazion:cutoffex} is satisfied if $n-6- 2\alpha  >-1$ and $n-6 - 2 \be <-1$. That is, if
\begin{equation}
\alpha < \frac{n -5 }{2} \quad\textrm{ and }\quad  \frac{n-5}{2} < \beta .
\label{eq:1-ineqs}
\end{equation}
%
%
If $3\le n\le 7$ then $(n -5 )^2/4<2$, i.e., $ - \sqrt{2} < (n -5) / 2 < \sqrt{2}$, and thus we can choose $\alpha$ and $\beta$ satisfying
\eqref{eq:1-ineqs} and \eqref{equaz:provaalfabeta}. It then follows that $c^2\equiv 0$, and
hence $\partial E$ is a hyperplane.
\end{proof}

The argument in the previous proof (leading to the dimension $n\le 7$) is very much related to a well known result: Hardy's inequality in $\RR^n$ -- which is presented next.

\subsection{Hardy's inequality}
As already noticed in Remark \ref{rem:Jacobi operator}, for a minimal surface $\partial E$ the second variation of perimeter \eqref{eq:minimal-1-2v2} can also be rewritten, by integrating by parts, as 
$$
\int_{\partial E} \left\{ -  \DLB \xi -c^2 \xi \right\} \xi dH_{n-1} .
$$
This involves the linearized or Jacobi operator $- \DLB -c^2$. If $x= |x| \si= r \si$, with
$\si \in S^{n-1}$, then $c^2= d(\si) / |x|^2$ (if $\pa E$ is a cone and thus $c^2$ is homogeneous of degree $-2$), where $d(\si)$ depends only on the angles $\si$. Thus, we are in the presence of the ``Hardy-type operator''
$$- \DLB - \frac{d(\si)}{|x|^2} ;$$
notice that $\DLB$ and $d(\si) / |x|^2 $ scale in the same way.
Thus, for all admissible functions $\xi$,
$$0 \le \int_{\partial E} \left\{ |\de \xi|^2 - \frac{d(\si)}{|x|^2} \, \xi^2 \right\} dH_{n-1} , \quad \mbox{if $\partial E$ is a stable minimal cone.}$$

Let us analyze the simplest case when $\pa E = \RR^n$ and $d \equiv constant$. Then, the validity or not of the previous inequality is given by Hardy's inequality, stated and proved next.

\begin{proposition}[Hardy's inequality]\label{Prop:Hardy inequality}
If $n \ge 3$ and $\xi \in C_{c}^{1}(\RR^n \setminus \lbrace 0 \rbrace)$, then
\begin{equation}\label{Hardyin}
\frac{(n-2)^{2}}{4}\int _{\RR^n}\frac{\xi ^{2}}{|x|^{2}} \,dx
\leq \int _{\RR^n}\left|\nabla \xi \right|^{2}dx .
\end{equation}
In addition, $(n-2)^{2} / 4$ is the best constant in this inequality and it is not achieved by any $0 \not\equiv \xi \in H^1 (\RR^n)$.

Moreover, if $a> (n-2)^{2} / 4$, then the Dirichlet spectrum of $- \DLB - a / |x|^2$  in the unit ball $B_1$ goes all the way to $- \infty$. That is,
\begin{equation}\label{equa:quotient}
\inf \frac{ \int_{B_1} \lbrace |\na \xi|^2 - a \, \frac{\xi^2}{|x|^2} \rbrace dx }{\int_{B_1} |\xi|^2 dx} = - \infty ,
\end{equation}
where the infimum is taken over $0 \not\equiv \xi \in H_0^1 (B_1)$.
\end{proposition}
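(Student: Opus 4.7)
The plan is to first establish the inequality \eqref{Hardyin} by a one-line integration by parts, and then deduce optimality, non-attainment, and the unboundedness of the spectrum from essentially the same computation.

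\textbf{Step 1: Proof of \eqref{Hardyin}.} The key observation is that the radial vector field $X(x) = x/|x|^2$ satisfies $\dv X = (n-2)/|x|^2$ in $\RR^n \setminus \{0\}$. For $\xi \in C_c^1(\RR^n \setminus \{0\})$, I would multiply by $\xi^2$ and integrate by parts:
\[
(n-2)\int_{\RR^n} \frac{\xi^2}{|x|^2}\,dx = \int_{\RR^n} \xi^2 \,\dv X\,dx = -2\int_{\RR^n} \xi\, \frac{x\cdot \nabla \xi}{|x|^2}\,dx.
\]
An application of Cauchy--Schwarz to the right-hand side gives
\[
(n-2)\int_{\RR^n} \frac{\xi^2}{|x|^2}\,dx \le 2\left(\int_{\RR^n}\frac{\xi^2}{|x|^2}\,dx\right)^{1/2}\left(\int_{\RR^n} |\nabla \xi|^2\,dx\right)^{1/2},
\]
and dividing by the first factor on the right yields \eqref{Hardyin}. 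The inequality extends to all $\xi \in H^1(\RR^n)$ by density.

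\textbf{Step 2: Sharpness and non-attainment.} The formal optimizer is $\xi_*(x) = |x|^{-(n-2)/2}$, for which a direct computation shows $|\nabla \xi_*|^2 = \frac{(n-2)^2}{4}\,\xi_*^2/|x|^2$, so equality would hold in \eqref{Hardyin} if $\xi_*$ were admissible. To show $(n-2)^2/4$ is optimal, I would plug in the truncated family $\xi_\varepsilon(x) = |x|^{-(n-2)/2}\,\eta_\varepsilon(x)$, where $\eta_\varepsilon$ is a smooth cutoff equal to $1$ on $\{\varepsilon \le |x| \le 1/\varepsilon\}$ and vanishing outside $\{\varepsilon/2 \le |x| \le 2/\varepsilon\}$. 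A direct calculation in polar coordinates shows that, as $\varepsilon \to 0$, the ratio $\int |\nabla \xi_\varepsilon|^2 / \int \xi_\varepsilon^2/|x|^2$ tends to $(n-2)^2/4$. For non-attainment, if equality held in \eqref{Hardyin} for some $0\not\equiv \xi\in H^1(\RR^n)$, then equality would hold in the Cauchy--Schwarz step, forcing $\nabla \xi$ to be (almost everywhere) a scalar multiple of $\xi\, x/|x|^2$; integrating the resulting ODE along rays gives $\xi(x) = f(x/|x|)\,|x|^{-(n-2)/2}$, and one checks that no such nonzero function lies in $H^1(\RR^n)$ (it is not locally $L^2$ at the origin in dimension $n\ge 3$ after squaring and integrating in spherical coordinates, nor is $\nabla \xi$).

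\textbf{Step 3: The spectrum is unbounded below when $a > (n-2)^2/4$.} By the optimality just proved, there exists $\xi_0 \in C_c^\infty(\RR^n \setminus \{0\})$ with
\[
Q[\xi_0] := \int_{\RR^n} |\nabla \xi_0|^2\,dx - a\int_{\RR^n}\frac{\xi_0^2}{|x|^2}\,dx < 0.
\]
After a translation and dilation I may assume $\mathrm{supp}\,\xi_0 \subset \{1\le |x|\le 2\}$. For $\lambda>0$ set $\xi_\lambda(x) = \xi_0(x/\lambda)$; then $\mathrm{supp}\,\xi_\lambda \subset \{\lambda\le |x|\le 2\lambda\} \subset B_1$ for $\lambda < 1/2$, so $\xi_\lambda \in H_0^1(B_1)$. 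A change of variables shows
\[
\int_{B_1}|\nabla \xi_\lambda|^2\,dx - a\int_{B_1}\frac{\xi_\lambda^2}{|x|^2}\,dx = \lambda^{n-2}\, Q[\xi_0], \qquad \int_{B_1}\xi_\lambda^2\,dx = \lambda^n \int \xi_0^2\,dx,
\]
so the Rayleigh quotient equals $\lambda^{-2}\, Q[\xi_0]/\int \xi_0^2$, which tends to $-\infty$ as $\lambda \to 0$ since $Q[\xi_0]<0$. This proves \eqref{equa:quotient}.

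The main subtlety will be Step 2: carefully tracking the boundary contributions from the cutoff $\eta_\varepsilon$ to ensure they vanish in the limit, and ruling out attainment without circularity (since the equality case of Cauchy--Schwarz and the membership in $H^1$ must be handled simultaneously). Steps 1 and 3 are essentially one-line consequences of the scaling $x \mapsto \lambda x$ and the divergence identity for $x/|x|^2$.
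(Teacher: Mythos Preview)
Your proof is correct. Step~1 is the same computation as the paper's, only packaged differently: the paper writes the integration by parts ray-by-ray in spherical coordinates (using $r^{n-3}=(r^{n-2}/(n-2))'$) and then applies Cauchy--Schwarz on each ray, whereas you use the divergence identity for $x/|x|^2$ and a single global Cauchy--Schwarz. The paper's treatment of sharpness and non-attainment is very terse (``an inspection of the equality cases''), so your Step~2 is more explicit.

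Step~3 is where you take a genuinely different route. The paper plugs in the explicit family $\xi=r^{-\alpha}-1$ (cut off near the origin) with $(n-2)^2/4<\alpha^2<a$ and $\alpha\searrow(n-2)/2$, and checks by hand that the numerator of the Rayleigh quotient tends to $-\infty$ while the denominator stays bounded. Your scaling argument is cleaner and more conceptual: once sharpness gives a single test function with negative energy, the homogeneity $Q[\xi_\lambda]=\lambda^{n-2}Q[\xi_0]$ and $\|\xi_\lambda\|_2^2=\lambda^n\|\xi_0\|_2^2$ do the rest. One small slip: you cannot use a \emph{translation} to normalize the support of $\xi_0$, since the potential $a/|x|^2$ is anchored at the origin and translation destroys the scaling identity for $Q$. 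But you do not need it---a dilation alone places $\mathrm{supp}\,\xi_0$ inside $B_1\setminus\{0\}$ (not necessarily in a thin annulus, but that specific shape is irrelevant), and then $\mathrm{supp}\,\xi_\lambda\subset B_\lambda\subset B_1$ for $\lambda<1$, which is all your computation uses.
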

\begin{proof}
Using spherical coordinates, for a given $\si \in S^{n-1}$ we can write
\begin{equation}\label{har1}
\int_0^{+\infty} r^{n-1} r^{-2} \xi^2(r \si) \, dr = - \frac{1}{n-2} \int_0^{+\infty} r^{n-2} 2 \xi (r \si) \xi_r (r \si) \, dr .
\end{equation}
Here we integrated by parts, using that $r^{n-3}= \left( r^{n-2} / (n-2) \right)'$.

Now we apply the Cauchy-Schwarz inequality in the right-hand side to obtain
\begin{multline}\label{har2}
- \int_0^{+\infty} r^{n-2} \xi \xi_r \, r^{\frac{n-3}{2} } \, r^{- \frac{n-3}{2} }  \, dr \le
\\
\left( \int_0^{+\infty} r^{n-3} \xi^2 \, dr\right)^{\frac{1}{2}} \, \left(\int_0^{+\infty} r^{n-1} \xi^2_r \, dr \right)^{\frac{1}{2}}.
\end{multline}
Putting together \eqref{har1} and \eqref{har2} we get
$$\int_0^{+\infty} r^{n-3} \xi^2 \, dr \le \frac{2}{n-2} \left( \int_0^{+\infty} r^{n-3} \xi^2 \, dr\right)^{\frac{1}{2}} \, \left(\int_0^{+\infty} r^{n-1} \xi^2_r \, dr \right)^{\frac{1}{2}},$$
that is,
$$\frac{(n-2)^2}{4} \int_0^{+\infty} r^{n-1} \, \frac{\xi^2 }{r^2} \, dr \le \int_0^{+\infty} r^{n-1} \xi^2_r \, dr . $$
By integrating in $\si$ we conclude \eqref{Hardyin}.
An inspection of the equality cases in the previous proof shows that the best constant is not achieved.

Let us now consider $(n-2)^2 / 4 < \al^2 < a$ with $\al \searrow (n-2) / 2$. Take
$$
\xi = r^{- \al} -1  
$$
and cut it off near the origin to be admissible. If we consider the main terms in the quotient \eqref{equa:quotient}, we get
$$
\frac{ \int (\al^2 - a) r^{-2 \al -2} dx }{ \int r^{-2 \al } dx } .
$$
Thus it is clear that, as $\al \searrow (n-2) / 2$, the denominator remains finite independently of the cut-off, while the numerator is as negative as we want after the cut-off. Hence, the quotient tends to $- \infty$.
\end{proof}

\begin{remark}\label{rem:dopohardy}
As we explained in Remark \ref{rem:Foliation B-DG-G}, in \cite{BdGG}, Bombieri, De Giorgi, and Giusti used a foliation made of exact solutions to the minimal surface equation $\cH=0$ when $2 m \ge 8$.
These are the solutions of the ODE \eqref{equa:odest} starting from points $\left( s(0),t(0) \right)= \left( s_0,0 \right) $ in the $s$-axis and with vertical derivative $\left( s'(0), t'(0) \right) = \left( 0,1 \right) $.
They showed that, for $2m \ge 8$, they do not intersect each other, neither intersect the Simons cone $\cSC$. Instead, in dimensions $4$ and $6$ they do not produce a foliation and, in fact, each of them crosses infinitely many times $\cSC$, as showed in Figure \ref{fig:add2}. This reflects the fact that the linearized operator  $- \DLB -c^2$ on $\cSC$ has infinitely many negative eigenvalues, as in the simpler situation of Hardy's inequality in the last statement of Proposition \ref{Prop:Hardy inequality}.
%
\end{remark}

\begin{figure}[htbp]
\centering
\includegraphics[scale=.25]{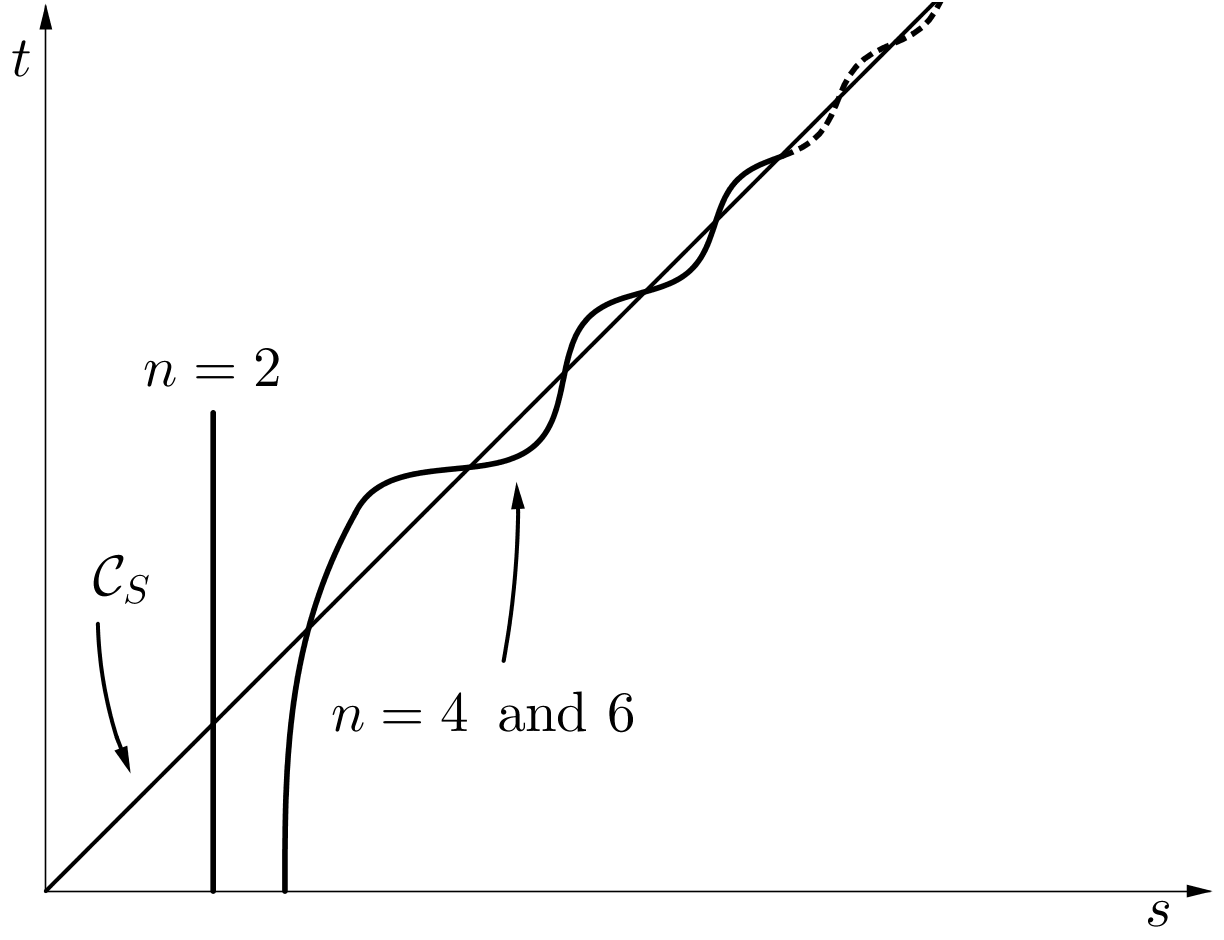}
\caption{Behaviour of the solutions to $\cH = 0$ in dimensions $2$, $4$, and $6$}
\label{fig:add2}       
\end{figure}

\subsection{Proof of the Simons lemma}\label{subsec:SimLem}
As promised, in this section we present the proof of Lemma \ref{lmsimons} with almost all details. 
We follow the proof contained in Giusti's book \cite{G}, where more details can be found (Simons lemma is Lemma 10.9 in \cite{G}). We point out that in the proof of \cite{G} there are the following two typos:
\begin{itemize}
\item as already noticed before, the identity in the statement of \cite[Lemma 10.8]{G} is missing $\cH$ in the second integrand. We wrote the corrected identity in equation \eqref{eq:Giustitypo} of these notes;
\item the label (10.18) is missing in line -8, page 122 of \cite{G}.
\end{itemize}

Alternative proofs of Lemma \ref{lmsimons} using intrinsic Riemaniann tensors can be found in the original paper of Simons \cite{S} from 1968 and also in the book of Colding and Minicozzi \cite{ColMinLN}.

\begin{notations}
We denote by $d(x)$ the signed distance function to $\pa E$, defined by
$$d(x) :=
\bigg \{
\begin{array}{rl}
\operatorname{dist} (x, \pa E) , & \ \ x \in \RR^n \setminus E , \\
- \operatorname{dist} (x, \pa E) , & \ \ x \in E . \\
\end{array}
$$
As we are assuming $E \setminus \{ 0 \}$ to be regular, we have that $d(x)$ is $C^2$ in a neighborhood of $\pa E \setminus \{ 0 \} $.

The normal vector to $\pa E$ is given by 
$$\nu= \na d= \frac{ \na d}{| \na d |};$$
we write
$$\nu= (\nu^1, \dots, \nu^n) = (d_1, \dots, d_n) ,$$
where we adopt the abbreviated notation
$$w_i=w_{x_i} = \pa_i w \, \mbox{ and } \, w_{ij}=w_{x_i x_j} = \pa_{ij} w $$
for partial derivatives in $\RR^n$.
As introduced after Theorem \ref{thmcone}, we will use the tangential derivatives
\begin{equation*}
\de_i := \pa_i - \nu^i \nu^k \pa_k
\end{equation*}
for $i=1, \dots, n$, and thus
$$
\de_i w = w_i - \nu^i \nu^k w_k,
$$
where we adopted the summation convention over repeated indices.
Finally, recall the Laplace-Beltrami operator defined in \eqref{def:Laplace-Beltrami}:
\begin{equation*}
\DLB := \de_i \de_i. 
\end{equation*}

\end{notations}


\begin{remark}
Since
\begin{equation}\label{equazione:nu2}
1= | \nu |^2 = \sum_{k=1}^n d_k^2 ,
\end{equation}
it holds that
$$
d_{jk} d_k =0 \, \mbox{ for } j=1, \dots,n.
$$
Thus, we have
$$
\de_i \nu^j= \de_i d_j = d_{ij} - d_i d_k d_{kj}= d_{ij}=d_{ji},
$$
which leads to
\begin{equation*}
\de_i \nu^j= \de_j \nu^i .
\end{equation*}
\end{remark}

\begin{exercise}
Using $\de_i \nu^j = d_{ij}$, verify that
\begin{equation*}
\cH= \de_i \nu^i ,
\end{equation*}
\begin{equation}
\label{eq:def c indici}
c^2= \de_i \nu^j \de_j \nu^i = \sum_{i,j=1}^n (\de_i \nu^j)^2 .
\end{equation}
\end{exercise}

The identities
\begin{equation*}
\nu^i \de_i = 0 ,
\end{equation*}
\begin{equation}
\label{eq:utileperSimonslemmaproof}
\nu^i \de_j \nu^i = 0 , \, \mbox{ for } j=1,\dots,n ,
\end{equation}
will be used often in the following computations. The first one follows from the definition of $\de_i$, while the second is immediate from \eqref{equazione:nu2}.

The next lemma will be useful in what follows.
\begin{lemma}
The following equations hold for every smooth hypersurface $\pa E$:
\begin{equation}
\label{eq:Giusti 1}
\de_i \de_j = \de_j \de_i + (\nu^i \de_j \nu^k - \nu^j \de_i \nu^k)\de_k ,
\end{equation}
\begin{equation}
\label{eq:Giusti 2}
\DLB \nu^j + c^2 \nu^j = \de_j \cH  \, (=0 \, \mbox{ if $\pa E$ is stationary}) ,
\end{equation}
for all indices $i$ and $j$.
\end{lemma}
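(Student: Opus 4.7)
The plan is to prove the two identities in turn, treating \eqref{eq:Giusti 1} (a commutator formula for tangential derivatives) as the workhorse, then deducing \eqref{eq:Giusti 2} from it together with the already-noted symmetry $\de_i\nu^j = \de_j\nu^i$ and the relations \eqref{eq:utileperSimonslemmaproof} and \eqref{eq:def c indici}.

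For \eqref{eq:Giusti 1}, my approach would be a direct expansion. Starting from $\de_j f = \pa_j f - \nu^j \nu^l \pa_l f$, I would apply the tangential operator $\de_i = \pa_i - \nu^i \nu^k \pa_k$ and collect all terms. The second-order Euclidean derivatives $\pa_i\pa_j f$ and the ``doubly normal'' terms $\nu^i\nu^j\nu^k\nu^l \pa_k\pa_l f$ are symmetric in $i,j$ and thus cancel when one forms $\de_i\de_j f - \de_j\de_i f$. The mixed terms $\nu^i\nu^k \pa_k\pa_j f$ (and its transpose) also cancel by symmetry of Euclidean second derivatives. What remains comes from the derivatives falling on the factors $\nu^j\nu^l$ inside $\de_j$. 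Using $\pa_i \nu^j = \de_i\nu^j + \nu^i \nu^m \pa_m \nu^j$ and the identity $\nu^m \pa_m \nu^j = \nu^m d_{mj} = 0$ (from \eqref{equazione:nu2}), one can replace Euclidean derivatives of $\nu^j$ by tangential ones. A bit of careful index juggling, together with the identity \eqref{eq:utileperSimonslemmaproof} (to kill terms of the form $\nu^j \de_i \nu^j$), produces exactly the two remaining terms $\nu^i \de_j\nu^k \, \pa_k f$ and $-\nu^j \de_i \nu^k \, \pa_k f$. Finally one observes that in these coefficients the Euclidean derivative $\pa_k$ may be replaced by the tangential one $\de_k$: indeed, the difference is proportional to $\nu^k$, and both $\nu^i \de_j\nu^k \, \nu^k$ and $\nu^j \de_i\nu^k \, \nu^k$ vanish by \eqref{eq:utileperSimonslemmaproof}. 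This yields \eqref{eq:Giusti 1}.

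For \eqref{eq:Giusti 2}, the idea is to swap the two $\de_i$'s in $\DLB \nu^j = \de_i\de_i\nu^j$ so that one of them hits the summed index of $\nu^i$, producing the mean curvature. Concretely, by symmetry $\de_i\nu^j = \de_j\nu^i$, and then applying \eqref{eq:Giusti 1} with $f=\nu^i$:
\begin{equation*}
\DLB\nu^j \;=\; \de_i\de_j\nu^i \;=\; \de_j\de_i\nu^i + \bigl(\nu^i\de_j\nu^k - \nu^j\de_i\nu^k\bigr)\de_k\nu^i.
\end{equation*}
The first term equals $\de_j \cH$. In the second, the piece $\nu^i\de_j\nu^k\,\de_k\nu^i$ vanishes: summing first over $i$ and using \eqref{eq:utileperSimonslemmaproof} one gets $\nu^i\de_k\nu^i = 0$. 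The piece $-\nu^j\de_i\nu^k\,\de_k\nu^i$ is, by symmetry $\de_i\nu^k = \de_k\nu^i$, equal to $-\nu^j \sum_{i,k}(\de_i\nu^k)^2 = -c^2 \nu^j$ by \eqref{eq:def c indici}. Rearranging gives \eqref{eq:Giusti 2}.

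The main obstacle is entirely in \eqref{eq:Giusti 1}: the bookkeeping of many tensorial terms when expanding the commutator. Once that formula is secured, the deduction of \eqref{eq:Giusti 2} is short and uses only the symmetry of the second fundamental form, the algebraic identity \eqref{eq:utileperSimonslemmaproof}, and the very definition \eqref{eq:def c indici} of $c^2$.
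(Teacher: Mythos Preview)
Your proposal is correct. The paper itself does not give a proof of this lemma: it simply refers the reader to \cite[Lemma~10.7]{G}. Your direct-expansion argument for \eqref{eq:Giusti 1} (expand the commutator, use the symmetry $\de_i\nu^j=\de_j\nu^i$ of the second fundamental form to kill the $\nu^l(\de_j\nu^i-\de_i\nu^j)$ term, then replace $\pa_k$ by $\de_k$ via \eqref{eq:utileperSimonslemmaproof}) is exactly the standard computation one finds in Giusti's book, and your deduction of \eqref{eq:Giusti 2} from \eqref{eq:Giusti 1} together with $\de_i\nu^j=\de_j\nu^i$, \eqref{eq:utileperSimonslemmaproof}, and \eqref{eq:def c indici} is clean and correct.
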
 
For a proof of this lemma, see \cite[Lemma 10.7]{G}.

Equation \eqref{eq:Giusti 2} is an important one. It says that the normal vector $\nu$ to a minimal surface solves the Jacobi equation $\left( \DLB + c^2 \right) \nu \equiv 0$ on $\pa E$. This reflects the invariance of the equation $\cH = 0$ by translations (to see this, write a perturbation made by a small translation as a normal deformation, as in Figure \ref{fig:2}).

If $\pa E$ is stationary, from \eqref{eq:Giusti 1} and by means of simple calculations, one obtains that
\begin{equation}
\label{eq:Giusti 3}
\DLB \de_k = \de_k \DLB - 2 \nu^k (\de_i \nu^j) \de_i \de_j - 2 (\de_k \nu^j)(\de_j \nu^i) \de_i .
\end{equation}
Equation \eqref{eq:Giusti 3} is the formula with the missed label (10.18) in \cite{G}.


We are ready now to give the

\begin{sketch}[of Lemma \ref{lmsimons}]
By \eqref{eq:def c indici} we can write that
$$
\frac{1}{2} \DLB c^2 = (\de_i \nu^j) \DLB \de_i \nu^j + \sum_{i,j,k} (\de_k \de_i \nu^j)^2.
$$
Then, using \eqref{eq:Giusti 2}, \eqref{eq:Giusti 3}, and the fact $\cH=0$, we have
$$
\frac{1}{2} \DLB c^2 = - (\de_i \nu^j) \de_i (c^2 \nu^j) - 2 (\de_i \nu^j) (\de_k \nu^l) (\de_l \nu^j) (\de_i \nu^k) + \sum_{i,j,k} (\de_k \de_i \nu^j)^2 ,
$$
and by \eqref{eq:Giusti 1}
$$
\frac{1}{2} \DLB c^2  = -c^4 - 2 \nu^i \nu^l (\de_j \de_l \nu^k) (\de_k \de_i \nu^j) + \sum_{i,j,k} (\de_k \de_i \nu^j)^2.
$$
Now, if $x_0 \in \pa E \setminus \{ 0 \}$, we can choose the $x_n$-axis to be the same direction as $\nu(x_0)$. Thus, $\nu(x_0)=(0,\dots,0,1)$ and at $x_0$ we have
$$
\nu^n=1 , \, \de_n=0 ,
$$
$$
\nu^{\al}=0 , \, \de_\al= \pa_\al \, \mbox{ for } \al=1,\dots,n-1 .
$$
Hence, computing from now on always at the point $x_0$, and by using \eqref{eq:Giusti 1} and \eqref{eq:utileperSimonslemmaproof}, we get
\begin{eqnarray*}
\frac{1}{2} \DLB c^2 &=& -c^4 + \sum_{\al,\be,\ga} (\de_\ga \de_\al \nu^\be)^2 + 2 \sum_{\al, \ga} (\de_\ga \de_\al \nu^n)^2 - 2 \sum_{\al, \be} (\de_\al \de_\be \nu^n)^2
\\
&=& -c^4 + \sum_{\al,\be,\ga} (\de_\ga \de_\al \nu^\be)^2 ,
\end{eqnarray*}
where all the greek indices indicate summation from $1$ to $n-1$.

On the other hand, we have 
$$
| \de c |^2 = \frac{1}{c^2} (\de_\al \nu^\be) (\de_\ga \de_\al \nu^\be) (\de_\si \nu^\tau) (\de_\ga \de_\si \nu^\tau) ,
$$
and hence
\begin{equation*}
\frac{1}{2} \DLB c^2 + c^4 - | \de c |^2 = \frac{1}{2 c^2} \sum_{\al, \be, \ga, \si, \tau} \left[ (\de_\si \nu^{\tau}) (\de_\ga \de_\al \nu^\be) - (\de_\al \nu^\be) (\de_\ga \de_\si \nu^\tau) \right]^2 .
\end{equation*}

Now remember that $\pa E$ is a cone with vertex at the origin, and thus
$<x, \nu> = 0$ on $\pa E$.
Since we took $\nu= (0, \dots , 0 , 1)$ at $x_0$, we may choose coordinates in such a way that $x_0$ lies on the $(n-1)$-axis. In particular, $\nu^{n-1} = 0$ at $x_0$ and
$$
0 = \de_i <x, \nu> = <\de_i x, \nu>+<x, \de_i \nu> = <x, \de_i \nu>,
$$
which leads to
$$
\de_i \nu^{n-1} = 0 \, \mbox { at } \, x_0 .
$$

If now the letters $A,B,S,T$ run from $1$ to $n-2$, he have
\begin{equation*}
\begin{split}
\frac{1}{2} \DLB c^2 + c^4 - | \de c |^2 = & \frac{1}{2 c^2} \sum_{A,B,S,T, \ga} \left[ (\de_S \nu^T) (\de_\ga \de_A \nu^B) - (\de_A \nu^B) (\de_\ga \de_S \nu^T) \right]^2 
\\
& + \frac{2}{c^2} \sum_{S,T, \ga, \al} (\de_S \nu^T)^2 (\de_\ga \de_{n-1} \nu^{\al})^2  \ge 2 \sum_{\al, \ga} ( \de_\ga \de_{n-1} \nu^{\al} )^2.
\end{split}
\end{equation*}
From \eqref{eq:Giusti 1}, $\de_i \de_{n-1} = \de_{n-1} \de_i $ and $\de_{n-1} = \pa_{n-1} = \pm \left( x^j /|x| \right) \pa_j$ at $x_0$.
Since $\pa E$ is a cone, $\nu$ is homogeneous of degree 0 and hence $\de_i \nu^\al$ is homogeneous of degree $-1$. Thus, by Euler's theorem on homogeneous functions, we have
$$
\de_{n-1} \de_i \nu^{\al} = \pm  \frac{x^j \pa_j }{|x|}  \de_i \nu^{\al} = \mp \frac{1}{|x|} \de_i \nu^{\al} ,
$$ 
and hence
$$
2 \sum_{i, \al } ( \de_i \de_{n-1} \nu^{\al} )^2 = \frac{2}{ | x |^2} \sum _{i, \al} (\de_i \nu^\al)^2 = \frac{2 c^2}{| x |^2} .
$$
The proof is now completed.
\qed
\end{sketch}

\subsection{Comments on: harmonic maps, free boundary problems, and nonlocal minimal surfaces}
Here we briefly sketch arguments and results similar to the previous ones on minimal surfaces, now for three other elliptic problems.

\subsubsection{Harmonic maps}
Consider the energy
\begin{equation}
\label{eq:harm_enrg}
E(u)=\frac{1}{2}\int_{\Omega}|Du|^2dx
\end{equation}
for $H^1$ maps $u:\Omega\subset\RR^n \to \overline{S^N_+} $ from a domain
$\Omega$ of $\RR^n$ into the closed upper hemisphere
$$\overline{S^N_+}=\{y\in \mathbb{R}^{N+1}: |y|=1, y_{N+1}\ge 0\}.$$

A critical point of $E$ is called a (weakly) {\it harmonic map}.
When a map minimizes $E$ among all maps with values into $\overline{S^N_+}$ and with same boundary
values on $\partial \Omega$, then it is called a 
minimizing harmonic map.

From the energy \eqref{eq:harm_enrg} and the restriction $|u| \equiv 1$, one finds that the equation for harmonic maps is given by
\begin{equation*}
-\Delta u=|Du|^2u \qquad\text{in }\Omega.
\end{equation*}

In 1983, J\"ager and Kaul proved the following theorem, that we state here without proving it (see the original paper \cite{JK} for the proof).

\begin{theorem}[J\"ager-Kaul~\cite{JK}]\label{thm:equator}
The equator map 
\begin{equation*}
u_*:B_1\subset\RR^n \rightarrow \overline{S^n_+},
\quad x\mapsto \left(x/|x|,0\right)
\end{equation*}
is a minimizing harmonic map on the class
$$
{\mathcal C}=\{u\in H^1(B_1\subset\RR^n,S^n) : u=u_* \text{ on }\partial B_1\}
$$
if and only if $n\ge 7$.
\end{theorem}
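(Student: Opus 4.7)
First I would verify directly that $u_*$ is a weakly harmonic map: writing $u_*(x)=(x_1/|x|,\ldots,x_n/|x|,0)$, one computes $-\Delta(x_i/|x|)=(n-1)x_i/|x|^3$ for $i=1,\ldots,n$ and $|Du_*|^2=(n-1)/|x|^2$, so the harmonic-map equation $-\Delta u=|Du|^2\,u$ into $S^n$ is satisfied. In particular $|Du_*|^2\in L^1(B_1)$ precisely because $n\ge 3$, so $u_*\in\mathcal{C}$. The remainder of the proof shows that the ``critical'' threshold for minimality is exactly $n=7$.

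\textbf{The ``only if'' direction ($3\le n\le 6$).} Here I would produce a competitor in $\mathcal{C}$ with strictly smaller energy than $u_*$, by pushing the equator map slightly into the upper hemisphere. Consider the radial family
\[
u_\phi(x)=\Bigl(\sin\phi(|x|)\,\tfrac{x}{|x|},\ \cos\phi(|x|)\Bigr),\qquad \phi:[0,1]\to[0,\pi/2],\quad \phi(1)=\tfrac{\pi}{2},
\]
which takes values in $\overline{S^n_+}\subset S^n$ and agrees with $u_*$ on $\partial B_1$ (the equator map is $\phi\equiv\pi/2$). A short calculation gives, up to a multiplicative constant,
\[
E(u_\phi)=\int_0^1\Bigl\{\phi'(r)^2+(n-1)\,\tfrac{\sin^2\phi(r)}{r^2}\Bigr\}r^{n-1}\,dr.
\]
Setting $\phi=\pi/2-\varepsilon\eta$ with $\eta\in C^1_c(0,1)$, Taylor expansion at $\phi\equiv\pi/2$ shows that the second variation is proportional to
\[
Q(\eta):=\int_0^1\Bigl\{\eta'(r)^2-(n-1)\,\tfrac{\eta(r)^2}{r^2}\Bigr\}r^{n-1}\,dr.
\]
By the radial Hardy inequality (Proposition~\ref{Prop:Hardy inequality}), the sharp (and unattained) constant in $\int_0^1\eta^2 r^{n-3}\,dr\le C\int_0^1(\eta')^2 r^{n-1}\,dr$ is $4/(n-2)^2$; hence $Q$ is strictly indefinite whenever $(n-1)\cdot 4/(n-2)^2>1$, equivalently $n^2-8n+8<0$, i.e.\ for $3\le n\le 6$. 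As in the last statement of Proposition~\ref{Prop:Hardy inequality} the infimum of $Q$ is actually $-\infty$, so truncations of $r^{-\alpha}$ with $\alpha$ close to $(n-2)/2$ produce an $\eta$, and a corresponding $\phi\in[0,\pi/2]$, for which $E(u_\phi)<E(u_*)$, defeating minimality.

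\textbf{The ``if'' direction ($n\ge 7$).} The same Hardy computation now yields $Q\ge 0$, so $u_*$ is at least \emph{stable} among radial competitors; but to promote this to genuine minimality against \emph{arbitrary} $u\in\mathcal{C}$ is the hard part, and is where I expect the main obstacle to lie. The plan is a calibration/comparison argument in the spirit of J\"ager--Kaul: one seeks an $n$-form $\omega$ on $B_1\times S^n$ (equivalently, a suitable null-Lagrangian density) with the two properties that, pointwise,
\[
u^{*}\omega\le |Du|^{2}\,dx\qquad\text{for every }u:B_1\to S^n,
\]
with equality when $u=u_*$, and that $\omega$ is closed (so $\int_{B_1}u^{*}\omega$ depends only on $u|_{\partial B_1}$). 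Stokes' theorem then gives $E(u)\ge\tfrac12\int_{B_1}u^{*}\omega=\tfrac12\int_{B_1}u_*^{*}\omega=E(u_*)$ for every $u\in\mathcal{C}$. Constructing such an $\omega$ reduces, after exploiting the rotational symmetry of $u_*$, to solving a single scalar ODE for an ``angle profile'' on $[0,\pi/2]$; global solvability of that ODE is equivalent to the very Hardy-threshold condition $(n-1)\cdot 4/(n-2)^2\le 1$ identified above, i.e.\ to $n\ge 7$. The main difficulty is that Hardy's inequality becomes sharp exactly at the borderline dimension, so the ODE degenerates at $n=7$ and one must use a careful limiting/concavity argument (as in the original proof of J\"ager--Kaul \cite{JK}) both to extract a genuine calibration (rather than an almost-calibration) and to verify the pointwise inequality $u^{*}\omega\le|Du|^{2}dx$ in \emph{all} directions in $T_yS^n$, not merely along the radial perturbations used in the stability computation above. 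The close parallel with Simons' Theorem~\ref{thmcone}, where the same Hardy threshold $(n-2)^2/4$ produced the dimension $7$, makes this Hardy-based picture a natural and guiding one.
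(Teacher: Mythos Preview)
The paper does \emph{not} prove this theorem: it is stated with an explicit disclaimer (``that we state here without proving it'') and a reference to \cite{JK}. The only information the paper adds is (a) the remark that the ``if'' direction uses a calibration argument, and (b) an alternative route to the ``only if'' direction via Theorem~\ref{thm:reg_harm} (Giaquinta--Sou\v{c}ek; Schoen--Uhlenbeck), which classifies degree-zero minimizing harmonic maps into the hemisphere.

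Your ``only if'' argument is correct and takes a different, more elementary route than the paper's alternative: instead of invoking a rigidity theorem for minimizers, you show directly that the second variation at $u_*$ is indefinite for $3\le n\le 6$ by reducing to the radial Hardy quotient and the threshold $(n-2)^2/4$ versus $n-1$. This immediately rules out even local minimality and produces an explicit competitor $u_\phi$ with lower energy. The paper's sketched alternative is stronger in scope (it says something about \emph{all} homogeneous minimizers, not just $u_*$), but yours is self-contained and sharper in identifying the dimensional threshold.

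Your ``if'' direction matches the paper's one-line hint (calibration) and correctly ties the construction to the same Hardy threshold, but---as you yourself flag---it remains a plan: the actual calibrating form, the ODE it reduces to, and the verification of the pointwise inequality $u^*\omega\le |Du|^2\,dx$ for arbitrary (not just radial) maps are described rather than carried out. Since the paper does not carry them out either, there is nothing further to compare; your outline is a faithful sketch of the strategy in \cite{JK}.
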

We just mention that the proof of the ``if" in Theorem \ref{thm:equator} uses a calibration argument.

Later, Giaquinta and Sou\v cek \cite{GiaSouLN}, and independently Schoen and Uhlenbeck \cite{SU2}, proved the following result.

\begin{theorem}[Giaquinta-Sou\v cek \cite{GiaSouLN}; Schoen-Uhlenbeck \cite{SU2}]\label{thm:reg_harm}
\hfill\\
Let $u:B_1\subset\RR^n\to \overline{S^N_+}$ be a minimizing harmonic map,
homogeneous of degree zero, 
into the closed upper hemisphere $\overline{S^N_+}$. If $3\le n \le 6$,
then $u$ is constant.
\end{theorem}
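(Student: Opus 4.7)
\textbf{Proof plan for Theorem \ref{thm:reg_harm}.} The strategy I would follow is a direct transposition of the Simons scheme of Theorem \ref{thmcone} to the Dirichlet energy \eqref{eq:harm_enrg}, with the sphere $S^N$ playing the role of the ambient manifold and the constraint $u_{N+1}\ge 0$ providing the analogue of the normal variation used in the minimal-surfaces setting. The first step would be to derive, from the minimizing character of $u$, a Jacobi-type stability inequality. Since the image of $u$ lies in the closed upper hemisphere, the admissible one-parameter families include maps of the form $u_t = (u+t\eta\, e_{N+1})/|u+t\eta\, e_{N+1}|$ with $\eta\in C_c^\infty(B_1)$ and $\eta\ge 0$, for which $u_t$ stays in $\overline{S^N_+}$. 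A direct computation of the second variation, after projecting $e_{N+1}$ onto the tangent space $T_uS^N$, gives
\begin{equation*}
\int_{B_1}\bigl\{|\nabla\phi|^2-|Du|^2|\phi|^2\bigr\}\,dx\ge 0,
\qquad \phi = \eta\,(e_{N+1}-u_{N+1}u),
\end{equation*}
and because $u$ is minimizing (not merely stable), the sign restriction on $\eta$ can be removed by a symmetrization/splitting argument that replaces $\eta$ by $|\eta|$ and uses quadratic dependence. The resulting Hardy-type inequality is the exact analogue of \eqref{eq:1-2v2} with $|Du|^2$ in place of $c^2$.

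Next I would exploit the degree-zero homogeneity $u(x)=u(x/|x|)$. It forces $|Du(x)|^2 = D(\sigma)/|x|^2$ and $u_{N+1}(x)=u_{N+1}(\sigma)$ with $\sigma=x/|x|$, so, writing $w(\sigma):=1-u_{N+1}(\sigma)^2$, the stability inequality reads
\begin{equation*}
\int_{B_1}|Du|^2\,w(\sigma)\,\eta(|x|)^2\,dx\;\le\;\int_{B_1}\bigl|\nabla\bigl(\eta(|x|)\sqrt{w(\sigma)}\bigr)\bigr|^2\,dx
\end{equation*}
for every Lipschitz $\eta$ with compact support outside the origin. Separating the radial and angular integrations and using $|\nabla(\eta\sqrt{w})|^2 = \eta'(r)^2 w(\sigma) + r^{-2}\eta(r)^2|\nabla_\sigma\sqrt{w}|^2$, this reduces to a one-dimensional inequality of the form
\begin{equation*}
\Bigl(\!\int_{S^{n-1}}\!\!Dw\,d\sigma\Bigr)\!\int_0^\infty\!\eta^2 r^{n-3}dr
\le
\Bigl(\!\int_{S^{n-1}}\!\!w\,d\sigma\Bigr)\!\int_0^\infty\!(\eta')^2 r^{n-1}dr
+\Bigl(\!\int_{S^{n-1}}\!\!|\nabla_\sigma\!\sqrt{w}|^2 d\sigma\Bigr)\!\int_0^\infty\!\eta^2 r^{n-3}dr.
\end{equation*}

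At this point the proof would proceed exactly as in Theorem \ref{thmcone}: choose the Simons-type radial weight $\eta(r)=r^{-\alpha}$ for $r\le 1$ and $\eta(r)=r^{-\beta}$ for $r\ge 1$, cut it off near $0$ and $\infty$, and check that the cutoffs vanish in the limit whenever the Hardy admissibility condition $\alpha<(n-2)/2<\beta$ holds. The radial integrals become $\alpha^2$ and $\beta^2$ times the critical integrals, and the angular factor $\int Dw-\int|\nabla_\sigma\sqrt{w}|^2$ must be bounded below by the angular factor $\int w$ times some constant strictly greater than $(n-2)^2/4$ in the range $3\le n\le 6$. The threshold matches exactly the equator map in Theorem \ref{thm:equator}, where $D=n-1$ and the spherical Hardy constant $(n-1)$ exceeds $(n-2)^2/4$ precisely for $n\le 6$; combined with the condition $u_{N+1}\ge 0$, a cone construction going back to J\"ager--Kaul shows that the angular deficit $\int Dw-\int|\nabla_\sigma\sqrt{w}|^2 - \tfrac{(n-2)^2}{4}\int w$ is strictly positive whenever $w\not\equiv 0$ on $S^{n-1}$. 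Once this angular lower bound is established, choosing $\alpha,\beta$ sufficiently close to $(n-2)/2$ forces the stability inequality to fail unless $w\equiv 0$, i.e.\ $u_{N+1}\equiv 1$; but then $u\equiv e_{N+1}$ is constant.

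The principal obstacle I anticipate is the angular Bochner/Hardy inequality on $S^{n-1}$ that replaces Simons' Lemma \ref{lmsimons}: one must show that for \emph{any} degree-zero homogeneous harmonic map $u$ into $\overline{S^N_+}$, not just for the equator map, the angular quantity $\int_{S^{n-1}}(D\,w - |\nabla_\sigma\sqrt{w}|^2)\,d\sigma$ dominates $(n-2)^2/4\cdot\int_{S^{n-1}}w\,d\sigma$ whenever $w\not\equiv 0$. This is the harmonic-map analogue of the pointwise bound $\tfrac12\Delta_{LB}c^2-|\delta c|^2+c^4\ge 2c^2/|x|^2$ on stationary cones, and, as in Subsection \ref{subsec:SimLem}, its proof should combine the harmonic-map equation $-\Delta u=|Du|^2 u$, the hemisphere constraint $u_{N+1}\ge 0$ (which allows comparison with the equator map as an extremal configuration), and a careful choice of test variations obtained by reflecting $u$ across $\{y_{N+1}=0\}$ to build competitors with strictly smaller energy unless $u$ is constant.
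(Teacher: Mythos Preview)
Your route diverges from the paper's in a way that leaves a real gap. The paper does \emph{not} build the test function out of the height $u_{N+1}$. Instead it first applies stereographic projection from the south pole, obtaining $v=P\circ u:B_1\to\RR^N$ with $|v|\le 1$ and energy $\int |Dv|^2/(1+|v|^2)^2$. It then uses the \emph{first} variation with the test field $\xi=v\,\eta(|x|)$ to show that either $v$ is constant or $|v|\equiv 1$ (i.e.\ $u$ maps into the equator). Only after this reduction does it plug $\xi=v\,|Dv|\,\eta(|x|)$ into the \emph{second} variation, setting $\tilde c=c:=|Dv|$; the Simons-type input is then the pointwise Bochner inequality of Lemma~\ref{lem:harm},
\[
\tfrac12\Delta c^2 - |Dc|^2 + c^4 \ge \frac{c^2}{|x|^2}+\frac{c^4}{n-1},
\]
which feeds directly into the stability inequality exactly as $c^2$ did for minimal cones, and the Hardy-type endgame for $3\le n\le 6$ goes through.

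Your scheme, by contrast, skips the first-variation reduction and replaces $c$ by $\sqrt{w}=\sqrt{1-u_{N+1}^2}$. The ``principal obstacle'' you identify --- the angular inequality $\int_{S^{n-1}}(Dw-|\nabla_\sigma\sqrt{w}|^2)>\tfrac{(n-2)^2}{4}\int_{S^{n-1}}w$ for every degree-zero harmonic map into $\overline{S^N_+}$ --- is genuinely unproven in your outline, and the suggested mechanism (reflection across the equator to build competitors) does not obviously yield a bound of this form. There is no analogue of Lemma~\ref{lem:harm} for your quantity $w$, and without it the argument does not close. The paper's choice of $\tilde c=|Dv|$ is precisely what makes the Bochner identity available; your choice of $\sqrt{w}$ trades a provable pointwise differential inequality for an integral one you cannot establish.
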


Now we will show an outline of the proof of Theorem~\ref{thm:reg_harm} following~\cite{GiaSouLN}. More details can also be found in Section 3 of \cite{CabCapThree}. This theorem gives an alternative proof of one part of the statement of Theorem~\ref{thm:equator}. Namely, that the equator map $u_*$ 
is not minimizing for $3\le n\le 6$.

\begin{sketch}[of Theorem~\ref{thm:reg_harm}]
After stereographic projection (with respect to the south pole)
$P$ from $S^{N}\subset\RR^{N+1}$ to $\RR^N$, for the new
function $v=P\circ u: B_1\subset\RR^n\to\RR^N$,
the energy \eqref{eq:harm_enrg} (up to a constant factor) 
is given by 
\begin{equation*}
E(v):=\int_{B_1}\frac{|Dv|^2}{(1+|v|^2)^2}dx.
\end{equation*}
In addition, we have $|v|\le 1$ since the image of $u$ is contained
in the closed upper hemisphere.

By testing the function
$$
\xi(x)=v(x)\eta(|x|),
$$ 
where 
$\eta$ is a smooth radial function with compact support in $B_1$, in the equation of the first variation of the energy, that is
\begin{equation*}
\delta E(v) \xi=0 ,
\end{equation*}
one can deduce that either $v$ is constant (and 
then the proof is finished) or
$$
|v|\equiv 1 ,
$$
that we assume from now on.

Since $v$ is a minimizer, we have that the second variation of the energy satisfies
$$\delta^2 E(v) (\xi,\xi) \ge 0 .$$
By choosing here the function 
$$
\xi(x)=v(x)|Dv(x)|\eta(|x|),
$$ 
where 
$\eta$ is a smooth radial function with compact support in $B_1$
(to be chosen later), and setting
$$
c(x):=|Dv(x)|,
$$
one can conclude the proof by similar arguments as in the previous section and by using Lemma \ref{lem:harm}, stated next.
\qed
\end{sketch}

\begin{lemma}\label{lem:harm}
If $v$ is a harmonic map, homogeneous of degree
zero, and with $|v|\equiv 1$, we have
\begin{equation*}
\frac{1}{2}\Delta c^2-|Dc|^2 + c^4 \geq \frac{c^2}{|x|^2}+\frac{c^4}{n-1},
\end{equation*}
where $c:=|Dv|$.
\end{lemma}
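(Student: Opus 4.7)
The plan is to split the proof into two stages: first a Bochner-type identity that turns the expression $\frac12\Delta c^2-|Dc|^2+c^4$ into a purely algebraic quantity in the first two derivatives of $v$; and second a pointwise inequality for $|D^2 v|^2-|Dc|^2$ that exploits both the target-space constraint $|v|\equiv 1$ and the degree-zero homogeneity of $v$ in the base.

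For the first stage, I would differentiate the harmonic map equation $-\Delta v^\alpha = c^2 v^\alpha$, apply the standard identity $\tfrac12\Delta((v^\alpha_i)^2)=|Dv^\alpha_i|^2+v^\alpha_i\Delta v^\alpha_i$, and sum over $i,\alpha$. The cross-term $(c^2)_i\sum_\alpha v^\alpha v^\alpha_i$ vanishes because $|v|^2\equiv 1$ yields $v\cdot v_i=0$, and $\sum_\alpha(v^\alpha_i)^2=|v_i|^2$, so one is left with
\[
\tfrac12\Delta c^2 \;=\; |D^2 v|^2 - c^4.
\]
The lemma therefore reduces to the pointwise algebraic inequality
\[
|D^2 v|^2 - |Dc|^2 \;\geq\; \tfrac{c^2}{|x|^2} + \tfrac{c^4}{n-1}.
\]
The $c^4/(n-1)$ term will come from a target-space decomposition of the Hessian. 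Writing $V^\alpha_{ij}:=v^\alpha_{ij}$ and differentiating $v\cdot v_i=0$ gives $v\cdot V_{ij}=-A_{ij}$, where $A_{ij}:=v_i\cdot v_j$; hence $|D^2 v|^2 = \tr(A^2) + T$ with $T:=\sum_{i,j}|V^\perp_{ij}|^2$ collecting the components of $V$ perpendicular to $v$. The matrix $A$ is symmetric and positive semidefinite with $\tr(A)=c^2$, and the degree-zero homogeneity $x^j v^\alpha_j=0$ forces $Ax=0$, so $\rank(A)\leq n-1$. Cauchy--Schwarz applied to the nonzero eigenvalues then yields $\tr(A^2)\geq c^4/(n-1)$.

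It remains to show $T-|Dc|^2\geq c^2/|x|^2$, a sharpened Kato-type inequality which will be the main obstacle: the naive Cauchy--Schwarz gives only $|Dc|^2\leq T$ and thus loses the Hardy-like term. To recover it, I would fix a point and choose coordinates with $x=re_n$. Homogeneity gives $v_n=\partial_r v=0$, and differentiating $x^j v^\alpha_j\equiv 0$ produces $V^\alpha_{nj}=-v^\alpha_j/r$; together with $v\cdot v_j=0$ this shows that each $V_{nj}$ is already perpendicular to $v$. Separating in the sum defining $T$ the indices equal to $n$ from the rest one finds
\[
T \;=\; \tfrac{2c^2}{r^2} + \sum_{i,j<n}|V^\perp_{ij}|^2,
\]
since each off-diagonal pair $(i,n)$ contributes twice while the diagonal $(n,n)$ contributes zero (because $v_n=0$). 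In parallel, using $v\cdot v_i=0$ one obtains $c\,c_j=\sum_{i,\alpha}v^\alpha_i(V^\perp_{ij})^\alpha$; this directly gives $c_n=-c/r$, while for $j<n$ the $i=n$ summand drops out (since $v_n=0$) and Cauchy--Schwarz yields $(c_j)^2\leq\sum_{i<n}|V^\perp_{ij}|^2$. Summing over $j$ produces $|Dc|^2\leq c^2/r^2+\sum_{i,j<n}|V^\perp_{ij}|^2$, so $T-|Dc|^2\geq c^2/r^2$.

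Combining the two lower bounds gives
\[
|D^2 v|^2 - |Dc|^2 \;=\; \tr(A^2) + \bigl(T-|Dc|^2\bigr) \;\geq\; \tfrac{c^4}{n-1} + \tfrac{c^2}{r^2},
\]
which together with the Bochner identity of the first stage completes the proof. The delicate point is precisely the last display: the target-space rank bound and the base-space homogeneity both play an essential role, and the improved Cauchy--Schwarz using $v_n=0$ is needed to isolate the contribution of the ``radial row'' $V_{nj}=-v_j/r$ of the Hessian as the source of the Hardy term $c^2/|x|^2$.
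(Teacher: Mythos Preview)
Your proof is correct and complete. The paper itself does not give a proof of this lemma but only refers to \cite{GiaSouLN} and notes that it ``also follows from Bochner identity (see \cite{SU2})''; your argument is precisely a self-contained execution of this Bochner-identity route, combining the target-space decomposition (yielding the $c^4/(n-1)$ term via the rank bound on the Gram matrix $A_{ij}=v_i\cdot v_j$) with the base-space homogeneity (isolating the radial row of the Hessian to produce the Hardy term $c^2/|x|^2$ through an improved Kato inequality).
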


This lemma is the analogue result of Lemma~\ref{lmsimons} for
minimal cones. See \cite{GiaSouLN} for a proof of the lemma,
which also follows from Bochner identity (see \cite{SU2}).

\subsubsection{Free boundary problems}
Consider the one-phase free boundary problem:
%
\begin{equation}\label{000:eq:freboundpb}
\left\{
\begin{array}{rcll}
\De u &=& 0 & \quad\mbox{in } E \\
u &=& 0  & \quad\mbox{on } \pa E \\
| \na u | &=& 1  & \quad\mbox{on } \pa E \setminus \lbrace 0 \rbrace ,\\
\end{array}\right.
\end{equation}
where $u$ is homogeneous of degree one and positive in the domain $E \subset \RR^n$ and $\pa E$ is a cone.
We are interested in solutions $u$ that are stable for the Alt-Caffarelli energy functional
\begin{equation*}
E_{B_1}(u) = \int_{B_1} \left\lbrace | \na u |^2 + \mathbbm{1}_{ \left\lbrace u> 0 \right\rbrace  } \right\rbrace  dx
\end{equation*}
with respect to compact domain deformations that do not contain the origin. More precisely, we say that $u$ is {\it stable} if for any smooth vector field $\Psi: \RR^n \rightarrow \RR^n$ with $0 \notin {\rm supp} \Psi \subset B_1$ we have
\begin{equation*}
\left.\frac{d^2}{dt^2} E_{B_1} \left(u \left(x + t \Psi(x) \right) \right) \right|_{t=0} \ge 0.
\end{equation*}
The following result due to Jerison and Savin is contained in \cite{JS}, where a detailed proof can be found.

\begin{theorem}[Jerison-Savin \cite{JS}]\label{JSfreebp}
The only stable, homogeneous of degree one, solutions of \eqref{000:eq:freboundpb} in dimension $n\le 4$ are the one-dimensional solutions $u=(x \cdot \nu )^+$, $\nu \in S^{n-1}$.
\end{theorem}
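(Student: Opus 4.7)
The plan is to mimic, in the free-boundary setting, the Simons-type scheme already used for minimal cones in Theorem \ref{thmcone}. First, I would write down the first and second variations of the Alt--Caffarelli functional $E_{B_1}$ under normal domain deformations $x \mapsto x + t\,\xi(x)\,\nu(x)$ with $\xi$ compactly supported in $B_1\setminus\{0\}$. The first variation recovers the free boundary condition $|\nabla u|=1$ on $\partial E$, while the second variation, after the usual integration by parts and using $\Delta u = 0$ in $E$ together with $u=0$ on $\partial E$, produces a stability inequality of the schematic form
\begin{equation*}
\int_{\partial E}\bigl(|\delta \xi|^{2} - c^{2}\xi^{2}\bigr)\,dH_{n-1} \;+\; (\text{interior Dirichlet correction involving the harmonic extension of }\xi) \;\ge\; 0 ,
\end{equation*}
where $c^2$ is again the sum of the squares of the principal curvatures of the cone $\partial E$. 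The interior correction is the genuinely new object compared with \eqref{eq:1-2v2} and encodes that $\partial E$ is only one half of a free boundary.

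Next, exploiting that $u$ is harmonic in $E$, homogeneous of degree one, vanishes on $\partial E$, and satisfies $|\nabla u|=1$ on $\partial E$, I would derive the free-boundary analogue of Simons identity (Lemma \ref{lmsimons}). The idea is: differentiating the two boundary conditions tangentially along $\partial E$ yields $u_\nu \equiv 1$ and a linear relation between the tangential Hessian of $u$ and the second fundamental form $A$ of $\partial E$; combining this with $\Delta u = 0$ gives $u_{\nu\nu} = -\mathcal H\, u_\nu$, exactly as the Reilly-type identity recalled in Section 1. Feeding these algebraic identities into the intrinsic Riemannian computation of $\tfrac12 \Delta_{LB}|A|^2$ (as in \cite{S} or the Bochner formulation in \cite{ColMinLN}), one should obtain a pointwise inequality of the form
\begin{equation*}
\tfrac12\,\Delta_{LB} c^{2} - |\delta c|^{2} + c^{4} \;\ge\; \frac{\kappa_{n}\,c^{2}}{|x|^{2}}
\end{equation*}
on $\partial E\setminus\{0\}$, for some explicit positive constant $\kappa_n$ whose size will ultimately dictate the critical dimension. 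This step is, in my view, \emph{the main obstacle}: the algebra is heavier than in the minimal-cone case because the free boundary condition $|\nabla u|=1$ couples the geometry of $\partial E$ to the extra interior term, and one must track this coupling carefully through the Bochner-type computation.

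Having the pointwise Simons-type inequality, I would plug $\xi = c\,\eta(|x|)$ into the stability inequality, integrate by parts using \eqref{equa:intLB}, and absorb the $|\delta c|^{2}$ terms to arrive at
\begin{equation*}
\int_{\partial E} c^{2}\Bigl(|\delta \eta|^{2} - \frac{\kappa_{n}\,\eta^{2}}{|x|^{2}}\Bigr)\,dH_{n-1} \;\ge\; 0 \qquad \text{(modulo the interior correction).}
\end{equation*}
Choosing the one-parameter family of Lipschitz cut-offs
$\eta(r) = r^{-\alpha}$ for $r\le 1$ and $\eta(r) = r^{-\beta}$ for $r\ge 1$, exactly as in the proof of Theorem \ref{thmcone}, I would then balance the two constraints $\alpha<(n-5)/2<\beta$ (needed for integrability of the tails on the $(n-1)$-dimensional cone) against $\alpha^{2},\beta^{2}<\kappa_{n}$ (needed for the pointwise Hardy-type inequality to bite). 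Both constraints are simultaneously satisfiable precisely in the low-dimensional range in which the Jacobi/Hardy spectrum on $\partial E$ stays nonnegative, which for the free boundary problem one expects to be $n\le 4$. Under this dimensional restriction one concludes $c^{2}\equiv 0$, so $\partial E$ is a hyperplane; combined with $u$ being harmonic, homogeneous of degree one, positive in $E$, and satisfying $|\nabla u|=1$ on $\partial E$, this forces $u(x)=(x\cdot\nu)^{+}$ for some $\nu\in S^{n-1}$, as required.
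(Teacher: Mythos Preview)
Your scheme has the right spirit but the wrong choice of $c$, and this is a genuine gap. The paper (following Jerison--Savin) does \emph{not} take $c^2$ to be the second fundamental form of the cone $\partial E$ and does not work with $\Delta_{LB}$ on $\partial E$. Instead, $c$ is built from the second derivatives of $u$ throughout the domain: one takes $c^2=\|D^2 u\|^2=\sum_{i,j} u_{ij}^2$, an \emph{interior} quantity on $E$. The linearized problem here is not the Jacobi equation on the hypersurface but the boundary value problem $\Delta v=0$ in $E$, $v_\nu+\mathcal H\,v=0$ on $\partial E\setminus\{0\}$, and the Simons-type inequality is correspondingly an interior one with the Euclidean Laplacian:
\[
\tfrac12\,\Delta c^2 - |\nabla c|^2 \;\ge\; \frac{2(n-2)}{n-1}\,\frac{c^2}{|x|^2} + \frac{2}{n-1}\,|\nabla c|^2 \quad\text{in } E.
\]
On top of this one also needs a separate boundary inequality for $c_\nu$ on $\partial E$; the two are combined with the stability condition. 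Your plan to run everything as a boundary computation on $\partial E$ with the second fundamental form, treating the interior harmonic extension as a ``correction'', does not match the actual structure of the problem and would not close.

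There is a second gap: even with the correct $c=\|D^2 u\|$, the resulting Hardy-type balance only succeeds for $n\le 3$. To reach $n=4$ (the full statement of the theorem) one must replace $c$ by a more involved convex function of the eigenvalues of $D^2 u$; your proposal gives no hint of this, and the naive power cut-off argument with the simple $c$ will fall one dimension short.
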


In dimension $n=3$ this result had been established by Caffarelli, Jerison, and Kenig \cite{CJK}, where they conjectured that it remains true up to dimension $n \le 6$.
On the other hand, in dimension $n=7$, De Silva and Jerison \cite{DJ} provided an example of a nontrivial minimizer.

The proof of Jerison and Savin of Theorem \ref{JSfreebp} is similar to Simons proof of the rigidity of stable minimal cones in low dimensions: they find functions $c$ (now involving the second derivatives of $u$) which satisfy appropriate differential inequalities for the linearized equation. 

Here, the linearized problem is the following:
%
$$
\left\{
\begin{array}{rcll}
\De v &=& 0 & \quad\mbox{in } E \\
v_{\nu} + \cH v &=& 0  & \quad\mbox{on } \pa E \setminus \left\lbrace 0 \right\rbrace . \\
\end{array}\right.
$$
For the function
$$
c^2= \nr D^2 u \nr^2 = \sum_{i,j =1}^n u_{ij}^2 ,
$$
they found the following interior inequality which is similar to the one of the Simons lemma:
$$\frac{1}{2} \De c^2 - |\na c|^2 \geq 2 \, \frac{n-2}{n-1} \, \frac{c^2}{|x|^2}+ \frac{2}{n-1} \, |\na c|^2.$$
In addition, they also need to prove a boundary inequality involving $c_\nu$.
Furthermore, to establish Theorem \ref{JSfreebp} in dimension $n=4$, a more involved function $c$ of the second derivatives of $u$ is needed.

\subsubsection{Nonlocal minimal surfaces}
Nonlocal minimal surfaces, or $\al$-minimal surfaces (where $\al \in (0,1)$), have been introduced in 2010 in the seminal paper of Caffarelli, Roquejoffre, and Savin \cite{CRS}. These surfaces are connected to fractional perimeters and diffusions and, as $\al \nearrow 1$, they converge to classical minimal surfaces.
We refer to the lecture notes \cite{CF} and the survey \cite{DipVal}, where more references can be found.

For $\al$-minimal surfaces and all $\al \in (0,1)$, the analogue of Simons flatness result is only known in dimension $2$ by a result for minimizers of Savin and Valdinoci \cite{SV}.

\section{The Allen-Cahn equation}\label{ALLENCA}
This section concerns the {\it Allen-Cahn equation}
\begin{equation}\label{AlCa}
- \Delta u= u - u^3 \quad \mbox{in $\RR^{n}$}.
\end{equation}
By using equation \eqref{AlCa} and the maximum principle it can be proved that any solution satisfies
$|u| \le 1$. Then, by the strong maximum principle we have that either $|u|<1$  or $u \equiv \pm 1$.
Since $u \equiv \pm 1$ are trivial solutions, from now on we consider $u:\RR^{n}\to (-1,1)$.

We introduce the class of {\it 1-d solutions}: 
$$
u(x)= u_* (x \cdot e) \quad \mbox{ for a vector } \, e \in \RR^n, |e|=1,
$$ 
where
$$
u_* (y)= \tanh\left(\frac{y}{\sqrt{2}} \right).
$$
%
%
%
The solution $u_*$ is sometimes referred to as the {\it layer solution} to \eqref{AlCa};
see Figure \ref{fig:8}.
The fact that $u$ depends only on one variable can be rephrased also by saying that all the level sets
$\{u=s\}$ of $u$ are
hyperplanes.

\begin{figure}[htbp]
\centering
\includegraphics[scale=.25]{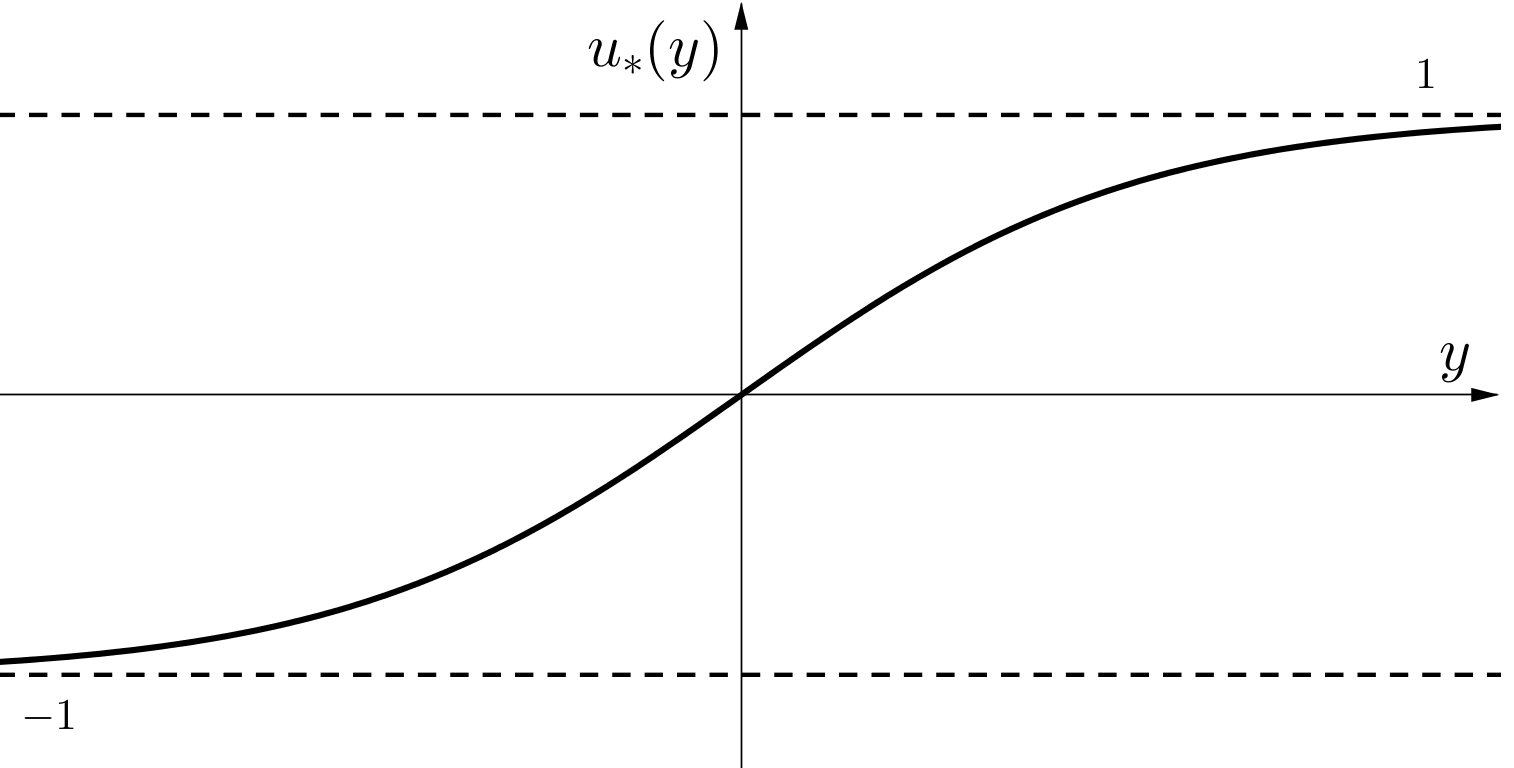}
\caption{The increasing, or layer, solution to the Allen-Cahn equation}
\label{fig:8}       
\end{figure}

\begin{exercise}
Check that the $1$-d functions introduced above are solutions of the Allen-Cahn equation.
\end{exercise}

\begin{remark}\label{rem:1dthenconditions}
Let us take $e= e_n = (0, \dots, 0,1)$ and consider the 1-d solution
$u(x)=u_* (x_n) = \tanh ( x_n / \sqrt{2} )$. It is clear that the following two relations hold:
\begin{equation}\label{monoton}
u_{x_n} >0 \quad \mbox{in }\RR^{n} ,
\end{equation}

\begin{equation}\label{limiting}
\lim_{x_n\rightarrow\pm\infty}u(x', x_n)=\pm 1
\quad \text{ for all }x'\in \RR^{n-1}.
 \end{equation}

\end{remark}

The energy functional associated to equation \eqref{AlCa} is
\begin{equation*}
E_\Om (u):=\int_\Omega \left\{ \frac{1}{2} |\nabla u|^2
+ G(u)\right\} dx ,
\end{equation*}
where $G$ is the double-well potential in Figure \ref{fig:7}:
\begin{equation*}
G(u)= \frac{1}{4} \left( 1- u^2 \right)^2. 
\end{equation*}

\begin{definition}[Minimizer]\label{definit:MinimizAC}
A function  $u:\RR^{n}\to (-1,1)$ is said to be a {\it minimizer} of \eqref{AlCa}
when
$$E_{B_R} (u) \leq E_{B_R} (v)$$
for every open ball $B_R$ and functions $v: \ol{B_R}\to \RR$ such that $v\equiv u$ on 
$\partial B_R$.
\end{definition}

\begin{figure}[htbp]
\centering
\includegraphics[scale=.25]{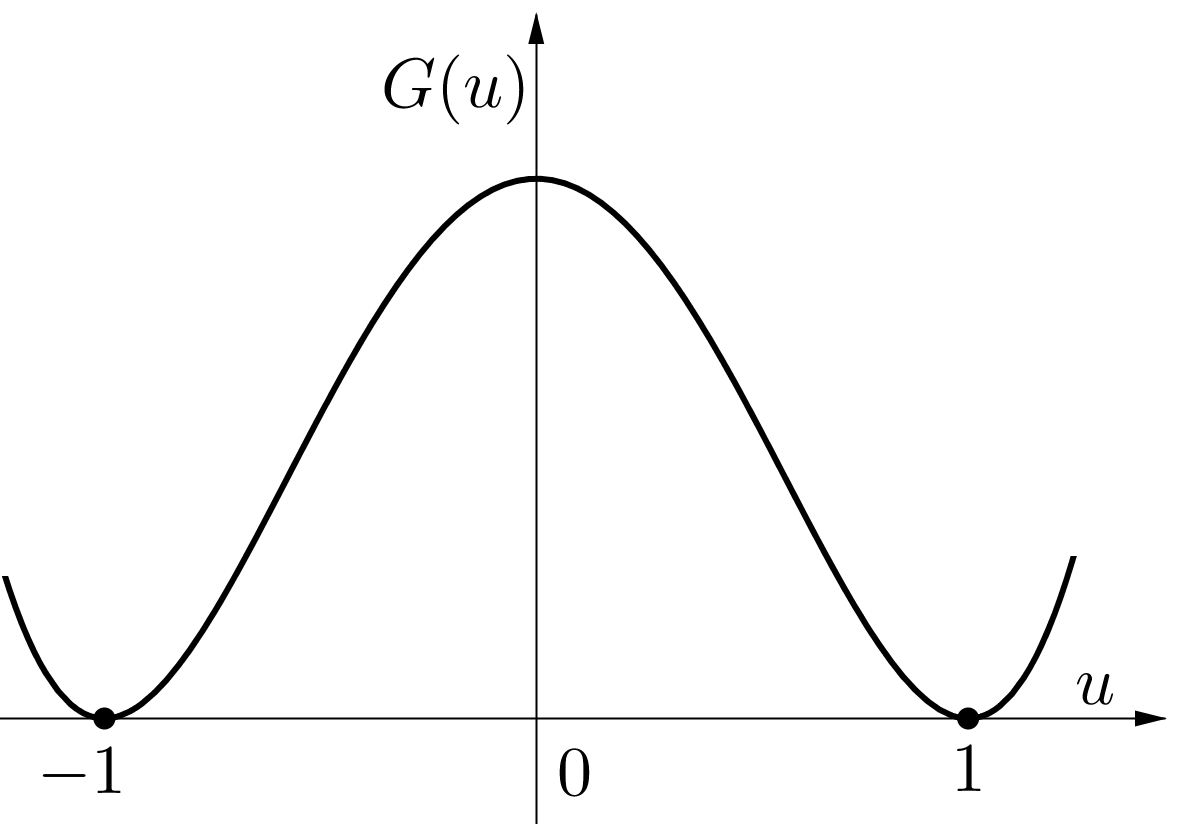}
\caption{The double-well potential in the Allen-Cahn energy}
\label{fig:7}       
\end{figure}

\smallskip
\noindent
{\bf Connection with the theory of minimal surfaces.}
The Allen-Cahn equation has its origin in the theory of phase transitions and it is used as a model for some nonlinear reaction-diffusion processes.
To better understand this, let $\Om\subset \RR^n$ be a bounded domain, and consider the Allen-Cahn equation with parameter $\ve > 0$,
\begin{equation}\label{eq:Allen-Cahnwithparameter}
- \ve^2 \De u= u - u^3 \, \mbox{ in } \, \Om ,
\end{equation}
with associated energy functional given by
\begin{equation}\label{eq:energyACwithparameter}
E_{\ve}(u)= \int_{\Om} \left\lbrace \frac{\ve}{2} \, | \na u|^2 + \frac{1}{\ve} \, G(u) \right\rbrace dx .
\end{equation}
Assume now that there are two populations (or chemical states) A and B and that $u$ is a density measuring the percentage of the two populations at every point: if $u(x) = 1$ (respectively, $u(x)= -1$) at a point $x$, we have only population A at $x$ (respectively, population B); $u(x)=0$ means that at $x$ we have $50\%$ of population A and $50\%$ of population $B$.

By \eqref{eq:energyACwithparameter}, it is clear that in order to minimize $E_{\ve}$ as $\ve$ tends to $0$, $G(u)$ must be very small. From Figure \ref{fig:7} we see that this happens when $u$ is close to $\pm 1$. These heuristics are indeed formally confirmed by a celebrated theorem of Modica and Mortola. It states that, if $u_\ve$ is a family of minimizers of $E_\ve$,
then, up to a subsequence, $u_\ve$ converges in $L^1_{\mathrm{loc}} (\Om)$, as $\ve$ tends to $0$,
to
$$
u_0 = \mathbbm{1}_{ \Om_+  } - \mathbbm{1}_{ \Om_-  } 
$$
for some disjoint sets $\Om_{\pm}$ having as common boundary a surface $\Ga$. In addition, $\Ga$ is a minimizing minimal surface.
Therefore, the result of Modica-Mortola establishes that the two populations tend to their total separation, and in such a (clever) way that the interface surface $\Ga$ of separation has least possible area.

Finally, notice that the $1$-d solution of \eqref{eq:Allen-Cahnwithparameter},
$$
x \mapsto u_* (\frac{x \cdot e}{\ve}) ,
$$
makes a very fast transition from $-1$ to $1$ in a scale of order $\ve$. Accordingly, in Figure \ref{fig:add1}, $u_\ve$ will make this type of fast transition across the limiting minimizing minimal surface $\Ga$.   
The interested reader can see \cite{AAC} for more details.

\smallskip

\begin{figure}[htbp]
\centering
\includegraphics[scale=.25]{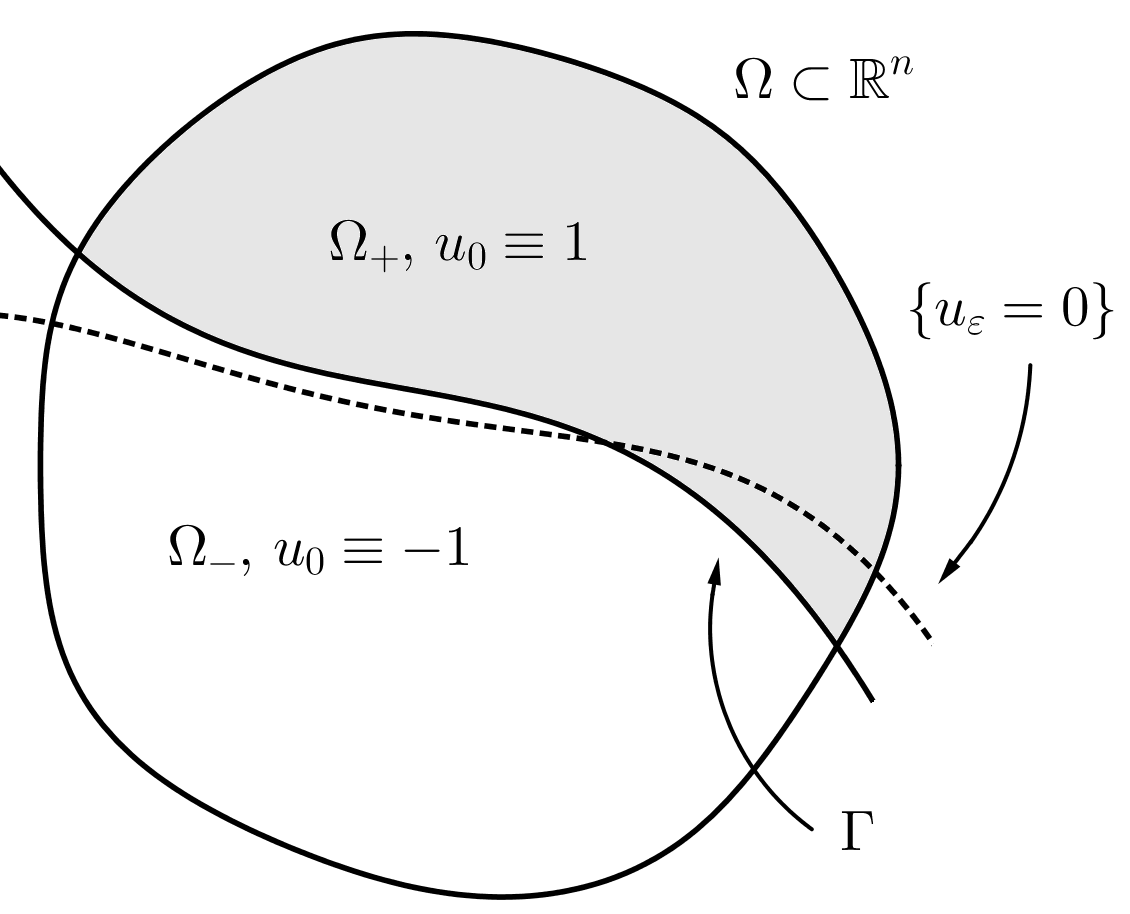}
\caption{The zero level set of $u_\ve$, the limiting function $u_0$, and the minimal surface $\Ga$}
\label{fig:add1}       
\end{figure}

\subsection{Minimality of monotone solutions with limits \texorpdfstring{$\pm 1$}{+-1}}

The following fundamental result shows that monotone solutions with limits $\pm 1$ are minimizers (as in Definition \ref{definit:MinimizAC}).

\begin{theorem}[Alberti-Ambrosio-Cabr\'e \cite{AAC}]\label{alba}
Suppose that $u$ is a solution of \eqref{AlCa} satisfying the monotonicity hypothesis \eqref{monoton} and the condition \eqref{limiting} on limits.
Then, $u$ is a minimizer of \eqref{AlCa} in $\RR^n$.
\end{theorem}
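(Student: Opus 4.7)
The strategy is the foliation/calibration argument of Alberti--Ambrosio--Cabré. The monotonicity \eqref{monoton} together with the limits \eqref{limiting} provides a complete foliation of $\RR^n$ by translates of $u$, each of which is itself a solution of \eqref{AlCa}; this structure is rigid enough to force $u$ to be energy-minimizing, not merely stable.

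\textbf{Step 1 (Truncation).} First I would show that it suffices to consider competitors $v$ with $-1 \le v \le 1$. If $v \equiv u$ on $\partial B_R$, then $\tilde v := \max\{-1,\min\{v,1\}\}$ still satisfies $\tilde v = u$ on $\partial B_R$ (since $|u|<1$), and $E_{B_R}(\tilde v) \le E_{B_R}(v)$: the Dirichlet part cannot increase because $|\nabla \tilde v| \le |\nabla v|$ a.e., and the potential part cannot increase because $G\ge 0$, $G(\pm 1)=0$, and $G$ is increasing outside $[-1,1]$.

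\textbf{Step 2 (Foliation).} Next I would set $u^t(x',x_n):=u(x',x_n+t)$ for $t\in\RR$. By translation invariance each $u^t$ solves \eqref{AlCa}; by \eqref{monoton} the family is strictly increasing in $t$; and by \eqref{limiting}, $u^t \to \pm 1$ uniformly on compact subsets of $\RR^n$ as $t\to\pm\infty$. Hence $\{u^t\}$ foliates $\RR^n\times(-1,1)$: for every $(x,s)\in \overline{B_R}\times(-1,1)$ there is a unique smooth function $T(x,s)$ with $u^{T(x,s)}(x)=s$ and $\partial_s T=1/u_{x_n}(x',x_n+T)>0$. Any truncated competitor may thus be represented as $v(x)=u^{\phi(x)}(x)$ with $\phi(x):=T(x,v(x))$ and $\phi\equiv 0$ on $\partial B_R$.

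\textbf{Step 3 (Calibration and conclusion).} The plan is now to build a calibration for the Allen--Cahn functional out of the foliation. I would introduce the graph $\mathcal G_w:=\{(x,w(x)):x\in B_R\}\subset B_R\times(-1,1)$ of an admissible function $w$ and a vector field $\Xi:B_R\times(-1,1)\to\RR^{n+1}$ that (a) is divergence-free, (b) coincides along $\mathcal G_u$ with the (upward) unit conormal to $\mathcal G_u$ weighted by the local energy density, and (c) has length at each point bounded by the density. The crucial input for (a) is precisely the Euler--Lagrange equation $-\Delta u^t=u^t-(u^t)^3$ satisfied by every leaf, which, written across the foliation, becomes the divergence-free condition on $\Xi$. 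With such $\Xi$ in hand, integrating $\operatorname{div}\Xi=0$ on the region bounded by $\mathcal G_u$ and $\mathcal G_v$ (which share the trace $\{(x,u(x)):x\in\partial B_R\}$) and applying the divergence theorem in $\RR^{n+1}$ yields $E_{B_R}(u)\le E_{B_R}(v)$.

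\textbf{Main obstacle.} The principal difficulty is constructing the calibration $\Xi$ and verifying properties (a)--(c) simultaneously, because the pointwise equipartition $\tfrac12|\nabla u|^2 = G(u)$ (which makes the analogous construction trivial in the one-dimensional case) is \emph{not} available for a general monotone solution in $\RR^n$. One must therefore choose the horizontal and vertical components of $\Xi$ using the full foliation rather than just the single leaf $\mathcal G_u$, and reduce the divergence condition to the Allen--Cahn equation leaf by leaf via a careful change of variables. As an alternative route, one can avoid the explicit calibration by writing $v = u^{\phi}$ as above and directly expanding $E_{B_R}(u^\phi)$ after the change of variables $y=(x',x_n+\phi(x))$, using the equation for $u$ to organize the resulting expression as $E_{B_R}(u)$ plus a manifestly nonnegative remainder; this is the path adopted in the original proof.
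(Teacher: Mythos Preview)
Your plan follows the original Alberti--Ambrosio--Cabr\'e calibration argument, and as you acknowledge, the main work lies in building the field $\Xi$ (or, equivalently, in the change-of-variables computation $v=u^{\phi}$); this is correct but, as you note yourself, ``slightly involved''. The paper, however, does \emph{not} present that proof. Instead it gives a much shorter argument suggested by Caffarelli: one first invokes the direct method to obtain an actual minimizer $v$ of $E_{B_R}$ with $v=u$ on $\partial B_R$, and then slides the foliation $u^t$ from $t=-\infty$ (and from $t=+\infty$) until it first touches $v$; since $u^{t_*}<u=v$ on $\partial B_R$ the contact must be interior, and the strong maximum principle applied to the linear equation satisfied by $u^{t_*}-v$ forces $u^{t_*}\equiv v$, a contradiction unless $v\equiv u$. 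The trade-off is clear: your route avoids both the existence of a minimizer and the strong maximum principle, and yields the inequality $E_{B_R}(u)\le E_{B_R}(v)$ for \emph{every} competitor directly, at the cost of a delicate construction; the paper's route is almost immediate once one accepts those two classical inputs, and it also gives uniqueness of the minimizer with the given boundary datum.
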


See \cite{AAC} for the original proof of the Theorem~\ref{alba}. It uses a calibration built from a foliation and avoids the use of the strong maximum principle, but it is slightly involved. Instead, the simple proof that we give here was suggested to the first author (after one of his lectures on \cite{AAC}) by  L.~Caffarelli.
It uses a simple foliation argument together with the strong maximum principle, as in the alternative proof of Theorem \ref{thm:SimCone} given in Subsection \ref{subsec 1.1:MinimalitySimcones}.

\begin{proof}[Proof of Theorem \ref{alba}]
Denoting $x= (x' ,x_n) \in \RR^{n-1} \times \RR$, let us consider the functions
$$
u^t (x):= u(x', x_n +t), \, \mbox{ for } \, t\in\RR.
$$
By the monotonicity assumption \eqref{monoton} we have that
\begin{equation}\label{eq:monfoliation}
u^t < u^{t'} \, \mbox{ in } \RR^n , \, \mbox{ if } t<t'.
\end{equation}
Thus, by \eqref{limiting} we have that the graphs of $u^t= u^t (x)$, $t \in \RR$,
form a foliation filling all of $\RR^n \times (-1,1)$.
Moreover, we have that for every $t \in \RR$, $u^t$ are solutions of $-\De u^t =u^t - (u^t)^3$ in $\RR^n$.

By simple arguments of the Calculus of Variations, given a ball $B_R$ it can be proved that there exists
a minimizer $v: \ol{B}_R \to \RR$ of $E_{B_R}$ such that $v=u$ on $\pa B_R$. In particular, $v$ satisfies
%
%
%
\begin{equation*}
\left\{
\begin{array}{rcll}
- \De v &=& v - v^3 & \quad\mbox{in } B_R \\
|v| &<& 1  & \quad\mbox{in } \ol{B}_R \\
v &=& u  & \quad\mbox{on } \pa B_R .\\
\end{array}\right.
\end{equation*}

By \eqref{limiting}, we have that the graph of $u^t$ in the compact set $\ol{B}_R$ is above the graph of $v$ for $t$ large enough, and it is below the graph of $v$ for $t$ negative enough
(see Figure~\ref{fig:9}).
If $v \not\equiv u$, assume that $v<u$ at some point in $B_R$ (the situation $v>u$ somewhere in $B_R$ is done similarly). It follows that, starting from $t= - \infty$, there will exist a first $t_* <0$
such that $u^{t_*}$ touches $v$ at a point $P \in \ol{B}_R$. This means that
$u^{t_*} \le v$ in $\ol{B}_R$ and $u^{t_*} (P) = v(P)$.

\begin{figure}[htbp]
\centering
\includegraphics[scale=.25]{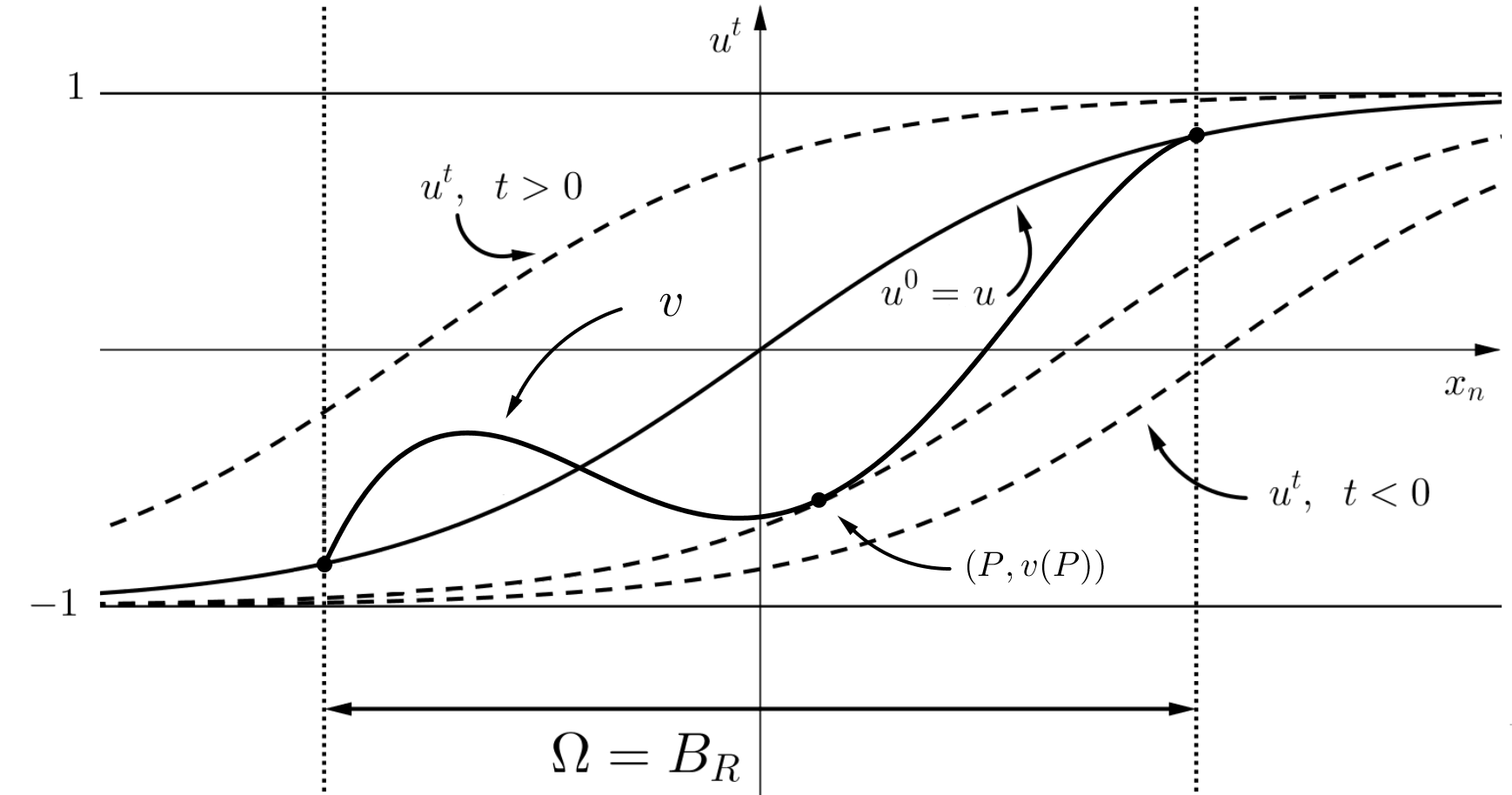}
\caption{The foliation $\lbrace u^t \rbrace$ and the minimizer $v$}
\label{fig:9}       
\end{figure}

By \eqref{eq:monfoliation}, $t_* <0$, and the fact that $v=u = u^0$ on $\pa B_R$, the point $P$ cannot belong to
$\pa B_R$. Thus, $P$ will be an interior point of $B_R$.

But then we have that $u^{t_*}$ and $v$ are two solutions of the same semilinear equation (the Allen-Cahn equation), the graph of $u^{t_*}$ stays below that of $v$, and they touch each other at the interior point $\left( P, v(P) \right) $. This is a contradiction with the strong maximum principle.

Here we leave as an exercise (stated next) to verify that the difference of two solutions of $- \De u = f(u)$ satisfies a linear elliptic equation to which we can apply the strong maximum principle.
This leads to $u^{t_*} \equiv v$, which contradicts $u^{t_*} < v = u^0$ on $\pa B_R$.
%
\end{proof}

\begin{exercise}
Prove that the difference $w:= v_1 - v_2$ of two solutions of a semilinear equation $- \De v= f(v)$, where $f$ is a Lipschitz function, satisfies a linear equation of the form $\De w + c(x) \, w = 0$, for some function $c \in L^{\infty}$. Verify that, as a consequence, this leads to $u^{t_*} \equiv v$ in the previous proof.
\end{exercise}

By recalling Remark \ref{rem:1dthenconditions}, we immediately get the following corollary.
\begin{corollary}
The 1-d solution $u(x)=u_* (x \cdot e)$ is a minimizer of \eqref{AlCa} in $\RR^n$, for every unit vector $e \in \RR^n$. 
\end{corollary}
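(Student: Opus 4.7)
The plan is to reduce the statement to a direct application of Theorem \ref{alba}, since that theorem asserts minimality for every solution satisfying the monotonicity condition \eqref{monoton} and the boundary-at-infinity condition \eqref{limiting}. So the task amounts to verifying these two hypotheses for the given one-dimensional profile after reducing to a convenient orientation.

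First, I would exploit the invariance of the Allen-Cahn equation, of the energy $E_{B_R}$, and of the class of admissible variations (in Definition~\ref{definit:MinimizAC}) under rigid motions of $\RR^n$. Given an arbitrary unit vector $e \in \RR^n$, let $\mathcal{R}$ be any rotation with $\mathcal{R}(e_n) = e$, and define $\tilde u(x) := u(\mathcal{R} x) = u_*(\mathcal{R} x \cdot e) = u_*(x \cdot e_n) = u_*(x_n)$. Since $-\Delta$ and the energy functional are rotationally invariant, $u$ is a minimizer in the sense of Definition \ref{definit:MinimizAC} if and only if $\tilde u$ is. Thus, without loss of generality it is enough to prove the statement for $e = e_n$, that is, for the profile $\tilde u(x) = u_*(x_n)$.

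Next, I would check that $\tilde u$ solves \eqref{AlCa} and satisfies both hypotheses of Theorem \ref{alba}. The fact that $\tilde u$ solves the Allen-Cahn equation is the content of the exercise preceding Remark \ref{rem:1dthenconditions} (explicit ODE verification using $u_*(y) = \tanh(y/\sqrt{2})$). The monotonicity \eqref{monoton} holds because $\tilde u_{x_n}(x) = u_*'(x_n) = \tfrac{1}{\sqrt{2}}\operatorname{sech}^2(x_n/\sqrt{2}) > 0$ on all of $\RR^n$. The limiting condition \eqref{limiting} holds because $\lim_{x_n \to \pm \infty} \tanh(x_n/\sqrt{2}) = \pm 1$, uniformly in $x' \in \RR^{n-1}$. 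Both verifications are already recorded in Remark \ref{rem:1dthenconditions}.

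Finally, with both hypotheses in force, Theorem \ref{alba} applies and gives that $\tilde u$ is a minimizer of \eqref{AlCa} in $\RR^n$. Undoing the rotation $\mathcal{R}$ then yields that the original $u(x) = u_*(x \cdot e)$ is a minimizer, concluding the argument. There is no serious obstacle here: the only (very mild) point to be careful about is the rotational invariance reduction, which must ensure that competitors $v$ on a ball $B_R$ with $v = u$ on $\partial B_R$ correspond bijectively, under the change of variables $x \mapsto \mathcal{R} x$, to competitors for $\tilde u$ on the same ball $B_R$ (this is immediate since rotations preserve balls centered at the origin; by translation invariance of the equation and energy, one can also work with balls centered elsewhere if desired).
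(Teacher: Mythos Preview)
Your proposal is correct and follows essentially the same approach as the paper: the corollary is stated immediately after Theorem~\ref{alba} with the one-line justification ``By recalling Remark~\ref{rem:1dthenconditions}, we immediately get the following corollary,'' and you have simply spelled out the rotation reduction and the verification of \eqref{monoton}--\eqref{limiting} in more detail than the paper does.
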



As a corollary of Theorem \ref{alba}, we easily deduce the following important energy estimates.

\begin{corollary}[Energy upper bounds; Ambrosio-Cabr\'e \cite{ambcab}]\label{tupper}
Let $u$ be a solution of \eqref{AlCa} satisfying
\eqref{monoton} and \eqref{limiting} (or more generally, let $u$ be a minimizer in $\RR^n$).

Then, for all $R \ge 1$ we have
\begin{equation}\label{upperbou}
E_{B_R} (u) \leq C R^{n-1}
\end{equation}
for some constant $C$ independent of $R$.
In particular, since $G \ge 0$, we have that
$$
\int_{B_R} |\nabla u|^2 \,dx
\leq C R^{n-1} 
$$
for all $R \ge 1$.
\end{corollary}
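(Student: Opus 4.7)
The plan is to exploit the minimality of $u$ (which, for solutions satisfying \eqref{monoton} and \eqref{limiting}, was established in Theorem \ref{alba}) by constructing an explicit competitor $v:\ol{B}_R\to\RR$ with $v=u$ on $\pa B_R$ whose energy is of order $R^{n-1}$. Since $|u|<1$ and hence $u$ is globally bounded, standard elliptic regularity applied to the semilinear equation \eqref{AlCa} yields a uniform bound $|\na u|\le C_0$ in all of $\RR^n$, for a constant $C_0$ depending only on $n$ (and on the right-hand side of \eqref{AlCa}). This will be used to control the gradient of the competitor on $\pa B_R$.

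The competitor I would use is a radial interpolation between the trace $u_{|\pa B_R}$ and the constant $+1$: set $v\equiv 1$ on $B_{R-1}$, and for $x\in B_R\setminus B_{R-1}$ define
\begin{equation*}
v(x)=\bigl(R-|x|\bigr)\cdot 1+\bigl(|x|-(R-1)\bigr)\,u\!\left(R\,\tfrac{x}{|x|}\right).
\end{equation*}
By construction $v=u$ on $\pa B_R$, $v=1$ on $\pa B_{R-1}$, and $|v|\le 1$ on $B_R$. On $B_{R-1}$ one has $\na v\equiv 0$ and $G(v)=G(1)=0$, so the interior ball contributes nothing to the energy. On the annulus $B_R\setminus B_{R-1}$, a direct computation using the chain rule gives
\begin{equation*}
|\na v(x)|\le 1+|u(R x/|x|)|+\bigl(|x|-(R-1)\bigr)\,\tfrac{R}{|x|}\,\bigl|\na_T u(R x/|x|)\bigr|,
\end{equation*}
where $\na_T$ denotes the tangential gradient on $\pa B_R$. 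Since $|u|\le 1$ and $|\na u|\le C_0$, this forces $|\na v|\le C$ on the annulus, and also $G(v)\le C$. Multiplying by the volume of the annulus, which is comparable to $R^{n-1}$ for $R\ge 1$, yields $E_{B_R}(v)\le C\,R^{n-1}$ for a constant $C$ depending only on $n$.

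The minimality of $u$ (Definition \ref{definit:MinimizAC}, or Theorem \ref{alba} when \eqref{monoton}--\eqref{limiting} hold) applied with this $v$ gives immediately $E_{B_R}(u)\le E_{B_R}(v)\le C\,R^{n-1}$, which is \eqref{upperbou}. The second inequality follows because $G\ge 0$ by definition. The only non-routine ingredient is the a priori bound $|\na u|\le C_0$, but this is a direct consequence of standard interior gradient estimates for bounded solutions of the uniformly semilinear equation \eqref{AlCa} (e.g.\ rescaling argument on unit balls combined with $L^\infty$ estimates), so no essential obstacle appears; the main point of the proof is really the explicit choice of competitor and the bookkeeping of the constants on the annular shell of width $1$.
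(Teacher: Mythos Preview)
Your proof is correct and follows essentially the same approach as the paper: both exploit minimality by comparing $u$ with a competitor that equals $1$ on $B_{R-1}$, agrees with $u$ on $\pa B_R$, and has bounded energy density on the unit-width annulus (relying on the uniform gradient bound for bounded solutions of \eqref{AlCa}). The only cosmetic difference is the form of the competitor: the paper uses $v_R:=(1-\phi_R)u+\phi_R$ with a smooth cutoff $\phi_R$ (so it interpolates between $u(x)$ and $1$), whereas you interpolate radially between the projected boundary value $u(Rx/|x|)$ and $1$; both lead to the same $O(R^{n-1})$ bound via the same mechanism.
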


\begin{remark}
The proof of Corollary \ref{tupper} is trivial for $1$-d solutions. Indeed, it is easy to check that $\int_{- \infty}^{+ \infty} \left\lbrace \frac{1}{2} (u_* ')^2 + \frac{1}{4}(1- u_*^2)^2 \right\rbrace dy < \infty$ and, as a consequence, by applying Fubini's theorem on a cube larger than $B_R$, that \eqref{upperbou} holds.
This argument also shows that the exponent $n-1$ in \eqref{upperbou} is optimal (since it cannot be improved for $1$-d solutions).
\end{remark}

The estimates in Corollary \ref{tupper} are fundamental in the proofs of a conjecture of De Giorgi that we treat in the next subsection.

The estimate \eqref{upperbou} was first proved by Ambrosio and the first author in \cite{ambcab}. Later on, in \cite{AAC} Alberti, Ambrosio, and the first author discovered that monotone solutions with limits are minimizers (Theorem \ref{alba} above). This allowed to simplify the original proof of the energy estimates found in \cite{ambcab}, as follows.

\begin{proof}[Proof of Corollary \ref{tupper}]
Since $u$ is a minimizer by Theorem \ref{alba} (or by hypothesis), we can
perform a simple energy comparison argument. Indeed, let
$\phi_R\in C^\infty(\RR^{n})$ satisfy $0\le\phi_R\le 1$ in $\RR^{n}$,
$\phi_R\equiv 1$ in $B_{R-1}$,
$\phi_R\equiv 0$ in $\RR^{n}\setminus B_R$, and 
$\Vert\nabla\phi_R\Vert_\infty\leq 2$. Consider  
$$
v_R:=(1-\phi_R) u+\phi_R.
$$

Since $v_R \equiv u$ on $\pa B_R$, we can compare
the energy of $u$ in $B_R$ with that of $v_R$. 
We obtain
\begin{multline*}
\int_{B_R}\left\{\frac{1}{2} |\nabla u|^2+ G(u) \right\}dx \leq 
\int_{B_R}\left\{\frac{1}{2}|\nabla v_R|^2+ G(v_R) \right\}dx 
\\ =\int_{B_R\setminus B_{R-1}}
\left\{\frac{1}{2}|\nabla v_R|^2+ G(v_R) \right\}dx \leq 
C|B_R\setminus B_{R-1}| \le  CR^{n-1}
\end{multline*}
for every $R \ge 1$, with $C$ independent of $R$. In the second inequality of the chain above we used that $\frac{1}{2}|\nabla v_R|^2+ G(v_R) \le C$ in $B_R\setminus B_{R-1}$ for some constant $C$ independent
of~$R$. This is a consequence of the following exercise.
\end{proof}

\begin{exercise}
Prove that if $u$ is a solution of a semilinear equation $- \De u = f(u)$ in $\RR^n$ and $|u| \le 1$ in $\RR^n$, where $f$ is a continuous nonlinearity, then $|u| + | \na u| \le C$ in $\RR^n$ for some constant $C$ depending only on $n$ and $f$. See \cite{ambcab}, if necessary, for a proof.
\end{exercise}

\subsection{A conjecture of De Giorgi}

In 1978, E. De Giorgi \cite{DG} stated the following conjecture:

\smallskip\noindent
{\em 
{\rm {\bf Conjecture} (DG).}  
Let $u:\RR^{n}\to (-1,1)$ be a solution of the Allen-Cahn equation \eqref{AlCa}
satisfying the monotonicity condition \eqref{monoton}.
Then, $u$ is a $1$-d solution (or equivalently, all level sets $\{u=s\}$ of $u$ are
hyperplanes), at least if $n\leq 8$.
}

\smallskip

This conjecture was proved in 1997 for $n=2$ by Ghoussoub and Gui \cite{GG}, and in 2000 for $n=3$ by
Ambrosio and Cabr\'e \cite{ambcab}.
Next we state a deep result of Savin \cite{Sa} under the only assumption of minimality. This is the semilinear analogue of Simons Theorem \ref{thmcone} and Remark \ref{remarkchemiserve:1.17} on minimal surfaces. As we will see, Savin's result leads to a proof of Conjecture (DG) for $n \le 8$ if the additional condition \eqref{limiting} on limits is assumed.

\begin{theorem}[Savin \cite{Sa}]\label{teosavin}
Assume that $n\leq 7$ and that $u$ is a minimizer of \eqref{AlCa} in~$\RR^n$. 
Then, $u$ is a $1$-d solution.
\end{theorem}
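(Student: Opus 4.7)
The plan is to follow the semilinear analogue of the De Giorgi--Simons regularity program. The two main ingredients will be (a) a blow--down argument at infinity which, thanks to the sharp energy bound $E_{B_R}(u)\le CR^{n-1}$ of Corollary \ref{tupper} and Modica--Mortola type $\Ga$--convergence, reduces the asymptotic study of a minimizer to that of a \emph{minimizing minimal cone}, and (b) a quantitative \emph{improvement of flatness} statement for level sets of minimizers of \eqref{AlCa}, which upgrades asymptotic planarity of the zero level set into exact planarity in every ball. The dimensional restriction $n\le 7$ will enter only through Simons' flatness Theorem \ref{thmcone} (together with Remark \ref{remarkchemiserve:1.17}), exactly as in the classical theory of minimal surfaces.

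For the blow--down step I would set $v_R(x):=u(Rx)$ on $B_1$ and observe that, with $\ve=1/R$, the change of variables gives
\[
\frac{1}{R^{n-1}}E_{B_R}(u)\;=\;\int_{B_1}\Bigl\{\tfrac{\ve}{2}|\na v_R|^2+\tfrac{1}{\ve}G(v_R)\Bigr\}\,dx,
\]
which stays uniformly bounded by Corollary \ref{tupper}. By Modica--Mortola $\Ga$--convergence, along a subsequence $v_R\to \mathbbm{1}_E-\mathbbm{1}_{E^c}$ in $L^1_{\mathrm{loc}}(\RR^n)$ for a Caccioppoli set $E$. Since minimality passes to the limit and the procedure is a blow--down, $\pa E$ is a \emph{minimizing minimal cone}. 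Because $n\le 7$, Theorem \ref{thmcone} together with Remark \ref{remarkchemiserve:1.17} forces $\pa E$ to be a hyperplane $\{x\cdot e=0\}$. Translating this back, for every $\theta>0$ and every $R$ large enough the zero level set $\{u=0\}$ in $B_R$ is contained in a slab $\{|x\cdot e|\le \theta R\}$, i.e.\ it is \emph{$\theta$--flat} at scale $R$.

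The hard part is the improvement of flatness, which is Savin's fundamental result: there exist universal constants $\eps_0,\eta\in(0,1)$ and $\alpha>0$ such that if $u$ is a minimizer in $B_R$ whose zero level set is contained in a slab $\{|x\cdot e|\le \eps R\}$ with $\eps\le\eps_0$ and $R\ge R_0/\eps$, then in $B_{\eta R}$ the zero level set lies in a thinner slab $\{|x\cdot e'|\le \eta^{1+\alpha}\eps R\}$ for some unit vector $e'$ with $|e-e'|\le C\eps$. The plan to prove it follows a contradiction/compactness scheme: one assumes existence of sequences $u_k$ of minimizers with flatness $\eps_k\to 0$ violating the conclusion, rescales the level sets in the normal direction by $1/\eps_k$, and extracts a limit profile of the rescaled zero sets. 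The key estimates needed are a Harnack inequality for flat level sets of \eqref{AlCa} (the semilinear counterpart of De Giorgi's Harnack inequality for almost--flat minimal graphs) and barrier constructions built from tilts and translates of the one--dimensional layer $u_*$; together they force the limit to solve a linear elliptic equation in the viscosity sense and, by standard $C^{2,\alpha}$ theory, to be smoother than what the negation allows, giving the contradiction.

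With the improvement of flatness in hand the conclusion is a standard iteration. Fix an arbitrary ball $B_\rho$; by the blow--down paragraph the zero level set of $u$ is $\eps_k R_k$--flat in $B_{R_k}$ with $\eps_k\to 0$ along some $R_k\to\infty$. Iterating the dyadic flatness decay between scales $R_k$ and $\rho$ shows that $\{u=0\}\cap B_\rho$ is contained in a slab of arbitrarily small width, hence lies in a hyperplane; letting $\rho\to\infty$, $\{u=0\}$ is a hyperplane in $\RR^n$. Applying the same argument to every level set $\{u=s\}$ for $s\in(-1,1)$, and invoking the translation invariance of \eqref{AlCa} exactly as in the foliation argument of Theorem \ref{alba}, one concludes that $u$ depends only on one variable. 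The main obstacle, as anticipated, is the improvement of flatness: obtaining the Harnack inequality and the uniform $C^{2,\alpha}$ regularity of the linearised limit problem with constants that are genuinely \emph{universal} (independent of the flatness parameter $\eps$) requires a careful viscosity--solution framework for the moving level sets of $u$ and is the real technical heart of Savin's proof.
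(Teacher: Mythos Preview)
The paper does not prove this theorem; it only states it with attribution to Savin and records in one sentence that ``the proof of Theorem \ref{teosavin} uses an improvement of flatness result for the Allen-Cahn equation developed by Savin, as well as Theorem \ref{thmcone} on the non-existence of stable minimal cones in dimension $n\le 7$.'' Your outline is therefore not competing with a proof in the paper but rather expanding on that one-line description, and it does so accurately: the blow-down via the energy bound of Corollary \ref{tupper} and Modica--Mortola, the use of Simons' result (through Remark \ref{remarkchemiserve:1.17}) to force the limiting interface to be a hyperplane, the improvement-of-flatness step as the technical core, and the final iteration are indeed the architecture of Savin's argument.

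One small imprecision: after blow-down you obtain a set $E$ whose boundary is a \emph{minimizing minimal surface} in $\RR^n$; it is not automatic from the blow-down alone that $\pa E$ is a cone. What you actually need (and what Remark \ref{remarkchemiserve:1.17} provides) is that minimizing minimal surfaces in all of $\RR^n$ are hyperplanes when $n\le 7$; the cone structure is a step inside the proof of that fact, via monotonicity, rather than an immediate consequence of your rescaling. Apart from this wording issue, your sketch is faithful to Savin's strategy and correctly flags the Harnack inequality for flat level sets and the compactness/linearisation step as the genuine obstacles.
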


The hypothesis $n\leq 7$ on its statement is sharp.
Indeed, in 2017 Liu, Wang, and Wei \cite{LWW} have shown the existence of a minimizer in $\RR^8$ whose level sets are not hyperplanes. Its zero level set is asymptotic at infinity to the Simons cone. However, a canonical solution described in Subsection \ref{subsec:saddlesha} (and whose zero level set is exactly the Simons cone) is still not known to be a minimizer in $\RR^8$.

Note that Theorem \ref{teosavin} makes no assumptions on the monotonicity or
the limits at infinity of the solution. To prove Conjecture (DG) using Savin's result
(Theorem~ \ref{teosavin}), one needs to make the further assumption \eqref{limiting} on the limits only to guarantee, by Theorem~\ref{alba}, that the solution is actually a minimizer.
Then, Theorem~\ref{teosavin} (and the gain of one more dimension, $n=8$, thanks to the monotonicity of the solution)
leads to the proof of Conjecture (DG) for monotone solutions with limits $\pm 1$.

However, for $4\leq n\leq 8$ the conjecture in its original statement (i.e., without the limits $\pm 1$ as hypothesis) is still open.
To our knowledge no clear evidence is known about its validity
or not.

The proof of Theorem \ref{teosavin} uses an improvement of flatness result for the Allen-Cahn equation developed by Savin, as well as Theorem \ref{thmcone} on the non-existence of stable minimal cones in dimension $n \le 7$.

Instead, the proofs of Conjecture (DG) in dimensions $2$ and $3$ are much simpler. They use the energy estimates of Corollary \ref{tupper} and a Liouville-type theorem developed in \cite{ambcab} (see also \cite{AAC}). As explained next, the idea of the proof originates in the paper \cite{BCN} by Berestycki, Caffarelli, and Nirenberg.

\smallskip
\noindent
{\bf Motivation for the proof of Conjecture (DG) for $n \le 3$.}
In \cite{BCN} the authors made the following heuristic observation. From the equation
$- \De u = f(u)$ and the monotonicity assumption \eqref{monoton}, by differentiating we find that 
\begin{equation}
\label{eq:remforseno}
u_{x_n}>0 \, \mbox{ and } \, L u_{x_n} := \left( - \De - f'(u) \right) u_{x_n}=0 \, \mbox{ in } \RR^n .
\end{equation} 
If we were in a bounded domain $\Om\subset \RR^n$ instead of $\RR^n$ (and we forgot about boundary conditions), from \eqref{eq:remforseno}, we would deduce that $u_{x_n}$ is the first eigenfunction of $L$ and that its first eigenvalue is $0$. As a consequence, such eigenvalue is simple. But then, since we also have that
$$
L u_{x_i} = \left( - \De - f'(u) \right) u_{x_i}=0 \quad \mbox{for } \, i=1, \dots, n-1,
$$
the simplicity of the eigenvalue would lead to 
\begin{equation}\label{eq:remforseno2}
u_{x_i}= c_i u_{x_n} \quad \mbox{for } \, i=1, \dots, n-1 ,
\end{equation}
where $c_i$ are constants. Now, we would conclude that $u$ is a 1-d solution, by the following exercise.
\begin{exercise}
Check that \eqref{eq:remforseno2}, with $c_i$ being constants, is equivalent to the fact that $u$
is a 1-d solution.
\end{exercise}
To make this argument work in the whole $\RR^n$, one needs a Liouville-type theorem. For $n=2$ it was proved in \cite{BCN} and \cite{GG}. Later, \cite{ambcab} used it to prove Conjecture (DG) in $\RR^3$ after proving the crucial energy estimate \eqref{upperbou}. The Liouville theorem requires the right hand side of \eqref{upperbou} to be bounded by $C R^2= C R^{3-1}$.

\smallskip

In 2011, del Pino, Kowalczyk, and Wei \cite{dP} established 
that Conjecture~(DG) does not hold for $n\geq 9$ 
-- as suggested in De Giorgi's original statement.

\begin{theorem}[del Pino-Kowalczyk-Wei \cite{dP}]\label{thm:delP K W}
If $n \ge 9$, there exists a solution of \eqref{AlCa}, satisfying \eqref{monoton} and \eqref{limiting}, and which is not a $1$-d solution.
\end{theorem}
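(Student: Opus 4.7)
The plan is to construct a counterexample whose interface (the zero level set) is modeled, after a rescaling by a large parameter $\al \gg 1$, on a non-trivial entire minimal graph in $\RR^{n-1}$. For $n \ge 9$ the Bombieri--De Giorgi--Giusti theory (of which Theorem~\ref{thm:SimCone} is the cornerstone) produces such a graph $\Ga = \{x_n = F(x_1,\dots,x_{n-1})\}$ in $\RR^n$ which is not a hyperplane; $F$ is analytic and satisfies the minimal surface equation \eqref{equa:Hgraph}, with algebraic growth at infinity. The dilated interface $\Ga_\al := \al\,\Ga$ has principal curvatures of size $O(\al^{-1})$ and is therefore ``nearly flat'' on the unit length scale, which is precisely the scale of the heteroclinic layer $u_*$ of the Allen-Cahn equation. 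This asymptotic flatness is the geometric fact that drives the whole construction.

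First, I would introduce Fermi coordinates $(y,z)$ in a tubular neighborhood of $\Ga_\al$ (with $z$ the signed normal distance) and consider the naive ansatz $u_0(x)=u_*\bigl(z-h(y)\bigr)$, where $h$ is a small function on $\Ga_\al$ yet to be determined. Expanding $\De$ in Fermi coordinates and using $\cH_{\Ga_\al}\equiv 0$, the error $E[u_0]:=\De u_0+u_0-u_0^3$ decomposes into terms driven by $|A_{\Ga_\al}|^2=O(\al^{-2})$, by the curvatures of parallel surfaces, and by derivatives of $h$. Successive corrections $\sum_{k\ge 1}\al^{-k}\phi_k(y,z)$ are added to cancel the error order by order; at each stage, solvability of a linear ODE in $z$ requires Fredholm orthogonality against $u_*'(z)$, and each of these solvability conditions translates into a linear equation for $h$ along $\Ga_\al$. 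After cutting and gluing with the constants $\pm 1$ outside a thickening of $\Ga_\al$, one obtains a global approximate solution $U=U_{\al,h}$ whose error $E[U]$ decays sharply in weighted $L^\infty$ norms.

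Next, set $u=U+\phi$ and reduce $-\De u = u-u^3$ via a Lyapunov--Schmidt scheme to the coupled system
\begin{equation*}
L_U\phi=-E[U]-N(\phi)+c(y)\,u_*'(z),\qquad \int_{\RR}\phi(y,z)\,u_*'(z)\,dz=0\ \text{ for every } y\in\Ga_\al,
\end{equation*}
where $L_U:=-\De-(1-3U^2)$, $N$ collects the quadratic and cubic nonlinearities, and $c(y)$ is a Lagrange multiplier that kills the approximate kernel of $L_U$ (which is spanned by $u_*'$ because of the one-dimensional spectral gap of $-\pa_z^2-(1-3u_*^2)$). The orthogonality condition yields an invertibility theory for $L_U$ in weighted $L^\infty$ spaces, and a contraction mapping delivers a small $\phi=\phi(h)$. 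Imposing $c(y)\equiv 0$ then becomes the \emph{reduced equation}, a nonlocal nonlinear equation on the surface of the form
\begin{equation*}
J_{\Ga_\al} h = \al^{-1}\,\mathcal{G}(h,\al),
\end{equation*}
governed by the Jacobi operator $J_{\Ga_\al}=\DLB+|A_{\Ga_\al}|^2$ of $\Ga_\al$.

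The hard part is solving this reduced equation, since one must invert $J_{\Ga_\al}$ in weighted spaces compatible with the algebraic growth of $F$ at infinity. The key structural input is that the normal component of the translation field $\pa_{x_n}$ is a \emph{bounded, strictly positive} Jacobi field on $\Ga$ (positive precisely because $\Ga$ is a graph over $\RR^{n-1}$, a consequence of the minimality of $\Ga$ that ultimately rests on Theorem~\ref{thm:SimCone}). This positive Jacobi field gives a maximum-principle-type inversion of $J_{\Ga_\al}$ in suitable weighted norms and allows a fixed-point argument that produces $h$ for all $\al$ large enough. The resulting solution $u$ is not one-dimensional: its zero set is a normal $O(\al^{-1})$-perturbation of the non-planar surface $\Ga_\al$. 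The monotonicity $u_{x_n}>0$ required in \eqref{monoton} follows because $\Ga_\al$ is a graph over $\{x_n=0\}$, so the coordinate $z$ is essentially increasing in $x_n$ and the dominant term of $\pa_{x_n}u$ is $u_*'(z)\,\pa_{x_n}z>0$, which controls all lower-order corrections; the limits \eqref{limiting} are built into the global ansatz, since $U$ coincides with $\pm 1$ outside a tube around $\Ga_\al$.
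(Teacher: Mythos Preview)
Your proposal is correct and follows the approach of del Pino--Kowalczyk--Wei that the paper itself only summarizes in one paragraph (the paper does not give a proof of this theorem, it merely cites \cite{dP} and remarks that the construction uses the Bombieri--De Giorgi--Giusti minimal graph, with the zero level set of the solution staying at finite distance from it). Your sketch of the Lyapunov--Schmidt reduction, the role of Fermi coordinates, the Jacobi operator on the dilated graph, and the positive Jacobi field coming from the graphical structure is an accurate and considerably more detailed account of the same method.
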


The proof in \cite{dP} uses crucially the minimal graph in $\RR^9$ built by Bombieri, De Giorgi, and Giusti in \cite{BdGG}. This is a minimal surface in $\RR^9$ given by the graph of a function $\phi: \RR^8 \to \RR$ which is antisymmetric with respect to the Simons cone.
The solution of Theorem \ref{thm:delP K W} is built in such a way that its zero level set stays at finite distance from the Bombieri-De Giorgi-Giusti graph.

We consider next a similar object to the previous minimal graph, but in the context of the Allen-Cahn equation: a solution $u: (\RR^8 =) \RR^{2m} \to \RR$ which is antisymmetric with respect to the Simons cone.

\subsection{The saddle-shaped solution vanishing on the Simons cone}\label{subsec:saddlesha}

As in Section \ref{sec:mincones}, let $m \ge 1$ and denote by $\cSC$
the Simons cone \eqref{def:simonscone}.
For $x=(x_1,\dots, x_{2m})\in\RR^{2m}$, $s$ and $t$ denote the two
radial variables
\begin{equation}\label{coor}
s =   \sqrt{x_1^2+...+x_m^2} \quad \mbox{ and } \quad t  =  \sqrt{x_{m+1}^2+...+x_{2m}^2}.
\end{equation}
The Simons cone is given by 
$$
\cSC=\{s=t\}=\partial E, \quad\text{ where }
E =\{s>t\}.
$$ 


\begin{definition}[Saddle-shaped solution]\label{def:saddlesolution} 
We  say  that $u:\RR^{2m}\rightarrow\RR$ is a {\it saddle-shaped
solution}  (or simply a saddle solution) of the Allen-Cahn equation 
\begin{equation}\label{eq2m}
-\Delta u= u- u^3 \quad {\rm in }\;\RR^{2m}
\end{equation}
whenever $u$ is a
solution of
\eqref{eq2m} and, with $s$ and $t$ defined by \eqref{coor},
\renewcommand{\labelenumi}{$($\alph{enumi}$)$}
\begin{enumerate}
\item $u$ depends only on the variables $s$ and $t$. We write
$u=u(s,t)$; 
\item $u>0$ in $E:=\{s>t\}$; 
\item  $u(s,t)=-u(t,s)$ in $\RR^{2m}$.
\end{enumerate}
\end{definition}

\begin{figure}[htbp]
\centering
\includegraphics[scale=.25]{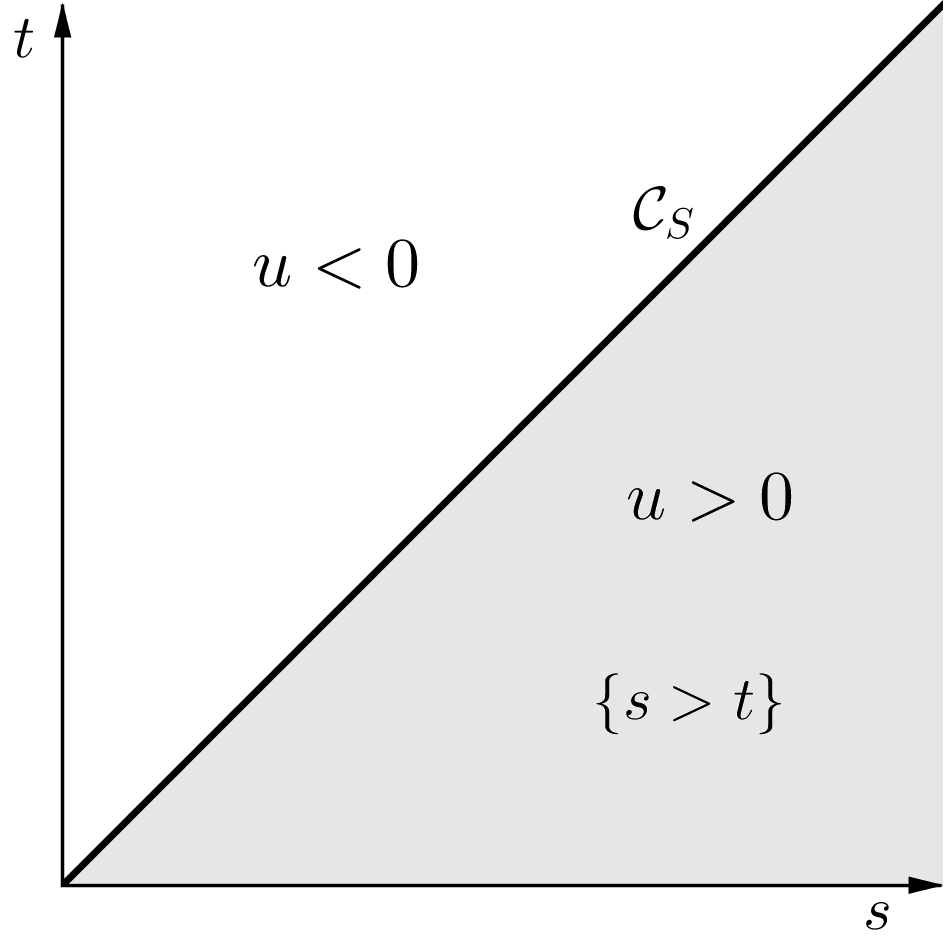}
\caption{The saddle-shaped solution $u$ and the Simons cone $\cSC$}
\label{fig:10}       
\end{figure}

\begin{remark}
Notice that if $u$ is a saddle-shaped solution, then we have $u=0$ on $\cSC$ (see Figure~\ref{fig:10}).
\end{remark}

While the existence of a saddle-shaped solution is easily established, its uniqueness is more delicate. This was accomplished in 2012 by the first author in \cite{Ca}.

\begin{theorem}[Cabr\'e \cite{Ca}] \label{unique}
For every even dimension  $2m\geq 2$,
there exists a unique saddle-shaped solution $u$ of \eqref{eq2m}.
\end{theorem}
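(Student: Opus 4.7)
The theorem splits into an existence statement and a uniqueness statement, the second being substantially harder.

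\textbf{Existence.} I would use the direct method on a carefully chosen symmetric class. Work in the half--domain $E=\{s>t\}$ and, for each $R>0$, minimise the Allen--Cahn energy $E_{E\cap B_R}$ over the convex class of $v\in H^1(E\cap B_R)$ satisfying $v=0$ on $\cSC\cap B_R$, $0\le v\le 1$, and a suitable boundary condition on $\pa B_R\cap\ol{E}$ (for instance $v$ equal to the truncation of the layer $u_*\bigl((s-t)/\sqrt{2}\bigr)$). The $O(m)\times O(m)$--invariance of the functional, together with a symmetrisation in the two $m$--dimensional groups of variables, allows one to take $v=v(s,t)$. Lower semicontinuity and weak compactness in $H^1$ yield a minimiser $u_R$, which by Euler--Lagrange solves $-\De u_R=u_R-u_R^3$ in $E\cap B_R$ with $u_R=0$ on $\cSC\cap B_R$. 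Standard elliptic regularity gives $C^{2,\al}_{\mathrm{loc}}$ bounds uniform in $R$, so along a subsequence $u_R\to u\in C^2(\ol{E})$ as $R\to\infty$, where $u$ solves the Allen--Cahn equation in $E$, vanishes on $\cSC$, and satisfies $0\le u\le 1$. Nontriviality ($u\not\equiv 0$) follows from comparing $u_R$ with the truncated layer $u_*\bigl((s-t)/\sqrt{2}\bigr)$, whose energy is uniformly bounded in slabs around $\cSC$ and strictly smaller than the energy of $v\equiv 0$ on large enough $E\cap B_R$. The strong maximum principle then gives $0<u<1$ in $E$; finally the antisymmetric extension $u(x',x''):=-u(x'',x')$ is of class $C^2$ across $\cSC$ (because $G$ is odd about $0$) and provides a global saddle--shaped solution.

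\textbf{Uniqueness.} Let $u_1,u_2$ be two saddle--shaped solutions; by antisymmetry it suffices to prove $u_1\equiv u_2$ in $\ol{E}$. The key preliminary is to show that every saddle--shaped solution is strictly monotone in the direction $\eta:=(e_s-e_t)/\sqrt{2}$ normal to $\cSC$, i.e.\ $\pa_\eta u>0$ in $E$. Hopf's lemma applied to the positive function $u$ in $E$ vanishing on the smooth surface $\cSC\setminus\{0\}$ gives $\pa_\eta u>0$ on $\cSC\setminus\{0\}$; propagating this into $E$ uses the strong maximum principle for the linearised operator $Lw:=-\De w-(1-3u^2)w$, which annihilates every directional derivative of $u$. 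With monotonicity available, perform a sliding argument on $w_\tau(x):=u_2(x+\tau\eta)-u_1(x)$: for $\tau$ large, $w_\tau>0$ in $E$ (using the asymptotic behaviour $u_i\to 1$ inside $E$ at infinity, obtained by barrier comparison with shifted layers $u_*$); decrease $\tau$ to the critical value $\tau^*\ge 0$ for which $\inf_E w_{\tau^*}=0$. Since $w_{\tau^*}\ge 0$ satisfies a linear elliptic equation $Lw=c(x)w$ with bounded $c$ on the set where $|u_i|<1$, either $w_{\tau^*}\equiv 0$ in $E$ (in which case the monotonicity of $u_2$ forces $\tau^*=0$ and thus $u_1\le u_2$) or the infimum is not attained, a case ruled out by the uniform behaviour at infinity. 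Reversing the roles of $u_1,u_2$ yields $u_1\equiv u_2$.

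\textbf{Main obstacle.} The serious difficulty is the quantitative control of saddle--shaped solutions at infinity needed to initiate the sliding, namely showing that along $\cSC$ and far from the origin $u_i$ tends to $\pm 1$ inside $E$, respectively $\RR^{2m}\setminus\ol{E}$, with a universal exponential rate, and that no tangential oscillations persist. This requires delicate barrier constructions using the one--dimensional layer $u_*$ together with the $O(m)\times O(m)$--symmetry, and is intimately tied to the stability behaviour of the Simons cone as a minimal surface discussed in Section~\ref{sec:mincones}: for $2m\ge 8$ minimality of $\cSC$ (Theorem~\ref{thm:SimCone}) provides strong rigidity, whereas in the low--dimensional cases $2m\in\{2,4,6\}$ the presence of infinitely many negative Jacobi modes (cf.\ Remark~\ref{rem:dopohardy}) means that bare sub/super--solution arguments are insufficient and a more careful asymptotic analysis of $u_i$ at infinity is needed.
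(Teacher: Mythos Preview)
The paper does not contain a proof of this theorem: it is merely stated, with attribution to \cite{Ca}, and the surrounding text only remarks that ``the existence of a saddle-shaped solution is easily established, its uniqueness is more delicate.'' So there is no proof in the paper to compare your proposal against.

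That said, a few comments on your outline. The existence sketch is essentially the standard one and is fine, except for a slip: the $C^2$ extension across $\cSC$ works because the nonlinearity $f(u)=u-u^3$ is odd, not because ``$G$ is odd about $0$'' (the potential $G(u)=\frac14(1-u^2)^2$ is even).

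For uniqueness, your sliding scheme in the direction $\eta=(e_s-e_t)/\sqrt{2}$ has a structural issue you do not address: for $\tau>0$ the translate $u_2(\cdot+\tau\eta)$ is only defined on the shifted half-space, and for points $x\in E$ with small $t$-coordinate the translate $x+\tau\eta$ may leave the region $\{s,t\ge 0\}$ where the $(s,t)$-representation makes sense. More importantly, the actual proof in \cite{Ca} does not proceed by sliding two arbitrary saddle solutions against each other; rather, it constructs a \emph{minimal} and a \emph{maximal} saddle solution (via monotone iteration between explicit sub- and supersolutions built from the layer $u_*((s-t)/\sqrt{2})$) and then shows these coincide, using the monotonicity properties $u_s>0$, $u_t<0$ together with a maximum-principle argument. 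Finally, your closing remark tying the difficulty in low dimensions to the instability of the Simons cone is misleading: the uniqueness proof in \cite{Ca} is uniform in $m\ge 1$ and makes no use of the minimality (or non-minimality) of $\cSC$.
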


Due to the minimality of the Simons cone when $2m \ge 8$ (and also because of the minimizer from \cite{LWW} referred to after Theorem \ref{teosavin}), the saddle-shaped solution is expected to be a minimizer when $2m \ge 8$:

\begin{open problem}
Is the saddle-shaped solution a minimizer of \eqref{eq2m} in $\RR^8$, or at least in higher even dimensions?
\end{open problem}

Nothing is known on this open problem except for the following result.
It establishes stability (a weaker property than minimality) for $2m \ge 14$. Below, we sketch its proof.

\begin{theorem}[Cabr\'e \cite{Ca}]\label{stab} 
If $2m\geq 14$, the saddle-shaped solution $u$ of \eqref{eq2m} is stable in $\RR^{2m}$, in the sense of the following definition.
\end{theorem}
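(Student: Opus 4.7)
Stability here should mean that the quadratic form associated to the linearization of \eqref{eq2m} at $u$ is nonnegative, i.e.
\[
Q_u(\xi) \;:=\; \int_{\RR^{2m}} \Bigl\{ |\na \xi|^2 - (1-3u^2)\,\xi^2 \Bigr\} dx \;\ge\; 0 \qquad \text{for every } \xi\in C^1_c(\RR^{2m}).
\]
My first step is to exploit the symmetries of the saddle solution to reduce the class of test functions. Since $u=u(s,t)$ is invariant under $SO(m)\times SO(m)$, an orthogonal decomposition of $\xi$ in spherical harmonics in $x'$ and $x''$ shows that the worst term in $Q_u$ is the one on functions $\xi$ depending only on $(s,t)$; the higher-mode components pick up an additional nonnegative centrifugal contribution of the form $(\ell_1(\ell_1+m-2)/s^2+\ell_2(\ell_2+m-2)/t^2)\xi^2$ which only helps. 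After this reduction, $Q_u(\xi)$ becomes a weighted integral over the first quadrant $\{s,t>0\}$:
\[
Q_u(\xi)=c_m\int_0^\infty\!\!\int_0^\infty \Bigl\{ \xi_s^2+\xi_t^2 - (1-3u^2)\,\xi^2 \Bigr\}\, s^{m-1}t^{m-1}\,ds\,dt,
\]
and the task becomes showing its nonnegativity for all $\xi=\xi(s,t)$ with compact support in $\overline{\{s,t>0\}}$.

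The second step is to produce a \emph{positive supersolution} $\phi=\phi(s,t)>0$ of the linearized operator
\[
L\phi = -\phi_{ss}-\phi_{tt}-\tfrac{m-1}{s}\phi_s-\tfrac{m-1}{t}\phi_t-(1-3u^2)\phi,
\]
since, as in the Allegretto--Piepenbrink argument recalled in Section~\ref{sec:mincones} (and already used in the proof of Theorem~\ref{thmcone} via the substitution $\xi=\tilde c\,\eta$), the identity
\[
Q_u(\phi\eta)=\int \phi^2|\na \eta|^2\,s^{m-1}t^{m-1}\,ds\,dt + \int (L\phi)\,\phi\,\eta^2\,s^{m-1}t^{m-1}\,ds\,dt
\]
shows that $L\phi\ge 0$ and $\phi>0$ imply $Q_u\ge 0$. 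The natural candidates are functions built from partial derivatives of $u$, which automatically satisfy linearized-type equations. Differentiating the equation in $s$ one finds $L(u_s)=-\frac{m-1}{s^2}\,u_s$ and similarly for $u_t$; hence neither $u_s$ nor $u_t$ is a supersolution, but the ansatz
\[
\phi(s,t) \;=\; \alpha(s,t)\,u_s(s,t) \,+\, \beta(s,t)\,u_t(s,t),
\]
with carefully chosen positive, symmetric weights $\alpha,\beta$, produces an extra term of Hardy type. The goal is to arrange $\phi>0$ in the whole quadrant (using the expected monotonicity $u_s>0$, $u_t<0$ in $\{s>t>0\}$ and the antisymmetry $u(s,t)=-u(t,s)$), and to compute $L\phi$ so that the remainder is controlled by the $(m-1)^2/4$ constant coming from the one-dimensional Hardy inequality on $(0,\infty)$ with weight $s^{m-1}$, i.e.\ $\int_0^\infty s^{m-3}\eta^2\,ds\le \frac{4}{(m-2)^2}\int_0^\infty s^{m-1}\eta_s^2\,ds$.

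The arithmetic of combining the two weighted Hardy inequalities in $s$ and $t$ with the Hardy-type loss contributed by $u_s$ and $u_t$ is where the threshold $2m\ge 14$ is expected to appear; a similar scale-counting was carried out in the proof of Theorem~\ref{thmcone}, where the analogous comparison between $c^2/|x|^2$ and a Hardy constant forced $n\le 7$ and here forces $m\ge 7$. Concretely, I expect the final inequality, after completing squares, to reduce to the demand that a certain quadratic polynomial in $m$ with roots around $m=7$ be nonnegative, which is exactly the borderline $2m=14$.

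The main obstacle I anticipate is genuinely analytic rather than formal: one must prove that the chosen $\phi$ is \emph{strictly positive} throughout $\{s,t>0\}$ (and has the correct boundary behaviour on the axes and near the Simons cone $\{s=t\}$, where $u$ vanishes). This requires quantitative information on $u_s$, $u_t$ and on the ratio $u/(s-t)$ near the cone, which in turn rests on the monotonicity and sign properties of the unique saddle solution obtained in Theorem~\ref{unique}. If any of these monotonicity properties fails, the supersolution approach collapses and one is forced into a more delicate symmetrization argument in which the bad region $\{1-3u^2>0\}$ (a neighbourhood of $\cSC$) is handled by a Hardy inequality with weight $d(\,\cdot\,,\cSC)^{-2}$, while the good region $\{u^2\ge 1/3\}$ trivially contributes a positive term since $1-3u^2\le 0$ there; matching the two regions would again produce the condition $2m\ge 14$.
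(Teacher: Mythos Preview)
Your core strategy matches the paper's: build a positive supersolution of the linearized operator $\Delta + f'(u)$ out of derivatives of $u$, then run the standard $\xi^2/\varphi$ integration-by-parts argument to conclude $Q_u\ge 0$. The paper's explicit choice is
\[
\varphi = t^{-b}\,u_s \;-\; s^{-b}\,u_t
\]
for a suitable $b>0$; this is your ansatz $\alpha\,u_s+\beta\,u_t$ with $\alpha=t^{-b}$ and $\beta=-s^{-b}$. Note the minus sign: since $u_t<0$ in $\{s>t>0\}$, the term $-s^{-b}u_t$ is positive there, so your stipulation that both weights be \emph{positive} is a slip that would actually obstruct positivity of $\phi$. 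The dimensional threshold $2m\ge14$ emerges in the paper from the computation verifying $(\Delta+f'(u))\varphi\le0$ for this specific $\varphi$ (details deferred to \cite{Ca}); your Hardy-inequality heuristic is in the right spirit, though the actual calculation goes through the equations $L(u_s)=-\tfrac{m-1}{s^2}u_s$, $L(u_t)=-\tfrac{m-1}{t^2}u_t$ combined with further monotonicity properties of $u$ established in \cite{Ca}.

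Your first step --- the $SO(m)\times SO(m)$ spherical-harmonic reduction to test functions $\xi=\xi(s,t)$ --- is correct but unnecessary: since $\varphi=\varphi(s,t)$ is a bona fide function on $\RR^{2m}$ and the supersolution argument uses only integration by parts, it applies directly to arbitrary $\xi\in C^1_c(\RR^{2m})$ with support in $\{st>0\}$, after which a cut-off handles the axes. The paper skips the reduction entirely.
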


\begin{definition}[Stability]
We say that a
solution $u$ of $- \De u = f(u)$ in $\RR^n$ is {\it stable} if the second variation of the energy with respect to compactly
supported perturbations $\xi$ is nonnegative. That is, if
\begin{equation*}
\int_{{\RR}^{n}}\left\{ |\nabla\xi|^2-f'(u)\xi^2\right\}dx\geq 0 \quad \mbox{ for all } \;\xi\in C^1_c(\RR^{n}) .
\end{equation*}
\end{definition}

In the rest of this section, we will take $n=2m$ and $f$ to be the Allen-Cahn nonlinearity, i.e.,
$f (u) = u- u^3 .$

\begin{sketch}[of Theorem \ref{stab}]
Notice that
\begin{equation}\label{eqst}
u_{ss}+u_{tt}+(m-1){\Big (}\frac{u_s}{s}+\frac{u_t}{t}{\Big
)}+ f(u) =0 ,
\end{equation}
for $s>0$ and $t>0$,
is equation \eqref{eq2m} expressed in the 
$(s,t)$ variables.
Let us introduce the function
\begin{equation}\label{0001:eq:deffi}
\varphi := t^{-b}u_s -s^{-b}u_t .
\end{equation}
Differentiating \eqref{eqst} with respect to $s$ (and to $t$), one finds equations satisfied by $u_s$ (and by $u_t$) -- and which involve a zero order term with coefficient $f'(u)$. These equations, together with some more delicate monotonicity properties of the saddle-shaped solution established in \cite{Ca}, can be used to prove the following fact.

For $2m\geq 14$, one can choose $b>0$ in \eqref{0001:eq:deffi} (see \cite{Ca} for more details) such that $\varphi$ is a positive supersolution of the linearized problem, i.e.:
\begin{equation}\label{posivar}
\varphi >0 \quad \text{in } \{st> 0\},
\end{equation}
\begin{equation}\label{superpf}
\{\Delta + f'(u)\} \varphi \leq 0 \quad \text{in } \RR^{2m}\setminus\{st=0\}=\{st>0\} .
\end{equation}

Next, using \eqref{posivar} and \eqref{superpf}, we can verify the stability
condition of $u$ for any $C^1$ test function
$\xi = \xi(x)$ with compact support in $\{st>0\}$. Indeed, multiply \eqref{superpf}
by $\xi^2/\varphi$ and integrate by parts to get
\begin{eqnarray*}
\int_{\{st>0\}} f'(u)\,\xi^2 \, dx  
& = &\int_{\{st>0\}} f'(u) \varphi\, \frac{\xi^2}{\varphi} \, dx \\
& \leq & \int_{\{st>0\}} -\Delta \varphi \, \frac{\xi^2}{\varphi}  \, dx \\
& = &\int_{\{st>0\}} 
\nabla \varphi \, \nabla\xi\, \frac{2\xi}{\varphi}  \, dx 
-\int_{\{st>0\}} \frac{|\nabla \varphi|^2}{\varphi^2}\, \xi^2 \, dx .
\end{eqnarray*}
Now, using the Cauchy-Schwarz inequality, we are led to
\begin{equation*}
\int_{\{st>0\}} f'(u)\,\xi^2 \, dx \leq \int_{\{st>0\}} |\nabla\xi|^2 \, dx .
\end{equation*}

Finally, by a cut-off argument we can prove that this same inequality holds also for every function $\xi \in C^1_c(\RR^{2m})$.
\qed
\end{sketch}

\begin{remark}
Alternatively to the variational proof seen above, another way to establish stability from the existence of a positive supersolution to the linearized problem is by using the maximum principle (see \cite{BNV} for more details).
\end{remark}

\section{Blow-up problems} 

In this final section, we consider positive solutions of the semilinear problem

\begin{equation}\label{pb}
\left\{
\begin{array}{rcll}
-\De u &=& f(u) & \quad\mbox{in } \Om \\
u &>& 0  & \quad\mbox{in } \Om \\
u &=& 0  & \quad\mbox{on } \pa\Om ,\\
\end{array}\right.
\end{equation}
where $\Om \subset \RR^n$ is a smooth bounded domain, $n\geq 1$, and $f: \RR^+ \to \RR$ is $C^1$.

The associated energy functional is
\begin{equation}\label{eq:energiaultimasec}
E_\Om (u):=\int_\Omega \left\{ \frac{1}{2} |\nabla u|^2 - F(u) \right\} dx ,
\end{equation}
where $F$ is such that $F' = f$.

\subsection{Stable and extremal solutions. A singular stable solution for \texorpdfstring{$n \ge 10$}{n>=10}}

We define next the class of stable solutions to \eqref{pb}. It includes any local minimizer, i.e., any minimizer of \eqref{eq:energiaultimasec} under small perturbations vanishing on $\pa \Om$.

\begin{definition}[Stability]
A solution $u$ of \eqref{pb} is said to be \emph{stable} if 
the second variation of the energy with respect to $C^1$ perturbations $\xi$ vanishing on $\pa \Om$ is nonnegative. That is, if
\begin{equation} \label{LN:stability}
\int_{\Omega} f'(u)\xi^2\,dx
\leq
\int_{\Omega} |\nabla\xi|^2\,dx \quad\textrm{for all } \xi \in C^1 (\ol{\Om} ) \textrm{ with } \xi_{|\pa \Om} \equiv 0.
\end{equation}
\end{definition}


There are many nonlinearities for which \eqref{pb} admits a (positive) stable solution. 
Indeed, replace $f(u)$ by $\lambda f(u)$
in \eqref{pb}, with $\lambda\geq 0$:
\begin{equation}\label{pbla}
\left\{
\begin{array}{rcll}
-\Delta u &=& \la f(u) & \quad\mbox{in $\Omega$}\\
u &=& 0  & \quad\mbox{on $\pa\Omega$.}\\
\end{array}\right.
\end{equation}
Assume that $f$ is positive, nondecreasing, and superlinear at $+\infty$, that is,
\begin{equation} \label{hypf}
f(0) > 0, \quad f'\geq 0 \quad\textrm{and}\quad\lim_{t\to +\infty}\frac{f(t)}t=+\infty.
\end{equation}
Note that also in this case we look for positive solutions (when $\la>0$), since $f>0$.
We point out that, for $\la>0$, $u\equiv 0$ is not a solution.

\begin{proposition}\label{prop:extremal}
Assuming \eqref{hypf}, there exists an extremal parameter $\lambda^*\in (0,+\infty)$ such that if $0\le\lambda<\lambda^*$ then
\eqref{pbla} admits a minimal stable classical solution $u_\lambda$. Here ``minimal'' means the smallest among all the solutions, while ``classical'' means of class $C^2$. Being classical is a consequence of $u_\la \in L^{\infty} ( \Om)$ if $\la < \la^*$.

On the other hand, if $\lambda>\lambda^*$ then \eqref{pbla} has no classical solution.

The family of classical solutions  $\left\{ u_{\lambda }:0\le \lambda <\lambda ^{*}\right\}$ 
is increasing in $\lambda$, and its limit as $\lambda\uparrow\lambda^{*}$ is a weak solution $u^*=u_{\lambda^*}$ of \eqref{pbla} for $\la=\la^*$. 
\end{proposition}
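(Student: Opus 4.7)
The plan is to proceed in four standard steps, following the classical theory of extremal solutions. First I would construct a minimal classical solution $u_\lambda$ for $\lambda$ in an interval $[0,\Lambda)$ by monotone iteration. Since $f(0)>0$, the zero function is a strict subsolution; for a supersolution, let $\phi$ solve $-\Delta\phi=1$ in $\Omega$, $\phi=0$ on $\partial\Omega$, and note that for $\lambda$ small enough one has $1\ge \lambda\,\max_{[0,\|\phi\|_\infty]} f$, so $\phi$ is a classical supersolution. Iterating $-\Delta u_{k+1}=\lambda f(u_k)$, $u_{k+1}=0$ on $\partial\Omega$, starting at $u_0=0$, yields an increasing sequence bounded above by $\phi$, whose limit is a classical solution; taking the infimum over all supersolutions gives the \emph{minimal} solution $u_\lambda$. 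The map $\lambda\mapsto u_\lambda$ is then increasing by a standard comparison/iteration argument using the monotonicity of $f$.

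Second, I would show $\lambda^*:=\sup\{\lambda: \eqref{pbla}\text{ admits a classical solution}\}$ is finite. Let $\varphi_1>0$ be the first Dirichlet eigenfunction of $-\Delta$ in $\Omega$ with eigenvalue $\lambda_1>0$. Multiplying \eqref{pbla} by $\varphi_1$ and integrating by parts gives $\lambda_1\int_\Omega u\,\varphi_1\,dx=\lambda\int_\Omega f(u)\,\varphi_1\,dx$; superlinearity \eqref{hypf} forces $f(t)/t\ge \lambda_1/\lambda+1$ for $t$ large, so (since $f$ is positive and nondecreasing) $\lambda$ cannot exceed a finite explicit constant. By the definition of $\lambda^*$ there is no classical solution for $\lambda>\lambda^*$.

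Third, and this is the step I expect to be the main obstacle, I would prove that each $u_\lambda$ with $\lambda<\lambda^*$ satisfies the stability inequality \eqref{LN:stability}. The plan is to study the principal Dirichlet eigenvalue $\mu_1(\lambda)$ of the linearized operator $L_\lambda:=-\Delta-\lambda f'(u_\lambda)$. The function $\lambda\mapsto\mu_1(\lambda)$ is continuous (and, by standard perturbation theory, nonincreasing in $\lambda$ as $u_\lambda$ increases and $f'\ge 0$). If $\mu_1(\lambda_0)<0$ at some $\lambda_0<\lambda^*$, with positive eigenfunction $\psi$, a careful Taylor expansion combined with the existence of a classical solution $u_\mu$ for some $\mu\in(\lambda_0,\lambda^*)$ would produce, via a subsolution of the form $u_\lambda+\varepsilon\psi$ trapped below $u_\mu$, a classical solution strictly below $u_\lambda$, contradicting the minimality of $u_\lambda$. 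Hence $\mu_1(\lambda)\ge 0$, which is exactly the stability of $u_\lambda$.

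Finally, to pass to the limit $\lambda\uparrow\lambda^*$, I would note that the family $\{u_\lambda\}$ is increasing and, by the stability inequality applied with a fixed test function, bounded in $H^1_0(\Omega)$ on any compact subinterval; together with the $L^1$ bound obtained by testing the equation against $\varphi_1$ and using the superlinearity of $f$, this allows to invoke monotone convergence to define $u^*:=\lim_{\lambda\uparrow\lambda^*}u_\lambda$. Standard arguments (Fatou and dominated convergence on $\lambda f(u_\lambda)$, which is dominated through the $L^1$ bound by approximation against $\varphi_1$ or $\mathrm{dist}(\cdot,\partial\Omega)$) then show that $u^*$ is a weak solution of \eqref{pbla} at $\lambda=\lambda^*$. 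The nonexistence of a classical solution for $\lambda>\lambda^*$ follows immediately from the definition of $\lambda^*$.
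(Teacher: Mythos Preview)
The paper does not give its own proof of this proposition; it simply refers the reader to Dupaigne's book \cite{Dupaigne}. Your outline follows the standard route found in such references (monotone iteration for existence, the first-eigenfunction test for $\lambda^*<\infty$, a spectral argument for stability, and monotone passage to the limit), so in spirit you are aligned with what the paper intends.

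That said, your Step 3 contains a genuine error in the direction of the perturbation. If $\mu_1(\lambda_0)<0$ with positive eigenfunction $\psi$, then $u_{\lambda_0}+\varepsilon\psi$ is indeed a subsolution at $\lambda_0$ and can be trapped below the supersolution $u_\mu$ for $\mu>\lambda_0$; but the solution produced by sub/supersolution iteration lies \emph{between} these two functions, hence \emph{above} $u_{\lambda_0}$, and this does not contradict minimality. The correct argument goes the other way: a Taylor expansion shows that $u_{\lambda_0}-\varepsilon\psi$ is a \emph{supersolution} at $\lambda_0$ for small $\varepsilon>0$ (the leading term is $-\varepsilon\mu_1\psi>0$), and by Hopf's lemma one has $u_{\lambda_0}-\varepsilon\psi>0$ in $\Omega$ for $\varepsilon$ small. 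Then $0$ and $u_{\lambda_0}-\varepsilon\psi$ form an ordered sub/supersolution pair yielding a classical solution strictly below $u_{\lambda_0}$, which is the desired contradiction.

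A smaller point on Step 4: the claim of uniform $H^1_0$ bounds ``by the stability inequality applied with a fixed test function'' is vague and, in general, nontrivial. For the notion of weak solution used in the paper (namely $u^*\in L^1(\Omega)$ and $f(u^*)\,\mathrm{dist}(\cdot,\partial\Omega)\in L^1(\Omega)$), your $\varphi_1$-test already suffices: it gives a uniform bound on $\int_\Omega f(u_\lambda)\varphi_1\,dx$, hence on $\int_\Omega u_\lambda\varphi_1\,dx$, and monotone convergence then yields the weak solution $u^*$ at $\lambda^*$. You can safely drop the $H^1_0$ claim.
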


\begin{definition}[Extremal solution]
The function $u^*$ given by Proposition \ref{prop:extremal} is called \emph{the extremal solution} of \eqref{pbla}.
\end{definition}

For a proof of Proposition \ref{prop:extremal} see the book \cite{Dupaigne} by L. Dupaigne.
The definition of weak solution (the sense in which $u^*$ is a solution) requires $u^* \in L^1 (\Om)$, $f(u^* ) \mathrm{dist}(\cdot, \pa \Om) \in L^1 (\Om)$, and the equation to be satisfied in the distributional sense after multiplying it by test functions vanishing on $\pa \Om$ and integrating by parts twice (see \cite{Dupaigne}). Other useful references regarding extremal and stable solutions are \cite{Brezis}, \cite{Cextremal}, and \cite{CabBound}.

Since 1996, Brezis has raised several questions regarding stable and extremal solutions; see for instance \cite{Brezis}. They have led to interesting works, some of them described next. One of his questions is the following.

\medskip
\noindent
\textbf{Question} (Brezis). Depending on the dimension $n$ or on the domain $\Om$, is the extremal solution $u^*$ of \eqref{pbla} bounded (and therefore classical) or is it unbounded?
More generally, one may ask the same question for the larger class of stable solutions to \eqref{pb}.

\medskip

\begin{figure}[htbp]
\centering
\includegraphics[scale=.25]{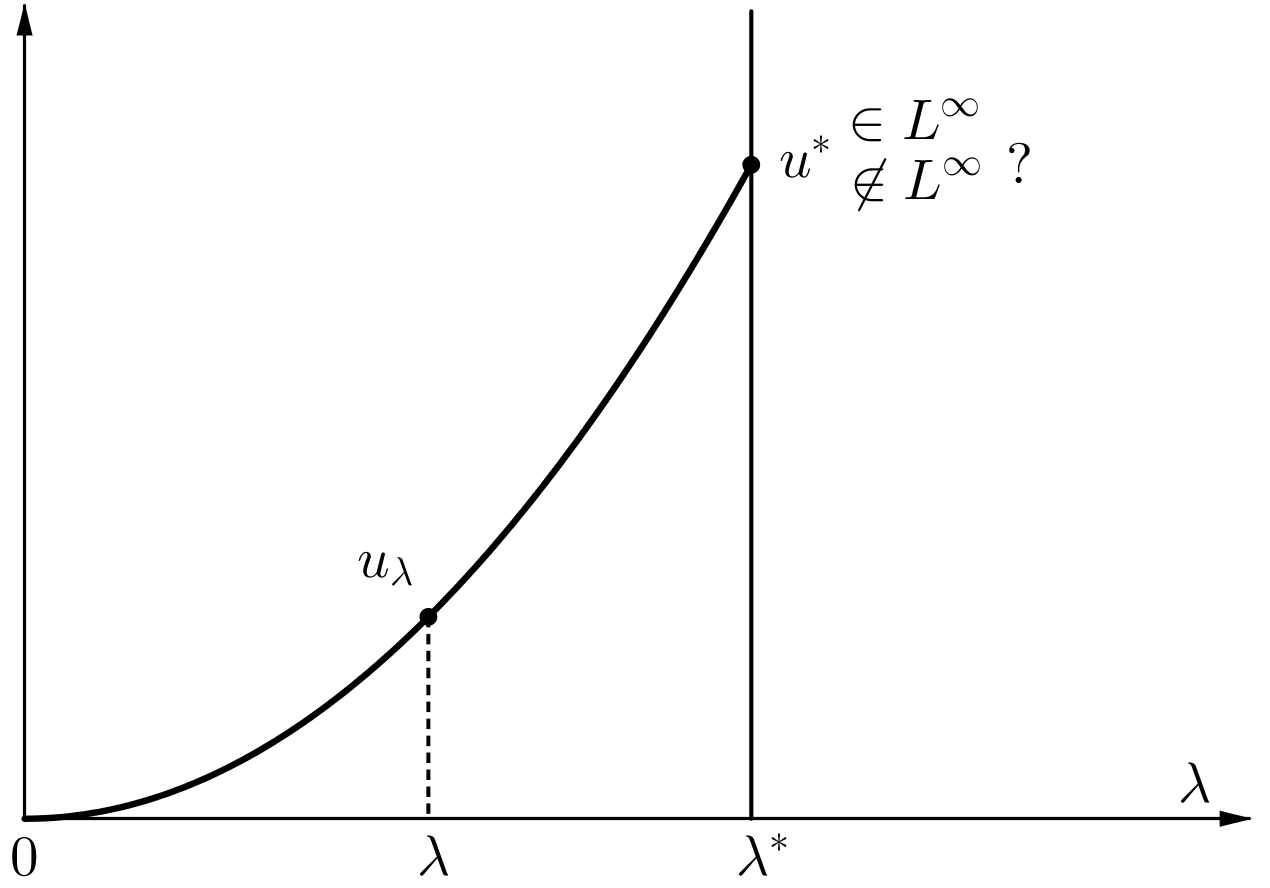}
\caption{The family of stable solutions $u_\la$ and the extremal solution $u^*$}
\label{fig:11}       
\end{figure}

The following is an explicit example of stable unbounded (or singular) solution.

It is easy to check that, for $n\geq 3$, the function $\tilde u=-2\log\vert{x}\vert$ is a solution of \eqref{pb} in $\Om =B_1$, 
the unit ball, for $f(u)= 2(n-2) e^u$. 
%
Let us now consider the linearized operator at $\tilde u$, which is given by  
$$
-\Delta -2(n-2)e^{\tilde u} =
-\Delta  -\frac{2(n-2)}{\vert{x}\vert^2}.
$$
If $n\ge 10$, then its first Dirichlet eigenvalue in $B_1$ is nonnegative.
This is a consequence of {\it Hardy's inequality} \eqref{Hardyin}:
$$
\frac{(n-2)^2}{4}\int_{B_1}\frac{\xi^2}{\vert{x}\vert^2} \, dx \;\; \leq\;\;
\int_{B_1} \vert\nabla \xi\vert^2 dx
\qquad\quad
\mbox{ for every } \xi\in H^1_0(B_1),
$$
and the fact that $2(n-2)\leq (n-2)^2/4$ if $n\geq 10$.
Thus we proved the following result.
\begin{proposition}\label{prop:dimensopt}
For $n\geq 10$, $\tilde u = -2\log\vert{x}\vert$ is an $H^1_0(B_1)$ stable weak solution of $-\De u = 2 (n-2) e^u $ in $B_1$, $u>0$ in $B_1$, $u=0$ on $\pa B_1$.
\end{proposition}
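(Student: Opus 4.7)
The plan is to verify the three claims of the statement in turn: membership in $H^1_0(B_1)$ together with the boundary and positivity conditions; the (weak) validity of the equation $-\De u = 2(n-2)e^{u}$; and, finally, the stability condition \eqref{LN:stability}. All three rely only on elementary computations and on Hardy's inequality, which is available as Proposition~\ref{Prop:Hardy inequality}.

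First, I would observe that, for $|x|<1$, $\tilde u(x)=-2\log|x|>0$ and $\tilde u(x)\to 0$ as $|x|\to 1^-$, so that $\tilde u|_{\pa B_1}=0$ and $\tilde u>0$ in $B_1$. A direct computation gives $\na \tilde u(x)=-2\,x/|x|^2$, hence $|\na \tilde u|^2=4/|x|^2$, which is integrable on $B_1$ precisely when $n\ge 3$; combined with the vanishing trace on $\pa B_1$, this yields $\tilde u\in H^1_0(B_1)$ for all $n\ge 3$, and in particular for $n\ge 10$.

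Next, I would compute, for $x\neq 0$,
\begin{equation*}
\De \tilde u(x) = -2\,\dv\!\left(\frac{x}{|x|^2}\right)
= -2\left(\frac{n}{|x|^2}-\frac{2|x|^2}{|x|^4}\right)
= -\frac{2(n-2)}{|x|^2}.
\end{equation*}
Since $e^{\tilde u(x)}=|x|^{-2}$, this shows that $-\De \tilde u = 2(n-2)e^{\tilde u}$ pointwise in $B_1\setminus\{0\}$. A standard cut-off argument (integrating against a test function $\fhi\in C_c^\infty(B_1)$ on $B_1\setminus B_\eps$, applying the divergence theorem, and letting $\eps\to 0^+$, using that the boundary term on $\pa B_\eps$ scales like $\eps^{n-2}$ and thus vanishes for $n\ge 3$) upgrades this identity to the weak formulation; the integrability conditions $\tilde u\in L^1(B_1)$ and $e^{\tilde u}\,\mathrm{dist}(\cdot,\pa B_1)=|x|^{-2}\,(1-|x|)\in L^1(B_1)$ are straightforward to check for $n\ge 3$.

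Finally, to establish stability I would apply Hardy's inequality \eqref{Hardyin} from Proposition~\ref{Prop:Hardy inequality}, which (by density) holds for every $\xi\in H^1_0(B_1)\supset\{\xi\in C^1(\ol{B_1}):\xi|_{\pa B_1}=0\}$:
\begin{equation*}
\frac{(n-2)^2}{4}\int_{B_1}\frac{\xi^2}{|x|^2}\,dx \le \int_{B_1}|\na\xi|^2\,dx.
\end{equation*}
Since $f'(\tilde u)=2(n-2)\,e^{\tilde u}=2(n-2)/|x|^2$, the stability condition \eqref{LN:stability} reduces to
\begin{equation*}
2(n-2)\int_{B_1}\frac{\xi^2}{|x|^2}\,dx \le \int_{B_1}|\na\xi|^2\,dx,
\end{equation*}
which follows from Hardy's inequality as soon as $2(n-2)\le (n-2)^2/4$, i.e. $n\ge 10$. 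The only mildly delicate point is to make rigorous the passage from the classical equation in $B_1\setminus\{0\}$ to the weak equation in $B_1$, but this is routine because the singularity at the origin is of the borderline Hardy type and $n\ge 3$ ensures that all the relevant integrals and boundary contributions are controlled.
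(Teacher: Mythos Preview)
Your proposal is correct and follows exactly the same approach as the paper: verify that $\tilde u$ solves the equation (the paper simply says ``it is easy to check''), identify the linearized operator as $-\Delta - 2(n-2)/|x|^2$, and then invoke Hardy's inequality together with the elementary observation that $2(n-2)\le (n-2)^2/4$ precisely when $n\ge 10$. You have simply written out the details that the paper leaves implicit.
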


Thus, in dimensions $n\geq 10$ there exist unbounded $H^1_0$ stable weak solutions of \eqref{pb}, 
even in the unit ball and for the exponential nonlinearity.
It is believed that $n \ge 10$ could be the optimal dimension for this fact, as we describe next.

\subsection{Regularity of stable solutions. The Allard and Michael-Simon Sobolev inequality}

The following results give $L^{\infty}$ bounds for stable solutions. To avoid technicalities we state the bounds for the extremal solution but, more generally, they also apply to every stable weak solution of \eqref{pb} which is the pointwise limit of a sequence of bounded stable solutions to similar equations (see \cite{Dupaigne}). 


\begin{theorem}[Crandall-Rabinowitz \cite{CR}]
\label{thm:CranRab}
Let $u^*$ be the extremal solution of \eqref{pbla} with $f(u)=e^u$ or $f(u)=(1+u)^p$, $p>1$. If $n\le 9$, then $u^* \in L^\infty (\Om)$.
\end{theorem}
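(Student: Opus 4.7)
My plan is to prove a uniform $L^\infty$ bound on the family $\{u_\lambda : 0 \le \lambda < \lambda^*\}$ and then pass to the limit $\lambda \uparrow \lambda^*$, using the monotone convergence $u_\lambda \to u^*$. The core idea is to feed a carefully chosen test function into the stability inequality \eqref{LN:stability}, and simultaneously test the equation $-\Delta u_\lambda = \lambda f(u_\lambda)$ against a related function, so that the resulting $\int f'(u_\lambda)\xi^2$ and $\int |\nabla \xi|^2$ can be traded for integrals of pure exponentials (or powers) of $u_\lambda$. Once I have a uniform bound on $\|f(u_\lambda)\|_{L^q(\Omega)}$ for some $q>n/2$, standard $L^q$ elliptic regularity plus the Sobolev embedding $W^{2,q}(\Omega)\hookrightarrow L^\infty(\Omega)$ will finish the job. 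The entire difficulty is thus concentrated in turning stability into higher integrability of $f(u_\lambda)$, with the quantitative threshold that forces $n\le 9$.

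\textbf{The exponential case.} For $f(u)=e^u$, fix $\alpha\in(0,2)$ and take
$\xi_\alpha := e^{\alpha u_\lambda}-1 \in H^1_0(\Omega)$ in \eqref{LN:stability}. Since $|\nabla \xi_\alpha|^2 = \alpha^2 e^{2\alpha u_\lambda}|\nabla u_\lambda|^2$, stability reads
\begin{equation*}
\lambda \int_\Omega e^{u_\lambda}\bigl(e^{\alpha u_\lambda}-1\bigr)^2\,dx \;\le\; \alpha^2 \int_\Omega e^{2\alpha u_\lambda}|\nabla u_\lambda|^2\,dx.
\end{equation*}
Next I would test the PDE against $v_\alpha := e^{2\alpha u_\lambda}-1 \in H^1_0(\Omega)$; integration by parts gives
\begin{equation*}
2\alpha \int_\Omega e^{2\alpha u_\lambda}|\nabla u_\lambda|^2\,dx \;=\; \lambda\int_\Omega e^{u_\lambda}\bigl(e^{2\alpha u_\lambda}-1\bigr)\,dx.
\end{equation*}
Substituting and expanding $(e^{\alpha u_\lambda}-1)^2 = e^{2\alpha u_\lambda}-2e^{\alpha u_\lambda}+1$ on the left, after cancelling the factor $\lambda$ (harmless if $\lambda > 0$; the case $\lambda=0$ is trivial), I arrive at the clean inequality
\begin{equation*}
\bigl(1-\tfrac{\alpha}{2}\bigr)\int_\Omega e^{(2\alpha+1)u_\lambda}\,dx \;\le\; 2\int_\Omega e^{(\alpha+1)u_\lambda}\,dx \;-\;\bigl(1+\tfrac{\alpha}{2}\bigr)\int_\Omega e^{u_\lambda}\,dx.
\end{equation*}

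\textbf{Bootstrapping to higher integrability.} Since $2(\alpha+1)=(2\alpha+1)+1$, Cauchy--Schwarz yields $\int e^{(\alpha+1)u_\lambda} \le (\int e^{(2\alpha+1)u_\lambda})^{1/2}(\int e^{u_\lambda})^{1/2}$. Writing $A_\lambda := \int e^{(2\alpha+1)u_\lambda}$ and $B_\lambda := \int e^{u_\lambda}$, I obtain $(1-\alpha/2)A_\lambda \le 2\sqrt{A_\lambda B_\lambda}$, i.e.
\begin{equation*}
\int_\Omega e^{(2\alpha+1)u_\lambda}\,dx \;\le\; \frac{16}{(2-\alpha)^2}\int_\Omega e^{u_\lambda}\,dx \qquad \text{for every } \alpha\in(0,2).
\end{equation*}
A uniform $L^1$ bound for $e^{u_\lambda}$ (equivalently, for $\lambda^{-1}(-\Delta u_\lambda)$) in the range $\lambda\in[0,\lambda^*)$ comes from a standard argument testing the PDE against the first Dirichlet eigenfunction $\varphi_1>0$ of $-\Delta$ on $\Omega$: the convexity of $e^u$ and the bound $\lambda<\lambda^*$ force $\int e^{u_\lambda}\varphi_1\,dx$, and hence $\int e^{u_\lambda}\,dx$ (use that $\varphi_1\gtrsim \mathrm{dist}(\cdot,\partial\Omega)$ and a separate control on $u_\lambda$ away from $\partial\Omega$), to stay bounded as $\lambda\uparrow\lambda^*$. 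This is a classical a priori estimate which I would invoke rather than reprove.

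\textbf{Closing the argument.} The previous step shows $f(u_\lambda)=e^{u_\lambda}\in L^q(\Omega)$ uniformly in $\lambda<\lambda^*$ for every $q<5$. Standard Calderón--Zygmund theory for $-\Delta u_\lambda = \lambda e^{u_\lambda}$ with zero Dirichlet data then gives $u_\lambda$ uniformly bounded in $W^{2,q}(\Omega)$, and the Sobolev embedding $W^{2,q}(\Omega)\hookrightarrow L^\infty(\Omega)$ is valid as soon as $2q>n$. If $n\le 9$ I can pick $q\in(n/2,5)$, which yields $\|u_\lambda\|_{L^\infty(\Omega)}\le C$ uniformly; monotone convergence gives $u^*\in L^\infty(\Omega)$. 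The threshold $n\le 9$ is intrinsic to the method: as $\alpha\nearrow 2$ the integrability exponent $2\alpha+1$ only approaches $5$, matching the singular example $\tilde u = -2\log|x|$ of Proposition \ref{prop:dimensopt}. For $f(u)=(1+u)^p$ the same scheme works with $\xi=(1+u_\lambda)^\alpha-1$ in stability and the PDE tested against $(1+u_\lambda)^{2\alpha-1}-1$; the algebra is messier but leads to $(1+u_\lambda)^{p+2\alpha-1}\in L^1$ uniformly for $\alpha$ in an interval whose optimal endpoint depends on $p$, and the bootstrap closes again precisely when $n\le 9$. I expect the main technical obstacle to be carefully tracking the admissible range of $\alpha$ in the power case and the uniform $L^1$ bound on $f(u_\lambda)$; the algebraic identities themselves are straightforward once the right test functions are in place.
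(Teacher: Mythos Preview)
Your proposal is correct and follows essentially the same approach as the paper's sketch: the test function $\xi=e^{\alpha u_\lambda}-1$ in stability, combined with testing the equation against $e^{2\alpha u_\lambda}-1$, to obtain $e^{u_\lambda}\in L^{2\alpha+1}$ uniformly for $\alpha<2$, and then Calder\'on--Zygmund plus Sobolev to close when $n\le 9$. One minor simplification: the separate uniform $L^1$ bound on $e^{u_\lambda}$ can be avoided by using Young's inequality $2e^{(\alpha+1)u}\le \varepsilon e^{(2\alpha+1)u}+C_\varepsilon$ (valid since $\alpha+1<2\alpha+1$) in place of Cauchy--Schwarz, which absorbs the middle term directly.
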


\begin{sketch}[in the case $f(u)= e^u$]
Use the equation in \eqref{pbla} for the classical solutions $u=u_\la$ ($\la < \la^*$), together with the stability condition \eqref{LN:stability} for the test function $\xi=e^{\al u} -1$ (for a positive exponent $\al$ to be chosen later).
More precisely, start from \eqref{LN:stability} -- with $f'$ replaced by $\la f'$ -- and to proceed with $\int_\Om \al^2 e^{2 \al u} |\na u|^2$, write $ \al^2 e^{2 \al u} | \na u|^2 = ( \al / 2) \na \left( e^{2 \al u} -1 \right) \na u$, and integrate by parts to use \eqref{pbla}.
For every $\al < 2$, verify that this leads, after letting $\la \uparrow \la^*$,
to $e^{u^*} \in L^{2 \al +1} (\Om)$. As a consequence, by Calder\'{o}n-Zygmund theory and Sobolev embeddings, $u^* \in W^{2, 2 \al +1} (\Om) \subset L^\infty (\Om)$ if $2 ( 2 \al +1) > n$. This requires that $n \le 9$.
\qed
\end{sketch}

Notice that the nonlinearities $f(u)=e^u$ or $f(u)=(1+u)^p$ with $p>1$ satisfy~\eqref{hypf}.

In the radial case $\Om=B_1$ we have the following result.

\begin{theorem}[Cabr\'e-Capella \cite{CabrCappLN}]
\label{corolrad}
Let $u^*$ be the extremal solution of \eqref{pbla}. Assume that $f$ satisfies \eqref{hypf} and that $\Om=B_1$.
If $1\leq n\leq 9$, then $u^* \in L^\infty (B_1)$. 
\end{theorem}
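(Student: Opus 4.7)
The plan is to establish a uniform $L^\infty$ bound for the minimal classical solutions $u_\lambda$ with $\lambda \in [0,\lambda^*)$; since by Proposition \ref{prop:extremal} the family $\{u_\lambda\}$ is monotone increasing and $u^* = \lim_{\lambda \uparrow \lambda^*} u_\lambda$, such a uniform bound immediately yields $u^* \in L^\infty(B_1)$. The first reduction is to radial ODEs: by the Gidas--Ni--Nirenberg symmetry theorem, applied to the positive classical solution $u_\lambda$ of \eqref{pbla} in the ball $B_1$, $u_\lambda$ is radially symmetric and strictly radially decreasing. Writing $u(r) := u_\lambda(|x|)$ and $w(r) := -u'(r) > 0$ for $r \in (0,1]$, and differentiating the radial equation $-(r^{n-1}u')' = \lambda r^{n-1} f(u)$, one finds that $w$ satisfies the linearized ODE
\begin{equation*}
  -w'' - \frac{n-1}{r}\,w' + \frac{n-1}{r^2}\,w \;=\; \lambda f'(u)\,w \qquad \text{on } (0,1),
\end{equation*}
while the stability condition \eqref{LN:stability}, restricted to radial test functions $\xi(x) = \eta(|x|)$ with $\eta \in C^1_c((0,1])$ vanishing at $r = 1$, reads
\begin{equation*}
  \int_0^1 \lambda f'(u)\, \eta^2\, r^{n-1}\,dr \;\le\; \int_0^1 (\eta')^2\, r^{n-1}\,dr.
\end{equation*}

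The crucial step is to couple these two pieces of information through the choice $\eta = w\,\zeta$ for a Lipschitz cutoff $\zeta$ with compact support in $(0,1)$. Expanding $(w\zeta)'^2$ in the stability inequality yields one identity, while multiplying the ODE for $w$ by $w\,\zeta^2\,r^{n-1}$ and integrating by parts produces a second identity. After subtraction the terms $\int (w')^2 \zeta^2 r^{n-1}dr$ and the mixed terms $2\int w\,w'\zeta\zeta' r^{n-1}dr$ cancel exactly, leaving the clean Hardy-type inequality
\begin{equation*}
  (n-1)\int_0^1 \frac{w^2}{r^2}\,\zeta^2\, r^{n-1}\,dr \;\le\; \int_0^1 w^2\,(\zeta')^2\, r^{n-1}\,dr,
\end{equation*}
valid for every admissible cutoff $\zeta$. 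This is the radial manifestation of stability as a Hardy--Poincaré inequality weighted by $w^2$.

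The final step is to convert this weighted Hardy inequality into a pointwise decay estimate for $w$, and hence for $u$. Choosing a one-parameter family of cutoffs $\zeta_{\varepsilon}(r)$ that on $[\varepsilon,1]$ behave essentially as a power $r^{-\alpha}$ (appropriately truncated near both endpoints), inserting them into the Hardy inequality and letting $\varepsilon \to 0$, yields weighted integrability of $w^2$ of the form $\int_0^1 w(r)^2\, r^{n-3-2\alpha}\,dr \le C$ for any $\alpha$ below the critical Hardy threshold $(n-2)/2$. Combining this with the monotonicity properties of $r \mapsto r^\beta w(r)$ encoded in the ODE leads to the pointwise bound $w(r) \le C\, r^{-\gamma}$ for an exponent $\gamma$ strictly less than $(n-2)/2$. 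Integrating in $r$, one obtains $u(r) - u(1) = \int_r^1 w(s)\,ds \le C$ whenever $\gamma < 1$. The a priori bound on the constant $C$ in terms of $\|u_\lambda\|_{L^1(B_1)}$ comes from testing \eqref{pbla} against the first positive Dirichlet eigenfunction of $-\Delta$ in $B_1$, which together with the superlinearity \eqref{hypf} gives $\|u_\lambda\|_{L^1(B_1)} \le C$ uniformly in $\lambda < \lambda^*$.

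The main obstacle and the source of the dimensional restriction are concentrated in the passage from the weighted Hardy inequality to the pointwise decay. The exponent $\gamma$ produced by the argument is essentially $(n-2)/2$, and the condition $\gamma < 1$ needed to make $w$ integrable near the origin forces $n-2 < 2 + \varepsilon$, i.e.\ $n \le 9$ (with the borderline $n = 10$ saturating the computation, in perfect agreement with the explicit singular stable solution $\tilde u = -2\log|x|$ of Proposition \ref{prop:dimensopt}). The delicate point is therefore to make the formal balance $(n-2)^2/4$ versus $2(n-2)$ rigorous at the level of the ODE analysis of $w$, without losing constants in the cutoff approximations.
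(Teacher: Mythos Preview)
Your overall strategy coincides with the paper's sketch: both take $\tilde c = u_r$ (your $w$) in the stability inequality \eqref{stabc}, use the linearized equation for $u_r$ to produce the weighted Hardy inequality
\[
(n-1)\int_0^1 \frac{w^2}{r^2}\,\zeta^2\, r^{n-1}\,dr \;\le\; \int_0^1 w^2\,(\zeta')^2\, r^{n-1}\,dr,
\]
and then test with power-type cutoffs (the paper's explicit choice is $\zeta = r\bigl(r^{-\alpha}-(1/2)^{-\alpha}\bigr)_+$).

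However, your final dimensional count is garbled. From your own Hardy inequality, inserting $\zeta\sim r^{-\alpha}$ gives a useful bound on $\int_0^1 w^2 r^{\,n-3-2\alpha}\,dr$ precisely when $\alpha^2 < n-1$, so the critical threshold is $\alpha<\sqrt{n-1}$, \emph{not} $(n-2)/2$ as you write. Combining this bound with the monotonicity of $r^{n-1}w$ (which follows from $(r^{n-1}w)'=\lambda r^{n-1}f(u)>0$) yields $w(\rho)\le C\rho^{\,\alpha-(n-2)/2}$; hence $\gamma=(n-2)/2-\alpha$, and $\gamma<1$ means $\alpha>(n-4)/2$. The two constraints $(n-4)/2<\alpha<\sqrt{n-1}$ are compatible iff $(n-4)^2<4(n-1)$, i.e.\ $(n-2)(n-10)<0$, which gives exactly $n\le 9$. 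Your line ``$n-2<2+\varepsilon$, i.e.\ $n\le 9$'' does not follow from anything you wrote (it would give $n\le 4$), and the balance ``$(n-2)^2/4$ versus $2(n-2)$'' that you invoke is the one relevant to the explicit exponential example in Proposition~\ref{prop:dimensopt}, not to the general radial argument here.
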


As mentioned before, this theorem also holds for every $H_0^1 (B_1)$ stable weak solution of \eqref{pb}, for any $f \in C^1$.
Thus, in view of Proposition \ref{prop:dimensopt}, the dimension $n \le 9$ is optimal in this result.

We turn now to the nonradial case and we present the currently known results. First, in 2000 Nedev solved the case $n \le 3$.

\begin{theorem}[Nedev \cite{ND}]\label{thm3pre}
Let $f$ be convex and satisfy \eqref{hypf}, and $\Omega\subset\RR^{n}$ be a smooth bounded domain.
If $n\leq 3$, then $u^* \in L^\infty(\Omega)$.
\end{theorem}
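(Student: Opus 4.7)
The plan is to imitate the bootstrap scheme of Crandall--Rabinowitz (Theorem~\ref{thm:CranRab}) but, since the nonlinearity is no longer explicit, to extract all information from the \emph{convexity} of $f$ combined with the stability inequality. I would work with the minimal classical stable solutions $u_\la$ for $0\le\la<\la^{*}$ (provided by Proposition~\ref{prop:extremal}) and establish $L^{p}$-bounds on $f(u_\la)$ uniform in $\la$; these estimates are then inherited by $u^{*}$ via monotone convergence as $\la\uparrow\la^{*}$.

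The first step is to test the stability inequality~\eqref{LN:stability} with $\xi:=f(u_\la)-f(0)$, which belongs to $H^{1}_{0}(\Om)$ because $u_\la\equiv 0$ on $\pa\Om$. This yields
\[
\la\int_{\Om} f'(u_\la)\bigl(f(u_\la)-f(0)\bigr)^{2}\,dx \;\le\; \int_{\Om} f'(u_\la)^{2}|\na u_\la|^{2}\,dx.
\]
To rewrite the right-hand side as a zero-order integral I would multiply the equation $-\De u_\la=\la f(u_\la)$ by $\Psi(u_\la)$, with $\Psi(u):=\int_{0}^{u}f'(t)^{2}\,dt$ (which vanishes on $\pa\Om$), and integrate by parts to obtain
\[
\int_{\Om} f'(u_\la)^{2}|\na u_\la|^{2}\,dx \;=\; \la\int_{\Om} f(u_\la)\,\Psi(u_\la)\,dx.
\]
Convexity of $f$ enters through the pointwise bounds $f(u)-f(0)\le u\,f'(u)$ and $\Psi(u)\le f'(u)\bigl(f(u)-f(0)\bigr)$, both immediate from the monotonicity of $f'$. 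Combining these three inequalities and rearranging produces, after the usual expansion and cancellations, a uniform estimate on combined moments of $f(u_\la)$ and $f'(u_\la)$.

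Iterating this scheme with test functions of the type $\xi=g(u_\la)$, for primitives $g$ chosen so that the product rule together with convexity generates higher integrability, I would eventually obtain
\[
\|f(u_\la)\|_{L^{p}(\Om)}\le C \qquad \text{for some } p>\tfrac{n}{2},
\]
with $C$ independent of $\la$. Standard Calder\'on--Zygmund regularity applied to $-\De u_\la=\la f(u_\la)$ with homogeneous Dirichlet data then yields a uniform bound $\|u_\la\|_{W^{2,p}(\Om)}\le C'$. When $n\le 3$ one can achieve $p>n/2$, and the Sobolev embedding $W^{2,p}(\Om)\hookrightarrow L^{\infty}(\Om)$ transfers the estimate to a uniform $L^{\infty}$-bound on $u_\la$; passing to the limit $\la\uparrow\la^{*}$ delivers $u^{*}\in L^{\infty}(\Om)$.

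The main obstacle is the higher-integrability step: with only convexity available, the explicit algebraic identities that in Theorem~\ref{thm:CranRab} are afforded by the special structure of $e^{u}$ or $(1+u)^{p}$ must here be replaced by the two elementary convexity inequalities above, which are essentially the only structural information on $f$ that survives. Squeezing from them an exponent $p>n/2$ is possible only in low dimensions, which is precisely why the argument is confined to $n\le 3$; the analogous statement for general convex $f$ in intermediate dimensions $4\le n\le 9$ remains the long-standing open problem of Brezis alluded to in the Question just after Proposition~\ref{prop:extremal}.
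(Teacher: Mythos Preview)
Your strategy --- test \eqref{LN:stability} with $\xi=h(u_\la)$ for an $h$ built from $f$, then convert the gradient term via the equation --- is exactly what the paper's one-line sketch indicates, and your choice $h(t)=f(t)-f(0)$ is Nedev's. At that level you match the paper.

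There is, however, a real gap at the step you call ``the usual expansion and cancellations.'' Combining your three displayed relations gives
\[
\int_\Om f'(u)\bigl(f(u)-f(0)\bigr)^{2}\,dx\;\le\;\int_\Om f(u)\,\Psi(u)\,dx,
\]
and your bound $\Psi(u)\le f'(u)\bigl(f(u)-f(0)\bigr)$ turns the right side into $\int_\Om f(u)\,f'(u)\bigl(f(u)-f(0)\bigr)\,dx$. Writing $f(u)=(f(u)-f(0))+f(0)$ and cancelling the common term leaves only
\[
0\;\le\; f(0)\int_\Om f'(u)\bigl(f(u)-f(0)\bigr)\,dx,
\]
which is vacuous since every factor is nonnegative. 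So no ``uniform estimate on combined moments'' drops out, and there is nothing on which to base the iteration you propose (in fact Nedev's argument does not iterate --- one test function suffices). Your other convexity bound $f(u)-f(0)\le u\,f'(u)$ does not repair this.

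What is genuinely missing from your outline is any appeal to the superlinearity $f(t)/t\to\infty$ in \eqref{hypf}; convexity alone cannot close the estimate, and it is the interplay of superlinearity with convexity that makes a lower-order term absorbable and produces the nontrivial $L^{p}$ bound on $f(u_\la)$. As a minor side remark, the open range in your last paragraph should be $5\le n\le 9$, since Theorem~\ref{thm3} already settles $n=4$.
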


In 2010, Nedev's result was improved to dimension four:

\begin{theorem}[Cabr\'e \cite{Cabre}; Villegas \cite{V}]\label{thm3}
Let $f$ satisfy \eqref{hypf}, $\Omega\subset\RR^{n}$ be a smooth bounded domain, and $1 \le n\leq 4$.
If $n\in \{3,4\}$ assume either that $f$ is a convex nonlinearity or that $\Omega$ is a convex domain. 
Then, $u^* \in L^\infty(\Omega)$.
\end{theorem}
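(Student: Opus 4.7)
The plan is to establish a uniform $L^{\infty}$ bound for the minimal classical stable solutions $u_\lambda$ with $\lambda<\lambda^*$; since this family is monotone and converges pointwise (and in $L^1$) to $u^*$, such a uniform bound will yield $u^*\in L^\infty(\Omega)$. The common starting point in all cases is to exploit the stability inequality \eqref{LN:stability} with a carefully chosen test function $\xi$ built out of $u=u_\lambda$, and to convert the resulting inequality, with the help of the equation $-\Delta u=\lambda f(u)$, into an a priori estimate of the form $\int_\Omega f(u)^{1+\delta}\,dx\le C$ for some $\delta>0$ independent of $\lambda$. Once such an integral bound is available (with $1+\delta>n/2$ essentially), standard Calder\'on--Zygmund theory plus Sobolev embedding upgrades it to an $L^\infty$ bound on $u_\lambda$.

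For $n\le 2$ the argument is essentially immediate: testing stability with $\xi=\eta f(u)^{1/2}$ (say, after a preliminary reduction using $f(0)>0$) and integrating by parts gives, independently of $\lambda$, a bound on $\int_\Omega f(u_\lambda)\,dx$ and on $\int_\Omega |\nabla u_\lambda|^2\,dx$; the Moser--Trudinger and standard Sobolev embeddings then close the estimate in low dimension. For $n=3$, $n=4$ with convex $f$ one follows Nedev's scheme, refined as in Villegas: plug $\xi=f(u)-f(0)$ into \eqref{LN:stability} to obtain
$$\lambda\int_\Omega f'(u)\bigl(f(u)-f(0)\bigr)^2\,dx \le \int_\Omega f'(u)^2|\nabla u|^2\,dx,$$
then integrate the right-hand side by parts using $-\Delta u=\lambda f(u)$. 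The convexity $f''\ge 0$ ensures that the leftover second-order term has the correct sign, and, combined with the Pohozaev identity applied to $u_\lambda$, yields a uniform bound on $\int_\Omega f(u_\lambda)^{1+\delta}\,dx$ with $\delta>0$ sufficient to cover $n\le 4$.

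For $n=3$, $n=4$ with convex $\Omega$ (and general $f$ satisfying \eqref{hypf}) the strategy is geometric and is the heart of Cabr\'e's contribution. One tests stability with a truncation of $\xi=|\nabla u|$; using the Sternberg--Zumbrun-type identity
$$-|\nabla u|\,\Delta|\nabla u| = |\nabla u|^2\bigl(|A|^2+\mathrm{something}\bigr) + \bigl|\nabla_T|\nabla u|\bigr|^2 - |\nabla^2 u|^2+\dots$$
on each regular level set, where $A$ is the second fundamental form of $\{u=t\}$, one converts \eqref{LN:stability} into an integral inequality controlling $\int_\Omega |\nabla u|^2|A|^2$ and a tangential-gradient term in terms of a boundary integral on $\partial\Omega$. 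The crucial point is that when $\Omega$ is convex, a classical Hopf-type computation shows this boundary term has a favorable sign, so it can be discarded or absorbed. Then the Michael--Simon--Allard Sobolev inequality applied on the level sets $\{u=t\}$ (which are smooth outside a negligible critical set) together with the coarea formula converts the curvature/tangential-gradient estimate into a Sobolev-type bound for $|\nabla u|$ that eventually gives $\int_\Omega f(u)^{1+\delta}\,dx\le C$ for $n\le 4$.

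The step I expect to be the main obstacle is the last one: carrying out the bookkeeping of exponents so that the Michael--Simon--Sobolev inequality on level sets, combined with the information obtained from stability and from the convexity of $\Omega$, produces an integrability gain strong enough to cover exactly $n=4$ and no more. This involves delicate truncation arguments to deal with the critical set $\{\nabla u=0\}$, careful handling of the boundary integrals arising when the cutoff touches $\partial\Omega$, and an optimization in the choice of powers of $|\nabla u|$ and of $f(u)$ used in the test function. Passing to the limit $\lambda\uparrow\lambda^*$ is then routine, since all estimates are uniform in $\lambda$ and the family $\{u_\lambda\}$ is monotone.
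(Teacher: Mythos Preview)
Your outline captures the essential mechanism—testing stability with $\xi$ built from $|\nabla u|$, extracting $|A|^2$ and $|\nabla_T|\nabla u||^2$ via the identity \eqref{eq:grad}, and closing with the Michael--Simon Sobolev inequality on level sets—so the core strategy matches the paper. Two aspects of your implementation, however, diverge from the paper's sketch, and one of them misplaces where the convexity of $\Omega$ enters.

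First, in the paper the test function is $\xi=|\nabla u|\,\varphi(u)$ with a Lipschitz $\varphi$ satisfying $\varphi(0)=0$; since $u=0$ on $\partial\Omega$, $\xi$ vanishes there and \emph{no boundary integral appears}. Stability plus \eqref{eq:grad} give directly \eqref{semi1}, which after coarea becomes the one-dimensional inequality $\int_0^T h_1(s)\varphi(s)^2\,ds\le\int_0^T h_2(s)\varphi'(s)^2\,ds$ in the height variable $s$, with $h_2(s)=\int_{\Gamma_s}|\nabla u|^3\,dV_s$. The Michael--Simon inequality applied to $v=|\nabla u|^{1/2}$ on each $\Gamma_s$ yields $h_1(s)\ge c(n)\bigl(\int_{\Gamma_s}|\nabla u|^{(n-1)/(n-3)}\bigr)^{(n-3)/(n-1)}$, and the comparison $(n-1)/(n-3)\ge 3$ is exactly $n\le 4$. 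A judicious choice of $\varphi$ (depending on $u$ itself when $n=4$) then bounds $T=\max u$ directly—this is the endpoint of the argument, not an $L^{1+\delta}$ bound on $f(u)$ as you propose. Second, and more importantly, the convexity of $\Omega$ is \emph{not} used to sign a boundary term in the stability computation; in Cabr\'e's argument it enters separately, via a moving-planes type argument giving a uniform $L^\infty$ bound for $u_\lambda$ in a neighborhood of $\partial\Omega$, which reduces matters to interior estimates. Your ``Hopf-type boundary term with favorable sign'' formulation belongs to a different circle of ideas and would not by itself close the estimate here.
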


For $3\leq n \leq 4$, \cite{Cabre} requires $\Omega$ to be convex, while $f$ needs not be
convex. Some years later, S. Villegas~\cite{V} succeeded to use both \cite{Cabre} and \cite{ND} when $n=4$
to remove the requirement that $\Omega$ is convex by further assuming that $f$ is convex.

\begin{open problem}
For every $\Om$ and for every $f$ satisfying \eqref{hypf}, is the extremal solution $u^*$ -- or, in general, $H_0^1$ stable weak solutions of \eqref{pb} -- always bounded in dimensions $5,6,7,8,9$?
\end{open problem}

We recall that the answer to this question is affirmative when $\Om = B_1$, by Theorem \ref{corolrad}. We next sketch the proof of this radial result, as well as the regularity theorem in the nonradial case up to $n \le 4$.
In the case $n=4$, we will need the following remarkable result.

\begin{theorem}[Allard; Michael and Simon]
\label{Sobolev}
Let  $M\subset  \RR^{m+1}$ be an immersed smooth $m$-dimensional 
compact hypersurface without boundary.

Then, for every $p\in  [1, m)$,
there exists a constant $C = C(m,p)$ depending only on the dimension $m$ 
and the exponent $p$ such that, for every $C^\infty$ function $v : M  \to \RR$, 
\begin{equation}
\label{MSsob}
\left( \int_M |v|^{p^*} \,dV \right)^{1/p^*} \leq C(m,p)
\left( \int_ M (|\nabla v|^p +  | \cH v|^p) \,dV \right)^{1/p},
\end{equation}
where $\cH$ is the mean curvature of $M$ and $p^* = mp/(m - p)$.
\end{theorem}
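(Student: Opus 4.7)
My strategy is the classical two-step one: first reduce the general case $1<p<m$ to the borderline case $p=1$, and then attack the $p=1$ case by a monotonicity-type argument on $M$. Assume we have already proved
\begin{equation*}
\Bigl(\int_M w^{m/(m-1)}\,dV\Bigr)^{(m-1)/m}\le C(m)\int_M\bigl(|\nabla w|+|\cH|\,w\bigr)\,dV
\end{equation*}
for every nonnegative $w\in C^\infty(M)$. Then the general case follows by applying this to $w=|v|^\beta$ with $\beta=p(m-1)/(m-p)>1$ and using H\"older's inequality with exponents $p$ and $p/(p-1)$ on each of the two integrals on the right-hand side. The value of $\beta$ is forced by the two identities $\beta m/(m-1)=p^*$ and $(\beta-1)p/(p-1)=p^*$, which ensure that a factor $(\int_M|v|^{p^*})^{(p-1)/p}$ can be absorbed from the right back into the left, leaving \eqref{MSsob} with constant $C(m,p)=C(m)\,\beta$.

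\textbf{The $p=1$ case via monotonicity.} This is the geometric heart of the theorem. The plan is to establish, for each $x_0\in M$ and each nonnegative $w\in C^\infty(M)$, the pointwise Riesz-potential bound
\begin{equation*}
w(x_0)\le C(m)\int_M\frac{|\nabla w|(x)+|\cH(x)|\,w(x)}{|x-x_0|^{m-1}}\,dV(x).
\end{equation*}
To this end, I would study the rescaled ball integrals $\phi(r):=r^{-m}\int_{M\cap B_r(x_0)}w\,dV$ and derive a differential inequality for $\phi$ by testing the tangential divergence theorem on $M\cap B_r(x_0)$ against the vector field $X(x)=w(x)(x-x_0)$. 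Using the identity $\mathrm{div}_M(x-x_0)=m-\langle\cH\nu,\,x-x_0\rangle$ and the coarea slicing along $M\cap\partial B_r$, one obtains a monotonicity-type estimate of the schematic form
\begin{equation*}
-\frac{d}{dr}\phi(r)\le \frac{1}{r^m}\int_{M\cap B_r(x_0)}\bigl(|\nabla w|+|\cH|\,w\bigr)\,dV.
\end{equation*}
Integrating from a small radius up to $r=\mathrm{diam}(M)$, and using that $\lim_{r\to 0^+}\phi(r)=\omega_m\,w(x_0)$ since $M$ is smooth, produces the Riesz-potential bound. From there, a standard Marcinkiewicz interpolation (the Riesz potential of order $m-1$ on $M$ maps $L^1$ into weak $L^{m/(m-1)}$), applied to the truncations $(w-t)^+$ and combined with the coarea formula, upgrades the pointwise estimate to the full $L^{m/(m-1)}$ bound.

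\textbf{Main obstacle.} All the difficulty is concentrated in the monotonicity step. The tangential divergence $\mathrm{div}_M(x-x_0)$ picks up a mean-curvature correction $-\langle\cH\nu,\,x-x_0\rangle$ of order $O(|\cH|\,|x-x_0|)$ which must be reabsorbed into the $|\cH|\,w$ contribution on the right-hand side. Doing this cleanly requires careful bookkeeping between the boundary term on $M\cap\partial B_r$, the bulk derivative of $\phi$, and the curvature correction, and this is the step I expect to demand the most work. A modern alternative I would mention but not pursue here is Brendle's ABP-type proof, which avoids the monotonicity formula entirely; I nevertheless prefer the Michael--Simon route because it relies only on the first variation and tangential divergence identities already used throughout Sections~1 and~2.
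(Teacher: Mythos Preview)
The paper does not actually prove this theorem: immediately after stating it, the text writes ``See the book \cite{Dupaigne} for a proof of Theorem \ref{Sobolev}.'' So there is no in-paper argument to compare your proposal against.

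Your outline is the classical Michael--Simon route (reduction to $p=1$ via $w=|v|^\beta$, then a monotonicity/first-variation computation on $M\cap B_r(x_0)$ with the field $X=w(x-x_0)$, yielding a pointwise Riesz-potential bound, then an upgrade to the strong $L^{m/(m-1)}$ estimate), which is indeed what one finds in the standard references. Two small technical cautions. First, the identity you quote is slightly off: one has $\mathrm{div}_M(x-x_0)=m$ exactly; the mean-curvature term $\langle \cH\nu,\,x-x_0\rangle$ enters not through the tangential divergence of $x-x_0$ but through the first-variation formula $\int_M \mathrm{div}_M X = -\int_M \langle X,\cH\nu\rangle + \text{(boundary)}$ applied to the non-tangential field $X$. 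Second, your final step ``the Riesz potential of order $m-1$ on $M$ maps $L^1$ into weak $L^{m/(m-1)}$'' is not free on an arbitrary hypersurface: it requires an upper volume growth $|M\cap B_r(x_0)|\le C r^m$, which in turn must be extracted from the same monotonicity formula (applied with $w\equiv 1$) together with the curvature correction. In the original Michael--Simon paper this circularity is avoided by a direct covering/iteration argument rather than a black-box Marcinkiewicz step; either way the plan is sound, but that passage needs more than the word ``standard''.
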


This theorem dates from 1972 and has its origin in an important result of Miranda from 1967. It stated that \eqref{MSsob} holds with $\cH=0$ if $M$ is a minimal surface in $\RR^{m+1}$.
See the book \cite{Dupaigne} for a proof of Theorem \ref{Sobolev}.

\begin{remark}
Note that this Sobolev inequality contains a term involving the mean curvature of $M$ on its right-hand side. This fact makes, in a remarkable way, that the constant $C(m,p)$ in the inequality does not depend on the geometry of the manifold~$M$.
\end{remark}

\begin{sketch}[of Theorems \ref{corolrad}, \ref{thm3pre}, and \ref{thm3}] 
For Theorem \ref{thm3pre} the test function to be used is $\xi=h(u)$, for some $h$ depending on $f$ (as in the proof of Theorem~\ref{thm:CranRab}).

Instead, for Theorems \ref{corolrad} and \ref{thm3}, the proofs start by writing 
the stability condition \eqref{LN:stability} for the test 
function $\xi= \tilde{c} \eta$, where $\eta|_{\partial\Omega}\equiv0$.
This was motivated by the analogous computation that we have presented for minimal surfaces right after Remark \ref{rem:Jacobi operator}.
Integrating by parts, one easily deduces that
\begin{equation}\label{stabc}
\int _{\Omega} \left( \Delta \tilde{c} +f'(u) \tilde{c} \right) \tilde{c} \eta ^{2}\,dx \le
\int _{\Omega} \tilde{c}^{2}\left|\nabla \eta\right|^{2}\,dx.
\end{equation}
Next, a key point is to choose a function $\tilde{c}$ satisfying an appropriate equation for
the linearized operator $\Delta + f'(u)$. In the radial case (Theorem \ref{corolrad}) the choice of $\tilde{c}$ and the final choice of $\xi$ are
\begin{equation*}
\tilde{c} =u_{r} \quad\text{ and }\quad\xi=u_r r(r^{-\alpha}-(1/2)^{-\alpha})_{+} ,
\end{equation*}
where $r=|x|$, $\alpha>0$, and $\xi$ is later truncated near the origin to make it Lipschitz.
The proof in the radial case is quite simple after computing the equation satisfied by $u_r$.


For the estimate up to dimension 4 in the nonradial case (Theorem~\ref{thm3}), \cite{Cabre} takes
\begin{equation}\label{choice4}
\tilde{c} =\left|\nabla u\right| \quad\text{ and }\quad\xi=\left|\nabla u\right|\varphi(u) ,
\end{equation}
where, in dimension $n=4$, $\varphi$ is chosen depending on the solution $u$ itself.

We make the choice \eqref{choice4} and, in particular, we take
$\tilde{c} =\left|\nabla u\right|$ in \eqref{stabc}.
It is easy to check that, in the set 
$\left\{\left|\nabla u\right|>0\right\}$, we have
\begin{equation}\label{dipassaggio}
\left(\Delta + f'(u)\right)|\nabla u|=\frac{1}{|\nabla u|}
\left(\sum_{i,j}u_{ij}^2-\sum_i\left(\sum_{j}u_{ij}\frac{u_j}
{|\nabla u|}\right)^2\right).
\end{equation}
Taking an orthonormal basis in which the last vector is the normal $\nabla u/|\nabla u|$ to the level set of $u$ (through a given point $x \in \Om$), and the other vectors are the principal directions of the level set at $x$, one easily sees that \eqref{dipassaggio} can be written as
\begin{equation}\label{eq:grad}
\left( \Delta+f'(u)\right)\left|\nabla u\right|=
\frac{1}{\left|\nabla u\right|} \left(\left|\nabla_T\left|\nabla u\right|\right|^2+
\left|A\right|^2\left|\nabla u\right|^2\right)\quad \text{in}\ 
\Omega\cap\left\{\left|\nabla u\right|>0\right\},
\end{equation}
where $\left|A\right|^2=\left|A\left(x\right)\right|^2$ is the squared norm of 
the second fundamental form of the level set of $u$ passing through a 
given point $x\in\Omega\cap\left\{\left|\nabla u\right|>0\right\}$, i.e., the 
sum of the squares of the principal curvatures of the level set. In the notation of the first section on minimal surfaces, $|A|^2 = c^2$.
On the other hand, as in that section $\nabla_T = \de$ denotes the tangential gradient to the level set. 
Thus, \eqref{eq:grad} involves geometric information of the level sets 
of $u$.

Therefore, using the stability condition \eqref{stabc},
we conclude that
\begin{equation}\label{semi1}
\int_{\left\{\left|\nabla u\right|>0\right\}} 
\left( |\nabla_T |\nabla u||^2 +|A|^2|\nabla u|^2\right)\eta^2\,dx
\leq \int_\Omega |\nabla u|^2 |\nabla \eta|^2 \,dx .
\end{equation}

Let us define
$$
T:=\max_{\ol{\Omega} }u=\left\|u\right\|_{L^\infty (\Omega)}
\quad\text{ and }\quad \Gamma_s:=\left\{x\in\Omega:u(x)=s\right\}
$$
for $s\in(0,T)$.  

We now use \eqref{semi1} with  
$\eta=\varphi(u)$, 
where $\varphi$ is a Lipschitz function in $\left[0,T\right]$ 
with $\varphi(0)=0$.
The right hand side of \eqref{semi1} becomes
\begin{eqnarray*}
\int_{\Omega}\left|\nabla u\right|^2\left|\nabla\eta\right|^2dx&=&
\int_{\Omega}\left|\nabla u\right|^4\varphi'(u)^2dx\\
&=&\int_{0}^{T}\left(\int_{\Gamma_s}\left|\nabla u\right|^3\,dV_s
\right)\varphi'(s)^2\,ds,
\end{eqnarray*}
by the {\it coarea formula}. Thus, \eqref{semi1} can be written as
\begin{eqnarray*}
&&\hspace{-1cm} \int_{0}^{T}\left(\int_{\Gamma_s}\left|\nabla u\right|^3\,dV_s
\right)\varphi'(s)^2\,ds\\
&&\geq\int_{\left\{\left|\nabla 
u\right|>0\right\}}\left(\left|\nabla_T\left|
\nabla u\right|\right|^2+\left|A\right|^2\left|\nabla 
u\right|^2\right)\varphi(u)^2dx\\
&&=\int_{0}^{T}\left(\int_{\Gamma_s\cap\left\{\left|\nabla 
u\right|>0\right\}}\frac{1}{\left|\nabla u\right|}
\left(\left|\nabla_T\left|
\nabla u\right|\right|^2+\left|A\right|^2\left|\nabla 
u\right|^2\right)\,dV_s\right)\varphi(s)^2\,ds\\
&&=\int_{0}^{T}\left(\int_{\Gamma_s\cap\left\{\left|\nabla 
u\right|>0\right\}}
\left( 4\left|\nabla_T\left|
\nabla u\right|^{1/2}\right|^2+\left(\left|A\right|\left|\nabla 
u\right|^{1/2}\right)^{2}\right) \,dV_s\right)\varphi(s)^2\,ds .
\end{eqnarray*}
We conclude that
\begin{equation}
\int_0^T h_1(s) \varphi(s)^2 \,ds
\leq
\int_0^T h_2(s)  \varphi'(s)^2 \,ds,
\label{semi3}
\end{equation}
for all Lipschitz functions 
$\varphi:\left[0,T\right]\rightarrow\mathbb{R}$ with 
$\varphi(0)=0$, where 
\begin{equation*}
h_1(s):=\int_{\Gamma_s} 
\left( 4|\nabla_T |\nabla u|^{1/2}|^2 +\left( |A||\nabla u|^{1/2} \right)^2\right) \,dV_s\, ,\
h_2(s):=\int_{\Gamma_s} |\nabla u|^3\,dV_s 
\end{equation*}
for every regular value $s$ of $u$.
We recall that, by Sard's theorem, almost every $s\in(0,T)$ is a regular value of $u$.

Inequality \eqref{semi3}, with $h_1$ and $h_2$ as defined above, leads to a bound for $T$
(that is, to an $L^{\infty}$ estimate and hence to Theorem \ref{thm3}) after choosing an 
appropriate test function $\varphi$ in \eqref{semi3}. 
In dimensions 2 and 3 we can choose a simple function $\varphi$ in \eqref{semi3} and use well known geometric inequalities 
about the curvature of manifolds (note that $h_1$ involves the curvature 
of the level sets of $u$). Instead, in dimension 4 we need to use the geometric Sobolev inequality of Theorem \ref{Sobolev} on each level set of $u$.
Note that $\cH^2 \le (n-1) |A|^2$.
This gives the following lower bound for $h_1 (s)$:
$$
c(n) \left( \int_{\Ga_s} | \na u |^{\frac{n-1}{n-3} }  \right)^{\frac{n-3}{n-1}} \le h_1 (s).
$$
Comparing this with $h_2 (s)$, which appears in the right hand side of \eqref{semi3}, we only know how to derive an $L^{\infty}$-estimate for $u$ (i.e., a bound on $T= \max u$) when the exponent $(n-1) / (n-3)$ in the above inequality is larger than or equal to the exponent $3$ in $h_2 (s)$.
This requires $n \le 4$.
See \cite{Cabre} for details on how the proof is finished.
\qed
\end{sketch}


\section{Appendix: a calibration giving the optimal isoperimetric inequality}
%
%

Our first proof of Theorem \ref{thm:SimCone} used a calibration. To understand better the concept and use of ``calibrations'', we present here another one. It leads to a proof of the isoperimetric problem.

The isoperimetric problems asks which sets in $\RR^n$ minimize perimeter for a given volume. Making the first variation of perimeter (as in Section \ref{sec:mincones}), but now with a volume constraint, one discovers that a minimizer $\Om$ should satisfy $\cH=c$ (with $c$ a constant), at least in a weak sense, where $\cH$ is the mean curvature of $\pa \Om$. Obviously, balls satisfy this equation -- they have constant mean curvature.
The isoperimetric inequality states that the unique minimizers are, indeed, balls. In other words, we have:
\begin{theorem}[The isoperimetric inequality]\label{thm:servepertesiLNisoperimetric}
We have
\begin{equation}
\label{eq:isoperimetric}
\frac{|\pa \Om|}{| \Om|^{\frac{n-1}{n}}} \ge \frac{|\pa B_1|}{| B_1|^{\frac{n-1}{n}}} 
\end{equation}
for every bounded smooth domain $\Om \subset \RR^n$.
In addition, if equality holds in \eqref{eq:isoperimetric}, then $\Om$ must be a ball.
\end{theorem}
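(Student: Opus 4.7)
The plan is to use an ABP-type calibration based on solving a Neumann problem, following the approach of Cabré. First, I would set the constant $c := |\pa \Om|/|\Om|$ and consider the Neumann problem
\begin{equation*}
\De u = c \ \text{ in } \Om, \qquad u_\nu = 1 \ \text{ on } \pa \Om,
\end{equation*}
which is solvable because the divergence theorem compatibility condition $\int_\Om \De u \, dx = \int_{\pa \Om} u_\nu \, dS$ reduces precisely to $c |\Om| = |\pa \Om|$. The gradient field $X := \na u$ will play the role of the calibration: it has constant divergence $c$ in $\Om$ and normal component equal to $1$ on $\pa \Om$, mimicking properties (i) and (ii) of Definition \ref{def:calibrationprima} adapted to a volume-constrained problem.

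Next, I would introduce the lower contact set
\begin{equation*}
\Ga_u := \{x \in \Om : u(y) \ge u(x) + \na u(x) \cdot (y - x) \ \text{ for every } y \in \ol{\Om}\},
\end{equation*}
and establish the inclusion $B_1(0) \subseteq \na u(\Ga_u)$. Indeed, for any $p \in B_1(0)$, the function $x \mapsto u(x) - p \cdot x$ must attain its minimum on $\ol{\Om}$ at an interior point $x_p \in \Om$: if the minimum were attained at some $x_0 \in \pa \Om$, the inward direction of decrease would force $(\na u(x_0) - p) \cdot \nu(x_0) \le 0$, i.e.\ $p \cdot \nu(x_0) \ge u_\nu(x_0) = 1$, contradicting $|p| < 1$. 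At $x_p$ one has $\na u(x_p) = p$ and $\na^2 u(x_p) \ge 0$, so $x_p \in \Ga_u$ and $p \in \na u(\Ga_u)$.

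The third step combines the area formula with the AM--GM inequality on the eigenvalues of $\na^2 u \ge 0$:
\begin{equation*}
0 \le \det \na^2 u \le \left(\frac{\De u}{n}\right)^n = \left(\frac{c}{n}\right)^n \ \text{ on } \Ga_u.
\end{equation*}
The area formula then gives
\begin{equation*}
|B_1| \le |\na u(\Ga_u)| \le \int_{\Ga_u} \det \na^2 u \, dx \le \left(\frac{c}{n}\right)^n |\Om| = \left(\frac{|\pa \Om|}{n\,|\Om|}\right)^n |\Om|.
\end{equation*}
Rearranging this and using the identity $|\pa B_1| = n\,|B_1|$ yields $|\pa \Om|/|\Om|^{(n-1)/n} \ge n|B_1|^{1/n} = |\pa B_1|/|B_1|^{(n-1)/n}$, which is exactly \eqref{eq:isoperimetric}.

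For the equality case, I would trace the inequality chain backwards: equality in AM--GM forces $\na^2 u = (c/n)\,I$ a.e.\ on $\Ga_u$, and equality in the area/measure chain forces $\Ga_u$ to exhaust $\Om$ up to a null set. This should lead to $u(x) = \frac{c}{2n}|x-z|^2 + \text{const}$ on all of $\Om$ for some $z \in \RR^n$, and then the Neumann condition $u_\nu = 1$ pins $\Om$ to be the ball $B_{n/c}(z)$. The main obstacle of the whole plan is precisely this rigidity step: promoting the pointwise identity on a full-measure set to actual geometric rigidity of $\Om$ requires some care, because the Neumann solution $u$ is only $C^{1,\alpha}$ (or $W^{2,n}$) in general. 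One way around this is to approximate $\Om$ by smoother domains, or to argue via the push-forward measure on $\Ga_u$ and a careful limit; the inequality itself, by contrast, is the easy half of the argument.
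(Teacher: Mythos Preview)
Your proposal is correct and follows essentially the same approach as the paper: the paper sets up the identical Neumann problem, identifies $X=\nabla u$ as the calibration, introduces the same lower contact set $\Gamma_u$, and then leaves as an exercise precisely the two steps you carry out in detail (the inclusion $B_1(0)\subset\nabla u(\Gamma_u)$ via sliding by parallel hyperplanes, and the area-formula/AM--GM computation). Your treatment of the equality case also goes a bit further than the paper, which only asserts that rigidity follows without giving the argument.
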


In 1996 the first author found the following proof of the isoperimetric problem. It uses a calibration (for more details see \cite{Caisoperimetric}).

\begin{sketch}[of the isoperimetric inequality]
The initial idea was to characterize the perimeter $|\pa \Om|$ as in \eqref{calideph}-\eqref{eq:dimcalibration1}, that is, as
$$
| \pa \Om | = \sup_{ \nr X \nr_{L^\infty} \le 1} \int_{ \pa \Om} X \cdot \nu \, dH_{n-1} .
$$
Taking $X$ to be a gradient, we have that
$$
| \pa \Om | = \int_{ \pa \Om} \na u \cdot \nu \, dH_{n-1} = \int_{\pa \Om} u_\nu \, dH_{n-1} ,
$$
for every function $u$ such that $u_\nu=1$ on $\pa \Om$.
Let us take $u$ to be the solution of
\begin{equation}\label{eqappisop}
\left\{
\begin{array}{rcll}
\De u &=& c & \quad\mbox{in } \Om \\
u_\nu &=& 1  & \quad\mbox{on } \pa\Om ,\\
\end{array}\right.
\end{equation}
where $c$ is a constant that, by the divergence theorem, is given by
$$
c= \frac{|\pa \Om|}{| \Om| }.
$$
It is known that there exists a unique solution $u$ to \eqref{eqappisop} (up to an additive constant).

Now let us see that $X=\na u$ (where $X$ was the notation that we used in the proof of Theorem \ref{thm:SimCone}) can play the role of a calibration.
In fact, in analogy with Definition~\ref{def:calibrationprima} we have:

\begin{enumerate}[]
\item(i-bis) \,$\dv \na u = \frac{|\pa \Om|}{| \Om| } \, \mbox{ in } \, \Om $;
\item(ii-bis)  \,$\na u \cdot \nu = 1 \, \mbox{ on } \, \pa \Om$;
\item(iii-bis)  \,$B_1 (0) \subset \na u (\Ga_u)$, where
$$\Ga_u = \left\{ x \in \Om : u(y) \ge u(x) + \na u(x) \cdot (y-x)  \, \mbox{ for every } \, y \in \ol{\Om}  \right\} $$
is the {\it lower contact set of $u$}, that is, the set of the points of $\Om$ at which the tangent plane to $u$ stays below $u$ in $\Om$. 
\end{enumerate}
The relations (i-bis) and (ii-bis) follow immediately from \eqref{eqappisop}. 
In the following exercise, we ask to establish (iii-bis) and finish the proof of \eqref{eq:isoperimetric}.

We point out that this proof also gives that $\Om$ must be a ball if equality holds in \eqref{eq:isoperimetric}.
\qed
\end{sketch}

\begin{exercise}
Establish (iii-bis) above. For this, use a foliation-contact argument (as in the alternative proof of Theorem \ref{thm:SimCone} and in the proof of Theorem \ref{alba}), foliating now $\RR^n \times \RR$ by parallel hyperplanes.

Next, finish the proof of \eqref{eq:isoperimetric}. For this, consider the measures of the two sets in (iii-bis), compute $| \na u (\Ga_u)|$ using the {\it area formula}, and control $\det D^2 u$ using the geometric-arithmetic means inequality.

\end{exercise}

%
%
%
%

%
\begin{quote}
{\bf Acknowledgments.}
{\small The authors wish to thank Lorenzo Cavallina for producing the figures of this work.

The first author is member of the Barcelona Graduate School of Mathematics and is supported by MINECO grants MTM2014-52402-C3-1-P and MTM2017-84214-C2-1-P. He is also part of the Catalan research group 2017 SGR 1392.

The second author was partially supported by PhD funds of the Universit\`{a} di Firenze and he is a member of the Gruppo Nazionale Analisi Matematica Probabilit\'{a} e Applicazioni (GNAMPA) of the Istituto Nazionale di Alta Matematica (INdAM). This work was partially written while the second author was visiting the Departament de Matem\`{a}tiques of the Universitat Polit\`ecnica de Catalunya, that he wishes to thank for hospitality and support.}
\end{quote}

\chapter{Littlewood's fourth principle}
\label{chap:Littlewoodfourthprinciple}
\centerline{Rolando Magnanini\footnote{Dipartimento di Matematica e Informatica ``U.~Dini'',
Universit\` a di Firenze, viale Morgagni 67/A, 50134 Firenze, Italy
({\tt magnanin@math.unifi.it}).}
and Giorgio Poggesi\footnote{
Dipartimento di Matematica ed Informatica ``U.~Dini'',
Universit\` a di Firenze, viale Morgagni 67/A, 50134 Firenze, Italy
({\tt giorgio.poggesi@unifi.it}).
}
}

\vspace{1cm}

\begin{quotation}
{\bf Abstract.}
{\small In Real Analysis, Littlewood's three principles are known as heuristics that help teach the essentials of measure theory and reveal the analogies between the concepts of topological space and continuous function on one side and those of measurable space and measurable function on the other one. They are based on important and rigorous statements, such as Lusin's and Egoroff-Severini's theorems, and have ingenious and elegant proofs. We shall comment on those theorems and show how their proofs 
can possibly be made simpler by introducing a \textit{fourth principle}. These alternative
proofs make even more manifest those analogies and show that Egoroff-Severini's theorem can be considered as the natural generalization of the classical Dini's monotone convergence theorem. 
}
\end{quotation}

\section{Introduction.}

John Edenson Littlewood (9 June 1885 - 6 September 1977) was a British mathematician.
In 1944, he wrote an influential textbook, \textit{Lectures on the Theory of Functions} (\cite{Littlewood}), in which he proposed three principles as guides for working in real analysis;  these are heuristics to help teach the essentials of measure theory, as Littlewood himself wrote in \cite{Littlewood}:
\begin{quotation}
The extent of knowledge [of real analysis] required is nothing like so great as is sometimes supposed. There are three principles, roughly expressible in the following terms:
every (measurable) set is nearly a finite sum of intervals;
every function (of class $L^{\lambda}$) is nearly continuous;
every convergent sequence is nearly uniformly convergent.
Most of the results of the present section are fairly intuitive applications of these ideas, and the student armed with them should be equal to most occasions when real variable theory is called for. If one of the principles would be the obvious means to settle a problem if it were ``quite'' true, it is natural to ask if the ``nearly'' is near enough, and for a problem that is actually soluble it generally is.
\end{quotation}

To benefit our further discussion, we shall express Littlewood's principles 
and their rigorous statements in forms
that are slightly different from those originally stated.

The first principle descends directly from the very definition of Lebesgue measurability of a set.

\begin{principle1}
Every measurable set is nearly closed.
\end{principle1}

The second principle relates the measurability of a function to the more familiar property of continuity.

\begin{principle2}
Every measurable function is nearly continuous.
\end{principle2}

The third principle connects the pointwise convergence of a sequence of functions to
the standard concept of uniform convergence.

\begin{principle3}
Every sequence of measurable functions that converges pointwise almost everywhere is nearly 
uniformly convergent.
\end{principle3}

These principles are based on important theorems that give a rigorous meaning to the term ``nearly''.
We shall recall these in the next section along with their ingenious proofs that give a taste of the 
standard arguments used in Real Analysis. 
\par
In Section 3, we will discuss a {\it fourth principle} that associates the concept of finiteness
of a function to that of its boundedness.

\begin{principle4}
Every measurable function that is finite almost everywhere is nearly 
bounded.
\end{principle4}

In the mathematical literature (see \cite{DiB}, \cite{CD}, \cite{Littlewood}, \cite{Ma}, \cite{Royden}, \cite{Ru}, \cite{Ta}), the proof of the second principle is based on the third; it can be easily seen that the fourth principle can be derived from the second.
\par
However, we shall see that the fourth principle can also be proved independently;
this fact makes possible a proof of the second principle \textit{without} appealing for the third,
that itself can be derived from the second, by a totally new proof based on \textit{Dini's monotone convergence theorem}.
\par
 As in \cite{Littlewood}, to make our discussion as simple as possible, we shall consider the Lebesgue measure $m$ for the real line $\mathbb {R}$. 

\section{The three principles}
We recall the definitions of  {\it inner} and {\it outer measure} of
a set $E\subseteq\RR$: they are 
\begin{eqnarray*}
&&m_i(E)=\sup\{ |K|: K \mbox{ is compact and } K\subseteq E\},\\
&&m_e(E)=\inf\{|A| : A\mbox{ is open and } A\supseteq E\},
\end{eqnarray*}
where the number $|K|$ is the infimum of the total lengths of all the finite unions of open intervals that contain $K$; accordingly, $|A|$ is the supremum of the total lengths of all the finite unions of closed intervals contained in $A$.
It always holds that $m_i(E)\le m_e(E)$. The set $E$ is (Lebesgue) measurable if and only if 
$m_i(E)=m_e(E)$;
when this is the case, the {\it measure} of $E$ is $m(E)=m_i(E)=m_e(E)$; thus $m(E)\in[0,\infty]$ 
and it can be proved that $m$ is a measure on the $\sigma$-algebra of Lebesgue measurable subsets of $\RR$.
\par
By the properties of the supremum, it is easily seen that, for any pair of subsets $E$ and $F$ of $\RR$, $m_e(E\cup F)\le m_e(E)+m_e(F)$ and $m_e(E)\le m_e(F)$ if $E\subseteq F$.
\vskip.1cm
The first principle is a condition for the measurability of subsets of $\RR$. 

\begin{theorem}[First Principle]
\label{th:first}
Let $E\subset\RR$ be a set of finite outer measure. 
\par
Then, $E$ is measurable if and only if for every $\eps > 0$ there exist two sets $K$ and $F$, with 
$K$ closed (compact), $K \cup F = E$ and $m_e(F) < \eps$.
\end{theorem}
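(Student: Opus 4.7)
The plan is to prove the two implications separately, using only the definitions of inner and outer measure and their basic subadditivity/monotonicity properties stated in the excerpt. The ``only if'' direction will extract a large compact subset of $E$ from the definition of $m_i(E)$; the ``if'' direction will use the decomposition $E = K \cup F$ to squeeze $m_i(E)$ and $m_e(E)$ arbitrarily close together.

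For the ``only if'' direction, suppose $E$ is measurable and $m_e(E) < \infty$. Since $m_i(E) = m_e(E) = m(E) < \infty$, by definition of the inner measure as a supremum, for every $\eps > 0$ I can pick a compact $K \subseteq E$ with $m(K) > m(E) - \eps$. Set $F := E \setminus K$, so trivially $K \cup F = E$. The point here is to bound $m_e(F)$. Since $K$ is compact, hence measurable, and $E$ is measurable, so is $F$, with $m(F) = m(E) - m(K) < \eps$; and $m_e(F) = m(F)$ for measurable sets. Thus $m_e(F) < \eps$, as desired.

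For the ``if'' direction, suppose that for every $\eps > 0$ there exist a compact $K$ and a set $F$ with $K \cup F = E$ and $m_e(F) < \eps$. From $K \subseteq E$ and the definition of $m_i$ as a supremum over compact subsets, I get $m_i(E) \ge |K| = m(K)$. On the other hand, from $E \subseteq K \cup F$ together with monotonicity and subadditivity of the outer measure (recalled just before the statement), I obtain
\[
m_e(E) \le m_e(K) + m_e(F) = m(K) + m_e(F) < m(K) + \eps \le m_i(E) + \eps.
\]
Since $\eps > 0$ is arbitrary, this gives $m_e(E) \le m_i(E)$; combined with the opposite inequality $m_i(E) \le m_e(E)$ which always holds, one concludes $m_i(E) = m_e(E)$, i.e.\ $E$ is measurable.

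I expect no real obstacle; the only point requiring minor care is the use of $m_e(K) = m(K)$ for the compact (hence measurable) set $K$ in the ``if'' direction, and the identity $m_e(F) = m(E) - m(K)$ in the ``only if'' direction, which both follow immediately from the measurability assumptions and the additivity of $m$ on disjoint measurable sets. The finiteness hypothesis $m_e(E) < \infty$ is used precisely to write $m(E) - m(K)$ without ambiguity in the first part.
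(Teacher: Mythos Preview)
Your proof is correct and follows essentially the same approach as the paper. The only minor difference is in the ``only if'' direction: the paper, instead of invoking measurability of $F=E\setminus K$ and additivity, also picks an open set $A\supseteq E$ with $m(A)<m(E)+\eps/2$ and bounds $m_e(F)\le m(A\setminus K)=m(A)-m(K)<\eps$ directly from the definition of outer measure; this avoids appealing to the $\sigma$-algebra structure but is otherwise the same idea.
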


This is what is meant by \textit{nearly closed}.

\begin{proof}
If $E$ is measurable, for any $\eps>0$ we can find a compact set $K\subseteq E$ and an open
set $A\supseteq E$ such that
$$
m(K)>m(E)-\eps/2 \ \mbox{ and } \ m(A)<m(E)+\eps/2.
$$
The set $A\setminus K$ is open and contains $E\setminus K$. Thus, by setting $F=E\setminus K$, we have $E=K\cup F$ and
$$
m_e(F)\le m(A)-m(K)<\eps.
$$
\par
Viceversa, for every $\eps>0$ we have: 
$$
m_e(E)=m_e(K\cup F)\le m_e(K)+m_e(F)<m(K)+\eps\le m_i(E)+\eps.
$$
Since $\eps$ is arbitrary, then $m_e(E)\le m_i(E)$.
\end{proof}

The second and third principles concern measurable functions from (measurable) subsets of $\RR$
to the {\it extended real line} $\ovr{\RR}=\RR\cup\{+\infty\}\cup\{-\infty\}$,
that is functions are allowed to have values $+\infty$ and $-\infty$. 
\par
Let $f:E\to\ovr{\RR}$ be a function defined on a measurable subset $E$ of $\RR$. 
We say that $f$ is \textit{measurable} if the \textit{level sets} defined by
$$
L(f,t)=\{ x\in E: f(x)>t\}
$$
are measurable subsets of $\RR$ for every $t\in\RR$. It is easy to verify that if we replace $L(f,t)$ with $L^*(f,t)=\{ x\in E: f(x) \geq t\}$ we have an equivalent definition.  
\par
Since the countable union and intersection of measurable sets are measurable, it is not hard to show that
the pointwise infimum and supremum of a sequence of measurable functions $f_n:E\to\ovr{\RR}$ 
are measurable functions as well as the function defined for any $x\in E$ by
$$
\limsup_{n\to\infty} f_n(x)=\inf_{k\ge 1}\sup_{n\ge k} f_n(x).
$$
\par
Since the countable union of sets of measure zero has measure zero and the difference between $E$ and any set of measure zero is measurable, the same definitions and conclusions hold even
if the functions $f$ and $f_n$ are defined \textit{almost everywhere} (denoted for short by a.e.), that is
if the subsets of $E$ in which they are not defined have measure zero. In the same spirit,
we say that a function or a sequence of functions satisfies a given property \textit{a.e.} in $E$, if that property holds with the exception of a subset of measure zero.
\par
As already mentioned, the third principle is needed to prove the second and is
known as \textit{Egoroff's theorem} or \textit{Egoroff-Severini's theorem}.\footnote{Dmitri Egoroff, a Russian physicist and geometer and Carlo Severini, an Italian mathematician, published independent proofs of this theorem in 1910 and 1911
(see \cite{Eg} and \cite{Severini}); Severini's assumptions are more restrictive. Severini's result is not very well-known, since it is hidden in a paper on orthogonal polynomials, published in Italian.}

\begin{theorem}[Third Principle; Egoroff-Severini]
\label{th:Egoroff}
Let $E\subset \RR$ be a measurable set with finite measure and
let $f:E\to\ovr{\RR}$ be measurable and finite a.e. in $E$.

The sequence of measurable functions $f_n:E\to\ovr{\RR}$  converges a.e. to $f$ in $E$ for $n\to\infty$ if and only if, for every $\eps> 0$, there exists a closed set $K \subseteq E$ such that $m(E \setminus K) < \eps$ and $f_n$ converges uniformly to $f$ on $K$.
\end{theorem}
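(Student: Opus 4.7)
The converse direction is immediate: applying the hypothesis with $\eps_n = 1/n$ produces closed sets $K_n \subseteq E$ with $m(E \setminus K_n) < 1/n$ on each of which $(f_k)$ converges uniformly, hence pointwise, to $f$. Setting $K = \bigcup_n K_n$ one obtains $m(E \setminus K) = 0$ and $f_k \to f$ on $K$, so the a.e.\ convergence on $E$ follows.

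For the main (and new) direction, my plan is to exhibit a compact set $K$ on which the ``tail-oscillations''
\[
g_n(x) = \sup_{k \geq n} |f_k(x) - f(x)|
\]
are continuous, monotonically decrease to $0$, and hence converge to $0$ \emph{uniformly} by Dini's monotone convergence theorem. Since $|f_k - f| \leq g_n$ on $K$ for every $k \geq n$, uniform convergence of $(g_n)$ to $0$ immediately transfers to uniform convergence of $(f_n)$ to $f$ on $K$.

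The construction of $K$ proceeds in four ingredients, each drawing on a previously established principle. First, because $m(E) < \infty$, I would replace $E$ by its intersection $E'$ with a sufficiently large interval $[-M, M]$ at a cost smaller than $\eps/4$ in measure, ensuring boundedness (and hence eventual compactness). Second, the set $\tilde E \subseteq E'$ where both $f$ is finite and $f_n(x) \to f(x)$ has full measure, so each $g_n$ is measurable on $\tilde E$ (countable supremum of measurable functions), nonincreasing in $n$, and decreasing pointwise to $0$ on $\tilde E$. Third, I would enclose the null set $E \setminus \tilde E$ in an open set $U$ with $m(U) < \eps/4$. Fourth, applying the Second Principle to each $g_n$ yields a closed set $K_n \subseteq E'$ on which $g_n$ is continuous and such that $m(E' \setminus K_n) < \eps \cdot 2^{-(n+2)}$. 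Then
\[
K := \Bigl( \bigcap_{n \geq 1} K_n \Bigr) \setminus U
\]
is closed (intersection of closed sets minus an open one), bounded, hence compact, and by countable subadditivity $m(E \setminus K) < \eps$.

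On $K$ every $g_n$ is continuous, the sequence $(g_n)$ is nonincreasing, and $g_n \to 0$ pointwise (since $K \subseteq \tilde E$); the constant limit $0$ is trivially continuous. Dini's theorem therefore delivers uniform convergence of $(g_n)$ to $0$ on the compact set $K$, and the conclusion follows as explained above. The main obstacle I anticipate is producing, through a single countable construction, a compact set on which \emph{all} the $g_n$ become simultaneously continuous while keeping the total excluded measure under $\eps$: the $\eps$-budget must be carefully split between the countably many applications of the Second Principle and the null-set exclusion. Once this is in place, the proof reduces to a purely classical application of Dini, thereby displaying Egoroff–Severini as the natural measure-theoretic extension of Dini's monotone convergence theorem.
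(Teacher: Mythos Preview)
Your approach coincides with the paper's \emph{alternative} proof (given later, in the section on the Fourth Principle), not with the classical argument that immediately follows the statement. The converse direction is identical to the paper's. For the forward direction, both arguments introduce the same tail-oscillations $g_n=\sup_{k\ge n}|f_k-f|$, use Lusin's theorem to make each $g_n$ continuous on a large closed set, intersect these sets, and finish with Dini's theorem on the resulting compact.

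There is one genuine gap. You apply the Second Principle to \emph{every} $g_n$, but Lusin requires the function to be finite a.e. Nothing in the hypotheses prevents, say, $f_1$ from equalling $+\infty$ on a set of positive measure (only the limit $f$ is assumed finite a.e.); in that case $g_1$ is not finite a.e.\ and Lusin does not apply to it. The paper handles this by first using the sets $E_{n,1}=\{x:g_n(x)<1\}$ to find an index $\nu$ and a large closed set on which $g_n<1$ for all $n\ge\nu$---this is exactly where the Fourth Principle enters---and only then applies Lusin to $g_n$ for $n\ge\nu$. You should insert this preparatory step (or, equivalently, work with $\min(g_n,1)$) before invoking the Second Principle.

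Your explicit intersection with $[-M,M]$ to guarantee compactness is a clean addition; the paper relies implicitly on the fact that the closed sets produced by its First Principle are already compact.
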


This is what we mean by \textit{nearly uniformly convergent}.

\begin{proof}
If $f_n \to f$ a.e. in $E$ as $n\to\infty$, the subset of $E$ in which $f_n \to f$ pointwise
has the same measure as $E$; hence, without loss of generality, we can assume that $f_n(x)$
converges to $f(x)$ for every $x\in E$.

Consider the functions defined by
\begin{equation}
\label{defgn}
g_n(x)=\sup_{k\ge n} |f_k(x)-f(x)|, \ \ x\in E
\end{equation}
and the sets
\begin{equation}
\label{defEnm}
E_{n,m} =\left\{x \in E : g_n(x) <\frac1{m} \right\}
\ \mbox{ for } \ n, m\in\NN.
\end{equation}
Observe that, if $x\in E$, then $g_n(x)\to 0$ as $n\to\infty$ and hence for any $m\in\NN$
$$
E=\bigcup_{n=1}^\infty E_{n,m}.
$$
As $E_{n,m}$ is increasing with $n$, the monotone convergence theorem implies that $m(E_{n,m})$ converges to $m(E)$ for $n\to\infty$  and for any $m \in\NN$.
Thus, for every $\eps>0$ and $m \in\NN$, there exists an index $\nu=\nu(\eps, m)$ such that
$
m(E \setminus E_{\nu,m}) < \eps/ {2^{m+1}}.
$

\par
The measure of the set $F = \bigcup\limits_{m = 1}^{\infty} (E \setminus E_{\nu,m})$ is arbitrary small, in fact
$$
m(F) \leq \sum_{m=1}^{\infty} m(E \setminus E_{\nu,m}) < \eps/ 2.
$$ 
Also, since $E\setminus F$ is measurable, by Thorem \ref{th:first}  there exists a compact set $K \subseteq E \setminus F$ such that $m(E \setminus F) - m(K) < \eps/ 2$, and hence 
$$
m(E \setminus K) = m(E\setminus F)+m(F) - m(K) < \eps.
$$
\par
Since $K \subseteq E \setminus F = \bigcap\limits_{m=1}^{\infty} E_{\nu(\eps, m), m}$ we have that 
$$
|f_n(x) - f(x)| < \frac{1}{m} \ \mbox{ for any } \ x\in K \ \mbox{ and } \ n \ge \nu(\eps, m),
$$ 
by the definitions of $E_{\nu,m}$ and $g_{n}$; this means that $f_n$ converges uniformly to $f$ on $K$ as $n\to\infty$.
\vskip.1cm
Viceversa, if for every $\eps> 0$ there is a closed set $K \subseteq E$ with
$m(E \setminus K) < \eps$ and $f_n \rightarrow f$ uniformly on $K$, then by
choosing $\eps=1/m$ we can say that there is a closed set $K_m \subseteq E$ such that $f_n \to f$ uniformly on $K_m$ and $m(E \setminus K_m) < 1 / m$. 
\par
Therefore, $f_n(x) \rightarrow f(x)$ for any $x$ in the set $F = \bigcup\limits_{m=1}^\infty K_m$ and 
$$
m(E \setminus F) = m\Bigl(\bigcap_{m=1}^\infty (E \setminus K_m)\Bigr) \leq m(E \setminus K_m) <\frac1{m} \ \mbox{ for any } \ m\in\NN,
$$ 
which implies that $m(E \setminus F) = 0$. Thus, $f_n\to f$ a.e. in $E$ as $n\to\infty$. 

\end{proof}

The second principle corresponds to \textit{Lusin's theorem} (see \cite{Lu}),\footnote{N. N. Lusin or Luzin was a student of Egoroff. For biographical notes on Egoroff and Lusin see \cite{GK}.} that we state
here in a form similar to Theorems \ref{th:first} and \ref{th:Egoroff}.

\begin{theorem}[Second Principle; Lusin]
\label{th:Lusin}
Let $E\subset \RR$ be a measurable set with finite measure and
let $f:E\to\ovr{\RR}$ be finite a.e. in $E$.
\par
Then, $f$ is measurable in $E$ if and only if, for every $\eps>0$, there exists a closed set $K \subseteq E$ such that $m(E \setminus K) < \eps$ and the restriction of $f$ to $K$ is continuous.
\end{theorem}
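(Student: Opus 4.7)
The plan is to prove Lusin's theorem without invoking the Third Principle, by exploiting the Fourth Principle announced in the excerpt. The strategy is to first reduce to the case where $f$ is bounded and then approximate uniformly by simple functions, using the First Principle (Theorem \ref{th:first}) to make each of those simple functions continuous on a large closed subset. Since a uniform limit of continuous functions is continuous, no pointwise-to-uniform convergence step is needed, which is precisely why Egoroff-Severini becomes avoidable.

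For the forward implication, let $\eps>0$. First I would apply the Fourth Principle to produce a closed set $K_0\subseteq E$ with $m(E\setminus K_0)<\eps/2$ on which $f$ is bounded, say $|f|\le M$. On $K_0$ I would approximate $f$ uniformly by the canonical simple functions: for each $n$, partition $[-M,M]$ into finitely many disjoint half-open intervals $I_{n,k}$ of length $\le 1/n$, set $E_{n,k}=\{x\in K_0:f(x)\in I_{n,k}\}$, and define $s_n=\sum_k c_{n,k}\,\chi_{E_{n,k}}$ with $c_{n,k}\in I_{n,k}$; then $\nr s_n-f\nr_\infty\le 1/n$ on $K_0$. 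Next, applying the First Principle to each of the finitely many nonempty $E_{n,k}$, I obtain closed subsets $K_{n,k}\subseteq E_{n,k}$ so that $K_n=\bigcup_k K_{n,k}\subseteq K_0$ satisfies $m(K_0\setminus K_n)<\eps/2^{n+2}$. Because the $K_{n,k}$ are finitely many pairwise disjoint closed subsets of $\RR$, each is relatively open in $K_n$, so $s_n$ is locally constant, and hence continuous, on $K_n$. Setting $K=\bigcap_{n\ge 1}K_n$ gives a closed set with $m(E\setminus K)<\eps$ on which every $s_n$ is continuous and $s_n\to f$ uniformly; thus $f|_K$ is continuous.

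For the converse, I would apply the hypothesis with $\eps=1/j$ to produce closed $K_j\subseteq E$ with $m(E\setminus K_j)<1/j$ and $f|_{K_j}$ continuous. For such $f|_{K_j}$, each level set $L(f,t)\cap K_j$ is relatively open in the closed set $K_j$, hence measurable in $\RR$. Writing $F=\bigcup_j K_j$, the set $L(f,t)\cap F$ is measurable and $E\setminus F$ has measure zero, so $L(f,t)$ itself is measurable; hence $f$ is measurable.

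The main technical point to handle carefully is the continuity of $s_n$ on $K_n$: it rests on the fact that a \emph{finite} family of pairwise disjoint closed subsets of $\RR$ is automatically mutually separated (each point of one admits an open neighborhood missing the finitely many others). Such separation may fail for infinite families, so the finiteness of the partition supplied by uniform simple-function approximation is essential. This is exactly the structural reason why the Fourth Principle — giving boundedness of $f$ and therefore a finite range partition at each scale $1/n$ — is the natural substitute for Egoroff-Severini in this argument.
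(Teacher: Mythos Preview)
Your proof is correct and is essentially the paper's alternative proof of Lusin's theorem (the one given after the Fourth Principle), which replaces the appeal to Egoroff--Severini by the Fourth Principle so as to obtain \emph{uniform} approximation by simple functions on a large closed set, and then intersects the closed sets on which each simple function is continuous. Your converse is argued slightly differently---you take the countable union of the $K_j$ and use that the remainder is null, whereas the paper writes $L^*(f,t)$ as a closed set union a set of outer measure less than $\eps$ and invokes Theorem~\ref{th:first} directly---but both routes are standard and valid.
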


This is what we mean by \textit{nearly continuos}.
\vskip.1cm
The proof of Lusin's theorem is done by approximation by simple functions. A \textit{simple function}
is a measurable function that has a finite number of real values. 
If $c_1, \ldots, c_n$ are the \textit{distinct} values of a simple function $s$,
then $s$ can be conveniently represented as
$$
s = \sum\limits_{j = 1}^{n} c_{j}\mathcal{X}_{E_j},
$$
where $\mathcal{X}_{E_j}$ is the characteristic function of the set $E_j = \left\{ x \in E \textrm{ : } s(x) = c_j \right\}$. Notice that the $E_j$'s form a
covering of $E$ of pairwise disjoint measurable sets.
\par
Simple functions play 
a crucial role in Real Analysis; this is mainly due to the following result of which we shall
omit the proof.

\begin{theorem}[Approximation by Simple Functions, \cite{Royden}, \cite{Ru}]
\label{th:approximation}
Let $E\subseteq\RR$ be a measurable set and let $f: E\to [0,+\infty]$  be a measurable function.
\par
Then, there exists an increasing sequence of non-negative simple functions $s_n$ that 
converges pointwise to $f$ in $E$ for $n\to\infty$.
\par
Moreover, if $f$ is bounded, then $s_n$ converges to $f$ uniformly in $E$.
\end{theorem}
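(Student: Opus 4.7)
The plan is to construct the approximating sequence explicitly via a dyadic decomposition of the range of $f$, which is the standard and most transparent approach. For each $n \in \NN$, I would partition the interval $[0, n)$ into $n \cdot 2^n$ half-open intervals of length $2^{-n}$, and send the ``tail'' $[n, +\infty]$ to the value $n$. Concretely, for $k = 0, 1, \dots, n \cdot 2^n - 1$, define
\[
E_{n,k} = \left\{ x \in E : \frac{k}{2^n} \le f(x) < \frac{k+1}{2^n} \right\}, \qquad F_n = \{ x \in E : f(x) \ge n \},
\]
and then set
\[
s_n = \sum_{k=0}^{n \cdot 2^n - 1} \frac{k}{2^n}\, \mathcal{X}_{E_{n,k}} + n\, \mathcal{X}_{F_n}.
\]
Since $f$ is measurable, each $E_{n,k}$ and each $F_n$ is measurable, and the collection is a finite disjoint cover of $E$; hence $s_n$ is a non-negative simple function taking only finitely many values.

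The first substantive step is to prove monotonicity $s_n \le s_{n+1}$ pointwise on $E$. I would argue by cases on the location of $f(x)$: if $x \in E_{n,k}$ with $k < n \cdot 2^n$, then in going from $n$ to $n+1$ the interval $[k/2^n, (k+1)/2^n)$ splits into the two subintervals $[2k/2^{n+1}, (2k+1)/2^{n+1})$ and $[(2k+1)/2^{n+1}, (2k+2)/2^{n+1})$, so $s_{n+1}(x)$ equals either $2k/2^{n+1} = k/2^n = s_n(x)$ or $(2k+1)/2^{n+1} > s_n(x)$; and if $x \in F_n$, then either $x$ falls into some $E_{n+1,k}$ with $k/2^{n+1} \ge n = s_n(x)$ or $x \in F_{n+1}$ with $s_{n+1}(x) = n+1 > n = s_n(x)$.

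Next, I would establish pointwise convergence. Fix $x \in E$. If $f(x) < +\infty$, then for all $n > f(x)$ we have $x \notin F_n$, so $0 \le f(x) - s_n(x) < 2^{-n}$, which tends to $0$. If $f(x) = +\infty$, then $x \in F_n$ for every $n$, so $s_n(x) = n \to +\infty = f(x)$. This proves pointwise convergence in all cases.

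Finally, the uniform statement when $f$ is bounded follows immediately from the inequality $0 \le f(x) - s_n(x) < 2^{-n}$ just obtained: if $\sup_E f \le M$, then for every $n > M$ one has $F_n = \emptyset$ and the estimate holds uniformly in $x \in E$, so $\|f - s_n\|_\infty < 2^{-n} \to 0$. I do not anticipate any real obstacle here; the only point requiring genuine care is the verification of monotonicity, which must be handled by the case distinction indicated above rather than dismissed as obvious.
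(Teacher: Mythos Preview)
Your proof is correct and is precisely the standard dyadic construction found in the references the paper cites (Royden, Rudin). The paper itself omits the proof of this theorem entirely, stating explicitly that it is a result ``of which we shall omit the proof,'' so there is no in-paper argument to compare against; your write-up fills that gap cleanly.
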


We can now give the proof of Lusin's theorem.

\begin{proof}
Any measurable function $f$ can be decomposed as $f=f^+-f^-$, where $f^+=\max(f,0)$ and 
$f^-=\max(-f,0)$ are measurable and non-negative functions. Thus, we can always suppose that $f$
is non-negative and hence, by Theorem \ref{th:approximation}, it 
can be approximated pointwise by a sequence of simple functions.
\par
We first prove that a simple function $s$ is nearly continuos. 
Since the sets $E_j$ defining $s$ are measurable, if we fix $\eps> 0$ we can find closed subsets $K_j$ of $E_j$ such that $m(E_j \setminus K_j) < \eps/n$ for $j=1,\dots, n$.
The union $K$ of the sets $K_j$ is also a closed set and, since the $E_j$'s cover $E$, we have that $m(E \setminus K) < \eps$. Since the closed sets $K_j$ are pairwise disjoint (as the $E_j$'s are pairwise disjoint) and $s$ is constant on $K_j$ for all $j=1,\dots,n$, we conclude that $s$ is continuous in $K$.
\par
Now, if $f$ is measurable and non-negative, 
let $s_n$ be a sequence of simple functions that converges pointwise to $f$ and fix an $\eps>0$. 
\par
As the $s_n$'s are nearly continuous, for any natural number $n$, there exists a closed set $K_n \subseteq E$ such that $m(E \setminus K_n) < \eps/{2^{n+1}}$ and $s_n$ is continuous in $K_n$. By Theorem \ref{th:Egoroff}, there exists a closed set $K_0 \subseteq E$ such that $m(E \setminus K_0) < \eps/2$ and $s_n$ converges uniformly to $f$ in $K_0$ as $n\to\infty$. Thus, in the set
$$
K = \bigcap\limits_{n=0}^\infty K_n
$$ 
the functions $s_n$ are all continuous and converge uniformly to $f$. Therefore $f$ is continuous in $K$
and
$$
m(E \setminus K) = m\Bigl(\bigcup\limits_{n=0}^\infty (E \setminus K_n)\Bigr) \leq \sum_{n=0}^{\infty} m(E \setminus K_n) < \eps.
$$
\par
Viceversa, if $f$ is nearly continuous, fix an $\eps>0$ and let $K$ be a closed subset of $E$ such that $m(E \setminus K) < \eps$ and $f$ is continuous in $K$. For any $t \in \RR$, we have:
 $$
L^*(f,t)= \left\{x \in K : f(x) \geq t\right\} \cup \left\{x \in E \setminus K : f(x) \geq t\right\}.
$$ 
The former set in this decomposition is closed, as the restriction of $f$ to $K$ is continuous, 
while the latter is clearly a subset of $E \setminus K$ and hence its outer measure must be less than $\eps$. By Theorem \ref{th:first}, $L^*(f,t)$ is measurable (for any $t\in\RR$), which means that $f$
is measurable. 
\end{proof}

\section{The fourth principle}

We shall now present alternative proofs of Theorems \ref{th:Egoroff} and \ref{th:Lusin}.  
They are based on a fourth principle, that corresponds to the following theorem.

\begin{theorem}[Fourth Principle]
\label{th:fourth}
Let $E\subset \RR$ be a measurable set with finite measure and let $f:E\to\ovr{\RR}$ be a measurable function.
\par
Then, $f$ is finite a.e. in $E$ if and only if, for every $\eps > 0$, there exists a closed set $K \subseteq E$ such that $m(E \setminus K) < \eps$ and $f$ is bounded on $K$.
\end{theorem}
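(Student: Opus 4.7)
The plan is to prove both implications by combining the First Principle (Theorem 2.1) with the continuity of the Lebesgue measure along monotone sequences of sets. Neither direction should be hard, and the argument will parallel the second half of the proof of Egoroff--Severini's theorem.

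For the ``only if'' direction, I would first assume that $f$ is finite a.e. in $E$ and, for $n\in\mathbb{N}$, introduce the level-style sets
\[
E_n=\{x\in E:|f(x)|\le n\}.
\]
These are measurable because $f$ is, they form an increasing sequence, and their union differs from $E$ by a set of measure zero (this is exactly where the hypothesis of a.e.\ finiteness is used, together with the convention $|\pm\infty|=+\infty$). Since $m(E)<\infty$, the monotone continuity of $m$ gives $m(E_n)\to m(E)$, hence for any prescribed $\varepsilon>0$ there exists $n=n(\varepsilon)$ with $m(E\setminus E_n)<\varepsilon/2$. The First Principle (Theorem~\ref{th:first}) applied to $E_n$ (which has finite outer measure since $E$ does) provides a closed (compact) set $K\subseteq E_n$ with $m(E_n\setminus K)<\varepsilon/2$. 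Then $K\subseteq E$, $|f|\le n$ on $K$, and
\[
m(E\setminus K)\le m(E\setminus E_n)+m(E_n\setminus K)<\varepsilon,
\]
which is the desired conclusion.

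For the ``if'' direction, I would choose $\varepsilon=1/n$ for each $n\in\mathbb{N}$ and extract, by hypothesis, a closed set $K_n\subseteq E$ with $m(E\setminus K_n)<1/n$ on which $f$ is bounded (in particular finite). Setting $F=\bigcup_{n=1}^{\infty}K_n$, the function $f$ is finite at every point of $F$, and
\[
m(E\setminus F)=m\Bigl(\bigcap_{n=1}^{\infty}(E\setminus K_n)\Bigr)\le m(E\setminus K_n)<\frac{1}{n}
\]
for every $n$, so $m(E\setminus F)=0$ and $f$ is finite a.e. in $E$.

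The main (and mild) obstacle is really only bookkeeping: one must be careful that the $E_n$'s exhaust $E$ \emph{modulo} a null set (not pointwise, unless $f$ is finite everywhere), and that the sum of the two small measures in the first direction is controlled by $\varepsilon$. No ingredient beyond Theorem~\ref{th:first} and the elementary properties of $m$ on monotone sequences is needed, and in particular the Third Principle is \emph{not} invoked, so that this proof can be used as an independent starting point for the alternative derivations of Theorems~\ref{th:Egoroff} and~\ref{th:Lusin} announced in the introduction.
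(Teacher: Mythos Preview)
Your proof is correct and is essentially the same as the paper's: the only cosmetic difference is that you work with the increasing sets $E_n=\{|f|\le n\}$ and upward monotone convergence, while the paper uses the complementary decreasing sets $L(|f|,n)=\{|f|>n\}$ and downward monotone convergence, and in the converse direction you pass through $F=\bigcup K_n$ whereas the paper bounds $\{|f|=\infty\}$ directly by each $E\setminus K_n$. Both arguments combine Theorem~\ref{th:first} with continuity of $m$ along monotone sequences in exactly the same way.
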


This is what we mean by \textit{nearly bounded}.
\begin{proof}
If $f$ is finite a.e., we have that 
$$
m(\left\{x \in E : |f(x)| = \infty\right\})=0.
$$
As $f$ is measurable, $|f|$ is also measurable and so are the sets 
$$
L(|f|,n) = \left\{x \in E :|f(x)| > n\right\}, \ n\in\NN.
$$ 
Observe that the sequence of sets $L(|f|,n)$ is decreasing and
$$
\bigcap\limits_{n=1}^\infty L(|f|,n) = \left\{x \in E :|f(x)| = \infty\right\}.
$$ 
As $m(L(|f|,1)) \leq m(E) < \infty$, we can
apply the (downward) monotone convergence theorem and infer that
$$
\lim_{n \to \infty} m(L(|f|,n)) = m(\left\{x \in E :|f(x)| = \infty\right\}) = 0.
$$ 
\par
Thus, if we fix $\eps>0$,  there is an $n_{\eps} \in \mathbb N$ such that $m(L(|f|,n_{\eps})) < \frac{\eps}{2}$. Also, we can find a closed subset $K$ of the measurable set
$E \setminus L(|f|,n_{\eps})$ such that $m(E \setminus L(|f|,n_{\eps}))-m(K)< \frac{\eps}{2}$. Finally, since $K \subseteq E \setminus L(|f|,n_{\eps})$, $|f|$ is obviously bounded by $n_\eps$ on $K$
and
$$
m(E\setminus K)=m(E \setminus L(|f|,n_{\eps}))+m(L(|f|,n_{\eps})\setminus K)<\eps.
$$
\par
Viceversa, if $f$ is nearly bounded, then for any $n \in\NN$ there exists a closed set $K_n \subseteq E$ such that $m(E \setminus K_n) < 1/n$ and $f$ is bounded (and hence finite) in $K_n$.
Thus, $\left\{x \in E :|f(x)| =\infty\right\} \subseteq E \setminus K_n$ for any $n \in\NN$, and hence $$
m(\left\{x \in E:|f(x)| = \infty\right\}) \leq \lim_{n\to\infty} m(E \setminus K_n) =0,
$$ 
that is $f$ is finite a.e..
\end{proof}

\begin{remark}
Notice that this theorem can also be derived from Theorem \ref{th:Lusin}. 
In fact,
without loss of generality, the closed set $K$ provided by Theorem \ref{th:Lusin} can be 
taken to be compact and hence, $f$ is surely bounded on $K$, being continuous on a compact set.
\end{remark}

More importantly for our aims, Theorem \ref{th:fourth} enables us to prove Theorem \ref{th:Lusin} \textit{without} using Theorem \ref{th:Egoroff}.

\begin{proof}[Alternative proof of Theorem \ref{th:Lusin}]
The proof runs similarly to that presented in Section 2. If $f$ is measurable, without loss of generality, we can assume that $f$ is non-negative and hence $f$ can be approximated pointwise by a sequence of simple functions $s_n$, which we know are nearly continuous. Thus, for any $\eps>0$, we can still construct the sequence of closed subsets $K_n$ of $E$ such that $m(E \setminus K_n) < \eps/{2^{n+1}}$ and $s_n$ is continuous in $K_n$. 
\par
Now, as $f$ is finite a.e., Theorem \ref{th:fourth} implies that it is nearly bounded, that is we can find a closed subset $K_0$ of $E$ in which $f$ is bounded and $m(E \setminus K_0) < \eps/ 2$. 
We apply the second part of the Theorem \ref{th:approximation} and infer that $s_n$ converges uniformly to $f$ in $K_0$. As seen before, we conclude that $f$ is continuous in the intersection $K$ of all the $K_n$'s,
because in $K$ it is the uniform limit of  the sequence of continuous functions $s_n$. As before
$m(E\setminus K)<\eps$.
\par
The reverse implication remains unchanged.
\end{proof}

In order to give our alternative proof of Theorem \ref{th:Egoroff}, we need to recall
a classical result for sequences of continuous functions.

\begin{theorem}[Dini]
\label{th:Dini}
Let $K$ be a compact subset of $\RR$ and let be given a sequence of continuous functions $f_n:K\to \RR$ that converges pointwise and monotonically in $K$ to a function $f:K\to\RR$.
\par
If $f$ is also continuous, then $f_n$ converges uniformly to $f$.
\end{theorem}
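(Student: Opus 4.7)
The plan is to reduce to the case of a monotonically decreasing sequence with zero limit, and then use compactness to convert the pointwise control into uniform control. Without loss of generality, by replacing $f_n$ with $f_n - f$ (if $f_n \downarrow f$) or with $f - f_n$ (if $f_n \uparrow f$), I can assume that $g_n := f_n - f$ is a sequence of continuous non-negative functions on $K$ that decreases pointwise to $0$ as $n \to \infty$. Note that differences and negatives of continuous functions are continuous, so this reduction preserves the hypotheses. The goal becomes showing that $g_n \to 0$ uniformly on $K$.

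Next, I would fix an arbitrary $\varepsilon > 0$. For every $x \in K$, the pointwise convergence $g_n(x) \to 0$ produces an index $N(x) \in \NN$ such that $g_{N(x)}(x) < \varepsilon$. Since $g_{N(x)}$ is continuous at $x$, there exists an open neighborhood $U_x \subseteq \RR$ of $x$ on which $g_{N(x)} < \varepsilon$. The monotonicity $g_n \le g_{N(x)}$ for $n \ge N(x)$ then upgrades this to
\[
0 \le g_n(y) < \varepsilon \quad \text{for every } y \in U_x \cap K \text{ and every } n \ge N(x).
\]

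The family $\{U_x\}_{x \in K}$ is an open cover of the compact set $K$, so I can extract a finite subcover $U_{x_1}, \dots, U_{x_k}$. Setting $N := \max\{N(x_1), \dots, N(x_k)\}$, every point $y \in K$ lies in some $U_{x_j}$, and for each $n \ge N$ one has $n \ge N(x_j)$, hence $g_n(y) < \varepsilon$. This gives $\sup_K g_n < \varepsilon$ for all $n \ge N$, proving uniform convergence.

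The main conceptual obstacle is precisely the step where the pointwise choice of $N(x)$ is promoted to a single uniform $N$; this is exactly where compactness of $K$, continuity of the $g_n$, and monotonicity must all cooperate simultaneously. In particular, dropping any one of the three hypotheses (compactness, continuity of the limit, or monotonicity) breaks the argument, since monotonicity is what lets us use a \emph{single} index $N(x)$ to dominate all later $g_n$ on the neighborhood $U_x$, while continuity of $f$ (inherited by the $g_n$) is what allows the passage from the value at $x$ to a neighborhood. No further subtlety is anticipated beyond organizing these three ingredients in the order above.
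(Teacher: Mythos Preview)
Your proof is correct and follows essentially the same approach as the paper: reduce to a non-negative continuous sequence decreasing pointwise to zero, then use compactness to pass from local to global control. The only cosmetic difference is that the paper packages the argument via the open sets $A_n=\{h_n<\varepsilon\}$, which are increasing in $n$, so a finite subcover immediately yields $A_m=K$ for a single index $m$; your version instead takes the maximum of finitely many pointwise indices $N(x_j)$, which amounts to the same thing.
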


\begin{proof}
We shall prove the theorem when $f_n$ is monotonically increasing. 

For each $n \in \NN$, set $h_n = f - f_n$; as $n\to\infty$ the continuos functions $h_n$ decrease pointwise to $0$ on $K$.

Fix $\eps>0$. The sets $A_n = \left\{x \in K:h_n(x) < \eps\right\}$ are relatively open in $K$, since the $h_n$'s are continuous; also,  $A_n \subseteq A_{n+1}$ for every $n\in\NN$, since the ${h_n}$'s decrease; finally,
the $A_n$'s cover $K$, since the $h_n$ converge pointwise to $0$. 
\par
By compactness, $K$ is then covered by a finite number $m$ of the $A_n$'s, which means that $A_m = K$ for some $m\in\NN$. This implies that  $|f(x) - f_n(x)| < \eps$ for all $n \geq m$ and $x \in K$, as desired.
\end{proof}

\begin{remark}
\label{semicont}
The conclusion of Theorem \ref{th:Dini} still holds true if we assume that the sequence of $f_n$'s is increasing (respectively decreasing) and $f$ and all the $f_n$'s are lower (respectively upper) semicontinuous. (We say that $f$ is lower semicontinuous if the level sets
$L_+(f,t)$ are open for every $t \in \RR$; $f$ is upper semicontinuous if $-f$ is lower semicontinuous.)
\end{remark}

Now, Theorem \ref{th:Egoroff} can be proved by appealing to Theorems \ref{th:Lusin} and \ref{th:Dini}.

\begin{proof}[Alternative proof of Theorem \ref{th:Egoroff}]
As in the classical proof of this theorem, we can always assume that $f_n(x) \to f(x)$ for every $x\in E$.
\par
Consider the functions and sets defined in \eqref{defgn} and \eqref{defEnm}, respectively.
We shall first show that there exists an $\nu\in\NN$ such that $g_n$ is nearly bounded for every
$n\ge \nu$. In fact, as already observed, since $g_n\to 0$ pointwise in $E$ as $n\to\infty$, we have that  
$$
E=\bigcup_{n=1}^\infty E_{n,1},
$$
and the $E_{n,1}$'s increase with $n$. Hence, if we fix $\eps>0$, there is a $\nu\in\NN$ such that $m(E\setminus E_\nu)<\eps/2$. Since
$E_\nu$ is measurable, by Theorem \ref{th:first} we can find a closed subset $K$ of $E_\nu$ such that
$m(E_\nu\setminus K)<\eps/2$.
\par
Therefore, $m(E\setminus K)<\eps$ and for every $n\ge \nu$
$$
0\le g_n(x)\le g_\nu(x)<1, \ \mbox{ for any } \ x\in K.
$$
\par
Now, being $g_n$ nearly bounded  in $E$ for every $n\ge \nu$, the alternative proof of Theorem \ref{th:Lusin} implies that $g_n$ is nearly continuous in $E$, that is for every $n\ge \nu$ there exists a closed
subset $K_n$ of $E$ such that $m(E\setminus K_n)<\eps/2^{n-\nu+1}$ and $g_n$ is continuous on $K_n$. 
The set 
$$
K=\bigcap_{n=\nu}^\infty K_n
$$
is closed, $m(E\setminus K)<\eps$ and on $K$ the functions $g_n$ are continuos for any $n\ge \nu$ 
and monotonically descrease to $0$ as $n\to\infty$.
\par
By Theorem \ref{th:Dini}, the $g_n$'s converge to $0$ uniformly on $K$.
This means that the $f_n$'s converge to $f$ uniformly on $K$ as $n\to\infty$. 
\par
The reverse implication remains unchanged.
\end{proof}

\begin{remark}
Egoroff's theorem can be considered, in a sense, as the \textit{natural} substitute of Dini's theorem, 
in case the monotonicity assumption is removed.
In fact, notice that the sequence of the $g_n$'s defined in \eqref{defgn} is decreasing; however, the $g_n$'s are in general no longer upper semicontinuous (they are only lower semicontinuous) and Dini's theorem (even in the form described in Remark \ref{semicont}) cannot be applied. In spite of that, the $g_n$'s \textit{remain} measurable if the $f_n$'s are so. 
\end{remark}

Of course, all the proofs presented in Sections 2 and 3 still work if we replace the real line $\RR$
by an Euclidean space of any dimension.
\vskip.1cm
Theorems \ref{th:Egoroff}, \ref{th:Lusin} and \ref{th:fourth} can also be extended to general measure spaces not necessarily endowed with a topology: the intersted reader can refer to \cite{CD}, \cite{Ox}, \cite{Royden} and \cite{Ta}.

\end{document}